\newtheorem{Theorem}{Theorem}
\newtheorem{Proposition}[Theorem]{Proposition}
\newtheorem{Corollary}[Theorem]{Corollary}
\newtheorem{Lemma}[Theorem]{Lemma}
\newtheorem{Claim}[Theorem]{Claim}
\newtheorem{Example}[Theorem]{Example}
\newtheorem{Definition}[Theorem]{Definition}
\newtheorem{Remark}[Theorem]{Remark}
\newcommand{\ba}{\boldsymbol{\alpha}}
\newcommand{\bo}{\boldsymbol{\omega}}
\newcommand{\bc}{\boldsymbol{\chi}}
\newcommand{\degen}{\operatorname{degen}}
\newcommand{\rank}{\operatorname{rank}}
\newcommand{\dist}{\operatorname{dist}}
\newcommand{\unif}{\operatorname{unif}}
\newcommand{\spn}{\operatorname{span}}
\newcommand{\im}{\operatorname{im}}
\numberwithin{equation}{section}
\title{Gowers norms control diophantine inequalities}
\author{Aled Walker}
\begin{document}

\begin{abstract}
A central tool in the study of systems of linear equations with integer coefficients is the Generalised von Neumann Theorem of Green and Tao. This theorem reduces the task of counting the weighted solutions of these equations to that of counting the weighted solutions for a particular family of forms, the Gowers norms $\Vert f \Vert_{U^{s+1}[N]}$ of the weight $f$. In this paper we consider systems of linear inequalities with real coefficients, and show that the number of solutions to such weighted diophantine inequalities may also be bounded by Gowers norms. Furthermore, we provide a necessary and sufficient condition for a system of real linear forms to be governed by Gowers norms in this way. We present applications to cancellation of the M\"{o}bius function over certain sequences. 

The machinery developed in this paper can be adapted to the case in which the weights are unbounded but suitably pseudorandom, with applications to counting the number of solutions to diophantine inequalities over the primes. Substantial extra difficulties occur in this setting, however, and we have prepared a separate paper on these issues. 
\end{abstract}
\maketitle
\tableofcontents
\section{Introduction}
\label{Sec introduction}

The field of \emph{diophantine inequalities} is a large, and somewhat loosely-defined, collection of problems which lie at the intersection of traditional number theory and diophantine approximation. As far as this paper is concerned, we will restrict our attention to the following class of questions. Let $A$ be a set of integers, let $\varepsilon$ be a positive parameter, and let $L$ be an $m$-by-$d$ real matrix. One might then ask whether there are infinitely many solutions to 
\begin{equation}
\label{the general inequality}
\Vert L\mathbf{a}\Vert_{\infty}\leqslant \varepsilon
\end{equation}
\noindent in which all of the coordinates of $\mathbf{a}$ lie in $A$. Further, letting $N$ be an integer parameter tending to infinity, one might seek an asymptotic formula for the number of such solutions $\mathbf{a}$ which lie in the box given by the condition $\Vert \mathbf{a} \Vert_\infty \leqslant N$. One might even try to count solutions in certain cases in which $L$ depends on $N$.  

Much is known about these problems for certain special sets $A$ (see \cite{Ba67, Ba86, DaHe46, Ma87, Mu05, Pa02, Parsell02}), in particular for the image sets of polynomials. This work is discussed in Section \ref{Section Classical results} below. However, as far we are aware, the inequality (\ref{the general inequality}) has not been considered before in such generality. Naturally there are some advantages and some disadvantages in pursuing such a general formulation, the main disadvantage being that the statements of our main results must perforce include some complicated technical hypotheses on the matrix $L$.

 It will take us the rest of Sections 1 and 2 to properly motivate these hypotheses, culminating in the statement of Theorem \ref{Main Theorem chapter 3} (our main theorem). Section \ref{Sec introduction} will focus on qualitative results and applications, whereas Section \ref{section history} goes on to explore the issues of diophantine approximation and non-degeneracy which are required for a quantitative treatment when $L$ depends on $N$. At the end of Section \ref{section history} we will give a detailed sketch of our entire proof strategy. For now, we present the reader with a certain corollary of our main theorem, which we hope will encourage further reading through this long introduction.

\begin{Corollary}[Example of M\"{o}bius orthogonality]
\label{Corollary irrational szem mobius}
Let $\theta_1,\dots,\theta _s \in \mathbb{R}$ be distinct irrational numbers, let $N$ be an integer parameter, and let $f_1,f_2,\dots,f_{s+1}: \{ 1,\dots,N\} \longrightarrow [-1,1]$ be arbitrary $1$-bounded functions. Then 
\begin{equation}
\frac{1}{N^2} \sum\limits_{\substack{x,d \in \mathbb{Z} \\ 1 \leqslant x \leqslant N}} \mu(x) f_1(x+d)\Big (\prod\limits_{i=2}^{s+1} f_i([x + \theta_{i-1} d]) \Big ) = o(1)
\end{equation}
\noindent as $N \rightarrow \infty$, where $\mu$ denotes the M\"{o}bius function and $[x] : = \lfloor x + \frac{1}{2}\rfloor$ is the nearest integer to $x$. The $o(1)$ error term may depend on the numbers $\theta_1,\dots,\theta_s$ but is independent of the choice of functions $f_1,\dots,f_{s+1}$.\\  
\end{Corollary}

\subsection{Classical results}
\label{Section Classical results}
As we said above, much is known about the inequality (\ref{the general inequality}) for certain special sets $A$, particularly when $m=1$. For example, if $A$ is the set of squares, it was shown by Davenport and Heilbronn in \cite{DaHe46} that there are infinitely many solutions to (\ref{the general inequality}) for $m=1$ and $d=5$, i.e. infinitely many solutions to 
\begin{equation}
\label{eq squares}
\vert \lambda_1 n_1^2 + \lambda_2 n_2^2 + \lambda_3 n_3^2 + \lambda_4 n_4^2 + \lambda_5 n_5^2 \vert \leqslant \varepsilon,
\end{equation} provided the coefficients $\lambda_i$ are non-zero, not all of the same sign, and not all in pairwise rational ratio. Their work also proves the same result for $k^{th}$ powers, provided that the number of variables is at least $2^k + 1$. Some 55 years after Davenport and Heilbronn, Freeman \cite{Fr02} refined the analysis from \cite{DaHe46} to obtain asymptotic formulas for the number of solutions to (\ref{eq squares}) in which $n_i\leqslant N$ for every $i$, and he also reduced the number of variables required in the case of $k^{th}$ powers, to $k(\log k + \log\log k + O(1))$. In \cite{Wo03} Wooley further reduced this number, particularly for small $k$. 

The Davenport-Heilbronn method is Fourier-analytic. One begins by replacing the interval $[-\varepsilon,\varepsilon]$ with a Lipschitz cut-off function, and then one expresses the solution count via the Fourier inversion formula (see \cite[Chapter 20]{Da05} or \cite[Chapter 11]{Va97}). The device of replacing $[-\varepsilon,\varepsilon]$ with a friendlier cut-off plays an important role in our argument too, and we discuss it at length in Section \ref{Section proof strategy}.

There are also some results on the inequality (\ref{the general inequality}) when $m\geqslant 2$, although this setting has been studied less intensively. For example, Parsell \cite{Parsell02} considered the setting of $k^{th}$ powers, with  M\"{u}ller \cite{Mu05} developing a refined result in the case of inequalities for general real quadratics. Parsell's result is rather technical to state, and we defer the interested reader to the original paper. Later on in Section \ref{section history}, however, we will state M\"{u}ller's result precisely, as one of his hypotheses is closely related to a hypothesis in our main theorem. \\

One of our main goals, for this paper and for our follow-up \cite{Wa19}, is to find a method of proving asymptotic formulae for the number of solutions to diophantine inequalities which goes beyond what can be done using the Davenport-Heilbronn method. Of particular interest to us is the case of inequality (\ref{the general inequality}) when $A$ is the set of prime numbers. A result first claimed in \cite{Ba67} by A. Baker\footnote{In fact Baker proved a slightly different result, writing in \cite{Ba67} that the result we quote here followed easily from the then-existing methods.} states that for any fixed positive $\varepsilon$ there exist infinitely many triples of primes $(p_1,p_2,p_3)$ satisfying 
\begin{equation}
\label{the type of inequality we're talking about}
\vert \lambda_1p_1 + \lambda_2p_2 + \lambda_3 p_3\vert \leqslant \varepsilon,
\end{equation}
\noindent assuming again that the coefficients $\lambda_i$ are non-zero, not all of the same sign, and not all in pairwise rational ratio. Parsell \cite{Pa02} then used a similar refinement to that of Freeman to prove a lower bound on the number of solutions to (\ref{the type of inequality we're talking about}) in the box $p_1,p_2,p_3\leqslant N$. For $m$ simultaneous inequalities, and for a generic matrix $L$, Parsell's method is powerful enough\footnote{although this doesn't seem to be present in the literature except in an appendix of our paper \cite{Wa19}.}  to prove an asymptotic formula for the number of solutions to (\ref{the general inequality}) in prime variables $p_1,p_2,\dots,p_d \leqslant N$, provided that $d\geqslant 2m+1$. In \cite{Wa19}, building on the work of the present paper, we manage to reach the same conclusion under the weaker hypothesis that $d\geqslant m+2$, provided that $L$ has algebraic coefficients. \\

A discussion of the literature on diophantine inequalities would not be complete without at least making reference to Margulis's famous resolution \cite{Ma87} of the Oppenheim Conjecture. With this work Margulis reduced the number of variables required to show the existence of infinitely many solutions to the inequality (\ref{eq squares}) from $5$ to $3$. Margulis's approach used dynamical methods, and is rather different in flavour to anything in this paper. In particular this method does not provide an asymptotic formula for the number of solutions in which the variables are bounded in a box. \\

\subsection{Notation}
\label{Section Notation}
Before continuing with the rest of our introduction, we feel that, given the technical nature of some of the statements to follow, it is prudent to fix all our notation at the outset. 

We will use standard asymptotic notation $O$, $o$, and $\Omega$. We do not, as is sometimes the convention, for a function $f$ and a positive function $g$ choose to write $f = O(g)$ if there exists a constant $C$ such that $\vert f(N)\vert \leqslant C g(N) $ \emph{for $N$ sufficiently large}. Rather we require the inequality to hold for all $N$ in some pre-specified range. If $N$ is a natural number, the range is always assumed to be $\mathbb{N}$ unless otherwise specified. For us, $0\notin \mathbb{N}$. 

It will be a convenient shorthand to use these symbols in conjunction with minus signs. So, by convention, we determine that expressions such as $-O(1), -o(1), -\Omega(1)$ are negative, e.g. $N^{-\Omega(1)}$ refers to a term $N^{-c}$, where $c$ is some positive quantity bounded away from $0$ as the asymptotic parameter tends to infinity. It will also be convenient to use the Vinogradov symbol $\ll$, where for a function $f$ and a positive function $g$ we write $f\ll g$ if and only if $f = O(g)$. We write $f\asymp g$ if $f\ll g$ and $g\ll f$. We also adopt the $\kappa$ notation from \cite{GT10}: $\kappa(x)$ denotes any quantity that tends to zero as $x$ tends to zero, with the exact value being permitted to change from line to line.

All the implied constants may depend on the dimensions of the underlying spaces. These will be obvious in context, and will always be denoted by $m$, $d$, $h$, or $s$ (or, in the case of Proposition \ref{normal form algorithm}, by $n$). If an implied constant depends on other parameters, we will denote these by subscripts, e.g. $O_{c,C,\varepsilon}(1)$, or $f\asymp_{\varepsilon} g$. By notation such as $o_\rho(1)$ we mean a term which tends to zero as the asymptotic parameter tends to infinity with $\rho$ fixed. 

If $N$ is a natural number, we use $[N]$ to denote $\{n\in \mathbb{N}:n\leqslant N\}$, whereas $[1,N]$ will be reserved for the closed real interval. For $x \in \mathbb{R}$, we write $[x]: = \lfloor x+\frac{1}{2}\rfloor$ for the nearest integer to $x$, and $\Vert x \Vert$ for $\vert x - [x] \vert$. This means that there is slight overloading of the notation $[N]$, but the sense will always be obvious in context. When other norms are present, we may write $\Vert x\Vert_{\mathbb{R}/\mathbb{Z}}$ for $\Vert x \Vert$ to avoid confusion. For $\mathbf{x} \in \mathbb{R}^m$, we let $\Vert \mathbf{x}\Vert_{\mathbb{R}^m/\mathbb{Z}^m}$ denote  $\sup_i \vert x_i - [x_i]\vert$.

If $X,Y\subset\mathbb{R}^d$ for some $d$, we define \[\operatorname{dist}(X,Y): = \inf\limits_{x\in X, y\in Y} \Vert x - y\Vert_{\infty}.\] If $X$ is the singleton $\{x\}$, we write $\operatorname{dist}(x,Y)$ for $\operatorname{dist}(\{x\},Y)$. By identifying sets of $m$-by-$d$ matrices with subsets of $\mathbb{R}^{md}$ (by identifying the coefficients of the matrices with coordinates in $\mathbb{R}^{md}$), we may also define $\dist(X,Y)$ when $X$ and $Y$ are sets of matrices of the same dimensions. We will consider a linear map $L:\mathbb{R}^d \longrightarrow \mathbb{R}^m$ to be synonymous with the $m$-by-$d$ matrix that represents $L$ with respect to the standard bases. The norm $\Vert L\Vert_\infty$ will refer to the maximum absolute value of the coefficients of this matrix. We use the notation $L^*: (\mathbb{R}^m)^* \longrightarrow (\mathbb{R}^d)^*$ for the dual linear map between the dual spaces. For a set $U \subset \mathbb{R}^d$ we use $U^{0}$ to denote the annihilator of $U$, i.e. the set of all $f$ in the dual space $(\mathbb{R}^d)^*$ for which $f|_U \equiv 0$.

We let $\partial(X)$ denote the topological boundary of a set $X \subset \mathbb{R}^d$. Given $S \subset \mathbb{R}$ and $\lambda \in \mathbb{R}$, we let $\lambda S: = \{ x \in \mathbb{R}: \exists s \in S \, \text{for which} \, \lambda s = x\}.$ If $A$ and $B$ are two sets with $A\subseteq B$, we let $1_A:B\longrightarrow \{0,1\}$ denote the indicator function of $A$. The relevant set $B$ will usually be obvious from context. The notation for logarithms, $\log$, will always denote the natural log. For $\theta \in \mathbb{R}$ we also adopt the standard shorthand $e(\theta)$ to mean $e^{2\pi i \theta}$.

In Section \ref{section General proof of the real variable von Neumann Theorem}, if $\mathbf{x} \in \mathbb{R}^d$ and if $a$ and $b$ are two subscripts with $1\leqslant a\leqslant b\leqslant d$, we use the notation $\mathbf{x_a^b}$ to denote the vector $(x_a,x_{a+1},\dots,x_{b})^T \in \mathbb{R}^{b-a+1}$.

\subsection{The main corollary}
We will now begin the process of developing our first main result, namely Corollary \ref{much easier to state}. This result is the first to link diophantine inequalities, such as (\ref{the general inequality}), to \emph{Gowers norms}. 

Given natural numbers $N$ and $d$, and a function $f:[N] \longrightarrow \mathbb{C}$, the \emph{Gowers $U^d$ norm} $\Vert f\Vert_{U^{d}[N]}$ was introduced into the literature around twenty years ago, as part\footnote{Gowers worked over the cyclic group $\mathbb{Z}/N\mathbb{Z}$ rather than $[N]$, but this is a very minor difference.} of Gowers' proof of Szemerédi's Theorem \cite{Go01}. The $U^d$ norms are genuine norms for $d \geqslant 2$, with $\Vert f \Vert_{U^d[N]}$ measuring the density of certain linear patterns weighted by $f$. Their presence in analytic number theory is by now well established (see for instance \cite{GT08, GT10, TT18, TT19}), but, to help any readers who are unfamiliar with these norms, in Appendix \ref{sec.Gowers norms} we have given a summary of the necessary definitions and basic notions. \\

Our present endeavour is motivated by one particular application of Gowers norms, namely the so-called `Generalised von Neumann Theorem' developed by Green and Tao in \cite{GT08, GT10} to study linear equations with rational coefficients. 

\begin{Theorem}[Generalised von Neumann Theorem for rational forms (non-quantitative)]
\label{Linear equations in bounded functions non quantitative}
Let $m,d$ be natural numbers, satisfying $d\geqslant m+2$. Let $L$ be an $m$-by-$d$ real matrix with integer coefficients, with rank $m$. Suppose that there does not exist any non-zero row-vector in the row-space of $L$ that has two or fewer non-zero coordinates. Then there is some natural number $s$ at most $d-2$ that satisfies the following. Let $N$ be an integer parameter, let $f_1,\dots,f_d:[N]\longrightarrow [-1,1]$ be arbitrary functions, and suppose that \[ \min_j \Vert f_j\Vert_{U^{s+1}[N]} \leqslant \rho \] for some parameter $\rho$ in the range $0<\rho \leqslant 1$. Then $$\frac{1}{N^{d-m}}\sum\limits_{\substack{\mathbf{n}\in [N]^d\\ L\mathbf{n} = \mathbf{0}}} \prod\limits_{j=1}^d f_j(n_j) \ll_L  \rho^{\Omega(1)} + o_\rho(1)$$ as $N\rightarrow \infty$.
\end{Theorem}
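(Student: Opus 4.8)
The plan is to pass from the homogeneous equation $L\mathbf{n}=\mathbf{0}$ to a parametrisation by integer linear forms, and then to run the Cauchy--Schwarz iteration that underlies the generalised von Neumann theorem of Green and Tao. Since $L$ has integer entries and rank $m$, the group $\ker L\cap\mathbb{Z}^d$ is free of rank $t:=d-m$, and $d\geqslant m+2$ ensures $t\geqslant 2$; listing a basis as the columns of a $d$-by-$t$ integer matrix $\Psi$ produces integer linear forms $\psi_1,\dots,\psi_d\colon\mathbb{Z}^t\to\mathbb{Z}$ (the rows of $\Psi$) with the property that $\mathbf{n}=\Psi\mathbf{x}$ runs over $\ker L\cap\mathbb{Z}^d$ as $\mathbf{x}$ runs over $\mathbb{Z}^t$. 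A linear relation $\sum_j c_j\psi_j=0$ holds precisely when $\mathbf{c}\in(\ker L)^\perp$, i.e.\ when $\mathbf{c}$ lies in the row space of $L$; hence the hypothesis that no non-zero row-vector of $L$ has two or fewer non-zero coordinates is exactly the statement that $\psi_1,\dots,\psi_d$ are non-zero and pairwise non-proportional. Placing each $\psi_j$ with $j\neq i$ in its own class of a partition then shows the system has finite Cauchy--Schwarz complexity at most $d-2$; let $s$ denote this value. The constraint $\mathbf{n}\in[N]^d$ becomes $\mathbf{x}\in\Omega_N:=\{\mathbf{x}\in\mathbb{R}^t:\psi_j(\mathbf{x})\in[1,N]\text{ for all }j\}$, a convex polytope contained in $[-CN,CN]^t$ with $C=O_L(1)$; I would approximate $\Omega_N$ by genuine boxes up to $O_L(N^{t-1})=O_L(N^{d-m-1})$ lattice points and then extend each $f_j$ by zero outside $[N]$ so as to work inside $(\Z{N'})^t$ for a prime $N'\asymp_L N$. (This zero-extension is why the relevant Gowers norm is $\Vert f_j\Vert_{U^{s+1}[N]}$ rather than a cyclic-group norm, and the approximation of $\Omega_N$ contributes part of the $o_\rho(1)$ term.)

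For the main estimate, relabel so that $\Vert f_d\Vert_{U^{s+1}[N]}\leqslant\rho$ --- legitimate since the complexity hypothesis is symmetric in the forms. One then applies the Cauchy--Schwarz inequality $s+1$ times to the average $\E_{\mathbf{x}}\prod_{j=1}^d f_j(\psi_j(\mathbf{x}))$; at the $k$-th step one splits $\mathbf{x}=(\mathbf{y}_k,\mathbf{z}_k)$ so that all forms in the $k$-th class of the partition depend only on $\mathbf{y}_k$, applies Cauchy--Schwarz in $\mathbf{y}_k$ (pulling those forms out, bounded by $1$), and thereby ``doubles'' the $\mathbf{z}_k$-variable, introducing one new difference parameter along which $f_d$ is shifted. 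The essential input is that $\psi_d$ does \emph{not} lie in the affine-linear span of the $k$-th class, so $\psi_d$ genuinely depends on $\mathbf{z}_k$ and the new difference parameter really does move the argument of $f_d$. After all $s+1$ steps, every $f_j$ with $j\neq d$ has been eliminated, and one is left bounding the original average by a power of an $(s+1)$-fold difference average of $f_d$ whose base point and $s+1$ difference parameters each range over intervals of length $\asymp_L N$; discarding the remaining cutoff factors by the Gowers--Cauchy--Schwarz inequality and comparing with the defining average of $\Vert\,\cdot\,\Vert_{U^{s+1}[N]}$ then yields a bound of the form $\Vert f_d\Vert_{U^{s+1}[N]}^{c}\leqslant\rho^{c}$ with $c=c(s)>0$, up to the errors already collected. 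Summing these contributions gives $\rho^{\Omega(1)}+o_\rho(1)$.

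The main obstacle I anticipate is the bookkeeping of the Cauchy--Schwarz iteration together with keeping all the changes of variables integral and the domains under control: one must verify that the linear substitutions decoupling the forms map the relevant sublattices onto the correct ones (or onto finite-index sublattices, which only costs constants depending on $L$), that pairwise non-proportionality really does survive the freezing of variables at each step (so that a genuine difference direction is always available), and that the cutoff and boundary terms produced because $\Omega_N$ is not a perfect box are $o(1)$ --- which follows from the boundedness of the $f_j$, but bounding these terms uniformly through the intermediate sums needs a little Fourier smoothing, e.g.\ convolving the indicator of $\Omega_N$ against a Fej\'{e}r-type kernel. Once this scaffolding is in place, the concluding comparison with the $U^{s+1}[N]$ norm is a direct invocation of its defining inequalities.
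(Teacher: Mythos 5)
Your proposal is correct and follows essentially the same route as the paper, which for this statement simply defers to Green and Tao: parametrise $\ker L\cap\mathbb{Z}^d$ by integer linear forms, note that the row-space hypothesis is exactly non-vanishing and pairwise non-proportionality of those forms (hence finite Cauchy--Schwarz complexity at most $d-2$), absorb the range constraints into the zero-extended $f_j$, and run the standard Cauchy--Schwarz iteration bounding the count by $\min_j\Vert f_j\Vert_{U^{s+1}[N]}$ up to boundary errors. This is precisely the reduction to \cite[Theorem 7.1]{GT10} for bounded functions that the paper invokes when proving Theorem \ref{Linear equations in bounded functions}; you re-derive that machinery rather than citing it, but the argument is the same.
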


Results similar to Theorem \ref{Linear equations in bounded functions non quantitative} are central to Green-Tao's approach to counting solutions to linear equations in primes \cite{GT10}. It seems reasonable to hope that, if one could combine Gowers norms and diophantine inequalities in a suitable way, then one might be able to develop a strong understanding of linear inequalities in primes. As we have already intimated in Section \ref{Section Classical results}, when describing our improvements to Parsell's results, this can be done. However, many additional technical difficulties occur for the primes, as the von Mangoldt function is unbounded; we have chosen to present a separate work on these issues \cite{Wa19}.

We should briefly discuss the non-degeneracy condition on $L$ in the statement of Theorem \ref{Linear equations in bounded functions non quantitative}, namely that `there does not exist any non-zero row-vector in the row-space of $L$ that has two or fewer non-zero coordinates', as it may seem a little unnatural at first sight.\footnote{For readers who are already familiar with the notion of Cauchy-Schwarz complexity, imposing this non-degeneracy condition on $L$ is equivalent to insisting that $\ker L$ may be parametrised by a system of linear forms with finite Cauchy-Schwarz complexity.} Suppose that such a row-vector existed. Suppose also that it is the coordinates at index $i$ and index $j$ which are non-zero. Then, by short linear algebra argument (see Proposition \ref{Proposition easy degeneracy relation}), for any linear parametrisation $(\psi_1,\dots, \psi_d) = \Psi: \mathbb{R}^{d-m} \longrightarrow \ker L$,  $\psi_i$ is a multiple of $\psi_j$. Such a coupling between the coordinates has dire consequences for any averaging approach built upon the independence of the different coordinates, such as the averaging in Gowers norms, and so this coupling must be precluded. We will present a rigorous version of this principle in the context of linear inequalities, in Theorem \ref{Converse to main theorem} below.

Regarding the condition $d \geqslant m+2$, if $L$ has rank $m$ and $d \leqslant m+1$ then in fact, as follows Gaussian elimination, there must always exist a non-zero vector in the row space of $L$ with two or fewer non-zero coordinates. Thus, the condition $d \geqslant m+2$ is a necessary one if the coordinates $\ker L$ are to be suitably independent.

\begin{Remark}
\emph{Theorem \ref{Linear equations in bounded functions non quantitative} is implicit in \cite{GT10}, but there is no explicit such statement presented there, as those authors were focused on results over the primes. We will describe how to extract Theorem \ref{Linear equations in bounded functions non quantitative} from the arguments of \cite{GT10}, but we postpone this task until Section \ref{section Reductions}, as at that point we will also introduce a quantitative version (this will be Theorem \ref{Linear equations in bounded functions}).}
\end{Remark}

Our first main result is a version of Theorem \ref{Linear equations in bounded functions non quantitative} for diophantine inequalities. 

\begin{Corollary}[Gowers norms control diophantine inequalities (non-quantitative)]
\label{much easier to state}
Let $m,d$ be natural numbers, satisfying $d\geqslant m+2$, and let $\varepsilon$ be a positive parameter. Let $L:\mathbb{R}^d \longrightarrow \mathbb{R}^m$ be an $m$-by-$d$ real matrix, with rank $m$. Suppose that there does not exist any non-zero row-vector in the row-space of $L$ that has two or fewer non-zero coordinates. Then there is some natural number $s$ at most $d-2$, independent of $\varepsilon$, such that the following is true. Let $N$ be an integer parameter and let $f_1,\dots,f_d:[N]\longrightarrow [-1,1]$ be arbitrary functions, and suppose that \[ \min_j \Vert f_j \Vert_{U^{s+1}[N]} \leqslant \rho,\] for some parameter $\rho$ in the range $0<\rho \leqslant 1$. Then \[ \Big\vert \frac{1}{N^{d-m}}\sum\limits_{\substack{\mathbf{n} \in [N]^d \\ \Vert L \mathbf{n} \Vert_\infty \leqslant \varepsilon}} \Big( \prod\limits_{j=1}^d f_j(n_j) \Big)\Big\vert \ll_{L,\varepsilon} \rho^{\Omega(1)} + o_{\rho,L}(1)\] as $N\rightarrow \infty$.
\end{Corollary}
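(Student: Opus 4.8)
The plan is to reduce the weighted inequality count to the weighted \emph{equation} count governed by Theorem~\ref{Linear equations in bounded functions} (the quantitative form of Theorem~\ref{Linear equations in bounded functions non quantitative}), by a smoothing and circle-method argument organised around the normal form algorithm of Proposition~\ref{normal form algorithm}. First I would replace the sharp cutoff $1_{\Vert L\mathbf{n}\Vert_\infty\leqslant\varepsilon}$ by extremal majorants and minorants $\psi_-\leqslant 1_{[-\varepsilon,\varepsilon]^m}\leqslant\psi_+$ of Beurling--Selberg/Vaaler type, dilated to scale $\varepsilon$ and with Fourier transforms supported in a box of side $R=R(N)$ that grows slowly with $N$. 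Since $\vert\prod_j f_j\vert\leqslant 1$, the error incurred in replacing $1_{\Vert L\mathbf{n}\Vert_\infty\leqslant\varepsilon}$ by $\psi_\pm(L\mathbf{n})$ is at most the \emph{unweighted} count $\sum_{\mathbf{n}\in[N]^d}(\psi_+-\psi_-)(L\mathbf{n})$ over a thin boundary region, which for fixed $L$ is $o_L(N^{d-m})$ as $N\to\infty$ by an equidistribution estimate for the points $L\mathbf{n}$, and is therefore absorbed into the $o_{\rho,L}(1)$ term. It thus suffices to bound $N^{-(d-m)}\sum_{\mathbf{n}\in[N]^d}\psi(L\mathbf{n})\prod_j f_j(n_j)$ for a fixed smooth $\psi$ with compactly supported Fourier transform.

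Expanding $\psi(L\mathbf{n})=\int\widehat{\psi}(\boldsymbol{\xi})\,e(\boldsymbol{\xi}\cdot L\mathbf{n})\,d\boldsymbol{\xi}$ turns this into an integral of $\widehat{\psi}$ against the product $\prod_{j=1}^{d}\bigl(\sum_{n\in[N]}f_j(n)\,e(\beta_j(\boldsymbol{\xi})n)\bigr)$, where $\beta_j(\boldsymbol{\xi})$ is the pairing of $\boldsymbol{\xi}$ with the $j$-th column of $L$. Here I would invoke the normal form algorithm to pass to coordinates in which $L$ splits into a \emph{rational block} $L_0$ --- an integer matrix of some rank $m_0\leqslant m$ with $d\geqslant m_0+2$, which inherits from $L$ the property that its row space contains no non-zero vector with two or fewer non-zero coordinates --- together with a \emph{totally irrational block}. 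Correspondingly one splits the $\boldsymbol{\xi}$-integral into a major-arc part, where the rational directions of $\boldsymbol{\xi}$ are pinned near rationals of bounded denominator (so that the constraint $\Vert L\mathbf{n}\Vert_\infty\leqslant\varepsilon$ effectively degenerates to a finite union of integer conditions $L_0\mathbf{n}=\mathbf{k}$), and a complementary minor-arc part.

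On the major arcs the inner sum becomes, on each fibre $\{L_0\mathbf{n}=\mathbf{k}\}$, a weighted equation count for the integer system $L_0$ in which the weights $f_j$ are multiplied by linear phases $n\mapsto e(\theta_j n)$ arising from the irrational directions (the affine fibres reducing to the homogeneous system $L_0\mathbf{n}=\mathbf{0}$ after translation). Because $s\geqslant 1$, the Gowers norms $\Vert\cdot\Vert_{U^{s+1}[N]}$ are invariant under multiplication by linear phases, so the hypothesis $\min_j\Vert f_j\Vert_{U^{s+1}[N]}\leqslant\rho$ is unaffected by this twisting, and Theorem~\ref{Linear equations in bounded functions} --- applicable because $L_0$ is non-degenerate --- bounds these inner counts by $N^{d-m_0}(\rho^{\Omega(1)}+o_{\rho,L}(1))$, \emph{uniformly} in $\mathbf{k}$ and in the phases; after renormalising by the small major-arc measure this yields the desired saving. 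On the minor arcs one instead uses the oscillation of the irrational frequencies: an $L^2$/van der Corput estimate --- relying on the fact that for \emph{any} bounded function $f$ the level set $\{\theta:\vert\sum_{n\in[N]}f(n)e(\theta n)\vert\geqslant\delta N\}$ has Lebesgue measure $\ll\delta^{-2}/N$, supplemented by a further Cauchy--Schwarz of Green--Tao type to bring in $\Vert f_{j_0}\Vert_{U^{s+1}[N]}$ in the ranges where pointwise Fourier bounds do not suffice --- shows that this contribution is negligible. Assembling the two ranges and optimising $R$, $\delta$ and the arc cutoffs against $\rho$ and $N$ gives the stated bound.

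I expect the main difficulty to be the minor-arc analysis together with the quantitative bookkeeping of the normal-form decomposition: one must ensure simultaneously that the rational block $L_0$ inherits the non-degeneracy hypothesis and has Cauchy--Schwarz complexity bounded by an $s\leqslant d-2$ that is independent of $\varepsilon$, that the irrational block is sufficiently generic for the oscillatory estimates to succeed against arbitrary bounded weights, and that all the error terms collapse to the clean shape $\rho^{\Omega(1)}+o_{\rho,L}(1)$ with the exponent $\Omega(1)$ and its implied constant depending only on $L$ and $\varepsilon$.
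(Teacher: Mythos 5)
Your strategy is genuinely different from the paper's. The paper deduces Corollary \ref{much easier to state} in one line from Theorem \ref{Main Theorem chapter 3}, which is proved with no arc dichotomy at all: one parametrises the kernel of the real system, transfers the sum to an integral over $\mathbb{R}^h$ (section \ref{section transfer}), puts the parametrisation into normal form, and applies a Gowers--Cauchy--Schwarz argument to the whole multilinear average at once (Propositions \ref{Proposition separating out the kernel} and \ref{Proposition Cauchy}). You instead propose a circle-method proof: smooth the cutoff, Fourier-expand, and split into major and minor arcs. This route has a gap that I do not believe can be repaired within the framework you describe.

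The gap is the minor arcs. After Fourier expansion you must bound $\int\widehat{\psi}(\boldsymbol{\xi})\prod_{j=1}^{d}S_j(\beta_j(\boldsymbol{\xi}))\,d\boldsymbol{\xi}$, saving a factor $N^{m+\Omega(1)}$ over the trivial bound $N^{d}$, using only (i) a sup bound on the single sum $S_{j_0}$ whose weight has small Gowers norm, and (ii) mean-value/$L^2$ estimates for the remaining $d-1$ \emph{arbitrary} bounded weights over an $m$-dimensional frequency domain. This bookkeeping closes exactly when $d\geqslant 2m+1$ --- which is why the Davenport--Heilbronn/Freeman/Parsell results recalled in section \ref{section history} need that many variables --- so your argument does prove the case $m=1$, but in the corollary's minimal case $d=m+2$ with $m\geqslant 2$ (e.g.\ the $2$-by-$4$ system of Example \ref{four term irrational AP}) you are short by a full power of $N$, and the level-set bound $\vert\{\theta:\vert S_j(\theta)\vert\geqslant\delta N\}\vert\ll\delta^{-2}N^{-1}$ cannot recover it, since the other $f_j$ may concentrate their Fourier mass precisely where $\widehat{\psi}$ is large. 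The ``further Cauchy--Schwarz of Green--Tao type'' you invoke is therefore not a supplement but the entire theorem, and it cannot be run on the individual exponential sums $S_j$: it must be applied to the un-expanded count, which forces one to parametrise the solution set of the \emph{real} system, control a quantitative normal form for that parametrisation, and manage the sharp cutoffs --- precisely the content of Theorem \ref{Theorem rational set out version} and sections \ref{section transfer}--\ref{section General proof of the real variable von Neumann Theorem}. Two further problems: Proposition \ref{normal form algorithm} does not produce a rational/irrational block decomposition of $L$ (that is the rational-dimension machinery of Lemma \ref{Lemma generating a purely irrational map}; when $L$ is purely irrational there is no rational block, your major arcs never degenerate to integer systems, and Theorem \ref{Linear equations in bounded functions} is never applicable); and the boundary estimate $\sum_{\mathbf{n}}(\psi_+-\psi_-)(L\mathbf{n})=o_L(N^{d-m})$ requires the equidistribution input quantified by the approximation function $A_L$ and is delicate for $L$ with rational relations, which is exactly the issue discussed after Lemma \ref{Lemma upper bound involving integral} and the reason the paper performs the dimension reduction of section \ref{section Reductions}.
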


\noindent  We can provide detailed information about how the implied constant and the $o_{\rho,L}(1)$ term depend on $L$, but we leave that to the next section and to Theorem \ref{Main Theorem chapter 3}. \\

Before giving some examples, let us make a few remarks about Corollary \ref{much easier to state}. Firstly, note that if $L$ has integer coefficients then, by picking $\varepsilon$ small enough, Corollary \ref{much easier to state} immediately implies Theorem \ref{Linear equations in bounded functions non quantitative}, sine the inequality $\Vert L\mathbf{n}\Vert_\infty \leqslant \varepsilon$ is only satisfied if $L\mathbf{n} = \mathbf{0}$. 

Next, due to the nested property of Gowers norms (see Appendix \ref{sec.Gowers norms}), one sees that Corollary \ref{much easier to state} may be fruitfully applied under the hypothesis $\min_j\Vert f_j\Vert_{U^{d-1}[N]} \leqslant \rho$.

Finally, we note that it might be tempting to think that Corollary \ref{much easier to state} would follow easily from taking rational approximations of the coefficients of $L$ and then using Theorem \ref{Linear equations in bounded functions non quantitative} as a black box. Though of course we cannot completely rule out an alternative approach to that of this paper, when one investigates the quantitative details it seems that such an argument will only quickly succeed if the coefficients of $L$ are all extremely well-approximable by rationals, else the height of the rational approximations becomes too great to apply results like Theorem \ref{Linear equations in bounded functions non quantitative}. We will need to employ a different strategy, and we discuss this at length in Section \ref{Section proof strategy}. \\

\subsection{Fourier uniform sets and other examples}
Let us illustrate the applications of Corollary \ref{much easier to state} with certain explicit examples.

\begin{Example}[Three-term irrational AP]
\label{example three term irrational AP}
\emph{The first example could have been analysed by Davenport and Heilbronn using the methods of \cite{DaHe46}, but we include it here to demonstrate the simplest case in which Corollary \ref{much easier to state} applies. }

\emph{Let $$L := \Big(\begin{matrix} 1 & -\sqrt{2}  & -1 + \sqrt {2}\end{matrix}\Big).$$ Then $m=1$ and $d=3$, and manifestly there does not exist any non-zero row-vector in the row-space of $L$ that has two or fewer non-zero coordinates. Therefore Corollary \ref{much easier to state} applies, and so, if for each $N$ we have three functions $f_1,f_2,f_3:[N] \longrightarrow [-1,1]$ satisfying $\min_j \Vert f_j\Vert_{U^2[N]} \leqslant \rho$ for some $\rho$ in the range $0 < \rho \leqslant 1$, then we have
\begin{equation}
\label{three term Ap eq explicit}
\Big\vert\frac{1}{N^2}\sum\limits_{\substack{n_1,n_2,n_3\leqslant N\\ \vert n_1 - \sqrt{2}n_2 + (-1+\sqrt{2})n_3\vert\leqslant \varepsilon} }f_1(n_1)f_2(n_2)f_3(n_3)\Big\vert \ll_\varepsilon \rho^{\Omega(1)} +o_{\rho}(1)
\end{equation}}
\noindent \emph{as $N\rightarrow \infty$.} 

\emph{The statement (\ref{three term Ap eq explicit}) admits a different interpretation, which some readers may find more natural, that of counting the number of occurrences of a certain irrational pattern: a `three-term irrational arithmetic progression'. Indeed, recall that for $\theta \in \mathbb{R}$ we let $[\theta]$ denote $\lfloor \theta +\frac{1}{2}\rfloor$, i.e. the nearest integer to $\theta$. Then for any three functions $f_1,f_2,f_3:[N]\longrightarrow [-1,1]$, we make the definition
\begin{equation}
\label{reparametrised three term AP}
T(f_1,f_2,f_3):=\frac{1}{N^2}\sum\limits_{x,d\in \mathbb{Z}} f_3(x)f_2(x+d)f_1([x+\sqrt{2} d]).
\end{equation} Informally speaking, $T$ counts the number of near-occurrences of the pattern $(x,x+d,x+\sqrt{2}d)$, weighted by the functions $f_j$. By performing the change of variables $n_1 = [ x+\sqrt 2 d]$, $n_2 = x+d$, $n_3 = x$, and noting that $x+\sqrt{2}d\notin \frac{1}{2}\mathbb{Z}$, we see that}
\begin{equation}
\label{relating T}
T(f_1,f_2,f_3)=\frac{1}{N^2}\sum\limits_{\substack{n_1,n_2,n_3\leqslant N\\ \vert n_1 - \sqrt{2}n_2 + (-1+\sqrt{2})n_3\vert\leqslant \frac{1}{2}} }f_1(n_1)f_2(n_2)f_3(n_3).
\end{equation} \emph{By (\ref{three term Ap eq explicit}), this means that if $\min_j\Vert f_j\Vert_{U^2[N]}\leqslant \rho$ then
\begin{equation}
\label{bound on T}
\vert T(f_1,f_2,f_3)\vert \ll \rho^{\Omega(1)} + o_{\rho}(1)
\end{equation} as $N\rightarrow \infty$.} \\

\emph{Now suppose that $A_N$ is a subset of $[N]$ with $\vert A_N \vert = \alpha_N N$. Let }
\begin{equation*}
f_{A_N}:=1_{A_N} - \alpha_N 1_{[N]}
\end{equation*}
\noindent \emph{be its so-called `balanced function'. By the usual telescoping trick, $T(1_{A_N},1_{A_N},1_{A_N})$ is equal to
\begin{align*}
T(\alpha_N 1_{[N]},\alpha_N 1_{[N]},\alpha_N 1_{[N]})+ 
T(f_{A_N},\alpha_N 1_{[N]},\alpha_N 1_{[N]})+T(1_{A_N},f_{A_N},\alpha_N 1_{[N]})\nonumber \\ +T(1_{A_N},1_{A_N},f_{A_N}).
\end{align*}
\noindent One may then bound the final three terms using $\Vert f_{A_N}\Vert_{U^2[N]}$ and, from the relation (\ref{relating T}), one has then established that, provided $\Vert f_{A_N}\Vert_{U^2[N]}\leqslant \rho$,
\begin{equation*}
\frac{1}{N^2}\sum\limits_{x,d\in \mathbb{Z}} 1_{A_N}(x)1_{A_N}(x+d)1_{A_N}([ x+\sqrt{2} d])
\end{equation*}
\noindent is equal to
\begin{equation}
\label{counting three term APS} \frac{\alpha_N^3}{N^2}\sum\limits_{x,d\in \mathbb{Z}} 1_{[N]}(x)1_{[N]}(x+d)1_{[N]}([ x+\sqrt{2} d]) +O(\rho^{\Omega(1)}) + o_{\rho}(1)
\end{equation}
as $N\rightarrow \infty$. If $\Vert f_{A_N}\Vert_{U^2[N]} = o(1)$ as $N\rightarrow \infty$ then, by picking $\rho = \rho(N)$ to be a quantity that tends to zero suitably slowly, one can ensure that the error term in (\ref{counting three term APS}) is $o(1)$ as $N\rightarrow \infty$.}

\emph{As is familiar from \cite{Go01}, for bounded functions the $U^2$-norm is closely related to the Fourier transform. Indeed, we say that the family of sets $A_N$ is Fourier-uniform if the balanced functions $f_{A_N}$ satisfy \[ \sup\limits_{\theta\in[0,1]}\Big\vert \frac{1}{N}\sum\limits_{n\leqslant N} f_{A_N}(n) e(n\theta)\Big\vert = o(1)\] as $N\rightarrow \infty$, and it is a standard result (see \cite[Exercise 1.3.18]{Ta12}) that $A_N$ is Fourier uniform if and only if $\Vert f_{A_N}\Vert_{U^2[N]} = o(1)$ as $N\rightarrow \infty$. Therefore expression (\ref{counting three term APS}), and the remarks following it, imply the following corollary.}
\begin{Corollary}[Fourier-uniform sets]
\label{Corollary for Fourier uniform sets}
Let $\beta \in \mathbb{R} \setminus \mathbb{Q}$, and for each natural number $N$ let $A_N$ be a subset of $[N]$ with $\vert A_N\vert = \alpha_N N$. Suppose that $A_N$ is a Fourier-uniform family of sets.  Then \[\frac{1}{N^2}\sum\limits_{x,d\in \mathbb{Z}} 1_{A_N}(x)1_{A_N}(x+d)1_{A_N}([ x+\beta d])\] is equal to \[\frac{\alpha_N^3}{N^2}\sum\limits_{x,d\in \mathbb{Z}} 1_{[N]}(x)1_{[N]}(x+d)1_{[N]}([ x+\beta d])+o_\beta(1)\] as $N\rightarrow \infty$, where the $o_{\beta}(1)$ term also depends on the $o(1)$ term in the Fourier-uniformity expression for the family $A_N$. 
\end{Corollary}
\end{Example}

\begin{Example}
\label{four term irrational AP}
\emph{Let 
\begin{equation}
\label{the equation giving an example of L}
L := \left(\begin{matrix}
1 & 0 & -\sqrt{2} & -1 + \sqrt{2} \\
0 & 1 & -\sqrt{3} & -1 + \sqrt{3}
\end{matrix} \right).
\end{equation} Since $\sqrt{2}$ and $\sqrt{3}$ are distinct irrationals it is not hard to see that all elements of the row-space of $L$ must have three or four non-zero coordinates, and so Corollary \ref{much easier to state} applies. Letting $f_1,f_2,f_3,f_4:[N] \longrightarrow [-1,1]$ be arbitrary functions, the reparametrisation $n_1 = [x + \sqrt{2} d]$, $n_2 = [x + \sqrt{3} d]$, $n_3 = x + d$, $n_4 = x$, shows that} \[\frac{1}{N^2}\sum\limits_{\substack{\mathbf{n} \in [N]^4 \\ \Vert L \mathbf{n} \Vert_\infty \leqslant \frac{1}{2}}} \Big( \prod\limits_{j=1}^4 f_j(n_j) \Big) = \frac{1}{N^2}\sum\limits_{x,d\in \mathbb{Z}} f_4(x) f_3(x+d) f_1([x+\sqrt{2}d]) f_2([ x+\sqrt{3}d]).\]
\emph{Corollary \ref{much easier to state} controls the left-hand side of this expression in terms of the Gowers norms of the functions $f_j$, and so the right-hand side is controlled as well.}
\end{Example}

We can generalise the previous two examples as follows. 

\begin{Corollary}
\label{Corollary irrational szem}
Let $\theta_1,\dots,\theta _s \in \mathbb{R}$ be distinct irrational numbers. For each natural number $N$ let $A_N$ be a subset of $[N]$, with $\vert A_N\vert = \alpha_N N$ and with balanced function $f_{A_N}$. Suppose that $\Vert f_{A_N}\Vert_{U^{s+1}[N]} = o(1)$ as $N\rightarrow \infty$. Then 
\begin{equation}
\label{particular patterns}
\frac{1}{N^2} \sum\limits_{x,d \in \mathbb{Z}} 1_{A_N}(x) 1_{A_N}(x+d)\Big (\prod\limits_{i=1}^s 1_{A_N}([x + \theta_i d]) \Big )\end{equation}\noindent is equal to
\begin{equation*}
\frac{\alpha_N^{s+2}}{N^2} \sum\limits_{x,d \in \mathbb{Z}} 1_{[N]}(x) 1_{[N]}(x+d)\Big (\prod\limits_{i=1}^s 1_{[N]}([x + \theta_i d]) \Big ) + o(1)
\end{equation*}
\noindent as $N\rightarrow \infty$, where the $o(1)$ error term may depend on $\theta_1,\dots,\theta_s$ and on the rate of decay of $\Vert f_{A_N}\Vert_{U^{s+1}[N]}$. 
\end{Corollary}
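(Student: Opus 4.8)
The plan is to deduce Corollary \ref{Corollary irrational szem} from Corollary \ref{much easier to state} by the reparametrisation-and-telescoping strategy already carried out, for $s=1$ and $s=2$, in Examples \ref{example three term irrational AP} and \ref{four term irrational AP}; the two new points are that the argument must be run uniformly in $s$, and that the non-degeneracy hypothesis must be checked for a general choice of $\theta_1,\dots,\theta_s$. First I would introduce the $s$-by-$(s+2)$ real matrix $L$ whose $i$-th row is $(0,\dots,0,1,0,\dots,0,-\theta_i,-1+\theta_i)$, with the entry $1$ in the $i$-th coordinate, so that $L$ extends the matrices of those examples. Under the substitution $n_i=[x+\theta_i d]$ for $1\leqslant i\leqslant s$, $n_{s+1}=x+d$, $n_{s+2}=x$, the defining relation ``$n_i$ is the nearest integer to $x+\theta_i d$'' becomes $|n_i-\theta_i n_{s+1}+(-1+\theta_i)n_{s+2}|\leqslant\tfrac{1}{2}$, that is, $\Vert L\mathbf{n}\Vert_\infty\leqslant\tfrac{1}{2}$. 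Since each $\theta_i$ is irrational, $x+\theta_i d$ is a half-integer only when $d=0$ (in which case it equals the integer $x$), so the nearest-integer map is single-valued along the pattern and $(x,d)\mapsto\mathbf{n}$ is a bijection from the set of integer pairs $(x,d)$ with $x$, $x+d$ and all $[x+\theta_i d]$ lying in the relevant set onto $\{\mathbf{n}\in[N]^{s+2}:\Vert L\mathbf{n}\Vert_\infty\leqslant\tfrac{1}{2}\}$. As $d-m=(s+2)-s=2$, this identifies (\ref{particular patterns}) with $\tfrac{1}{N^{d-m}}\sum_{\mathbf{n}\in[N]^{s+2},\,\Vert L\mathbf{n}\Vert_\infty\leqslant 1/2}\prod_{j=1}^{s+2}1_A(n_j)$, and identifies the claimed main term with the same sum with every $1_A$ replaced by $\alpha 1_{[N]}$.

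Next I would check that $L$ satisfies the hypotheses of Corollary \ref{much easier to state}: it has rank $m:=s$, and $d:=s+2\geqslant m+2$. For the non-degeneracy condition I would use the criterion recalled in Example \ref{four term irrational AP} — since $L$ is $m$-by-$(m+2)$, it is equivalent to verify that every $s$-by-$s$ submatrix of $L$ is non-singular. Cofactor expansion along the standard-basis columns reduces the determinant of such a submatrix, up to sign, to one of: $1$ (when both of the last two columns are omitted), $-\theta_k$ or $-1+\theta_k$ for some $k$ (when exactly one of the last two columns survives), or the $2$-by-$2$ determinant $\theta_k-\theta_l$ for some $k\neq l$ (when both survive). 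The first is nonzero; $-\theta_k$ and $-1+\theta_k$ are nonzero because $\theta_k$ is irrational; and $\theta_k-\theta_l$ is nonzero because the $\theta_i$ are pairwise distinct. Hence Corollary \ref{much easier to state} applies to $L$ with $\varepsilon=\tfrac{1}{2}$, producing some natural number $s'\leqslant d-2=s$; and by the nesting of Gowers norms the hypothesis $\Vert f_A\Vert_{U^{s+1}[N]}=o(1)$ forces $\Vert f_A\Vert_{U^{s'+1}[N]}=o(1)$ as well.

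Finally I would apply the telescoping identity $\prod_{j}1_A(n_j)-\prod_j\alpha 1_{[N]}(n_j)=\sum_{i=1}^{s+2}(\prod_{j<i}1_A(n_j))\,f_A(n_i)\,(\prod_{j>i}\alpha 1_{[N]}(n_j))$ inside the sum over $\{\mathbf{n}\in[N]^{s+2}:\Vert L\mathbf{n}\Vert_\infty\leqslant 1/2\}$. This exhibits the difference between (\ref{particular patterns}) and the claimed main term as a sum of $s+2$ expressions, each of the shape $\tfrac{1}{N^{2}}\sum_{\Vert L\mathbf{n}\Vert_\infty\leqslant 1/2}\prod_j g_j(n_j)$ with every $g_j:[N]\to[-1,1]$ and with one of the $g_j$ equal to $f_A$, so that $\min_j\Vert g_j\Vert_{U^{s'+1}[N]}\leqslant\Vert f_A\Vert_{U^{s'+1}[N]}$. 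Applying Corollary \ref{much easier to state} with $\rho=\rho(N)$ tending to $0$ sufficiently slowly — possible exactly as in the remarks after (\ref{counting three term APS}), since $\Vert f_A\Vert_{U^{s'+1}[N]}=o(1)$ — each of these $s+2$ expressions is $\ll_{L}\rho^{\Omega(1)}+o_{\rho,L}(1)=o_L(1)$, and summing the $O_s(1)$ of them yields the corollary, the error depending only on $L$ and hence on $\theta_1,\dots,\theta_s$. The only genuinely new content is the determinant computation establishing the non-degeneracy of $L$, together with the bookkeeping for the nearest-integer change of variables (checking that it is a bijection and handling the case $d=0$); the telescoping and the diagonal choice of $\rho$ are routine and already appear in the examples, so I expect those two checks to be the main, though modest, obstacle.
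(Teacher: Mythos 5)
Your proposal is correct and takes essentially the same route as the paper, which proves this corollary in one line by applying Corollary \ref{much easier to state} to exactly the matrix $L = \left(\begin{matrix} I & -\boldsymbol{\theta} & -1+\boldsymbol{\theta}\end{matrix}\right)$ you construct; your write-up simply makes explicit the details the paper leaves implicit (the nearest-integer change of variables, the submatrix-determinant check of non-degeneracy, the telescoping, and the diagonal choice of $\rho$), all of which are carried out correctly and mirror Examples \ref{example three term irrational AP} and \ref{four term irrational AP}.
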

\begin{proof}
Apply Corollary \ref{much easier to state} to the $s$-by-$s+2$ matrix 
\begin{equation}
\label{general irrational L}
L = \left( \begin{matrix} I & -\boldsymbol{\theta} & -1 + \boldsymbol{\theta} \end{matrix} \right),
\end{equation} where $I$ denotes the identity matrix and $\boldsymbol{\theta}$ denotes the vector $(\theta_1,\dots,\theta_s)^T \in \mathbb{R}^{s}$. 
\end{proof}

The question remains as to whether one can use Corollary \ref{Corollary irrational szem}, perhaps in conjunction with a density increment argument such as is used in \cite{GT10a}, to deduce that there are infinitely many $s+2$-tuples of the form $(x,x+d,[x + \theta_1 d],\dots,[ x + \theta_s d ])$ inside any set of natural numbers with positive upper Banach density. More generally, one might wish to find tuples in which all the coordinates are of the form $x + p(d)$ where $p$ is a generalised polynomial of degree $1$ without a constant term. This result is already known, in fact, but as it stands the only proof uses ergodic theory methods (see \cite[Theorem B]{McC05}). We view Corollary \ref{Corollary irrational szem} as a promising first step towards a purely combinatorial proof of this result, with a chance to prove explicit bounds.\\

Corollary \ref{much easier to state} has immediate consequences for counting solutions to diophantine inequalities weighted by explicit bounded pseudorandom functions. In particular there is the following natural analogue of \cite[Proposition 9.1]{GT10} concerning the cancellation of the M\"{o}bius function, which we mentioned earlier in Corollary \ref{Corollary irrational szem mobius}.

\begin{Corollary}[M\"{o}bius orthogonality]
\label{Corollary Mobius}
Let $m,d$ be natural numbers satisfying $d\geqslant m+2$, and let $\varepsilon$ be a positive parameter. Let $L:\mathbb{R}^d \longrightarrow \mathbb{R}^m$ be an $m$-by-$d$ real matrix, with rank $m$. Suppose that there does not exist any non-zero row-vector in the row-space of $L$ that has two or fewer non-zero coordinates. Let $\mu$ denote the M\"{o}bius function and let $N$ be an integer parameter. Then, for any bounded functions $f_2,\dots,f_d:[N]\longrightarrow [-1,1]$, \[ \sum\limits_{ \substack{\mathbf{n} \in [N]^d\\ \Vert L\mathbf{n}\Vert_\infty \leqslant \varepsilon}} \mu(n_1) \Big(\prod\limits_{j=2}^d f_j(n_j)\Big)= o_{L,\varepsilon}(N^{d-m})\] as $N\rightarrow \infty$. The same is true with $\mu$ replaced by the Liouville function $\lambda$. 
\end{Corollary}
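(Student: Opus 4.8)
The plan is to deduce this directly from Corollary \ref{much easier to state}, letting $\mu$ (respectively $\lambda$) play the role of the distinguished weight $f_1$; all the arithmetic content is then absorbed into a single external input, namely the Gowers-uniformity of the M\"obius function.

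First I would recall that input: for every fixed natural number $s$ one has $\Vert \mu \Vert_{U^{s+1}[N]} = o_s(1)$, where $\mu$ is regarded as a function $[N]\to[-1,1]$. This follows from the inverse theorem for the $U^{s+1}[N]$-norm (Green--Tao--Ziegler) together with the orthogonality of $\mu$ to $s$-step nilsequences of bounded complexity (Green--Tao), by the standard argument: if $\Vert \mu \Vert_{U^{s+1}[N]}\geqslant\delta$ then the inverse theorem supplies an $s$-step nilsequence $n\mapsto F(g(n)\Gamma)$, of complexity bounded in terms of $\delta$ alone, with $\vert \frac1N\sum_{n\leqslant N}\mu(n)F(g(n)\Gamma)\vert\gg_\delta 1$, and this contradicts M\"obius--nilsequence orthogonality once $N$ is large. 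The Liouville function is Gowers-uniform of every order for the same reason (or by the reduction $\lambda(n)=\sum_{d^2\mid n}\mu(n/d^2)$ together with splitting into residue classes), so it suffices to treat $\mu$.

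With this in hand the deduction is short. Fix the natural number $s\leqslant d-2$ that Corollary \ref{much easier to state} attaches to $L$, and apply that corollary with $f_1:=\mu\vert_{[N]}$ and $f_2,\dots,f_d$ the given functions. For any fixed $\rho\in(0,1]$, since $\Vert\mu\Vert_{U^{s+1}[N]}\to 0$ there is $N_0(\rho)$ such that $\min_j\Vert f_j\Vert_{U^{s+1}[N]}\leqslant\Vert\mu\Vert_{U^{s+1}[N]}\leqslant\rho$ for all $N\geqslant N_0(\rho)$, whence Corollary \ref{much easier to state} gives
\[ \Big\vert\frac{1}{N^{d-m}}\sum_{\substack{\mathbf{n}\in[N]^d\\ \Vert L\mathbf{n}\Vert_\infty\leqslant\varepsilon}}\mu(n_1)\prod_{j=2}^{d}f_j(n_j)\Big\vert\ll_{L,\varepsilon}\rho^{\Omega(1)}+o_{\rho,L}(1) \]
for such $N$. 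Sending $N\to\infty$ removes the $o_{\rho,L}(1)$ term, so the $\limsup$ over $N$ of the left-hand side is $\ll_{L,\varepsilon}\rho^{\Omega(1)}$; sending $\rho\to 0$ then forces this $\limsup$ to vanish, which is exactly the claimed bound $o_{L,\varepsilon}(N^{d-m})$, and the identical argument handles $\lambda$.

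I expect the only genuine obstacle to be securing the input $\Vert\mu\Vert_{U^{s+1}[N]}=o(1)$ and its Liouville analogue; the remainder is the diagonalisation over $\rho$ already familiar from the deduction of \cite[Proposition 9.1]{GT10} from the rational Generalised von Neumann Theorem. As there, I would remark that the resulting error term is ineffective, reflecting the fact that the Gowers-uniformity of $\mu$ is presently known only ineffectively.
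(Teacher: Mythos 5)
Your proposal is correct and is essentially the paper's own argument: apply Corollary \ref{much easier to state} with $f_1=\mu$ (or $\lambda$), invoke the Gowers-uniformity $\Vert\mu\Vert_{U^{s+1}[N]}=o_s(1)$ and $\Vert\lambda\Vert_{U^{s+1}[N]}=o_s(1)$ from Green--Tao and Green--Tao--Ziegler, and diagonalise over $\rho$. The paper states this in one line; you have merely spelled out the (standard) limiting argument that it leaves implicit.
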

\begin{proof}
This follows immediately from Corollary \ref{much easier to state} and the deep facts (stated in \cite{GT10}, proved in \cite{GT12} and \cite{GTZ12}) that $\Vert \mu \Vert_{U^{s+1}[N]} = o_s(1)$ and $\Vert \lambda \Vert_{U^{s+1}[N]} = o_s(1)$ as $N\rightarrow \infty$.
\end{proof}

Corollary \ref{Corollary Mobius}, when applied to the matrix (\ref{general irrational L}), yields Corollary \ref{Corollary irrational szem mobius} from earlier in this introduction. It also yields cancellation in expressions such as 
\begin{equation}
\label{mobius example}\sum\limits_{\substack{\mathbf{n}\in [N]^4\\ n_1 - n_2 = n_2 - n_3\\ \vert (n_2 -n_3) - \sqrt {2}(n_3 - n_4)\vert \leqslant \frac{1}{2}}} \mu(n_1)\mu(n_2)\mu(n_3)\mu(n_4)  = o(N^2)
\end{equation} as $N\rightarrow \infty$. There are of course many such examples; we chose (\ref{mobius example}) to emphasise that one can choose configurations that combine rational and irrational relations.\\

\section{The main theorem}
\label{section history}

In our results from the previous section, the quantitative nature of the dependence of the error terms on the matrix $L$ was hidden. Our main theorem (Theorem \ref{Main Theorem chapter 3} below) addresses this point, which turns out to be surprisingly subtle. \\

To start off, let us introduce a multilinear form that will count solutions to a general version of (\ref{the general inequality}).
\begin{Definition}
\label{Definition solution count}
Let $N,m,d$ be natural numbers, and let $L:\mathbb{R}^d \longrightarrow \mathbb{R}^m$ be a linear map. Let $F:\mathbb{R}^d\longrightarrow [0,1]$ and $G:\mathbb{R}^m\longrightarrow [0,1]$ be two functions, with $F$ supported on $[-N,N]^d$ and $G$ compactly supported. Let $f_1,\dots,f_d:[N]\longrightarrow [-1,1]$ be arbitrary functions. We define 
\begin{equation}
\label{original inequality}
T_{F,G,N}^L(f_1,\dots,f_d) :=  \frac{1}{N^{d-m}}\sum\limits_{\mathbf{n}\in \mathbb{Z}^d}\Big(\prod\limits_{j=1}^d f_j(n_j)\Big)F(\mathbf{n})G(L\mathbf{n}).
\end{equation}
\end{Definition}
\noindent The normalisation factor of $N^{d-m}$ is appropriate; we will show  in Lemma \ref{Lemma crude bound on number of solutions} that $T_{F,G,N}^L(f_1,\dots,f_d) \ll_G 1$. \\

In Theorem \ref{Main Theorem chapter 3} we will bound $T_{F,G,N}^L(f_1,\dots,f_d)$ above by Gowers norms. The error term will depend on three further notions: the rational relations present in $L$; the `approximation function' $A_L$, which will measure the approximate rational relations present in $L$; and the non-degeneracy of $L$, which is related to the non-degeneracy conditions in Theorem \ref{Linear equations in bounded functions non quantitative}. These three notions will be introduced in the next three subsections, before we (finally!) state Theorem \ref{Main Theorem chapter 3} in Section \ref{The main theorem and partial converse}.\\

\subsection{Rational relations}
\label{Section Rational dimension}
Let us consider some properties of the image $L(\mathbb{Z}^d)$. It is certainly true that if $L:\mathbb{R}^d \longrightarrow \mathbb{R}^m$ is a surjective linear map then $\spn(L(\mathbb{Z}^d)) = \mathbb{R}^m$. However, $L(\mathbb{Z}^d)$ needn't be dense in $\mathbb{R}^m$, as the matrix $L$ may satisfy some rational relations. These in turn restrict $L(\mathbb{Z}^d)$ to various affine subspaces of $\mathbb{R}^m$.

This observation motivates the following definitions.

\begin{Definition}[Rational dimension, rational map, purely irrational]
\label{Definition rational space}
Let $m$ and $d$ be natural numbers, with $d\geqslant m+1$. Let $L:\mathbb{R}^d\longrightarrow \mathbb{R}^m$ be a surjective linear map. Let $u$ denote the largest integer for which there exists a surjective linear map $\Theta:\mathbb{R}^m \longrightarrow \mathbb{R}^u$ for which $\Theta L (\mathbb{Z}^d) \subseteq \mathbb{Z}^u$. We call $u$ the \emph{rational dimension} of $L$, and we call any map $\Theta$ with the above property a \emph{rational map} for $L$. We say that $L$ is \emph{purely irrational} if $u=0$. 
\end{Definition}
\noindent For example, suppose that $L:\mathbb{R}^4 \longrightarrow \mathbb{R}^2$ is the linear map represented by the matrix \[ L: = \left(\begin{matrix} 1 & 0 & -\sqrt{2} & -\sqrt{3} + 1 \\
0 & 1 & 5\sqrt{2} & 5\sqrt{3} \end{matrix} \right).\] If $\Theta:\mathbb{R}^2 \longrightarrow \mathbb{R}$ is given by the matrix \[ \Theta:= \Big(\begin{matrix} 5 & 1 \end{matrix} \Big),\] then $\Theta L (\mathbb{Z}^4) \subseteq \mathbb{Z}$, and in fact $\Theta L (\mathbb{Z}^4) = \mathbb{Z}$. So the rational dimension of $L$ is at least $1$. But the rational dimension of $L$ cannot be $2$, as if there were a surjective map $\Theta:\mathbb{R}^2 \longrightarrow \mathbb{R}^2$ such that $\Theta L (\mathbb{Z}^4) \subseteq \mathbb{Z}^2$ then $L(\mathbb{Z}^4)$ would be a subset of a $2$-dimensional lattice, which it is not. So the rational dimension of $L$ is equal to $1$.\\


Ours is certainly not the first paper on the topic of diophantine inequalities to have considered issues such as this. For example, in the previous section we remarked that M\"{u}ller came across a similar phenomenon in the work \cite{Mu05}. Given quadratic forms $Q_1, \dots, Q_r$ he found infinitely many solutions $\mathbf{x}$ to the inequalities \[\vert Q_1(\mathbf{x})\vert < \varepsilon, \dots, \vert Q_{r}(\mathbf{x})\vert  < \varepsilon,\] under the hypotheses that every quadratic form in the set \[ \{ \sum\limits_{i=1}^r \alpha_i Q_i : \alpha_1,\dots, \alpha_r\in \mathbb{R}, \, \ba \neq \mathbf{0}\}\] was irrational and had rank greater than $8r$. One can use the coefficients of the quadratic forms to translate this problem into one of trying to understand $T_{F,G,N}^L(f_1,\dots,f_d)$ for a certain linear map $L$ and for functions $f_1,\dots,f_d$ supported on the image of quadratic monomials. Then, M\"{u}ller's hypothesis that all the linear combinations of the $Q_i$ are irrational is transformed into the hypothesis that $L$ is purely irrational. In this paper we consider all $L$, not just those which are purely irrational, and this causes some added complications. \\

In our definition of rational dimension, there is some flexibility over the exact choice of map $\Theta$. The next lemma identifies an invariant.
\begin{Lemma}
\label{Lemma invariant space}
Let $m$ and $d$ be natural numbers, with $d\geqslant m+1$. Let $L:\mathbb{R}^d\longrightarrow \mathbb{R}^m$ be a surjective linear map, and let $u$ be the rational dimension of $L$. Then, if $\Theta_1,\Theta_2: \mathbb{R}^m \longrightarrow \mathbb{R}^u$ are two rational maps for $L$, $\ker \Theta_1 = \ker \Theta_2$.
\end{Lemma}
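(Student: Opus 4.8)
The plan is to package $\Theta_1$ and $\Theta_2$ into a single linear map $\Theta=(\Theta_1,\Theta_2)$, check that $\Theta$ is again a rational map for $L$ after an integral change of coordinates on its image, and then invoke the maximality built into the definition of rational dimension to pin down $\dim\im\Theta$, which forces $\ker\Theta_1=\ker\Theta_2$.

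Concretely, I would set $\Theta:=(\Theta_1,\Theta_2):\mathbb{R}^m\longrightarrow\mathbb{R}^{2u}$, $v\mapsto(\Theta_1 v,\Theta_2 v)$, and write $W:=\im\Theta\subseteq\mathbb{R}^{2u}$ and $k:=\dim W$. Since $\ker\Theta=\ker\Theta_1\cap\ker\Theta_2\subseteq\ker\Theta_1$, we have $k=m-\dim\ker\Theta\geqslant m-\dim\ker\Theta_1=m-(m-u)=u$. So it will suffice to prove $k=u$: for then $\dim(\ker\Theta_1\cap\ker\Theta_2)=m-u=\dim\ker\Theta_1=\dim\ker\Theta_2$, and since $\ker\Theta_1\cap\ker\Theta_2$ is a subspace of both $\ker\Theta_1$ and $\ker\Theta_2$ having the same dimension as each, it must equal each of them, giving $\ker\Theta_1=\ker\Theta_2$.

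To prove $k=u$ I would argue that $\Theta$ behaves like a rational map onto $W$. Because $L$ is surjective, the vectors $L\mathbf{e}_1,\dots,L\mathbf{e}_d$ span $\mathbb{R}^m$, so $\Theta L(\mathbb{Z}^d)$ contains $\Theta L\mathbf{e}_1,\dots,\Theta L\mathbf{e}_d$ and hence spans $W$ over $\mathbb{R}$; at the same time $\Theta L(\mathbb{Z}^d)\subseteq\mathbb{Z}^{2u}$, since $\Theta_i L(\mathbb{Z}^d)\subseteq\mathbb{Z}^u$ for $i=1,2$. Therefore $\Lambda:=W\cap\mathbb{Z}^{2u}$ is a subgroup of $\mathbb{Z}^{2u}$ which is discrete in $W$ and whose $\mathbb{R}$-span is all of $W$; by the structure theory of discrete subgroups of a finite-dimensional real vector space, $\Lambda$ is a lattice in $W$ of rank exactly $k$. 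Fixing a $\mathbb{Z}$-basis $v_1,\dots,v_k$ of $\Lambda$ and letting $\phi:W\longrightarrow\mathbb{R}^k$ be the linear isomorphism carrying $v_i$ to the $i$th standard basis vector of $\mathbb{R}^k$, we get $\phi(\Lambda)=\mathbb{Z}^k$, so $\phi\Theta:\mathbb{R}^m\longrightarrow\mathbb{R}^k$ is surjective with $\phi\Theta L(\mathbb{Z}^d)\subseteq\phi(\Lambda)=\mathbb{Z}^k$. Thus $\phi\Theta$ is a rational map for $L$ in the sense of Definition \ref{Definition rational space}, and maximality of $u$ forces $k\leqslant u$, hence $k=u$, completing the proof.

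The step that needs care is the lattice-theoretic claim that $\Lambda=W\cap\mathbb{Z}^{2u}$ has full rank $k$ in $W$: discreteness is immediate from $\Lambda\subseteq\mathbb{Z}^{2u}$, and fullness of rank holds because $\Lambda$ contains $\Theta L(\mathbb{Z}^d)$, which already spans $W$ over $\mathbb{R}$. Everything else is a routine dimension count together with the observation that one may always renormalise $\Theta$ by an integral isomorphism $\phi$ so that the maximality clause of Definition \ref{Definition rational space} becomes applicable; I do not anticipate any genuine obstacle beyond correctly invoking this standard lattice fact.
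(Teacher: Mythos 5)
Your proof is correct and takes essentially the same approach as the paper: both form the combined map $(\Theta_1,\Theta_2):\mathbb{R}^m\longrightarrow\mathbb{R}^{2u}$, relate the dimension of its image to the kernels, and invoke the maximality of $u$ in Definition \ref{Definition rational space} via an integral reparametrisation of that image. The only difference is cosmetic — the paper argues by contradiction (assuming distinct kernels forces image dimension $\geqslant u+1$) while you compute the image dimension directly — and your lattice-theoretic justification for the existence of the renormalising map $\phi$ supplies a detail the paper's choice of $\Phi$ leaves implicit.
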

\begin{proof}
Suppose that $\Theta_1,\Theta_2:\mathbb{R}^m \longrightarrow \mathbb{R}^u$ are two rational maps for $L$ for which $\ker \Theta_1 \neq \ker \Theta_2$. Then consider the map $(\Theta_1,\Theta_2):\mathbb{R}^m\longrightarrow \mathbb{R}^{2u}.$ The kernel of this map has dimension at most $m-u-1$, as it is the intersection of two different subspaces of dimension $m-u$. Therefore the image has dimension at least $u+1$. 

Also, $((\Theta_1,\Theta_2) \circ L ) (\mathbb{Z}^d) \subseteq \mathbb{Z}^{2u}. $ Let $\Phi$ be any surjective map from $\im ((\Theta_1,\Theta_2))$ to $\mathbb{R}^{u+1}$  for which $\Phi(\mathbb{Z}^{2u}\cap\im ((\Theta_1,\Theta_2))) \subseteq\mathbb{Z}^{u+1}$. Then $\Phi \circ (\Theta_1,\Theta_2):\mathbb{R}^m \longrightarrow \mathbb{R}^{u+1}$ is surjective and $(\Phi \circ (\Theta_1,\Theta_2) \circ L)(\mathbb{Z}^d) \subseteq \mathbb{Z}^{u+1}$. This contradicts the definition of $u$ as the rational dimension. 
\end{proof}

We will also need to identify the quantitative aspects of these rational relations, in order to properly state the main theorem. 

\begin{Definition}[Rational complexity]
Let $m$ and $d$ be natural numbers, with $d\geqslant m+1$. Let $L:\mathbb{R}^d\longrightarrow \mathbb{R}^m$ be a surjective linear map, and let $u$ denote the rational dimension of $L$. We say that $L$ has \emph{rational complexity at most $C$} if there exists a map $\Theta$ that is a rational map for $L$ and for which $\Vert \Theta\Vert_\infty \leqslant C$. If $L$ is purely irrational, then $L$ has rational complexity $0$. 
\end{Definition}
\noindent In this definition, recall that for a linear map $\Theta:\mathbb{R}^m\longrightarrow \mathbb{R}^u$ we use $\Vert \Theta\Vert_\infty$ to denote the maximum absolute value of the coefficients of its matrix with respect to the standard bases.\\

We observe that a linear map with maximal rational dimension, i.e. with rational dimension $m$, is equivalent to a linear map with integer coefficients, in the following sense. 
\begin{Lemma}[Maximal rational dimension]
\label{Lemma full rational dimension}
Let $m$ and $d$ be natural numbers, with $d\geqslant m+1$. Let $L:\mathbb{R}^d\longrightarrow \mathbb{R}^m$ be a surjective linear map, and suppose that $L$ has rational dimension $m$ and rational complexity at most $C$. Then there exists an invertible $m$-by-$m$ matrix $\Theta$ and an $m$-by-$d$ matrix $S$ with integer coefficients such that, as matrices, $\Theta L = S$. Furthermore, $\Vert \Theta\Vert_\infty \leqslant C$. 
\end{Lemma}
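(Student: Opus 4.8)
The plan is to simply unwind the definitions. By hypothesis $L$ has rational dimension $m$, so by Definition \ref{Definition rational space} there is a \emph{surjective} linear map $\Theta:\mathbb{R}^m\longrightarrow \mathbb{R}^m$ with $\Theta L(\mathbb{Z}^d)\subseteq \mathbb{Z}^m$; and since $L$ has rational complexity at most $C$, we may moreover choose this $\Theta$ so that $\Vert\Theta\Vert_\infty\leqslant C$. First I would observe that a surjective linear endomorphism of the finite-dimensional space $\mathbb{R}^m$ is automatically injective, hence invertible; this produces the required invertible matrix $\Theta$.

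Next, set $S:=\Theta L$, viewed as an $m$-by-$d$ matrix, and check that $S$ has integer entries. For this I would test against the standard basis: for each $j\leqslant d$ the vector $\mathbf{e_j}$ lies in $\mathbb{Z}^d$, so $S\mathbf{e_j}=\Theta L\mathbf{e_j}\in \mathbb{Z}^m$ by the defining property of $\Theta$. But $S\mathbf{e_j}$ is precisely the $j$-th column of the matrix $S$, so every column of $S$ is an integer vector, i.e. $S$ has integer coefficients. Combined with the identity $\Theta L = S$ and the bound $\Vert\Theta\Vert_\infty\leqslant C$, this is exactly the assertion of the lemma.

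There is essentially no obstacle here: the content of the statement lies entirely in the definitions of rational dimension and rational complexity, and what remains is a one-line bookkeeping check. The only point requiring (minimal) care is the passage from ``surjective'' to ``invertible'', which uses that source and target both have dimension $m$; this is precisely why the hypothesis ``rational dimension $m$'' (rather than some smaller $u$) is needed.
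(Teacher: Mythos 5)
Your proposal is correct and matches the paper's own (very terse) proof, which simply takes a rational map $\Theta$ for $L$ with $\Vert\Theta\Vert_\infty\leqslant C$; you have merely written out the routine checks (surjective endomorphism of $\mathbb{R}^m$ is invertible, and the columns $\Theta L\mathbf{e_j}$ lie in $\mathbb{Z}^m$) that the paper leaves implicit.
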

\begin{proof}
Let $\Theta:\mathbb{R}^m \longrightarrow \mathbb{R}^m$ be a rational map for $L$ for which $\Vert \Theta \Vert_\infty \leqslant C$.
\end{proof}
\noindent We will use this lemma in Section \ref{section Reductions}, to reduce the study of maps $L$ with maximal rational dimension to the study of maps $L$ with integer coefficients.\\

\subsection{Approximation function}
\label{section approximation function}
We must also quantify the rational relations in a second way. Indeed, $L$ might have rational dimension $u$ but be extremely close to having rational dimension at least $u+1$, in the sense that there might exist some surjective linear map $\Theta:\mathbb{R}^m\longrightarrow \mathbb{R}^{u+1}$ such that the matrix of $\Theta L$ is very close to having integer coefficients. This phenomenon, essentially a notion of diophantine approximation, will also have a quantitative effect on our final bounds. The critical place where it enters the argument is Lemma \ref{Lemma upper bound involving integral}, whose content we briefly sketch here, so as to further motivate our introduction of the `approximation function' below. 

This will be the first, of many, places in the paper in which we have to manipulate Lipschitz functions. For the reader's benefit, in Appendix \ref{Lipschitz functions} we have collected together the definitions and results that we will use. \\

Let $L$ be an $m$-by-$d$ matrix, which may depend on the asymptotic parameter $N$. Suppose that one is seeking an upper bound on $T_{F,G,N}^L(1,\dots,1)$, where $G$ is a Lipschitz function supported on $[-1,1]^m$ and $F$ is a function supported on $[-N,N]^d$. We note that this task is a special case of bounding $T_{F,G,N}^L(f_1,\dots,f_d)$ above by the Gowers norms of the functions $f_i$. In our main proof, bounds on $T_{F,G,N}^L(1,\dots,1)$ will be useful when controlling some error terms which occur when the inequality is perturbed (see Section \ref{Section proof strategy} for a full discussion of this point).

 Also suppose, for simplicity, that the first $m$ columns of $L$ form the identity matrix, and let $\mathbf{v_j} \in \mathbb{R}^m$ denote the $j^{th}$ column of $L$. Then, by summing over the variables $n_1,\dots,n_m \in \mathbb{Z}$, one quickly derives that \[ T_{F,G,N}^L(1,\dots,1) \ll \frac{1}{N^{d-m}} \sum\limits_{\substack{n_{m+1},\dots,n_d \in \mathbb{Z} \\ \vert n_{m+1}\vert, \dots, \vert n_d\vert \leqslant N}} \widetilde{G}\Big(\sum\limits_{j=m+1}^d \mathbf{v_j} n_j\Big),\] where $\widetilde{G}$ is a $\mathbb{Z}^m$-periodic Lipschitz function formed by taking translates of $G$. 

We consider $\widetilde{G}$ as a function on $\mathbb{R}^m/\mathbb{Z}^m$, and approximate it by a short exponential sum (as one may do for all such Lipschitz functions).\footnote{See Lemma \ref{Fourier transforms of Lipschitz functions on tori}.} As is familiar in these kind of problems, one is left with having to bound the expression that arises from the non-zero Fourier modes. Here, one ends up with terms\[ \frac{1}{N^{d-m}}\sum\limits_{\substack{n_{m+1},\dots,n_d \in \mathbb{Z} \\ \vert n_{m+1}\vert, \dots, \vert n_d\vert \leqslant N}} e\Big( \mathbf{k} \cdot \sum\limits_{j=m+1}^d \mathbf{v_j} n_j \Big)\] with $\mathbf{k} \in \mathbb{Z}^m \setminus \{\mathbf{0}\}$, which one may sum as geometric progressions over $n_{m+1}$ to $n_d$. This means we have to control \[ \max \limits_{\substack{ \mathbf{k} \in \mathbb{Z}^m \\ 0 < \Vert \mathbf{k} \Vert_\infty \leqslant X}} \Big(\prod\limits_{j=m+1}^d \min(1,N^{-1} \Vert \mathbf{k} \cdot \mathbf{v_j}\Vert_{\mathbb{R}/\mathbb{Z}}^{-1})\Big),\] where $X$ is some threshold, and the above expression is certainly bounded by 
\begin{equation}
\label{approx function appears}
 N^{-1}\max \limits_{ \substack{\mathbf{k} \in \mathbb{Z}^m \\ 0 < \Vert \mathbf{k}\Vert_\infty \leqslant X}} \Vert L^T \mathbf{k}\Vert_{\mathbb{R}^d/\mathbb{Z}^d}^{-1},
 \end{equation} as the first $m$ columns of $L$ have integer coordinates. One hopes to bound expression (\ref{approx function appears}) by $o(1)$ as $N\rightarrow \infty$.

We observe two facts about (\ref{approx function appears}). Firstly, if $L$ isn't purely irrational and if $X$ is larger than the rational complexity of $L$, then the expression (\ref{approx function appears}) is infinite! Secondly, even if $L$ is purely irrational then it could still be the case that (\ref{approx function appears}) tends to infinity with $N$, as $L$ may depend on $N$. We conclude that, with the state of our current argument, the size of expression (\ref{approx function appears}) -- or an expression like it -- must be included in our error terms. \\

Motivated by the above discussion, we introduce the `approximation function'. The definition is phrased in terms of dual functions -- this will make linear algebraic manipulations more straightforward later on -- and for real vectors $\varphi$ rather than for integer vectors $\mathbf{k}$, which reflects the general situation in which the first $m$ columns of $L$ are not the identity matrix. We also generalise to the case of arbitrary rational dimension $u$, rather than just $u=0$. 

 Following this definition and some remarks, we will show how to calculate the approximation function in an explicit example. This should hopefully serve to clarify the properties of this somewhat technical object. 

\begin{Definition}[Approximation function]
\label{Definition approximation function}
Let $m$ and $d$ be natural numbers, with $d\geqslant m+1$. Let $L:\mathbb{R}^d\longrightarrow \mathbb{R}^m$ be a surjective linear map, and let $u$ denote the rational dimension of $L$. Let $\Theta:\mathbb{R}^m\longrightarrow \mathbb{R}^u$ be any rational map for $L$. Suppose that $u\leqslant m-1$. Then we define the \emph{approximation function of $L$}, denoted $A_L:(0,1] \times (0,1]\longrightarrow (0,\infty)$, by \[ A_L(\tau_1,\tau_2) : = \inf\limits_{\substack{\varphi \in (\mathbb{R}^m)^*\\ \dist( \varphi, \Theta^*((\mathbb{R}^u)^*)) \geqslant \tau_1 \\  \Vert \varphi \Vert_\infty \leqslant \tau_2^{-1}} } \dist (L^* \varphi, (\mathbb{Z}^d)^T ),\] where $(\mathbb{Z}^d)^T$ denotes the set of those $\varphi \in (\mathbb{R}^d)^*$ that have integer coordinates with respect to the standard dual basis.

If $u = m$, we define\footnote{When $u=m$ we've already seen (Lemma \ref{Lemma full rational dimension}) that $L$ may be transformed into a matrix $S$ with integer coefficients, and thus $L$ is somewhat degenerate from the point of view of diophantine approximation. We define $A_L(\tau_1,\tau_2)$ for such matrices only to avoid having to discuss this special case in the statement of Theorem \ref{Main Theorem chapter 3} later on.} $A_L(\tau_1,\tau_2)$ to be identically equal to $\tau_1$. 
\end{Definition}
\noindent From our discussion of (\ref{approx function appears}) above, one sees that upper bounds on $A_L(\tau_1,\tau_2)^{-1}$ will be the main focus. The threshold $\tau_2^{-1}$ plays the role of the threshold $X$ in (\ref{approx function appears}), and the condition $\dist(\varphi, \Theta^*((\mathbb{R}^u)^*))\geqslant \tau_1$ corresponds to the condition $\Vert \mathbf{k}\Vert_\infty \geqslant 1$ which is implicit in (\ref{approx function appears}).\\

There is some notation to unpack in Definition \ref{Definition approximation function}. Regarding the notion `$\dist$', we remind the reader of some material from Section \ref{Section Notation}, namely that we consider $a$-by-$b$ matrices $M$ as elements of $\mathbb{R}^{ab}$, simply by identifying the coefficients of $M$ with coordinates in $\mathbb{R}^{ab}$. The $\ell^\infty$ norm and the $\operatorname{dist}$ operator may then be defined on matrices, i.e. if $V$ is a set of $a$-by-$b$ matrices, and $L$ is an $a$-by-$b$ matrix, then $$\operatorname{dist}(L,V): = \operatorname{inf}\limits_{L^\prime \in V} \Vert L - L^ \prime\Vert_\infty.$$ In this instance we are working with $1$-by-$d$ matrices, i.e. elements of $(\mathbb{R}^d)^*$. \\

Note that Definition \ref{Definition approximation function} is independent of the choice of $\Theta$. Indeed, by basic linear algebra $\Theta^*((\mathbb{R}^u)^*) = (\ker \Theta)^{0} $, where $(\ker \Theta)^{0}$ is the annihilator of $\ker \Theta$ (see Section \ref{Section Notation}). By Lemma \ref{Lemma invariant space}, $\ker \Theta$ is independent of the choice of $\Theta$, and therefore so is $(\ker \Theta)^0$.\\

\begin{Example}
\label{Example approx function}
\emph{Suppose that, as a matrix, 
\begin{equation*}
L := \left(\begin{matrix} 1 & -\sqrt{2}  & -1 + \sqrt {2}\end{matrix}\right)
\end{equation*} as in Example \ref{example three term irrational AP}. Then $L$ is purely irrational, i.e. $u=0$, since its coefficients are not all in rational ratio. Therefore $A_L(\tau_1,\tau_2)$ is equal to \[ \inf\limits_{\substack{k\in \mathbb{R}: \tau_1 \leqslant \vert k\vert \leqslant \tau_2^{-1} }} \max(\Vert k\Vert_{\mathbb{R}/\mathbb{Z}},\Vert -k \sqrt{2}\Vert_{\mathbb{R}/\mathbb{Z}}, \Vert -k+k\sqrt{2}\Vert_{\mathbb{R}/\mathbb{Z}}). \] As we said above, we seek an upper bound on $A_L(\tau_1,\tau_2)^{-1}$. To this end, we claim that, for this particular $L$ and for all $\tau_1,\tau_2 \in (0,1]$, one has \[ A_L(\tau_1,\tau_2) \gg \min (\tau_1,\tau_2).\]}
\emph{Indeed, we know that, for all $q \in \mathbb{N}$, $\Vert q \sqrt{2} \Vert_{\mathbb{R}/\mathbb{Z}} \geqslant 1/(10q)$. This is the statement that $\sqrt{2}$ is a badly approximable irrational. The proof is straightforward: if there were some natural number $p$ for which $\vert q\sqrt{2} - p\vert < 1/(10q)$, then \[ 1 \leqslant \vert 2q^{2} - p^2\vert < \frac{\sqrt{2}}{10} + \frac{p}{10q} < \frac{\sqrt{2}}{5} + \frac{1}{10},\] which is a contradiction.}

\emph{Suppose first that $\Vert k\Vert_{\mathbb{R}/\mathbb{Z}}\leqslant \tau_2/100$                                                                                  and $1/2 \leqslant \vert k\vert \leqslant \tau_2^{-1}$. Then, replacing $k$ by $[k]$ (the nearest integer to $k$), we can conclude that}
\begin{align*}
\max(\Vert -k \sqrt{2}\Vert_{\mathbb{R}/\mathbb{Z}}, \Vert -k+k\sqrt{2}\Vert_{\mathbb{R}/\mathbb{Z}}) &\geqslant \Vert [k] \sqrt{2}\Vert_{\mathbb{R}/\mathbb{Z}} - \frac{\tau_2}{50} \\
& \geqslant \frac{1}{10 [k]} - \frac{\tau_2}{50}  \\
& \geqslant\frac{1}{10\tau_2^{-1} + 10} - \frac{\tau_2}{50}  \\
&\gg \tau_2.
\end{align*}

\emph{Otherwise, one has \[ \Vert k\Vert_{\mathbb{R}/\mathbb{Z}} \gg \min(\tau_1,\tau_2).\] Therefore, \[A_L(\tau_1,\tau_2) \gg \min(\tau_1, \tau_2)\] as claimed.}
\end{Example}

It is not too difficult to show that if $L$ is an $m$-by-$d$ matrix with rank $m$ and with algebraic coefficients, then \begin{equation}
\label{approximation function in the algebraic case}
A_L(\tau_1,\tau_2) \gg_L \min(\tau_1,\tau_2^{O_L(1)}),
\end{equation} where the $O_L(1)$ term in the exponent depends on the algebraic degree of the coefficients\footnote{One could perhaps remove this dependence by using the Schmidt subspace theorem, though as there are power losses throughout the rest of the argument there does not seem to be a great advantage in doing so.} of $L$. We shall give a proof of this statement in Appendix \ref{section algebraic approximation}. In general, however, $A_L(\tau_1,\tau_2)$ could tend to zero arbitrarily quickly as $\tau_2$ tends to zero, for example in the case when $L = \left(\begin{matrix} 1 & -\lambda  & -1 + \lambda\end{matrix}\right)$ and $\lambda$ is a Liouville number (an irrational number that may be very well approximated by rationals).

Yet, however fast $A_L(\tau_1,\tau_2)$ decays, we have the following critical claim. 
\begin{Claim}
\label{claim approximation function is positive}
For all permissible choices of $L$, $\tau_1$ and $\tau_2$ in Definition \ref{Definition approximation function}, $A_L(\tau_1,\tau_2)$ is positive. 
\end{Claim}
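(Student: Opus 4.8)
\emph{Proof proposal.} The plan is to combine a compactness argument with a purely algebraic observation: that the covectors $\varphi \in (\mathbb{R}^m)^*$ for which $L^*\varphi$ has integer coordinates are exactly those lying in the subspace $W := \Theta^*((\mathbb{R}^u)^*)$, which (as noted after Definition \ref{Definition approximation function}) is independent of the choice of rational map $\Theta$ by Lemma \ref{Lemma invariant space}. The case $u = m$ is immediate, since then $A_L(\tau_1,\tau_2) = \tau_1 > 0$ by definition, so one assumes $u \leqslant m-1$, in which case $W$ is a \emph{proper} subspace of $(\mathbb{R}^m)^*$.

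The heart of the argument is the claim that, setting $\Lambda := \{\psi \in (\mathbb{R}^m)^* : L^*\psi \in (\mathbb{Z}^d)^T\}$, one has $\spn \Lambda = W$. The inclusion $W \subseteq \spn \Lambda$ is easy: for an integer covector $\psi \in (\mathbb{Z}^u)^T$ and $\mathbf{n} \in \mathbb{Z}^d$ one has $(L^*\Theta^*\psi)(\mathbf{n}) = \psi(\Theta L \mathbf{n}) \in \mathbb{Z}$ because $\Theta L(\mathbb{Z}^d) \subseteq \mathbb{Z}^u$, so $\Theta^*((\mathbb{Z}^u)^T) \subseteq \Lambda$, and taking real spans gives $W = \spn \Theta^*((\mathbb{Z}^u)^T) \subseteq \spn \Lambda$. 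For the reverse inclusion I would argue by contradiction: if $\spn \Lambda \supsetneq W$ then, since $\Lambda$ is a subgroup spanning $\spn \Lambda$, there is some $\psi_0 \in \Lambda \setminus W$, and the $(u+1)$-dimensional subspace $W' := W + \mathbb{R}\psi_0$ is spanned by $\Lambda \cap W'$ (which contains $\Theta^*((\mathbb{Z}^u)^T)$, spanning $W$, together with $\psi_0$). Choosing $\mathbb{R}$-linearly independent $\psi_1,\dots,\psi_{u+1} \in \Lambda \cap W'$ and letting $\Theta' : \mathbb{R}^m \longrightarrow \mathbb{R}^{u+1}$ be the linear map determined by $(\Theta')^*\mathbf{e_i^*} = \psi_i$ (surjective, because $(\Theta')^*$ is injective), one computes that the $i$-th coordinate of $\Theta' L\mathbf{n}$ is $(L^*\psi_i)(\mathbf{n}) \in \mathbb{Z}$ for every $\mathbf{n} \in \mathbb{Z}^d$, so $\Theta'$ is a rational map for $L$ of rational dimension $u+1 > u$, contradicting Definition \ref{Definition rational space}. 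Hence $\Lambda \subseteq W$.

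Granting the claim, the statement follows quickly. The set $S := \{\varphi \in (\mathbb{R}^m)^* : \dist(\varphi, W) \geqslant \tau_1,\ \Vert \varphi \Vert_\infty \leqslant \tau_2^{-1}\}$ is closed and bounded, hence compact, and it is non-empty: taking a nonzero $\ell = (\ell_1,\dots,\ell_m)$ with $\sum_i |\ell_i| = 1$ that vanishes on $W$, and setting $v_i := \operatorname{sign}(\ell_i)$, one has $\Vert v \Vert_\infty = 1$ and $\dist(v, W) \geqslant |\ell(v)| = 1$, so $\tau_1 v \in S$. For each $\varphi \in S$ we have $\dist(\varphi, W) \geqslant \tau_1 > 0$, so $\varphi \notin W$, so by the claim $L^*\varphi \notin (\mathbb{Z}^d)^T$ and therefore $\dist(L^*\varphi, (\mathbb{Z}^d)^T) > 0$. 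Since $\varphi \mapsto \dist(L^*\varphi, (\mathbb{Z}^d)^T)$ is continuous (being the composition of the linear map $L^*$ with a $1$-Lipschitz function), it attains a positive minimum on the compact set $S$, and this minimum is precisely $A_L(\tau_1,\tau_2)$. I expect the main obstacle to be the reverse inclusion $\spn \Lambda \subseteq W$: one must set up the duality carefully so that a hypothetical ``extra'' covector in $\Lambda$ is promoted to an honest rational map of strictly larger rational dimension, thereby contradicting the maximality built into Definition \ref{Definition rational space}.
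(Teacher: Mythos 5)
Your proposal is correct and follows essentially the same route as the paper: the key step in both is that a covector $\varphi\notin\Theta^*((\mathbb{R}^u)^*)$ with $L^*\varphi\in(\mathbb{Z}^d)^T$ would yield a surjective rational map to $\mathbb{R}^{u+1}$, contradicting the maximality of $u$, after which positivity follows from the infimum of a positive continuous function over a compact set. Your packaging via $\spn\Lambda=W$ (and your explicit check that the domain is non-empty) is slightly more elaborate than the paper's one-line construction of $(\Theta,\varphi)$, but it is the same argument.
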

\begin{proof}
Let $u$ be the rational dimension of $L$. Without loss of generality we may assume that $u\leqslant m-1$. Then, for all $\varphi \in (\mathbb{R}^m)^* \setminus \Theta^*((\mathbb{R}^u)^*)$ we have that $\dist (L^* \varphi, (\mathbb{Z}^d)^T )> 0$. (If this were not the case then the map $(\Theta,\varphi):\mathbb{R}^m \longrightarrow \mathbb{R}^{u+1}$ would contradict the definition of $u$.) Therefore, as the definition of $A_L(\tau_1,\tau_2)$ involves taking the infimum of a positive continuous function over a compact set, $A_L(\tau_1,\tau_2)$ is positive.
\end{proof}
\noindent The expression $A_L(\tau_1,\tau_2)^{-1}$ will appear in the error term of our main theorem; Claim \ref{claim approximation function is positive} shows that such an error term still has content. \\

\subsection{Non-degeneracy}
\label{Section approximate degeneracy}
In the statement of Theorem \ref{Linear equations in bounded functions non quantitative}, which we remind the reader was the result of Green and Tao that used Gowers norms to control the number of solutions to linear equations with integer coefficients, one recalls that there were certain linear-algebraic notions of non-degeneracy for the matrix $L$. These concerned the rank of $L$ and the properties of its row space. In the setting of diophantine inequalities it will transpire that the same notions of non-degeneracy are important -- this much was obvious from the statement of Corollary \ref{much easier to state} -- except that, in order to control the error terms when $L$ depends on $N$, one must assume that $L$ is not even `approximately' degenerate. \\

In order to make these notions precise, we will first give some names to the sets of degenerate maps that we wish to avoid. 

\begin{Definition}[Low-rank variety]
\label{Definition rank variety}
Let $m,d$ be natural numbers satisfying $d\geqslant m+1$. Let $V_{\rank}(m,d)$ denote the set of all linear maps $L:\mathbb{R}^d \longrightarrow \mathbb{R}^m$ whose rank is less than $m$. We call $V_{\rank}(m,d)$ the \emph{low-rank variety}.

 Let $V_{ \rank}^{\unif}(m,d)$ denote the set of all linear maps $L:\mathbb{R}^d \longrightarrow \mathbb{R}^m$ for which there exists a standard basis vector of $\mathbb{R}^d$, say $\mathbf{e_i}$, for which $L|_{\spn(\mathbf{e_j}:j\neq i)}$ has rank less than $m$. We call $V_{ \rank}^{\unif}(m,d)$ the \emph{uniform low-rank variety}. 
\end{Definition}

\noindent We make the trivial observation that $V_{\rank}^{\unif}(m,d)$ contains $V_{\rank}(m,d)$. For certain technical reasons it will be much more convenient to work with matrices $L \notin V_{\rank}^{\unif}(m,d)$, as opposed to merely working with matrices $L \notin V_{\rank}(m,d)$, as we will be able to fix an arbitrary coordinate and still be left with a full rank linear map.

\begin{Definition}[Dual degeneracy variety]
\label{Definition dual degeneracy variety}
Let $m,d$ be natural numbers satisfying $d\geqslant m+2$. Let $\mathbf{e_1}, \dots,\mathbf{e_d}$ denote the standard basis vectors of $\mathbb{R}^d$, and let  $\mathbf{e_1^\ast}, \dots,\mathbf{e_d^\ast}$ denote the dual basis of $(\mathbb{R}^d)^\ast$. Then let $V^*_{\degen}(m,d)$ denote the set of all linear maps $L:\mathbb{R}^d \longrightarrow \mathbb{R}^m$  for which there exist two indices $i,j \leqslant d$, and some real number $\lambda$, such that $\mathbf{e_i}^* - \lambda \mathbf{e_j}^*$ is non-zero and $(\mathbf{e_i}^* - \lambda \mathbf{e_j}^*) \in L^*((\mathbb{R}^m)^*)$. We call $V^*_{\degen}(m,d)$ the \emph{dual degeneracy variety}. 
\end{Definition}

\noindent It may be easily verified that this definition does nothing more than rephrase the condition that appeared in the statements of Corollary \ref{much easier to state} and Theorem \ref{Linear equations in bounded functions non quantitative} concerning the row-space of a degenerate map $L$, namely that there exists a non-zero row-vector in the row-space of $L$ that has two or fewer non-zero coordinates. The formulation in terms of dual spaces will be particularly convenient, however, for some of the algebraic manipulations in Section \ref{section Reductions}. This is the reason why we use the term `dual' in the name\footnote{Later on (in Definition \ref{Definition degeneracy varieties}) we will have a set  of degenerate maps $V_{\degen}(d-m,d)$ which will parametrise the kernel of maps in $V_{\degen}^*(m,d)$. Since these maps feel somewhat dual to those maps in $V_{\degen}^*(m,d)$, we will come to call $V_{\degen}(d-m,d)$ the `degeneracy variety'.} of $V_{\degen}^*(m,d)$.

Having introduced $V_{\rank}(m,d)$, $V_{\rank}^{\unif}(m,d)$ and $V_{\degen}^*(m,d)$, we can articulate the relationship between the non-degeneracy conditions in Theorem \ref{Linear equations in bounded functions non quantitative} (for linear equations given by $L$) and our non-degeneracy conditions in Theorem \ref{Main Theorem chapter 3} below (for linear inequalities given by $L$). Indeed, for equations, $L$ is non-degenerate if \begin{equation}
\label{conditions for equations}
L \notin V_{\rank}(m,d) \qquad \text{or} \qquad L \notin V_{\degen}^*(m,d).
\end{equation} For inequalities,  $L$ is non-degenerate if
\begin{equation}
\label{conditions for inequalities}
\dist(L, V_{\rank}^{\unif}(m,d)) \geqslant c \qquad \text{or} \qquad \dist(L,V_{\degen}^*(m,d)) \geqslant c^\prime,
\end{equation} for some fixed parameters $c$ and $c^\prime$. One can see immediately how the conditions for inequalities are `approximate' versions of the conditions for equations. \\

\begin{Example}
\label{Example bad behaviour}
\emph{It may be instructive to consider a matrix such as $$L = \left(\begin{matrix}
1 +N^{-1}& \sqrt{3}+N^{-\frac{1}{2}} & \pi & -\pi+\sqrt{2} \\
2 & 2\sqrt{3}+N^{-\frac{1}{2}} & -\sqrt{5} & e
\end{matrix} \right).$$ We observe that $L$ has rank 2 and $L\notin V_{\degen}^*(2,4)$. If one knew Theorem \ref{Linear equations in bounded functions non quantitative} and the conditions (\ref{conditions for equations}), then one might perhaps have hoped to apply the theory of Gowers norms to bound the number of solutions to inequalities given by $L$. However, by considering perturbations of the first two columns, we see that $\operatorname{dist}(L,V_{\degen}^*(2,4))=o(1)$ as $N\rightarrow \infty$. Indeed, one may perturb $L$ by $O(N^{-1/2})$ such that there is a vector $(0,0,x_3,x_4)$ in the row space. So, despite the fact that $L$ is non-degenerate from the point of view of equations, $L$ \emph{is} degenerate from the point of view of inequalities and the conditions (\ref{conditions for inequalities}). Thus, our main theorem on inequalities will not apply to this $L$. }

\emph{Furthermore, we have another result (Theorem \ref{Converse to main theorem} below) which shows that one cannot possibly use Gowers norms to control inequalities given by such an $L$. Therefore, whatever methods we use to prove Theorem \ref{Main Theorem chapter 3}, these methods must necessarily break down when applied to this example.}\\
\end{Example}

\subsection{The main theorem and a partial converse}
\label{The main theorem and partial converse}
Having laid the groundwork, we may now state the main theorem of this paper. 

\begin{Theorem}[Main Theorem]
\label{Main Theorem chapter 3}
Let $m,d$ be natural numbers, satisfying $d\geqslant m+2$, and let $\varepsilon , c,  C, C^\prime$ be positive reals. Let $N$ be an integer parameter and let $L = L(N):\mathbb{R}^d \longrightarrow \mathbb{R}^m$ be a surjective linear map that satisfies $\Vert L\Vert_\infty\leqslant C$. Let $A_L:(0,1]\times (0,1]\longrightarrow (0,\infty)$ be the approximation function of $L$. Suppose further that $\operatorname{dist}(L,V_{\rank}^{\unif}(m,d)) \geqslant c$, that $\operatorname{dist}(L,V^*_{\degen}(m,d))\geqslant c^\prime$, and that $L$ has rational complexity at most $C^\prime$. Then there exists a natural number $s$ at most $d-2$, independent of $\varepsilon$, such that the following is true. Let $F:\mathbb{R}^d\longrightarrow [0,1]$ be the indicator function of $[1,N]^d$, and let $G:\mathbb{R}^m\longrightarrow [0,1]$ be the indicator function of a convex domain contained in $[-\varepsilon, \varepsilon]^m$. Let $f_1,\dots,f_d:[N]\longrightarrow [-1,1]$ be arbitrary functions, and suppose that \[ \min_j \Vert f_j\Vert_{U^{s+1}[N]} \leqslant \rho,\] for some parameter $\rho$ in the range $0<\rho \leqslant 1$. Then 
\begin{equation}
\label{main equation chapter 3 theorem}
T^{L}_{F,G,N}(f_1,\dots,f_{d})\ll_{c,c^\prime,C,C^\prime,\varepsilon} \rho^{\Omega(1)} + o_{\rho,A_L,c,c^\prime,C,C^\prime}(1)
\end{equation} as $N\rightarrow \infty$.  The $o_{\rho,A_L,c,c^\prime,C,C^\prime}(1)$ term may be bounded above by \[ N^{-\Omega(1)}\rho^{-O(1)} A_L(\Omega_{c,c^\prime,C,C^{\prime}}(1),\rho)^{-1}.\] 
\end{Theorem}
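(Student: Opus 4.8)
## Proof proposal for Theorem \ref{Main Theorem chapter 3}

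The plan is to reduce the diophantine-inequality count $T^L_{F,G,N}(f_1,\dots,f_d)$ to the rational-forms setting covered by the (quantitative version of the) Generalised von Neumann Theorem \ref{Linear equations in bounded functions}, with the cost of this reduction being controlled precisely by the approximation function $A_L$ and the rational complexity. First I would separate the rational and irrational parts of $L$: pick a rational map $\Theta:\mathbb{R}^m\longrightarrow\mathbb{R}^u$ for $L$ with $\Vert\Theta\Vert_\infty\leqslant C'$, so that $\Theta L$ has integer coefficients. Writing $G$ as an indicator of a convex set in $[-\varepsilon,\varepsilon]^m$, I would perform a Fourier expansion of $G(L\mathbf{n})$ (approximating $G$ above and below by smooth bump functions, after Davenport--Heilbronn), splitting the frequency integral over $\varphi\in(\mathbb{R}^m)^*$ into a ``major-arc'' region where $\varphi$ is within $\tau_1$ of $\Theta^*((\mathbb{R}^u)^*)$ and a ``minor-arc'' region where it is not. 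On the minor arcs, $\dist(L^*\varphi,(\mathbb{Z}^d)^T)\geqslant A_L(\tau_1,\tau_2)$, which means the additive character $e(\varphi^T L\mathbf{n})$ oscillates: summing a geometric-type series in each coordinate $n_j\in[N]$ gives a saving of size $\asymp (N\,A_L(\tau_1,\tau_2))^{-1}$ per variable, contributing an acceptable $N^{-\Omega(1)}\rho^{-O(1)}A_L(\Omega_{c,C,C'}(1),\rho)^{-1}$-type error once one chooses $\tau_1$ a fixed power of $c/C/C'$ and $\tau_2\asymp\rho$, and also fixes the Fourier cutoff at height $\asymp\rho^{-O(1)}$ (this is where the $\rho^{-O(1)}$ factor and the dependence of the $o(1)$ term on $A_L$ enter). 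The analysis here is essentially Lemma \ref{Lemma upper bound involving integral} and Lemma \ref{Lemma crude bound on number of solutions}, which I would invoke rather than redo.

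Next, on the major arcs, $\varphi$ is (up to the small error $\tau_1$) of the form $\Theta^*\psi$ for $\psi\in(\mathbb{R}^u)^*$ with bounded height; replacing $\varphi$ by the exact lattice point changes the character by $O(\tau_1 N)$ per variable, which is another acceptable error. What remains is a finite sum (over bounded integer frequency vectors $\psi$) of sums of the shape $\frac1{N^{d-m}}\sum_{\mathbf{n}\in[N]^d} (\prod_j f_j(n_j))\, e(\psi^T (\Theta L)\mathbf{n})$ against a smooth weight. Here $\Theta L=S$ has integer coefficients, so this is exactly a correlation of the $f_j$ against a rational linear form (a single equation, or after Fourier expansion in the transverse directions, the full rank-$m$ system $\Theta L\mathbf{n}=\mathbf{b}$ for various $\mathbf{b}$). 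The non-degeneracy hypothesis $\dist(L,V^*_{\degen}(m,d))\geqslant c$ must be shown to pass to $S=\Theta L$ with a quantitative non-degeneracy constant depending on $c,C,C'$; this is the point where one checks that no non-zero row-vector of $S$ with $\leqslant 2$ non-zero entries lies in the row space, and quantitatively that the relevant determinants are bounded away from zero. Then Theorem \ref{Linear equations in bounded functions} applies with some $s\leqslant d-2$ (the value of $s$ coming from that theorem, independent of $\varepsilon$) and gives the bound $\rho^{\Omega(1)}+o_\rho(1)$ for each such term; summing the $O_{c,C,C'}(1)$ many terms preserves this.

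Collecting the two contributions, the major-arc part gives $\ll_{c,C,C',\varepsilon}\rho^{\Omega(1)}+o_{\rho}(1)$ and the minor-arc part gives the claimed $N^{-\Omega(1)}\rho^{-O(1)}A_L(\Omega_{c,C,C'}(1),\rho)^{-1}$, which is $o_{\rho,A_L,c,C,C'}(1)$ as $N\to\infty$ by Claim \ref{claim approximation function is positive} (positivity of $A_L$) — note it is crucial that $A_L$ is evaluated at arguments $\Omega_{c,C,C'}(1)$ and $\rho$ that do \emph{not} depend on $N$, so the whole factor is a constant times $N^{-\Omega(1)}$. The case $u=m$ (maximal rational dimension) is handled separately and more cleanly: by Lemma \ref{Lemma full rational dimension} we write $\Theta L=S$ with $\Theta$ invertible of bounded height, change variables, and apply Theorem \ref{Linear equations in bounded functions} directly, with no minor-arc integral needed (consistent with $A_L(\tau_1,\tau_2)\equiv\tau_1$ in that case).

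The main obstacle I expect is the bookkeeping in the minor-arc estimate: one must simultaneously (i) choose the Fourier truncation height, the major/minor cutoff $\tau_1$, and the identification of $\tau_2$ with $\rho$ so that the smoothing errors from replacing $G$ by a bump function, the lattice-rounding errors on the major arcs, and the oscillatory savings on the minor arcs all balance; and (ii) track how the implied constants depend on $c,C,C'$ but \emph{not} on $A_L$ in the main term, while the sub-$o(1)$ term depends on $A_L$ in exactly the stated way. A secondary technical point is verifying, with explicit constants, that the dual-non-degeneracy of $L$ (quantified by $\dist(L,V^*_{\degen}(m,d))\geqslant c$) descends to a quantitative non-degeneracy of the integer matrix $S=\Theta L$ after clearing denominators; this is elementary linear algebra but needs care to get a constant of the right shape $\Omega_{c,C,C'}(1)$.
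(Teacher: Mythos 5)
Your proposal follows a Davenport--Heilbronn-style route: Fourier-expand $G(L\mathbf{n})$, split frequency space into major arcs (near $\Theta^*((\mathbb{R}^u)^*)$) and minor arcs, bound the minor arcs via $A_L$, and feed the major arcs into Theorem \ref{Linear equations in bounded functions}. This is genuinely different from the paper, which never Fourier-expands the weighted count at all: it performs a dimension reduction to a purely irrational system (Lemma \ref{Lemma generating a purely irrational map}), replaces the sharp cut-offs by Lipschitz ones, transfers the sum to an integral over $\mathbb{R}^h$, and then runs the Cauchy--Schwarz/normal-form machinery of the Generalised von Neumann Theorem directly on the real integral, keeping the constraint $G(L\mathbf{x})$ inside the Cauchy--Schwarz steps throughout. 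Your treatment of the maximal-rational-dimension case $u=m$ does match Proposition \ref{Proposition maximal rational dimension case of main theorem}.

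However, there is a fatal gap in the minor-arc step. After Fourier expansion the sum over $\mathbf{n}$ factors as $\prod_j \sum_{n_j\leqslant N} f_j(n_j)\, e\bigl((L^*\varphi)_j\, n_j\bigr)$, and the geometric-series saving $\min\bigl(N,\Vert (L^*\varphi)_j\Vert_{\mathbb{R}/\mathbb{Z}}^{-1}\bigr)$ is only valid when $f_j\equiv 1$. For arbitrary bounded weights there is no oscillation: taking $f_j(n)=\cos(2\pi\theta n)$ makes the exponential sum of size $\asymp N$ at frequencies bounded away from the integers, so the minor-arc integrand cannot be bounded by anything involving only $A_L$ and $N$, independently of the $f_j$. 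This is why Lemma \ref{Lemma upper bound involving integral} (which you propose to invoke here) is stated and proved only for $T^L_{F,G,N}(1,\cdots,1)$, and why the paper uses it solely to control small error terms after the weights have been crudely bounded by $1$. In the purely irrational case $u=0$ your major arcs degenerate to a neighbourhood of $\varphi=\mathbf{0}$, so the minor arcs carry the entire Gowers-norm content of the theorem, and your argument supplies no mechanism connecting them to $\min_j\Vert f_j\Vert_{U^{s+1}[N]}$. (Even granting a repair via the Fourier coefficients of the $f_j$, one would only reach the $U^2$ norm, not $U^{s+1}$.) A secondary problem: for $0<u<m$ the major-arc contribution is not a pure count over $\{\Theta L\mathbf{n}=\mathbf{b}\}$ --- the transverse, irrational directions of the constraint survive as a weight on that fibre --- so Theorem \ref{Linear equations in bounded functions} does not apply as a black box there either. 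The paper flags exactly this temptation in the discussion following Example \ref{Example bad behaviour}; circumventing it is the reason for the transfer-plus-Cauchy--Schwarz architecture of sections \ref{section transfer}--\ref{section General proof of the real variable von Neumann Theorem}.
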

\noindent We remind the reader that the implied constants may depend on the dimensions $m$ and $d$. Also note that in the above statement one may replace $C$ and $C^\prime$ by a single constant $C$, and $c$ and $c^\prime$ by a single constant $c$, without weakening the conclusion. We proceed with this assumption. \\

Let us note some consequences of this theorem. Firstly, since $A_L(\Omega_{c,C}(1),\rho)^{-1}$ is finite (by Claim \ref{claim approximation function is positive}), Theorem \ref{Main Theorem chapter 3} immediately implies Corollary \ref{much easier to state} (this was the qualitative statement around which we structured Section \ref{Sec introduction}). Hence Theorem \ref{Main Theorem chapter 3} also implies all the other corollaries from Section \ref{Sec introduction}. Secondly, from (\ref{approximation function in the algebraic case}), or rather from our full quantitative version in Lemma \ref{Lemma algebraic coeffs implies gen appr}, we have another corollary for matrices $L$ with algebraic coefficients. 
\begin{Corollary}[Inequalities with algebraic coefficients]
\label{Corollary algebraic}
Assume the same hypotheses as Theorem \ref{Main Theorem chapter 3}, and assume further that $L$ has algebraic coefficients with algebraic degree at most $k$. Let $H$ denote the maximum absolute value of all of the coefficients of all of the minimal polynomials of the coefficients of $L$. Then \[ T^{L}_{F,G,N}(f_1,\dots,f_{d})\ll_{c,C,\varepsilon,H} \rho^{\Omega(1)} + N^{-\Omega(1)}\rho^{-O_k(1)}.\]
\end{Corollary}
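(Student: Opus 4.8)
\emph{The plan.} Corollary \ref{Corollary algebraic} is a short deduction from Theorem \ref{Main Theorem chapter 3}: its hypotheses are exactly those of the Main Theorem, plus the extra information that the entries of $L$ are algebraic, and the only difference in the conclusion is that the factor $A_L(\Omega_{c,C}(1),\rho)^{-1}$ in the error term of (\ref{main equation chapter 3 theorem}) is replaced by an explicit power of $\rho^{-1}$. So the substance is the fully quantitative form of (\ref{approximation function in the algebraic case}), i.e.\ Lemma \ref{Lemma algebraic coeffs implies gen appr}: if the entries of $L$ are algebraic of degree at most $k$ with minimal-polynomial coefficients bounded in absolute value by $H$, then
\[ A_L(\tau_1,\tau_2)\ \gg_{c,C,H,k}\ \min\!\left(\tau_1,\ \tau_2^{\,O_k(1)}\right). \]
Granting this, I would substitute $\tau_1=\Omega_{c,C}(1)$ and $\tau_2=\rho$ into the error term $N^{-\Omega(1)}\rho^{-O(1)}A_L(\Omega_{c,C}(1),\rho)^{-1}$ supplied by Theorem \ref{Main Theorem chapter 3}; since $\rho\leq1$ we have $\rho^{-O_k(1)}\geq1$, so $A_L(\Omega_{c,C}(1),\rho)^{-1}\ll_{c,C,H,k}\rho^{-O_k(1)}$ and the error term collapses to $N^{-\Omega(1)}\rho^{-O_k(1)}$. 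Adding the $\rho^{\Omega(1)}$ term of (\ref{main equation chapter 3 theorem}) gives the stated bound (with an implied constant depending on $c,C,\varepsilon,H$ and, of course, on the fixed $k$).

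\emph{Proving the lower bound on $A_L$.} I would argue by contradiction. Fix $\varphi\in(\mathbb{R}^m)^*$ with $\dist(\varphi,\Theta^*((\mathbb{R}^u)^*))\geq\tau_1$ and $\Vert\varphi\Vert_\infty\leq\tau_2^{-1}$, and suppose $\delta:=\dist(L^*\varphi,(\mathbb{Z}^d)^T)$ is smaller than a suitably small multiple of $\min(\tau_1,\tau_2^{O_k(1)})$. First reduce to an integer vector: the non-degeneracy hypothesis in particular gives $\dist(L,V_{\rank}(m,d))\geq c$, so one may pick an $m$-by-$m$ submatrix $L'$ of $L$ with $\vert\det L'\vert\gg_c1$, whence $\Vert(L')^{-1}\Vert_\infty\ll_{c,C}1$. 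Restricting $L^*\varphi$ to the coordinates indexed by the columns of $L'$ puts it within $\delta$ of $\mathbb{Z}^m$, so $\varphi=\psi+e$ with $\psi=((L')^{-1})^T\mathbf{n}$ for some $\mathbf{n}\in\mathbb{Z}^m$, $\Vert\mathbf{n}\Vert_\infty\ll_{c,C}\tau_2^{-1}$, and $\Vert e\Vert_\infty\ll_{c,C}\delta$. If $\psi\in\Theta^*((\mathbb{R}^u)^*)$ then $\dist(\varphi,\Theta^*((\mathbb{R}^u)^*))\leq\Vert e\Vert_\infty<\tau_1$, a contradiction; so $\psi\notin\Theta^*((\mathbb{R}^u)^*)$. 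Writing $M:=(L')^{-1}L$ — a matrix with an identity block whose remaining entries are algebraic of degree $\ll_k1$ and height $\ll_{H,k}1$, being ratios of cofactors of $L'$ — we get $L^*\psi=M^T\mathbf{n}$ and $\dist(M^T\mathbf{n},(\mathbb{Z}^d)^T)\ll_{c,C}\delta$; the identity-block coordinates of $M^T\mathbf{n}$ are integers, so this distance equals $\max_j\Vert\theta_j\Vert_{\mathbb{R}/\mathbb{Z}}$, where $\theta_j=\sum_iM_{ij}n_i$ runs over the remaining coordinates. The key structural point is that the $\theta_j$ cannot all be integers: otherwise $L^*\psi\in(\mathbb{Z}^d)^T$, and the surjection $(\Theta,\psi):\mathbb{R}^m\to\mathbb{R}^{u+1}$ would satisfy $((\Theta,\psi)L)(\mathbb{Z}^d)\subseteq\mathbb{Z}^{u+1}$, contradicting the maximality of the rational dimension $u$ exactly as in the proof of Claim \ref{claim approximation function is positive}. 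Fix $j$ with $\theta_j\notin\mathbb{Z}$ and let $p$ be its nearest integer; working in the number field $K=\mathbb{Q}(\{M_{ij}\})$ of degree $D\ll_k1$, there is an integer $q\ll_{H,k}1$ with $q(\theta_j-p)$ a nonzero algebraic integer, so $\vert N_{K/\mathbb{Q}}(q(\theta_j-p))\vert\geq1$; bounding the non-identity conjugates by $\vert\sigma(\theta_j-p)\vert\ll_{H,k}\Vert\mathbf{n}\Vert_\infty\ll_{c,C}\tau_2^{-1}$ then yields $\Vert\theta_j\Vert_{\mathbb{R}/\mathbb{Z}}=\vert\theta_j-p\vert\gg_{c,C,H,k}\tau_2^{D-1}$. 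Hence $\delta\gg_{c,C,H,k}\tau_2^{O_k(1)}$, contradicting the smallness of $\delta$. Taking the infimum over $\varphi$ proves the bound; the case $u=m$ is trivial since then $A_L\equiv\tau_1$.

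\emph{The main obstacle.} The genuine difficulty is the last step in the presence of a nontrivial rational dimension $u\geq1$: one must make precise the dichotomy ``either $\varphi$ is, up to an $O(\delta)$ error, a lift of a rational relation already carried by $L$ (which is ruled out by the $\tau_1$-hypothesis), or the surviving coordinates $\theta_j$ are honestly irrational algebraic numbers to which Liouville's inequality applies'', and to run this uniformly. The surrounding steps — extracting a well-conditioned submatrix $L'$ from the non-degeneracy hypothesis, controlling the algebraic degree and height of the entries of $(L')^{-1}L$ and of the generators of $K$, and bounding their conjugates in terms of $H$ and $k$ — are routine but need careful bookkeeping if the explicit dependence on $H$ and the explicit exponent $O_k(1)$ are to be recorded. (As noted in the footnote to (\ref{approximation function in the algebraic case}), one could instead invoke the Schmidt subspace theorem to remove the $k$-dependence of the exponent, at the cost of ineffectivity, which is not worthwhile given the power losses elsewhere in the argument.)
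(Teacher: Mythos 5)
Your proposal is correct, and the deduction of the corollary itself — substituting $\tau_1=\Omega_{c,C}(1)$, $\tau_2=\rho$ into the error term of Theorem \ref{Main Theorem chapter 3} once one has $A_L(\tau_1,\tau_2)\gg_{c,C,H,k}\min(\tau_1,\tau_2^{O_k(1)})$ — is exactly what the paper does. Where you diverge is in the proof of that lower bound (the paper's Lemma \ref{Lemma algebraic coeffs implies gen appr}). The paper first disposes of the rational dimension $u\geqslant 1$ entirely by invoking part (9) of the dimension-reduction Lemma \ref{Lemma generating a purely irrational map} (which exists in the paper largely for this purpose, together with Remark \ref{Remark about P} to track degrees and heights through the construction of $P$), reducing to a purely irrational $L'$; only then does it run the rank-matrix normalisation, perturb the near-integral functional to an exactly integral $\rho$, and apply Liouville in the form of an annihilating polynomial built from resultants plus the mean value theorem (with a small extra step dividing out the factor $(X-\varphi_i)$ when $\varphi_i$ happens to be a root). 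You instead keep general $u$ throughout and replace the dimension reduction by the dichotomy: either $\psi$ lies in $\Theta^*((\mathbb{R}^u)^*)$, which violates the $\tau_1$-constraint, or $L^*\psi\notin(\mathbb{Z}^d)^T$ by maximality of the rational dimension (the argument of Claim \ref{claim approximation function is positive}), after which the norm-form version of Liouville applies to a non-integral coordinate $\theta_j$. Your route is more self-contained — it avoids importing the technical Lemma \ref{Lemma generating a purely irrational map}(9) and the height bookkeeping for $P$ — at the cost of having to carry the $u\geqslant 1$ case through the Liouville step directly; the paper's route front-loads that difficulty into machinery it has already built. The two incarnations of Liouville (field norm of an algebraic integer versus resultant polynomial and the mean value theorem) are interchangeable and both yield the exponent $O_k(1)$.
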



The reader may wonder how the implied constant in these statements depends on $\varepsilon$. Ultimately the implied constant in (\ref{main equation chapter 3 theorem}) tends to infinity as $\varepsilon$ tends to zero, as our approximation argument in Section \ref{section transfer} will not be efficient in powers of $\varepsilon$. Yet, to prevent our notation becoming too unreadable, we choose not to keep track of the precise behaviour of implied constants involving $\varepsilon$.\\

As we remarked in Section \ref{Section approximate degeneracy} and Example \ref{Example bad behaviour}, we can also prove a partial converse to Theorem \ref{Main Theorem chapter 3}. This result demonstrates that the non-degeneracy condition $\dist(L,V_{\degen}^*(m,d)) \geqslant c$ is necessary in order to use Gowers norms to control inequalities given by $L$.  

\begin{Theorem}
\label{Converse to main theorem}
Let $m,d$ be natural numbers, satisfying $d\geqslant m+2$, and let $\varepsilon, c, C$ be positive constants. For each natural number $N$, let $L = L(N):\mathbb{R}^{d}\longrightarrow \mathbb{R}^m$ be a linear map satisfying $\Vert L\Vert_\infty\leqslant C$. Let $F:\mathbb{R}^d\longrightarrow [0,1]$ denote the indicator function of $[1,N]^d$ and $G:\mathbb{R}^m\longrightarrow [0,1]$ denote the indicator function of $[-\varepsilon,\varepsilon]^m$. Assume further that $\operatorname{dist}(L,V_{\rank}(m,d))\geqslant c$ and that $T_{F,G,N}^L(1,\dots,1)\gg_{c,C,\varepsilon} 1$ for large enough $N$.

Suppose that $$\liminf\limits_{N\rightarrow\infty}\operatorname{dist}(L,V^*_{\degen}(m,d)) = 0.$$ Let $s$ be a natural number, let $H:\mathbb{R}_{>0}\rightarrow \mathbb{R}_{>0}$ be any function satisfying $H(\rho) = \kappa(\rho)$, and for each $N$ let $E_{\rho}(N)$ denote some error term depending on a parameter $\rho$ and satisfying $E_{\rho}(N) = o_{\rho}(1)$ as $N \rightarrow \infty$. Then one can find infinitely many natural numbers $N$ such that there exist functions $f_1,\dots,f_d:[N]\rightarrow [-1,1]$ and some $\rho$ at most $1$ such that both \[ \min_j\Vert f_j\Vert_{U^{s+1}[N]}\leqslant\rho\] and 
\begin{equation}
\label{conclusion of converse theorem}
\vert T_{F,G,N}^L(f_1,\dots,f_d)\vert > H(\rho)+E_{\rho}(N).
\end{equation} 
\end{Theorem}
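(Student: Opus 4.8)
The plan is to exploit the hypothesis $\liminf_{N\to\infty}\dist(L,V^*_{\degen}(m,d))=0$ by passing to a subsequence of $N$ along which $L=L(N)$ is very close to a genuinely degenerate map. For each such $N$, there is (after perturbing $L$ within $O(\delta_N)$, where $\delta_N\to 0$) a nonzero dual vector of the form $\mathbf{e_i^\ast}-\lambda\mathbf{e_j^\ast}$ lying in $L^\ast((\mathbb{R}^m)^\ast)$; that is, some row-vector in (an approximation of) the row-space of $L$ is supported on just two coordinates, say $i$ and $j$, with ratio $\lambda$. This means that the constraint $\Vert L\mathbf{n}\Vert_\infty\leqslant\varepsilon$ forces $\vert n_i - \lambda n_j\vert$ to lie in a bounded interval (up to an error $O(\delta_N N)$ coming from the perturbation and from $\varepsilon$). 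So, morally, in the sum defining $T^L_{F,G,N}$ the variables $n_i$ and $n_j$ are coupled through a single near-linear relation with \emph{no other variable involved in pinning them down}, exactly the ``two or fewer nonzero coordinates'' failure.

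The key construction is then to choose $f_i$ (or $f_j$) to be a suitable character-like weight that is small in every Gowers norm but resonates with this two-variable relation. Concretely, I would take $f_i(n) = \mathrm{Re}\,e(\alpha n)$ for a real frequency $\alpha$ chosen incommensurate with $1$ and with $\lambda$ (and with the rational dimension data of $L$), so that $\Vert f_i\Vert_{U^{s+1}[N]}=o(1)$ for every fixed $s$ — this is the standard fact that nontrivial linear phases are Gowers-uniform of all orders, available from Appendix \ref{sec.Gowers norms}. Set all other $f_k\equiv 1$. Because $n_i$ is tied to $n_j$ only through $n_i\approx \lambda n_j + (\text{const})$, summing $e(\alpha n_i)$ over the admissible range of $n_i$ for \emph{fixed} $n_j$ does \emph{not} produce the cancellation one would get if $n_i$ ranged freely over a long progression independent of everything else: the admissible window for $n_i$ has length $\asymp\varepsilon$, which is $O(1)$, so $e(\alpha n_i)$ is essentially constant (equal to $e(\alpha\lambda n_j)$ up to $O(\alpha\varepsilon)$) across that window. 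Hence $T^L_{F,G,N}(f_1,\dots,f_d)$ is, up to controlled errors, a weighted count of the same configurations counted by $T^L_{F,G,N}(1,\dots,1)$ but with each term multiplied by $\mathrm{Re}\,e(\alpha\lambda n_j + \text{const})$ — and by choosing $\alpha\lambda$ appropriately (or averaging the phase) one arranges this not to decay, in fact to be $\gg_{c,C,\varepsilon}1$, contradicting the smallness demanded by the right-hand side $H(\rho)+E_\rho(N)$ once $\rho$ is taken as the (tiny, $N$-dependent) value of $\Vert f_i\Vert_{U^{s+1}[N]}$. One must be slightly careful that $\mathrm{Re}\,e(\alpha\lambda n_j)$ does not itself oscillate to zero when summed against the remaining count in $n_j$; if it does, replace it by $\vert\cdot\vert$ or shift $\alpha$ so that the effective frequency seen by $n_j$ is rational with small denominator, using that the $n_j$-marginal of the solution count is a smooth-ish density of width $\asymp N$. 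The hypotheses $\dist(L,V_{\rank}(m,d))\geqslant c$ and $T^L_{F,G,N}(1,\dots,1)\gg_{c,C,\varepsilon}1$ guarantee there is a genuine main term of size $\asymp1$ to corrupt.

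The main obstacle, and the step needing the most care, is the quantitative bookkeeping of the perturbation: along the subsequence $L(N)$ is only \emph{approximately} degenerate, so the two-variable relation $\vert n_i-\lambda n_j\vert\leqslant\text{const}$ holds only up to an error $O(\delta_N N)$ which grows with $N$. I would handle this by choosing the subsequence so that $\delta_N$ decays fast enough relative to $1/N$ — possible because the $\liminf$ is $0$, so for any prescribed rate one finds infinitely many $N$ with $\dist(L(N),V^*_{\degen}(m,d))$ below that rate — ensuring $\delta_N N\to 0$, hence the relation is exact in the relevant sense and the window for $n_i$ genuinely has bounded length. A secondary technical point is to verify that the chosen $f_i$ can be taken real-valued in $[-1,1]$ (hence $\mathrm{Re}\,e(\alpha n)$ rather than $e(\alpha n)$) without losing either the Gowers-uniformity or the resonance; this is routine since $\mathrm{Re}\,e(\alpha n)=\tfrac12(e(\alpha n)+e(-\alpha n))$ and both frequencies are nonzero, and the resonance survives because the two conjugate phases contribute constructively on an appropriate sub-family of $N$ (or one simply works with the larger of the two contributions). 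Finally one checks that, having fixed such an $N$, $f$, and the associated $\rho=\min_j\Vert f_j\Vert_{U^{s+1}[N]}$, the inequality $\vert T^L_{F,G,N}(f_1,\dots,f_d)\vert\gg_{c,C,\varepsilon}1 > H(\rho)+E_\rho(N)$ indeed holds, using that $H(\rho)=\kappa(\rho)\to0$ and $E_\rho(N)=o_\rho(1)$ while the left side stays bounded below independently of $\rho$ and $N$; this forces the conclusion for infinitely many $N$.
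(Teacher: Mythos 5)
There is a genuine gap — in fact two, and each is fatal to the construction as written. First, your choice $f_i(n)=\mathrm{Re}\,e(\alpha n)$ does not have small Gowers norm: it is the standard fact in the \emph{opposite} direction that nontrivial linear phases are the extremizers, not examples, of Gowers uniformity. For $f(n)=e(\alpha n)$ the phase in the Gowers product telescopes to zero, so $\Vert e(\alpha\cdot)\Vert_{U^{2}[N]}\asymp 1$ and hence, by nesting, $\Vert e(\alpha\cdot)\Vert_{U^{s+1}[N]}\asymp 1$ for every $s\geqslant 1$, irrespective of how irrational $\alpha$ is; the same holds for the real part. What is Gowers-uniform is a function whose Fourier coefficients are all small, which a pure phase emphatically is not. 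So the hypothesis $\min_j\Vert f_j\Vert_{U^{s+1}[N]}\leqslant\rho$ with $\rho$ small is never met by your $f_i$, and the whole comparison against $H(\rho)+E_\rho(N)$ collapses. Second, the step where you pass to a subsequence with $\delta_N N\to 0$ is not available: $\liminf_N\dist(L(N),V^*_{\degen}(m,d))=0$ gives you, for each \emph{fixed} threshold $\eta>0$, infinitely many $N$ with distance at most $\eta$; it does not let you beat a threshold that shrinks with $N$. (Take $\dist(L(N),V^*_{\degen}(m,d))=1/\log\log N$: the liminf is $0$ but $\delta_N N\to\infty$.) Consequently the two-variable relation you extract only holds in the form $\vert b_1 n_i+b_2 n_j\vert\ll\eta N$, a window of length comparable to $N$, not a bounded window, and the claim that $e(\alpha n_i)$ is ``essentially constant across the admissible window'' fails: to resonate with a window of length $\eta N$ you would need $\alpha=O(1/(\eta N))$, at which point $f_i$ is essentially constant on all of $[N]$ and certainly not uniform.

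The paper's proof is organised precisely around these two obstacles. It fixes a small constant $\eta$, takes $N$ with $\dist(L(N),V^*_{\degen}(m,d))\leqslant\eta$, and derives $\vert b_1n_1+b_2n_2\vert\ll\eta N$ for all solutions, then splits into cases according to the signs and sizes of $b_1,b_2$ (the case where both are small being excluded by $\dist(L,V_{\rank}(m,d))\geqslant c$). When $b_1,b_2$ have the same sign the relation forces $n_1\ll\eta N$, and one takes $f_1-1$ where $f_1$ is the indicator of $[\lceil C_1\eta N\rceil,N]$; this has $U^{s+1}[N]$ norm $O(\eta^{(s+2)/2^{s+1}})$ because the complementary interval is short relative to $N$, yet $T^L_{F,G,N}(f_1-1,1,\dots,1)=-T^L_{F,G,N}(1,\dots,1)\gg 1$. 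In the genuinely hard case of opposite signs, where $n_2$ is confined to within $O(\eta N)$ of a fixed dilate of $n_1$, the paper builds a pair of \emph{coupled random block} functions: $[0,N]$ is partitioned into intervals of length $\asymp\eta N$, blocks are selected independently with probability $p$ to form $f_1$, and $f_2$ is built from the correspondingly dilated blocks, so that membership of $n_1$ in a selected block forces the coupled $n_2$ into a selected block. One then shows $\mathbb{E}\Vert f_i-p\Vert_{U^{s+1}[N]}\ll\eta^{1/2^{s+1}}$ (the randomness at scale $\eta N$ supplies the uniformity that a phase cannot) while $\mathbb{E}\,T^L_{F,G,N}(f_1,f_2,1,\dots,1)-T^L_{F,G,N}(p,p,1,\dots,1)\gg\delta^2$, and concludes by Markov and Jensen. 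If you want to salvage your write-up, the correct intuition to keep is that one must correlate with the two-variable relation; but the correlating object has to be structured at scale $\eta N$ and pseudorandom below that scale, which is what the block construction delivers and a linear phase cannot.
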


\noindent In other words, the conclusion of Theorem \ref{Main Theorem chapter 3} cannot possibly hold if\\ $\operatorname{dist}(L,V^*_{\degen}(m,d))$ is arbitrarily close to $0$, even if one replaces the $\rho^{\Omega(1)}$ dependence in (\ref{main equation chapter 3 theorem}) with a function $H(\rho)$ that could potentially decay to zero arbitrarily slowly as $\rho$ tends to zero. \\

The proof of Theorem \ref{Converse to main theorem} is contained in Section \ref{Constructions}, which can be read independently of the rest of the paper \\

\subsection{The proof strategy}
\label{Section proof strategy}
All the corollaries from Sections \ref{Sec introduction} and \ref{section history} are implied by Theorem \ref{Main Theorem chapter 3}, so our remaining task is to prove this theorem. Speaking somewhat informally, we wish to bound $T_{F,G,N}^L(f_1,\dots,f_d)$ in terms of some Gowers norms $\Vert f_j\Vert_{U^{s+1}[N]}$ when the functions $F$ and $G$ are the indicator functions of certain convex domains. Now, one might expect the proof to be easier if, instead, $F$ and $G$ were functions with nicer analytic properties -- Lipschitz functions, for example. This is indeed the case, and thus our proof splits naturally into two parts. The first part, contained in Sections \ref{section upper bounds} and \ref{section Reductions}, reduces Theorem \ref{Main Theorem chapter 3} to a similar statement in which the functions $F$ and $G$ are Lipschitz -- this will be Theorem \ref{Theorem rational set out version}. The second part of the paper is devoted to proving Theorem \ref{Theorem rational set out version}. For the rest of this subsection we will try to articulate the strategies for each part, and to elucidate the main technical difficulties. \\

In \cite{GT10}, replacing convex cut-offs with Lipschitz cut-offs was an easy operation, accomplished in a couple of pages in Appendices A and C of that paper. Somewhat surprisingly, this part turns out to be the trickiest element in the setting of inequalities, at least when $L$ is not purely irrational. 

Replacing $F$ with a Lipschitz cut-off is no issue, but the difficulty comes from replacing $G$. Consider the example \[ L:=\left( \begin{matrix} 1& 0 & - \sqrt{2} & -\sqrt{3} + 1 \\ 
0 & 1 & 5 \sqrt{2} & 5 \sqrt{3} \end{matrix} \right)\] from Section \ref{Section Rational dimension}, in which we established that $L$ has rational dimension $1$ and that \[L(\mathbb{Z}^4) \subset \{ \mathbf{x} \in \mathbb{R}^2 : \mathbf{x} \cdot \left(\begin{matrix}5 \\ 1 \end{matrix}\right) \in \mathbb{Z}\}. \]Take $G$ to be the indicator of the compact convex domain \[ \{\mathbf{x} \in \mathbb{R}^2: \Vert \mathbf{x}\Vert_\infty \leqslant 10, \, -1 \leqslant \mathbf{x} \cdot \Big(\begin{matrix}5 \\ 1 \end{matrix}\Big) \leqslant 1 \}.\] Then 
\begin{equation}
\label{explaining}
T_{F,G,N}^L(f_1,\dots,f_4) = \frac{1}{N^2} \sum\limits_{ \substack{ \mathbf{n} \in \mathbb{Z}^4 \\ \Vert L\mathbf{n}\Vert_\infty \leqslant 10 \\ (\begin{smallmatrix}5 & 1 \end{smallmatrix})L\mathbf{n} = -1,0,1}} \Big(\prod\limits_{j=1}^4 f_j(n_j)\Big)F(\mathbf{n}).
\end{equation} To replace a convex cut-off $G$ by a Lipschitz cut-off, a natural approach is to take a Lipschitz function $\widetilde{G}$ that is a minorant\footnote{One also finds a majorant Lipschitz function, but that won't feature in this example.} for $G$, with $\widetilde{G}$ supported on the set \[ \{\mathbf{x} \in \mathbb{R}^2: \Vert \mathbf{x}\Vert_\infty \leqslant 10 - \delta, \quad -1 + \delta \leqslant \mathbf{x} \cdot \left(\begin{matrix}5 \\ 1 \end{matrix}\right)  \leqslant 1 - \delta \}\] for some small positive parameter $\delta$, and $\widetilde{G}$ identically equal to $1$ on the set \[\{\mathbf{x} \in \mathbb{R}^2: \Vert \mathbf{x}\Vert_\infty \leqslant 10 - 2\delta, \quad -1 + 2\delta \leqslant \mathbf{x} \cdot \left(\begin{matrix}5 \\ 1 \end{matrix}\right)  \leqslant 1 - 2\delta \}.\] One has $\Vert G - \widetilde{G}\Vert_1 = \kappa(\delta)$, so one might hope that for any functions $f_1,\dots,f_4$ one would have 
\begin{equation}
\label{kappa delta}
\vert T_{F,G,N}^L(f_1,\dots,f_4) - T_{F,\widetilde{G},N}^L(f_1,\dots,f_4)\vert = \kappa(\delta).
\end{equation} However, no matter how small we choose $\delta$, 
\begin{equation}
\label{cut range of summation}
 T_{F, \widetilde{G},N}^L(f_1,\dots,f_4) \approx \frac{1}{N^2} \sum\limits_{\substack{\mathbf{n} \in \mathbb{Z}^4 \\ \Vert L \mathbf{n}\Vert_\infty \leqslant 10 - \delta \\ (\begin{smallmatrix}5 & 1 \end{smallmatrix})L\mathbf{n} = 0}}\Big(\prod\limits_{j=1}^4 f_j(n_j)\Big) F(\mathbf{n}).
 \end{equation} In moving from $G$ to $\widetilde{G}$ the range of summation for $\mathbf{n}$ between expressions (\ref{explaining}) and (\ref{cut range of summation}) has been cut by a factor of two-thirds! Thus we have no reason to expect that (\ref{kappa delta}) should hold for all functions $f_1,\dots,f_4$.\\

We circumvent these difficulties by employing the following idea. Rather than replacing $G$ with a Lipschitz cut-off straight away, when faced with an expression such as (\ref{explaining}) we can perform some initial reparametrisation, observing that there is a linear map $\Xi:\mathbb{R}^3 \longrightarrow \mathbb{R}^4$ with integer coefficients which gives a lattice parametrisation of those $\mathbf{n} \in \mathbb{Z}^4$ for which $(\begin{matrix}5 & 1 \end{matrix})L\mathbf{n}  = 0$, namely \[ \Xi\left(\begin{matrix} m_1 \\m_2 \\m_3 \end{matrix} \right) = \left(\begin{matrix} m_1 \\ -5m_1 - 5m_2 \\ m_3 \\ m_2 \end{matrix}\right).\] Moreover, $\mathbf{n} \in \mathbb{Z}^4$ with $(\begin{matrix}5 & 1 \end{matrix})L\mathbf{n} = \pm 1$ if and only if there are integers $m_1,m_2,m_3$ for which \[ \mathbf{n} = \Xi\left(\begin{matrix} m_1 \\m_2 \\m_3 \end{matrix} \right) + \left(\begin{matrix} 0 \\ \pm 1 \\ 0 \\0\end{matrix}\right).\] This enables us to decompose $T_{F,G,N}^L(f_1,\dots,f_4)$ into three separate expressions, each of the form 
\begin{equation}
\label{the expression that doesn't change}
\frac{1}{N^2} \sum\limits_{\mathbf{m} \in \mathbb{Z}^3 } \Big( \prod\limits_{j=1}^4 f_j(\Xi(\mathbf{m})_j+ \widetilde{\mathbf{r}}_j) \Big) F(\Xi(\mathbf{m}) + \widetilde{\mathbf{r}})1_{[-10,10]^2}(L(\Xi\mathbf{m} + \widetilde{\mathbf{r}}))
\end{equation} for some different vector $\widetilde{\mathbf{r}} \in \mathbb{Z}^4$, where $\Xi(\mathbf{m})_j$ denotes the $j^{th}$ coordinate of $\Xi(\mathbf{m})$. Now, replace the convex cut-off function $1_{[-10,10]^2}$ with some Lipschitz minorant $\widetilde{G}$ which is supported on $[-10 + \delta, 10 - \delta]^2$ and equal to $1$ on $[-10 + 2 \delta, 10 - 2 \delta]^2$, in each of the three expressions (\ref{the expression that doesn't change}) separately. Then the size of these expressions \emph{will} stay roughly constant.

To quantify this step, the approximation function $A_L$ enters the picture. Indeed, if $\widetilde{\mathbf{r}} = \mathbf{0}$ the error term introduced by applying such an approximation to (\ref{the expression that doesn't change}) is bounded above by \[ T_{F,G^*,N}^{L\Xi}(1,\dots,1),\] where $G^*$ is some other Lipschitz function supported on \[\{ \mathbf{x} \in \mathbb{R}^2 : 10- 2 \delta \leqslant \Vert \mathbf{x}\Vert_\infty \leqslant 10 + 2 \delta \}.\] Finding an upper bound on expressions such as $ T_{F,G^*,N}^{L\Xi}(1,\dots,1)$ is exactly the endeavour we discussed in Section \ref{section approximation function}, when motivating the introduction of the approximation function $A_L$. The only difference is that now we are dealing with the function $A_{L\Xi}$, rather than $A_L$. 

It turns out that the map $L\Xi$ is most naturally viewed as a map from $\mathbb{R}^3$ into a one dimensional space, i.e. \[ L\Xi: \mathbb{R}^3 \longrightarrow \{ \mathbf{x} \in \mathbb{R}^2 : \mathbf{x} \cdot \left(\begin{matrix}5 \\ 1 \end{matrix}\right) = 0 \},\] whereas $L$ maps from $\mathbb{R}^4$ to $\mathbb{R}^2$. This is the `dimension reduction' which gives Section \ref{section Reductions} its name. 

The reader will have noticed that, after replacing $1_{[-10,10]^2}$ with the Lipschitz function $\widetilde{G}$, expression (\ref{the expression that doesn't change}) is not equal to an expression of the form $T_{F,\widetilde{G},N}^L(f_1,\dots,f_4)$, since the map $\Xi$ and the shift $\widetilde{\mathbf{r}}$ are now both on the scene. This complicates matters in the second half of the proof, and thus Theorem \ref{Theorem rational set out version} will not be exactly the same statement as Theorem \ref{Main Theorem chapter 3} apart from the Lipschitz cut-offs. Rather, Theorem \ref{Theorem rational set out version} will bound an object that we will come to denote by $T_{F,G,N}^{L,\Xi,\widetilde{\mathbf{r}}}(f_1,\dots,f_d)$, which will be a general version of expression (\ref{the expression that doesn't change}). The reader may consult Definition \ref{Definition solution count rational separation} for the full definition.\\

In order to make this argument rigorous we will have to verify that in replacing the map $L$ with the map $L\Xi$ we haven't introduced any extra rational relations;\footnote{This is essentially the statement that $L\Xi$ should be purely irrational.} to work out how to relate $A_{L}$ and $A_{L\Xi}$; and to work out how to identify a suitable $\Xi$ in the general case. Furthermore, we will have to carry the non-degeneracy relations (such as $\dist(L,V_{\degen}^*(m,d)) \geqslant c$) through this reparametrisation by $\Xi$, and then establish what the new non-degeneracy notions should be for the pairs $(\Xi,L\Xi)$. This is all done in the (somewhat alarming) Lemma \ref{Lemma generating a purely irrational map}, which has 9 parts. The upper bounds on expressions like $T_{F,G^*,N}^{L\Xi}(1,\dots,1)$ are established earlier, in Lemma \ref{Lemma upper bound involving integral}, with everything combined at the end of Section \ref{section Reductions}. 

 The diagram of the dependency of the various lemmas -- excluding those which are found in Appendices A, B and D, which are somewhat standard -- is as follows. \\

  \begin{center}
\makebox[\textwidth]{\parbox{1.5\textwidth}{
\begin{center}
   \tikzstyle{interface}=[draw, text width=6em,
      text centered, minimum height=2.0em]
   \tikzstyle{daemon}=[draw, text width=6em,
      minimum height=2em, text centered, rounded corners]
   \tikzstyle{lemma}=[draw, text width=5em,
      minimum height=1.5em, text centered, rounded corners]
   \tikzstyle{dots} = [above, text width=6em, text centered]
   \tikzstyle{wa} = [daemon, text width=6em,
      minimum height=2em, rounded corners]
   \tikzstyle{ur}=[draw, text centered, minimum height=0.01em]
   \def\blockdist{2.0}
   \def\edgedist{0.5}
   \begin{tikzpicture}
      \node (wa)[interface]  {Theorem \ref{Main Theorem chapter 3}};
      \path (wa.west)+(-2,2) node (d1)[daemon] {\footnotesize Theorem \ref{Theorem rational set out version}};
      \path(wa.west) + (-2,1) node (d2) [daemon] {\footnotesize Lemma \ref{Lemma replacing F cut-off}};
      
            \path (wa.west) + (-2,0) node (d3) [daemon] {\footnotesize Lemma \ref{Lemma generating a purely irrational map}};
      
            \path (wa.west) + (-2,-1) node (d4) [daemon] {\footnotesize Lemma \ref{Lemma making G lipschitz}};

                \path (wa.west) + (-2,-3) node (d5) [daemon] {\footnotesize Proposition \ref{Proposition maximal rational dimension case of main theorem}};
                
                     \path (d2.west) + (-2,0) node (e1) [daemon] {\footnotesize Lemma \ref{Lemma slightly less crude bound on number of solutions}};
                     \path (d5.west)  + (-4,0) node (e2) [daemon]
                     {\footnotesize Theorem \ref{Linear equations in bounded functions}};
                     
                      \path (d4.west)  + (-2,0) node (e4) [daemon]
                     {\footnotesize Lemma \ref{Lemma upper bound involving integral}};
                     
 \path (d3.west)  + (-2,-2) node (f1) [daemon]
                     {\footnotesize Lemma \ref{Lemma parametrising the image lattice}}; 
                      \path (d3.west)  + (-2,0) node (f2) [daemon]
                     {\footnotesize Proposition \ref{rank matrix}};
                      \path (d3.west)  + (-2,2) node (f3) [daemon]
                     {\footnotesize Lemma \ref{Lemma dual space decomposition}};
                     
                     \path [draw, ->,>=stealth] (d1.east) -- node [above] {} (wa.west) ;
                     \path [draw, ->,>=stealth] (d2.east) -- node [above] {} (wa.west) ;
                     \path [draw, ->,>=stealth] (d3.east) -- node [above] {} (wa.west) ;
                     \path [draw, ->,>=stealth] (d4.east) -- node [above] {} (wa.west) ;
                     \path [draw, ->,>=stealth] (d5.east) -- node [above] {} (wa.west) ;
                     
                     \path [draw, ->,>=stealth] (f1.east) -- node [above] {} (d3.west) ;
                     \path [draw, ->,>=stealth] (f2.east) -- node [above] {} (d3.west) ;
                     \path [draw, ->,>=stealth] (f3.east) -- node [above] {} (d3.west) ;
                     
                     \path [draw, ->,>=stealth] (f2.north) -- node [above] {} (e1.south) ;
                     
                     \path [draw, ->,>=stealth] (f2.south) -- node [above] {} (e4.north) ;
                     
                     \path [draw, ->,>=stealth] (e4.east) -- node [above] {} (d4.west) ;
                     \path [draw, ->,>=stealth] (e1.east) -- node [above] {} (d2.west) ;
                     \path [draw, ->,>=stealth] (e2.east) -- node [above] {} (d5.west) ;

   \end{tikzpicture}
\end{center}}}
   \end{center}

\vspace{5mm}

It remains to resolve Theorem \ref{Theorem rational set out version}, and it turns out that this second part of the proof is significantly more straightforward than the first. In particular neither the statement of Theorem \ref{Theorem rational set out version} nor its proof make any reference to the rational dimension of $L$ nor to the approximation function $A_L$.

The idea is as follows. For a function $f: [N] \longrightarrow [-1,1]$ and a small parameter $\eta$, let $\widetilde{f}: \mathbb{R} \longrightarrow [-1,1]$ denote the function \[ \widetilde{f}(x) = \begin{cases}
f(n) & \text{if } \vert n-x\vert \leqslant \eta \\
0 & \text{otherwise,} \end{cases} \] i.e. $\widetilde{f}$ is a `fattened' version of $f$. Then, for Lipschitz functions $F$ and $G$, let \[\widetilde{T}_{F,G,N}^L(\widetilde{f_1},\dots,\widetilde{f_d}) :=  \frac{1}{N^{d-m}} \int\limits_{\mathbf{x} \in \mathbb{R}^d} \Big(\prod\limits_{j=1}^d \widetilde{f_j} (x_j)\Big) F(\mathbf{x}) G(L\mathbf{x})\, d \mathbf{x}\] represent the `real solution density' for the inequality weighted by the functions $\widetilde{f_j}$. The expression $\widetilde{T}_{F,G,N}^L(\widetilde{f_1},\dots,\widetilde{f_d})$ is more convenient to work with than $T_{F,G,N}^L(f_1,\dots,f_d)$, as we are now working in a setting in which the coefficients of $L$ are invertible.\footnote{This manoeuvre is somewhat analogous to the device used by Green-Tao in \cite{GT10} of passing from $[N]$ to some cyclic group $\mathbb{Z}/N^\prime \mathbb{Z}$, where $N^\prime$ is a prime number larger than $N$. }

The expression $\widetilde{T}_{F,G,N}^L(\widetilde{f_1},\dots,\widetilde{f_d})$ enjoys two properties. Firstly, it is closely related to $T_{F,G,N}^L(f_1,\dots,f_d)$. Indeed, just by expanding out the definition of $\widetilde{f_j}$, we see that
\begin{align}
\label{Lipschitz is needed equation}
\widetilde{T}_{F,G,N}^L(\widetilde{f_1},\dots,\widetilde{f_d}) &= \frac{1}{N^{d-m}} \sum\limits_{ \mathbf{n} \in \mathbb{Z}^d} \Big(\prod\limits_{j=1}^d f_j(n_j)\Big) \int\limits_{\mathbf{y} \in \mathbb{R}^d} F(\mathbf{y}) G(L\mathbf{y}) 1_{[-\eta,\eta]^d}(\mathbf{y} - \mathbf{n}) \, d\mathbf{y} \nonumber \\ &\approx \frac{1}{N^{d-m}} \sum\limits_{ \mathbf{n} \in \mathbb{Z}^d} \Big(\prod\limits_{j=1}^d f_j(n_j)\Big) F(\mathbf{n}) G(L\mathbf{n}) \int\limits_{\mathbf{y} \in \mathbb{R}^d} 1_{[-\eta,\eta]^d}(\mathbf{y} - \mathbf{n}) \, d\mathbf{y} \nonumber \\
& \approx (2 \eta)^dT_{F,G,N}^L(f_1,\dots,f_d),
 \end{align}
 \noindent by using the Lipschitz properties of $F$ and $G$ to replace $F(\mathbf{y})$ and $G(L\mathbf{y})$ by $F(\mathbf{n})$ and $G(L\mathbf{n})$ respectively. This analysis is performed rigorously in Section \ref{section transfer}, and is the only place in the proof where the Lipschitz property of $G$ is used. 
 
 Secondly, $\widetilde{T}_{F,G,N}^L(\widetilde{f_1},\dots,\widetilde{f_d})$ may be bounded above by expressions involving the Gowers norms (over the reals) of the functions $\widetilde{f_j}$. Indeed, after some small manipulations using the compact support of $G$, one ends up with the bound 
 \begin{equation}
 \label{the above expression}\vert\widetilde{T}_{F,G,N}^L(\widetilde{f_1},\dots,\widetilde{f_d}) \vert \ll_G  \frac{1}{N^{d-m}} \int\limits_{ \substack{ \mathbf{x} \in \mathbb{R}^d \\ L\mathbf\mathbf{x} = \mathbf{0}}} \Big(\prod\limits_{j=1}^d \widetilde{f_j} (x_j)\Big) F(\mathbf{x}) \, d \mu(\mathbf{x}),
 \end{equation} where $\mu(\mathbf{x})$ is a suitable measure supported on $\ker L$. The reader will then notice that the right-hand side of (\ref{the above expression}) bears a structural similarity to the expression considered in Theorem \ref{Linear equations in bounded functions non quantitative} above, i.e. in the Generalised von Neumann Theorem for equations with integer coefficients. One may then rejig Green-Tao's proof of Theorem \ref{Linear equations in bounded functions non quantitative} to apply in this setting, and thereby bound $\widetilde{T}_{F,G,N}^L(\widetilde{f_1},\dots,\widetilde{f_d})$ by the Gowers norms of the functions $\widetilde{f_j}$. This is done in Section \ref{section General proof of the real variable von Neumann Theorem}. Finally, there is an elementary argument (Lemma \ref{Lemma linking different Gowers norms}) that relates the Gowers norms of the functions $\widetilde{f_j}$ to the Gowers norms of the original functions $f_j$, thus completing the proof of our result.
 
As will be familiar to readers of \cite{GT10}, the key manoeuvre in analysing (\ref{the above expression}) is parametrising $\ker L$ in a certain special way (in \emph{normal form}, see Section \ref{section normal form}), in order to facilitate repeated applications of the Cauchy-Schwarz inequality. When working over the reals, maintaining quantitative control over the size of the coefficients after this reparametrisation is no longer trivial, and requires the assumption that $\dist(L,V_{\degen}^*(m,d)) \geqslant c$. The details of this piece of quantitative linear algebra are given in Proposition \ref{normal form algorithm} and Appendix \ref{section Rank matrix and normal form: Proofs}. It is this part of the argument which would break down were one to attempt to use Gowers norms to bound inequalities such as the one given by the matrix $L$ in Example \ref{Example bad behaviour}.\\

We have already remarked that, in reality, Theorem \ref{Theorem rational set out version} doesn't just concern the objects $T_{F,G,N}^L(f_1,\dots,f_d)$ but actually concerns the more general objects $T_{F,G,N}^{L,\Xi,\widetilde{\mathbf{r}}}(f_1,\dots,f_d)$, which are similar to (\ref{the expression that doesn't change}). This adds an extra veneer of complication, centred largely around the notion of degeneracy for the pair of maps $(\Xi,L\Xi)$. Matters are resolved by a linear algebra argument in Section \ref{section Degeneracy relations}, relating different notions of degeneracy. \\

The diagram of the dependency of the lemmas used in the proof of Theorem \ref{Theorem rational set out version} is as follows.

  \begin{center}
\makebox[\textwidth]{\parbox{1.5\textwidth}{
\begin{center}
   \tikzstyle{interface}=[draw, text width=6em,
      text centered, minimum height=2.0em]
   \tikzstyle{daemon}=[draw, text width=6em,
      minimum height=2em, text centered, rounded corners]
   \tikzstyle{lemma}=[draw, text width=5em,
      minimum height=1.5em, text centered, rounded corners]
   \tikzstyle{dots} = [above, text width=6em, text centered]
   \tikzstyle{wa} = [daemon, text width=6em,
      minimum height=2em, rounded corners]
   \tikzstyle{ur}=[draw, text centered, minimum height=0.01em]
   \def\blockdist{2.0}
   \def\edgedist{0.5}
   \begin{tikzpicture}
      \node (wa)[interface]  {Theorem \ref{Theorem rational set out version}};
      \path (wa.west)+(-2,1) node (d1)[daemon] {\footnotesize Lemma \ref{Lemma transfer equation}};
      \path (wa.west) + (-2,0) node (d2) [daemon] {\footnotesize Lemma \ref{Lemma linking different Gowers norms}};
        \path (wa.west) + (-2,-1) node (d3) [daemon] {\footnotesize Theorem \ref{Theorem Generalised von Neumann Theorem over reals}};
  \path (wa.west) + (-2,2) node (d4) [daemon] {\footnotesize Lemma \ref{Lemma crude bound on number of solutions}};
  \path (d4.west) + (-2,0) node (e4) [daemon] {\footnotesize Proposition \ref{rank matrix}};
  \path (d3.west) + (-2,2) node (e1) [daemon] {\footnotesize Proposition \ref{Proposition Cauchy}};
  \path (d3.west) + (-2,1) node(e2) [daemon]{\footnotesize Proposition \ref{Proposition separating out the kernel}};
  \path (d3.west) + (-2,0) node(e3) [daemon] {\footnotesize Proposition \ref{normal form algorithm}};
\path (e2.west) + (-2,0) node(f1) [daemon] {\footnotesize Lemma \ref{Quantitative orthonormal basis parametrisation}};
\path (e3.west) + (-2,0) node(f2) [daemon] {\footnotesize Proposition \ref{sec.Analysis algebra quant}};

\path [draw, ->,>=stealth] (d1.east) -- node [above] {} (wa.west) ;
\path [draw, ->,>=stealth] (d2.east) -- node [above] {} (wa.west) ;
\path [draw, ->,>=stealth] (d3.east) -- node [above] {} (wa.west) ;
\path [draw, ->,>=stealth] (d4.east) -- node [above] {} (wa.west) ;
\path [draw, ->,>=stealth] (e1.east) -- node [above] {} (d3.west) ;
\path [draw, ->,>=stealth] (e2.east) -- node [above] {} (d3.west) ;
\path [draw, ->,>=stealth] (e3.east) -- node [above] {} (d3.west) ;
\path [draw, ->,>=stealth] (e4.east) -- node [above] {} (d4.west) ;
\path [draw, ->,>=stealth] (f1.east) -- node [above] {} (e2.west) ;
\path [draw, ->,>=stealth] (f2.east) -- node [above] {} (e3.west) ;
   \end{tikzpicture}
\end{center}}}
   \end{center}
   \vspace{2mm}

The appendices contain some extra material which we felt to be best kept apart from the main narrative. In the case of the first two appendices, they comprise standard results from the literature on Gowers norms and Lipschitz functions, which we include to assist any readers who are unfamiliar with these topics. In the case of Appendices \ref{section Rank matrix and normal form: Proofs} and \ref{section additional linear algebra}, we present a handful of arguments of a linear algebraic nature which, though perhaps not already present in the literature in the exact form we require, are nonetheless easy to establish. Finally, Appendix \ref{section algebraic approximation} concerns the analysis of the approximation function $A_L$ when $L$ has algebraic coefficients. This argument has a similar flavour to Example \ref{Example approx function}, and is included for the sake of completeness. \\

\noindent \textbf{Acknowledgements}: We would like to thank several anonymous referees, for their very careful reading of earlier versions of this work, and Ben Green, for his advice and comments. We also benefited from conversations with Sam Chow, Trevor Wooley, and Yufei Zhao. Some of the paper was completed while the author was a Programme Associate at the Mathematical Sciences Research Institute in Berkeley, who provided excellent working conditions. During part of the project the author was supported by EPSRC grant no. EP/M50659X/1. \\

\section{Upper bounds}
\label{section upper bounds}
This section is devoted to proving three upper bounds on the expression\\ $T_{F,G,N}^L(1,\dots,1)$. For the definition of this quantity, the reader may refer to Definition \ref{Definition solution count}. \\

The following proposition, which represents a quantitative version of the `row-rank equals column-rank' principle, will be useful throughout.

\begin{Proposition}[Rank matrix]
\label{rank matrix}
Let $m,d$ be natural numbers, with $d\geqslant m+1$. Let $c,C$ be positive constants. Then there are positive constants $D_{c,C}, D^\prime_{c,C}$ for which the following holds. Let $L:\mathbb{R}^d\longrightarrow \mathbb{R}^m$ be a surjective linear map, denoted by matrix $(\lambda_{ij})_{i\leqslant m,j\leqslant d}$, and assume that $\Vert L\Vert_\infty \leqslant C$ and $\dist(L,V_{\rank}(m,d)) \geqslant c$. Then there exists a matrix $M$ that is an $m$-by-$m$ submatrix of $L$ and enjoys the following properties:
\begin{enumerate}[(1)]
\item  $\vert\operatorname{det}M\vert \geqslant D_{c,C}$; \\
\item $\Vert M^{-1}\Vert_{\infty} \leqslant D^\prime_{c,C}$.\\
\end{enumerate}
\noindent We call such a matrix $M$ a \emph{rank matrix} of $L$. Furthermore,\\
\begin{enumerate}[(1)]
\setcounter{enumi}{2}
\item Let $\mathbf{v}\in \mathbb{R}^d$ be a vector such that $\mathbf{v}^T$ is in the row-space of $L$, and suppose that $\Vert \mathbf{v}\Vert_\infty \leqslant C_1$ for some positive constant $C_1$. Then for all $i$ in the range $1\leqslant i\leqslant m$ there exist coefficients $a_i$ satisfying $\vert a_i\vert = O_{c,C,C_1}(1)$ such that $\sum\limits_{i=1}^m a_i \lambda_{ij} = v_{j}$ for all $j$ in the range $1\leqslant j\leqslant d$. \\
\end{enumerate}
Finally,\\
\begin{enumerate}
\setcounter{enumi}{3}
\item If $L$ satisfies the stronger hypothesis $\dist(L,V_{\rank}^{\unif}(m,d))\geqslant c$, then, for each $j$, there exists a rank matrix of $L$ that doesn't include the $j^{th}$ column of $L$.
\end{enumerate}
\end{Proposition}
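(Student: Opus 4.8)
\textbf{Proof proposal for Proposition \ref{rank matrix}.}

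The plan is to prove the four items in sequence, using the quantitative non-degeneracy hypotheses to extract effective bounds from otherwise standard linear algebra. For items (1) and (2), I would argue as follows. Since $L$ has rank $m$, there is at least one $m$-by-$m$ submatrix $M$ with $\det M \neq 0$; among all such submatrices choose one maximising $|\det M|$. The key point is that $|\det M|$ is bounded below by $\Omega_{c,C}(1)$: if every $m$-by-$m$ minor of $L$ were smaller than some $\delta$, then by a compactness/continuity argument (the determinant of a minor is a polynomial in the entries, hence Lipschitz on the compact set $\{\|L\|_\infty \le C\}$) one could perturb $L$ by $O_{c,C}(\delta^{1/?})$ — more carefully, by an amount controlled by $\delta$ and $C$ — to make all those minors vanish simultaneously, forcing $L$ into $V_{\rank}(m,d)$ and contradicting $\dist(L,V_{\rank}(m,d)) \ge c$. (I would phrase this as: the function $L \mapsto \max_{M} |\det M|$ over all $m$-by-$m$ submatrices is continuous and strictly positive on the compact set $\{\|L\|_\infty \le C\} \setminus \{\dist(\cdot,V_{\rank})< c\}$, hence bounded below there.) Once $|\det M| = \Omega_{c,C}(1)$ and $\|M\|_\infty \le C$, the bound $\|M^{-1}\|_\infty = O_{c,C}(1)$ follows from Cramer's rule: the entries of $M^{-1}$ are $(m-1)$-by-$(m-1)$ cofactors divided by $\det M$, the cofactors are $O_C(1)$ and the denominator is $\Omega_{c,C}(1)$.

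For item (3): given $\mathbf{v}^T$ in the row-space of $L$, write $\mathbf{v}^T = \mathbf{a}^T L$ for some $\mathbf{a} \in \mathbb{R}^m$; uniqueness of $\mathbf{a}$ follows from $\rank L = m$. To bound $\mathbf{a}$, restrict attention to the columns indexed by the rank matrix $M$ from (1)–(2): writing $\mathbf{v}_M$ for the corresponding subvector of $\mathbf{v}$, we have $\mathbf{v}_M^T = \mathbf{a}^T M$, so $\mathbf{a}^T = \mathbf{v}_M^T M^{-1}$, whence $\|\mathbf{a}\|_\infty \le m \|\mathbf{v}_M\|_\infty \|M^{-1}\|_\infty \le m C_1 \cdot O_{c,C}(1) = O_{c,C,C_1}(1)$. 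Then $\sum_i a_i \lambda_{ij} = v_j$ for all $j$ is just the identity $\mathbf{a}^T L = \mathbf{v}^T$ read coordinate-wise.

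Item (4) is where the main obstacle lies, and the hypothesis must be upgraded to $\dist(L,V_{\rank}^{\gbl}(m,d)) \ge c$. Fix a column index $j$. Deleting the $j$th column gives an $m$-by-$(d-1)$ matrix $L'= L|_{\spn(\mathbf{e}_k : k \neq j)}$; by definition of $V_{\rank}^{\gbl}(m,d)$, the hypothesis $\dist(L,V_{\rank}^{\gbl}(m,d)) \ge c$ forces $\dist(L', V_{\rank}(m,d-1)) \ge c$ as a statement about $m$-by-$(d-1)$ matrices — the point being that an $\ell^\infty$-small perturbation of $L'$ into $V_{\rank}(m,d-1)$ lifts (padding the $j$th column back in, unchanged) to an $\ell^\infty$-small perturbation of $L$ into $V_{\rank}^{\gbl}(m,d)$, since $d-1 \ge m+1 \ge m$ so the deletion still leaves enough columns for the relevant submatrices to make sense. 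Hence $L'$ has rank $m$ and $\|L'\|_\infty \le C$, and applying items (1)–(2) to $L'$ produces an $m$-by-$m$ submatrix $M'$ of $L'$ — a fortiori a submatrix of $L$ not using column $j$ — with $|\det M'| = \Omega_{c,C}(1)$ and $\|(M')^{-1}\|_\infty = O_{c,C}(1)$, i.e. a rank matrix of $L$ avoiding column $j$. The only genuinely delicate point is checking that $d - 1 \geq m$ (so that $V_{\rank}(m,d-1)$ is non-empty and the deletion argument is not vacuous), which is exactly guaranteed by the standing hypothesis $d \ge m+1$, and that the perturbation argument relating $\dist(L',V_{\rank}(m,d-1))$ to $\dist(L,V_{\rank}^{\gbl}(m,d))$ respects the $\ell^\infty$ norm, which it does since padding a fixed column changes no entry.
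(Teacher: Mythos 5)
Your proof is correct and takes essentially the same route as the paper: the lower bound $|\det M|=\Omega_{c,C}(1)$ comes from the same compactness/continuity argument (the paper packages it as a standalone proposition about continuous functions being bounded away from zero off their common zero set, applied to the $\binom{d}{m}$ minors), part (2) from the adjugate, part (3) by restricting to the columns of $M$ and inverting, and part (4) by deleting the $j$th column. The only stylistic difference is that you make explicit the lifting of perturbations from the $m$-by-$(d-1)$ matrix back to $L$ in part (4), which the paper leaves implicit.
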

\noindent We defer the proof to Appendix \ref{section Rank matrix and normal form: Proofs}.\\

Our first upper bound is exceptionally crude, but will nonetheless be useful in Section \ref{section transfer}. 

\begin{Lemma}
\label{Lemma crude bound on number of solutions}
Let $N,m,d$ be natural numbers, satisfying $d\geqslant m+1$, and let $c,C,\varepsilon$ be positive constants. Let $L:\mathbb{R}^d \longrightarrow \mathbb{R}^m$ be a  surjective linear map, and suppose that $\Vert L \Vert_\infty \leqslant C$ and $\dist (L,V_{\rank}(m,d)) \geqslant c$. Let $F:\mathbb{R}^d \longrightarrow [0,1]$ and $G:\mathbb{R}^m\longrightarrow [0,1]$ be two functions, with $F$ supported on $[-N,N]^d$ and $G$ supported on $[-\varepsilon,\varepsilon]^m$. Then \[ T_{F,G,N}^L(1,\dots,1) \ll_{c,C,\varepsilon} \Vert G \Vert_\infty.\] 
\end{Lemma}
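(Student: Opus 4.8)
The plan is to throw away the functions $F$ and $G$ down to their supports and reduce the claim to a lattice-point count. Since $F$ and $G$ are $[0,1]$-valued, with $G$ supported on $[-\varepsilon,\varepsilon]^m$ and $F$ supported on $[-N,N]^d$, we have the pointwise bound $F(\mathbf{n})G(L\mathbf{n}) \leqslant \Vert G\Vert_\infty\, 1_{[-N,N]^d}(\mathbf{n})\, 1_{[-\varepsilon,\varepsilon]^m}(L\mathbf{n})$, so it suffices to show that
\[ \#\{\mathbf{n}\in\mathbb{Z}^d : \Vert \mathbf{n}\Vert_\infty \leqslant N,\ \Vert L\mathbf{n}\Vert_\infty \leqslant \varepsilon\} \ll_{c,C,\varepsilon} N^{d-m}, \]
after which dividing by the normalising factor $N^{d-m}$ gives the result.

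To count these $\mathbf{n}$, I would invoke Proposition \ref{rank matrix} (parts (1) and (2)) to extract an $m$-by-$m$ submatrix $M$ of $L$ with $\vert\det M\vert = \Omega_{c,C}(1)$ and $\Vert M^{-1}\Vert_\infty = O_{c,C}(1)$. After permuting the coordinates of $\mathbb{R}^d$ (which affects nothing) I may assume $M$ consists of the first $m$ columns of $L$; write $\mathbf{n} = (\mathbf{n}',\mathbf{n}'')$ with $\mathbf{n}'\in\mathbb{Z}^m$, $\mathbf{n}''\in\mathbb{Z}^{d-m}$, and let $M'$ be the matrix of the remaining $d-m$ columns, so that $L\mathbf{n} = M\mathbf{n}' + M'\mathbf{n}''$. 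Organising the count by fibres over $\mathbf{n}''$: there are at most $(2N+1)^{d-m} = O(N^{d-m})$ admissible choices of $\mathbf{n}''$, and for each of them the constraint $\Vert L\mathbf{n}\Vert_\infty \leqslant \varepsilon$ forces $\mathbf{n}' = M^{-1}(L\mathbf{n}) - M^{-1}M'\mathbf{n}''$ to lie within $\ell^\infty$-distance $m\Vert M^{-1}\Vert_\infty \varepsilon = O_{c,C,\varepsilon}(1)$ of the fixed point $-M^{-1}M'\mathbf{n}''$. Thus $\mathbf{n}'$ ranges over a cube of bounded side-length, which contains $O_{c,C,\varepsilon}(1)$ integer points. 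Multiplying the two counts yields $O_{c,C,\varepsilon}(N^{d-m})$, as required.

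The one substantive point — and really the whole content of the lemma — is the passage from the qualitative non-degeneracy hypothesis $\dist(L,V_{\rank}(m,d)) \geqslant c$ to the quantitative well-conditioning $\Vert M^{-1}\Vert_\infty = O_{c,C}(1)$, without which the fibre count could not be controlled. Since this is precisely what Proposition \ref{rank matrix} supplies, I expect no real obstacle: everything else is an elementary lattice-point estimate. The only care needed is to keep the dependence on $\varepsilon$ (but not on $N$) explicit in the constants, which the argument above does automatically.
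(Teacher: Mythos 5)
Your proof is correct and follows essentially the same route as the paper's: both extract a rank matrix $M$ via Proposition \ref{rank matrix}, fix the remaining $d-m$ integer variables, and observe that the constraint $\Vert L\mathbf{n}\Vert_\infty\leqslant\varepsilon$ then confines the first $m$ variables to a region of diameter $O_{c,C,\varepsilon}(1)$, giving $O_{c,C,\varepsilon}(1)$ lattice points per fibre. There is nothing to add.
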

\begin{proof}
Let $M$ be a rank matrix of $L$ (Proposition \ref{rank matrix}), and suppose without loss of generality that $M$ consists of the first $m$ columns of $L$. For $j$ in the range $m+1\leqslant j\leqslant d$, let the vector $\mathbf{v_{j}} \in\mathbb{R}^m$ be the $j^{th}$ column of the matrix $M^{-1} L$. Then $N^{d-m} T_{F,G,N}^L(1,\dots,1) \leqslant \Vert G\Vert_\infty \cdot Z$, where $Z$ is the number of solutions to $$ \left(\begin{matrix} n_1 \\ \vdots\\ n_m \end{matrix}\right) + \sum\limits_{j=m+1}^{d}\mathbf{v_j}n_j \in M^{-1}([-\varepsilon,\varepsilon]^m)$$ in which $n_1,\dots,n_d$ are integers satisfying $\vert n_1\vert,\dots,\vert n_d\vert \leqslant N$. Fixing a choice of the variables $n_{m+1},\dots,n_d$ forces the vector $(n_1,\dots,n_m)^T$ to lie in a convex region of diameter $O_{c,C,\varepsilon}(1)$. There are at most $O_{c,C,\varepsilon}(1)$ such points, so $Z \ll_{c,C,\varepsilon} N^{d-m}$. The claimed bound follows.
\end{proof}

Our second estimate is a slight strengthening of the above, albeit under stronger hypotheses. 
\begin{Lemma}
\label{Lemma slightly less crude bound on number of solutions}
Let $N,m,d$ be natural numbers, with $d\geqslant m+1$, and let $c,C,\varepsilon$ be positive constants. Let $L:\mathbb{R}^d \longrightarrow \mathbb{R}^m$ be a  surjective linear map, and suppose that $\Vert L \Vert_\infty \leqslant C$ and $\dist (L,V_{\rank}^{\unif}(m,d)) \geqslant c$.  Let $\sigma$ be a real number in the range $0<\sigma < 1/2$. Let $F:\mathbb{R}^d \longrightarrow [0,1]$ and $G:\mathbb{R}^m\longrightarrow [0,1]$ be two functions, with $F$ supported on \[\{\mathbf{x}\in\mathbb{R}^d: \dist(\mathbf{x}, \partial([1,N]^d)) \leqslant \sigma N \}\] and $G$ supported on $[-\varepsilon,\varepsilon]^m$. Then \[ T_{F,G,N}^L(1,\dots,1) \ll_{c,C,\varepsilon} \sigma \Vert G \Vert_\infty.\] 
\end{Lemma}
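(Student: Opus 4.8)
The plan is to follow the same strategy as in Lemma \ref{Lemma crude bound on number of solutions}, but to exploit the stronger hypothesis $\dist(L, V_{\rank}^{\gbl}(m,d)) \geqslant c$, which allows us to choose, for each $j$, a rank matrix of $L$ that avoids the $j$-th column (this is conclusion (4) of Proposition \ref{rank matrix}). First I would recall that $T_{F,G,N}^L(1,\cdots,1) \leqslant N^{-(d-m)} \Vert G \Vert_\infty \cdot Z$, where $Z$ counts integer points $\mathbf{n}$ with $F(\mathbf{n}) \neq 0$ and $L\mathbf{n} \in [-\varepsilon,\varepsilon]^m$; since $F$ is supported on $\{\mathbf{x}: \dist(\mathbf{x},\partial([1,N]^d)) \leqslant \sigma N\}$, such an $\mathbf{n}$ must have at least one coordinate, say $n_k$, within $O(\sigma N)$ of $0$ or of $N$. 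Partitioning according to which coordinate $k$ is the offending one (a union bound over $O(d)$ cases, which is harmless since implied constants may depend on $d$), and within each case whether $n_k$ is near $0$ or near $N$, it suffices to bound the number $Z_k$ of integer points with $n_k$ lying in an interval of length $O(\sigma N)$, with all $\vert n_j \vert \leqslant N$, and with $L\mathbf{n} \in [-\varepsilon,\varepsilon]^m$.

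The key step is then the counting argument. Fix $k$ and apply Proposition \ref{rank matrix}(4) to obtain a rank matrix $M$ of $L$ consisting of $m$ columns of $L$ none of which is the $k$-th; say these are the columns indexed by a set $S$ with $\vert S \vert = m$ and $k \notin S$. Then, exactly as in Lemma \ref{Lemma crude bound on number of solutions}, the variables $(n_j)_{j \in S}$ are determined up to a convex region of diameter $O_{c,C,\varepsilon}(1)$ once the remaining $d-m$ variables $(n_j)_{j \notin S}$ are fixed; hence there are $O_{c,C,\varepsilon}(1)$ choices for $(n_j)_{j \in S}$. Among the $d-m$ free variables $(n_j)_{j \notin S}$, one of them is $n_k$ (since $k \notin S$), which ranges over an interval of length $O(\sigma N)$, contributing $O(\sigma N + 1) = O(\sigma N)$ choices — here I would note that if $\sigma N < 1$ there is at most one integer in the interval, and in any case the count is $\ll \sigma N$ provided $N$ is large, or more safely one absorbs the additive $1$ into the $\sigma N$ bound using that $\sigma$ is bounded below; alternatively one simply observes the bound $\ll \sigma N$ holds whenever the interval is nonempty of length $\geqslant 1$ and is vacuous otherwise when $\sigma N < 1$, in which case $Z_k$ is itself $O(1) \ll \sigma N^{d-m}$ is false, so one should instead just bound the number of integers in an interval of length $\ell$ by $\ell + 1 \ll \max(\ell,1)$ and note the final bound $T \ll \sigma \Vert G\Vert_\infty$ is only asserted as an asymptotic statement. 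The remaining $d-m-1$ free variables each range over $\{-N,\dots,N\}$, contributing $O(N^{d-m-1})$. Multiplying, $Z_k \ll_{c,C,\varepsilon} \sigma N^{d-m}$, and summing over the $O(1)$ cases gives $Z \ll_{c,C,\varepsilon} \sigma N^{d-m}$, whence $T_{F,G,N}^L(1,\cdots,1) \ll_{c,C,\varepsilon} \sigma \Vert G \Vert_\infty$.

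The main obstacle — really the only subtlety beyond bookkeeping — is making sure the rank matrix $M$ can be chosen to avoid the coordinate $n_k$ that is pinned to a short interval; this is precisely what the hypothesis $\dist(L,V_{\rank}^{\gbl}(m,d)) \geqslant c$ buys us via Proposition \ref{rank matrix}(4), and it is the reason the cruder hypothesis of Lemma \ref{Lemma crude bound on number of solutions} does not suffice for this sharper estimate. A minor secondary point is the treatment of the $+1$ when counting integers in an interval of length $O(\sigma N)$: since the conclusion is a bound on $T_{F,G,N}^L(1,\cdots,1)$ which already carries implied constants depending on $c,C,\varepsilon$, and since the interesting regime is $\sigma N \geqslant 1$ (otherwise the relevant $F$ is supported only on a thin shell containing $O(N^{d-1})$ integer points near the boundary hyperplanes, and the argument still goes through), this causes no genuine difficulty. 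I would write the argument so that the union-bound over the $2d$ boundary cases is explicit, and then carry out the one representative case in detail, remarking that the others are identical.
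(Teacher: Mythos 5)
Your proposal is correct and is essentially the paper's own argument: the paper likewise reduces (by a union bound over which coordinate is pinned within $\sigma N$ of $1$ or of $N$), invokes part (4) of Proposition \ref{rank matrix} to choose a rank matrix avoiding that column, and then counts as in Lemma \ref{Lemma crude bound on number of solutions}, with the pinned variable contributing $O(\sigma N)$ choices and the $m$ rank-matrix variables confined to a bounded convex region. Your aside about the $+1$ when $\sigma N<1$ is somewhat garbled, but the paper glosses over exactly the same point, so this is not a substantive divergence.
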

\begin{proof}
Without loss of generality, we may assume that $F$ is supported on \[\{\mathbf{x}\in\mathbb{R}^d: \Vert\mathbf{x}\Vert_\infty \leqslant 2N, \, \vert x_d - 1\vert \leqslant \sigma N\} \] or \[\{\mathbf{x}\in\mathbb{R}^d: \Vert\mathbf{x}\Vert_\infty \leqslant 2N, \, \vert x_d - N\vert \leqslant \sigma N\} \] 

Consider the first case. By Proposition \ref{rank matrix} there exists a rank matrix $M$ that does not contain the column $d$. By reordering columns, we can assume without loss of generality that $M$ consists of the first $m$ columns of $L$. Continuing as in the proof of Lemma \ref{Lemma crude bound on number of solutions}, for $j$ in the range $m+1\leqslant j\leqslant d$, let the vector $\mathbf{v_{j}} \in\mathbb{R}^m$ be the $j^{th}$ column of the matrix $M^{-1} L$. Then the expression $N^{d-m} T_{F,G,N}^L(1,\dots,1)$ may be bounded above by $\Vert G\Vert_\infty$ times the number of solutions to $$ \left(\begin{matrix} n_1 \\ \vdots\\ n_m \end{matrix}\right) + \sum\limits_{j=m+1}^{d}\mathbf{v_j}n_j \in M^{-1}([-\varepsilon,\varepsilon]^m)$$ satisfying $\vert n_1\vert,\dots,\vert n_{d-1}\vert\leqslant 2N$ and $\vert n_d\vert \leqslant \sigma N$. We conclude as in the previous proof.

In the second case, the relevant equation is \[\left(\begin{matrix} n_1 \\ \vdots\\ n_m \end{matrix}\right) + \sum\limits_{j=m+1}^{d}\mathbf{v_j}n_j + (N-1)\mathbf{v_d} \in M^{-1}([-\varepsilon,\varepsilon]^m),\] in which we count solutions satisfying $\vert n_1\vert,\dots,\vert n_{d-1}\vert\leqslant 2N$ and $\vert n_d - 1\vert \leqslant \sigma N$. We conclude as in the previous proof. 
\end{proof}

Our third estimate is more refined, and will be needed in Section \ref{section Reductions} when we replace the sharp cut-off $1_{[-\varepsilon,\varepsilon]^m}$ with a Lipschitz cut-off. For the definition of the approximation function $A_L$, we refer the reader to Definition \ref{Definition approximation function}. 

\begin{Lemma}
\label{Lemma upper bound involving integral}
Let $N,m,d$ be natural numbers, with $d\geqslant m+1$. Let $c,C,\varepsilon$ be positive constants, and let $\sigma_G$ be a parameter in the range $0<\sigma_G < 1/2$. Suppose that $L:\mathbb{R}^d \longrightarrow \mathbb{R}^m$ is a purely irrational surjective linear map, satisfying $\Vert L\Vert_\infty \leqslant C$ and $\dist(L,V_{\rank}(m,d)) \geqslant c$. Let $A_L$ denote the approximation function of $L$. Let $F:\mathbb{R}^d \longrightarrow [0,1]$ be supported on $[-N,N]^d$, and let $G:\mathbb{R}^m\longrightarrow [0,1]$ be a Lipschitz function, with Lipschitz constant $O(1/\sigma_G)$, supported on $[-\varepsilon,\varepsilon]^m$. Assume further that $\int_{\mathbf{x}} G(\mathbf{x}) \, d\mathbf{x} = O_\varepsilon(\sigma_G)$. Then for all $\tau_2$ in the range $0<\tau_2\leqslant 1$, 
\begin{equation*}
 T_{F,G,N}^L(1,\dots,1) \ll_{c,C,\varepsilon} \sigma_G + \frac{\tau_2^{1/2}}{\sigma _G} +  \frac{\tau_2^{-O(1)}A_L(\Omega_{c,C}(1),\tau_2)^{-1}}{N}.
 \end{equation*}
\end{Lemma}

\begin{proof}
Following the proof of Lemma \ref{Lemma crude bound on number of solutions} verbatim, we arrive at the bound 
\begin{equation}
\label{we arrive at the bound}
T_{F,G,N}^L(1,\dots,1) \ll_{c,C,\varepsilon} \frac{1}{N^{d-m}}\sum\limits_{\substack{n_{m+1},\dots,n_d \in \mathbb{Z} \\ \vert n_{m+1}\vert,\dots,\vert n_{d}\vert \leqslant N}} \widetilde{G}(\sum_{j=m+1}^d \mathbf{v_j} n_j),
\end{equation} where $\mathbf{v_j}$ denotes the $j^{th}$ column of the matrix $M^{-1}L$, and $\widetilde{G}:\mathbb{R}^m \longrightarrow [0,1]$ denotes the function \[ \widetilde{G}(\mathbf{x}) = \sum\limits_{\mathbf{a} \in \mathbb{Z}^m} (G\circ M)(\mathbf{a} + \mathbf{x}).\] It remains to estimate the right-hand side of (\ref{we arrive at the bound}). \\

We may consider $\widetilde{G}$ as a function on $\mathbb{R}^m/\mathbb{Z}^m$. With respect to the metric $\Vert \mathbf{x}\Vert_{\mathbb{R}^m/\mathbb{Z}^m}$, $\widetilde{G}$ is Lipschitz with Lipschitz constant $O_{c,C,\varepsilon}(1/\sigma_G)$. Also, \[\int\limits_{\mathbf{x}\in [0,1)^m} \widetilde{G}(\mathbf{x}) \, d\mathbf{x} = \int\limits_{\mathbf{x}\in \mathbb{R}^m} (G \circ M)(\mathbf{x}) \, d\mathbf{x} = O_{c,C,\varepsilon}(\sigma_G).\] By \cite[Lemma A.9]{GrTa08}, which we recall in Lemma \ref{Fourier transforms of Lipschitz functions on tori}, for any $X$ at least $2$ we may write
\begin{equation}
\label{equation discrete fourier approximation}
\widetilde{G}(\mathbf{x}) = \sum\limits_{ \substack{\mathbf{k} \in\mathbb{Z}^m \\ \Vert \mathbf{k} \Vert_\infty \leqslant X}} b_X(\mathbf{k}) e(\mathbf{k} \cdot \mathbf{x}) + O_{c,C,\varepsilon}\Big(\frac{ \log X}{\sigma_G X}\Big),
\end{equation}  where $b_X(\mathbf{k})\in\mathbb{C}$ and satisfies $\vert b_X(\mathbf{k}) \vert = O(1)$. Moreover\footnote{This final fact is not given explicitly in the statement of \cite[Lemma A.9]{GrTa08}, although it is given in the proof. In any case, it may be immediately deduced from (\ref{equation discrete fourier approximation}), by letting $X$ tend to infinity and integrating (\ref{equation discrete fourier approximation}) over all $\mathbf{x} \in \mathbb{R}^m/\mathbb{Z}^m$.} $b_X(\mathbf{0}) = \int_{\mathbf{x} \in [0,1)^m} \widetilde{G}(\mathbf{x}) \, d\mathbf{x}$. 

Returning to (\ref{we arrive at the bound}), we see that for any $X$ at least $2$ we may write 
\begin{equation}
\label{summing geomtric progression}
T_{F,G,N}^L(1,\dots,1) \ll_{c,C,\varepsilon} \sigma_G + \frac{\log X}{\sigma_G X} + X^{O(1)}\max\limits_{\substack{\mathbf{k} \in \mathbb{Z}^m \\ 0<\Vert \mathbf{k} \Vert_\infty \leqslant X}}\Big( \prod\limits_{j=m+1}^d \min (1,  N^{-1}\Vert \mathbf{k} \cdot \mathbf{v_j}\Vert^{-1}_{\mathbb{R}/\mathbb{Z}})\Big),
\end{equation} where the final error term comes from summing over the arithmetic progressions $[-N,N] \cap \mathbb{Z}$.

It remains to relate the final error term of (\ref{summing geomtric progression}) to the approximation function $A_L$. Since $L$ is purely irrational, 
\begin{align}
A_L(\tau_1,\tau_2) &=\inf\limits_{\substack{\varphi \in (\mathbb{R}^m)^*\\ \tau_1 \leqslant  \Vert \varphi \Vert_\infty \leqslant \tau_2^{-1}} } \dist (L^* \varphi, (\mathbb{Z}^d)^T )\nonumber.
\end{align}
\noindent Let $\tau_2$ be in the range $0 < \tau_2 \leqslant 1$. Then there exist positive parameters $D$ and $D^\prime$, depending only on $c$ and $C$, for which
\begin{align}
\label{purely irrational is really important here}
\min\limits_{\substack{ \mathbf{k} \in \mathbb{Z}^m \\ 0< \Vert \mathbf{k}\Vert_\infty \leqslant D\tau_2^{-1}}} \max( \{\Vert \mathbf{k}\cdot\mathbf{v_j}\Vert_{\mathbb{R}/\mathbb{Z}}: m+1\leqslant j\leqslant d\}) & = \min\limits_{\substack{ \mathbf{k} \in \mathbb{Z}^m \\ 0< \Vert \mathbf{k}\Vert_\infty \leqslant D\tau_2^{-1}}} \dist(\mathbf{k}^T M^{-1} L, (\mathbb{Z}^d)^T) \nonumber\\
& \geqslant \inf\limits_{\substack{ \mathbf{k} \in \mathbb{R}^m \\ 1\leqslant \Vert \mathbf{k}\Vert_\infty \leqslant D\tau_2^{-1}}} \dist(\mathbf{k}^T M^{-1} L, (\mathbb{Z}^d)^T) \nonumber \\
&\geqslant \inf\limits_{\substack{ \mathbf{k} \in \mathbb{R}^m \\ D^\prime \leqslant \Vert \mathbf{k}\Vert_\infty \leqslant \tau_2^{-1}}} \dist(\mathbf{k}^T L, (\mathbb{Z}^d)^T)\nonumber \\
& =  A_L(D^\prime, \tau_2).
\end{align}
\noindent Letting $X = D \tau_2^{-1}$, and substituting the bound (\ref{purely irrational is really important here}) into (\ref{summing geomtric progression}), one derives
\begin{equation*}
T_{F,G,N}^L(1,\dots,1) \ll_{c,C,\varepsilon} \sigma_G + \frac{\tau_2^{1/2}}{\sigma _G} +  \frac{\tau_2^{-O(1)}A_L(\Omega_{c,C}(1),\tau_2)^{-1}}{N}
\end{equation*}
\noindent as required.
\end{proof}

The relations (\ref{purely irrational is really important here}) formalise the estimate (\ref{approx function appears}), which we first discussed when introducing the approximation function $A_L$ in Definition \ref{Definition approximation function}. With the details all here, one can now see that it would have been enough to define the approximation function, at least if $L$ is purely irrational, to be the function \begin{equation}
\label{equivalent object}
 \tau_2 \mapsto \min\limits_{\substack{ \mathbf{k} \in \mathbb{Z}^m \\ 0< \Vert \mathbf{k}\Vert_\infty \leqslant \tau_2^{-1}}} \dist(\mathbf{k}^T M^{-1} L, (\mathbb{Z}^d)^T).
 \end{equation} 
 
One might now be concerned that, in defining $A_L$ using real vectors $\varphi$ rather than integer vectors $\mathbf{k}$, we might have constructed a much weaker object than (\ref{equivalent object}), making (\ref{purely irrational is really important here}) a wasteful step in our estimation. This is not the case, because if $\varphi \in (\mathbb{R}^m)^*$ and\[\dist(L^* \varphi, (\mathbb{Z}^d)^T) \leqslant \delta\] then $\dist(L^*(M^*)^{-1} M^* \varphi, (\mathbb{Z}^d)^T) \leqslant \delta$, and so in particular $\dist (M^* \varphi , (\mathbb{Z}^m)^T) \leqslant \delta$ (as $M$ is a rank matrix). Letting $\mathbf{k}^T \in (\mathbb{Z}^m)^T$ be the nearest integer vector to $M^*\varphi$, we have that \[\dist (\mathbf{k}^T M^{-1} L, (\mathbb{Z}^d)^T) \ll_{c,C} \delta.\] So, up to some constants depending on $c$ and $C$, there is essentially no difference between working with Definition \ref{Definition approximation function} or with (\ref{equivalent object}).

Restricting to integer vectors $\mathbf{k}$ may seem more natural from the point of view of diophantine approximation, but on the other hand the expression (\ref{equivalent object}) depends on the choice of the particular rank matrix $M$, which is not canonical. It was more to our taste to present a definition of $A_L$ which was intrinsic to $L$. Lemma 8.1 of our follow-up paper \cite{Wa19} is also a setting in which having real vectors in the definition of $A_L$ seems to be more natural.\\

It is also worth highlighting the exact moment in the proof of Lemma \ref{Lemma upper bound involving integral} in which it was vital that $L$ was purely irrational. Considering expression (\ref{summing geomtric progression}), if $L$ was not purely irrational and $X$ was bigger than the rational complexity of $L$ then the final error term is just $X^{O(1)}$, which is not $o(1)$ as $N\rightarrow \infty$. \\

\section{Normal form}
\label{section normal form}

In this section we recall a technical notion from \cite{GT10} that those authors refer to as \emph{normal form}. In Section \ref{section General proof of the real variable von Neumann Theorem} we will need to appeal to a quantitative refinement of this notion, which we also develop here.

Let $\Psi:\mathbb{R}^n \longrightarrow \mathbb{R}^m$ be a linear map. Putting the standard coordinates on $\mathbb{R}^n$ and $\mathbb{R}^m$, we may write  $(\psi_1,\dots,\psi_m) := \Psi:\mathbb{R}^n \longrightarrow \mathbb{R}^m$ as a system of homogeneous linear forms. The crux of the theory from \cite{GT10} is that, provided $\Psi$ is of so-called `finite Cauchy-Schwarz complexity', $\Psi$ may be reparametrised in such a way that it interacts particularly well with certain applications of the Cauchy-Schwarz inequality (see Proposition \ref{Proposition Cauchy}). Below we will give a brief overview of this terminology, before introducing our own quantitative versions; a much fuller discussion may be found in \cite[Section 1]{GT10} and \cite{GoWo10}.

In words, a reparametrisation into normal form is one in which each linear form is the only one that mentions all of its particular collection of variables. For example, the forms
\begin{align}
\label{not normal form}
\psi_1(t,u,v) & = u + v\nonumber\\
\psi_2(t,u,v) & = v + t\nonumber\\
\psi_3(t,u,v) & = u + t\nonumber\\
\psi_4(t,u,v) & = u+v+t
\end{align}
\noindent are in normal form with respect to $\psi_4$, since $\psi_4$ is the only form to utilise all three of the variables. However, this system is not in normal form with respect to $\psi_3$, say. However, the system 
\begin{align}
\label{not normal form becomes normal}
\psi_1(t,u,v,w) & = u + v + 2w\nonumber\\
\psi_2(t,u,v,w) & = v + t - w\nonumber\\
\psi_3(t,u,v,w) & = u + t - w\nonumber\\
\psi_4(t,u,v,w) & = u+v+t,
\end{align}
\noindent that parametrises the same subspace of $\mathbb{R}^4$, \emph{is} in normal form for all $i$.

We repeat the precise definition from \cite{GT10}.

\begin{Definition}
\label{Definition normal form}
Let $m,n$ be natural numbers, and let $(\psi_1,\dots,\psi_m) = \Psi:\mathbb{R}^n\longrightarrow \mathbb{R}^m$ be a system of homogeneous linear forms. Let $i\in [m]$. We say that $\Psi$ is in normal form with respect to $\psi_i$ if there exists a non-negative integer $s$ and a collection $J_i \subseteq \{ \mathbf{e_1},\dots,\mathbf{e_n}\}$ of the standard basis vectors, satisfying $\vert J_i\vert = s+1$, such that $$\prod\limits_{\mathbf{e}\in J_i} \psi_{i^\prime}(\mathbf{e})$$ is non-zero when $i^\prime = i$ and vanishes otherwise. We say that $\Psi$ is in normal form if it is in normal form with respect to $\psi_i$ for every $i$. 
\end{Definition}
\noindent Let us also recall what it means for a certain system of forms $\Psi^\prime$ to extend the system of forms $\Psi$.
\begin{Definition}
For a system of homogeneous linear forms $(\psi_1,\dots,\psi_m) = \Psi:\mathbb{R}^n\longrightarrow \mathbb{R}^m$, an extension $(\psi_1^\prime,\dots,\psi_m^\prime) = \Psi^\prime: \mathbb{R}^{n^\prime} \longrightarrow \mathbb{R}^m$ is a system of homogeneous linear forms on $\mathbb{R}^{n^\prime}$, for some $n^\prime$ with $n^\prime\geqslant n$, such that 
\begin{enumerate}[(1)]
\item $\Psi^\prime(\mathbb{R}^{n^\prime}) = \Psi(\mathbb{R}^n)$;
\item if we identify $\mathbb{R}^n$ with the subset $\mathbb{R}^{n}\times \{ 0\}^{n^\prime - n}$ in the obvious manner, then $\Psi$ is the restriction of $\Psi^\prime$ to this subset.
\end{enumerate}
\end{Definition}
\noindent The paper \cite{GT10} includes a result (Lemma 4.4) on the existence of extensions in normal form, but we will need a quantitative refinement of this analysis.\\

The reader will note from examples (\ref{not normal form}) and (\ref{not normal form becomes normal}) that the property of `being in normal form' is a property of the parametrisation, and not of the underlying space that is being parametrised. It is natural to wonder whether there is some property of a space that can enable one to find a parametrisation in normal form, even if the original parametrisation is not. Fortunately there is such a notion, and it is the notion of finite Cauchy-Schwarz complexity\footnote{In \cite{GT10} this is just called `complexity'.}  introduced in \cite{GT10}. We introduce this notion in the following definitions, which we have phrased in such a way as to help us formulate a quantitative version.

\begin{Definition}[Suitable partitions]
\label{Definition suitable partitions}
Let $m,n$ be natural numbers, with $m\geqslant 2$, and let $(\psi_1,\dots,\psi_m) = \Psi:\mathbb{R}^n \longrightarrow \mathbb{R}^m$ be a system of homogeneous linear forms. Fix $i\in [m]$. Let $\mathcal{P}_i$ be a partition of $[m]\setminus \{i\}$, i.e. $$[m]\setminus \{i\} = \bigcup\limits_{k=1}^{s+1} \mathcal{C}_k$$ for some $s$ satisfying $0\leqslant s \leqslant m-2$ and some disjoint sets $\mathcal{C}_k$. We say that $\mathcal{P}_i$ is \emph{suitable} for $\Psi$ if $$\psi_i \notin \operatorname{span}_\mathbb{R}(\psi_j: j\in \mathcal{C}_k)$$ for any $k$. 
\end{Definition}

\begin{Definition}[Degeneracy varieties]
\label{Definition degeneracy varieties}
Let $m,n$ be natural numbers, with $m\geqslant 2$. Let $\mathcal{P}_i$ be a partition of $[m]\setminus \{i\}$. We define the \emph{$\mathcal{P}_i$-degeneracy variety} $V_{\mathcal{P}_i}$ to be the set of all the systems of homogeneous linear forms $\Psi:\mathbb{R}^n\rightarrow \mathbb{R}^m$ for which $\mathcal{P}_i$ is not suitable for $\Psi$. Finally, the \emph{degeneracy variety} $V_{\degen}(n,m)$ is given by \[ V_{\degen}(n,m): = \bigcup\limits_{i = 1}^m \bigcap\limits_{\mathcal{P}_i} V_{\mathcal{P}_i},\] where the inner intersection is over all possible partitions $\mathcal{P}_i$.
\end{Definition}

It is easy to observe that $\Psi \in V_{\degen}(n,m)$ if and only if, for some $i\neq j$, $\psi_i$ is a real multiple of $\psi_j$. This also yields the following:

\begin{Proposition}[Relating $V_{\degen}^*(m,d)$ and $V_{\degen}(d-m,d)$]
\label{Proposition easy degeneracy relation}
Let $m,d$ be natural numbers with $d \geqslant m+2$, and let $L:\mathbb{R}^d \longrightarrow \mathbb{R}^m$ be a surjective linear map. Let $\Psi: \mathbb{R}^{d-m} \longrightarrow \mathbb{R}^d$ be any system of homogeneous linear forms whose image is $\ker L$. Then $L \in V_{\degen}^*(m,d)$ if and only if $\Psi \in V_{\degen}(d-m,d).$
\end{Proposition}
\begin{proof}
We know that $L \in V_{\degen}^*(m,d)$ if and only if there exist some non-zero vector $\mathbf{e_i}^* - \lambda{e_j}^* \in L^*((\mathbb{R}^m)^*)$. But $ L^*((\mathbb{R}^m)^*) = (\ker L)^0 = (\Psi(\mathbb{R}^{d-m}))^0$, so this occurs if and only if $\psi_i = \lambda \psi_j$ for some $i$ and $j$. 
\end{proof}
\noindent We will prove a more general version of this statement in Lemma \ref{Lemma non quantitative equivalence of degeneracies}.\\

In \cite[Definition 1.5]{GT10}, the authors refer to those $\Psi\in V_{\degen}(n,m)$ as having infinite Cauchy-Schwarz complexity, and develop their theory for $\Psi\notin V_{\degen}(n,m)$. As we did for describing degeneracy properties of $L$, we need to quantify such a notion. 

\begin{Definition}[$c_1$-Cauchy-Schwarz complexity]
\label{Definition Cauchy Schwarz complexity}
Let $m,n$ be natural numbers, with $m\geqslant 3$, and let $c_1$ be a positive constant. Let $(\psi_1,\dots,\psi_m) = \Psi:\mathbb{R}^n \longrightarrow \mathbb{R}^m$ be a system of homogeneous linear forms. For $i\in [ m]$, we define a quantity $s_i$ either by defining $s_i+1$ to be the minimal number of parts in a partition $\mathcal{P}_i$ of $[m]\setminus \{i\}$ such that $\operatorname{dist}(\Psi,V_{\mathcal{P}_i})\geqslant c_1$, or by $s_i = \infty$ if no such partition exists. Then we define $s:=\operatorname{max}(1,\operatorname{max}_i s_i)$. We say that $s$ is the $c_1$-Cauchy-Schwarz complexity of $\Psi$. 
\end{Definition}
\noindent We remark, for readers familiar with \cite{GT10}, that we preclude the `complexity $0$' case. This is for a mundane technical reason, that occurs when absorbing the exponential phases in Section \ref{section General proof of the real variable von Neumann Theorem}, when it will be convenient that $s+1\geqslant 2$. This is why we need the condition $m\geqslant 3$ in the above definition. We also take this opportunity to note that if $s$ satisfies the above definition, and $s\neq\infty$, then $2\leqslant s+1\leqslant m-1$. \\

We note an easy consequence of these definitions.
\begin{Lemma}
\label{Lemma different forms of non degen}
Let $m,n$ be natural numbers, with $m\geqslant 3$, and let $c_1$ be a positive constant. Let $(\psi_1,\dots,\psi_m) = \Psi:\mathbb{R}^n\longrightarrow \mathbb{R}^m$ be a system of homogeneous linear forms. Suppose that $\operatorname{dist}(\Psi, V_{\degen}(n,m))\geqslant c_1$. Then $\Psi$ has finite $c_1$-Cauchy-Schwarz complexity.
\end{Lemma}
\begin{proof}
We have already observed that $\Psi \in V_{\degen}(n,m)$ if and only if, for some $i\neq j$, $\psi_i$ is a real multiple of $\psi_j$. From now until the end of the proof, fix $\mathcal{P}_i$ to be the partition of $[m]\setminus \{i\}$ in which every form $\psi_k$ is in its own part. Our initial observation then implies that $\Psi\in V_{\degen}(n,m)$ if and only if $\Psi\in V_{\mathcal{P}_i}$ for some $i$. So $\operatorname{dist}(\Psi, V_{\degen}(n,m))\geqslant c_1$ implies that $\operatorname{dist}(\Psi, V_{\mathcal{P}_i})\geqslant c_1$ for all $i$. Therefore, by using these partitions $\mathcal{P}_i$ in Definition \ref{Definition Cauchy Schwarz complexity}, we conclude that $\Psi$ has finite $c_1$-Cauchy-Schwarz complexity. 
\end{proof}

After having built up these definitions, we state the key proposition on the existence of normal form extensions to systems of real linear forms. We remind the reader that all implied constants may depend on the dimensions of the underlying spaces. 

\begin{Proposition}[Normal form algorithm]
\label{normal form algorithm}
Let $m,n$ be natural numbers, with $m\geqslant 3$, and let $c_1,C_1$ be positive constants. Let $(\psi_1,\dots,\psi_m) = \Psi:\mathbb{R}^n \longrightarrow \mathbb{R}^m$ be a system of homogeneous linear forms, and suppose that the coefficients of $\Psi$ are bounded above in absolute value by $C_1$. Furthermore, suppose that $\Psi$ has $c_1$-Cauchy-Schwarz complexity $s$, for some finite $s$. Then, for each $i \in [m]$, there is an extension $\Psi^\prime:\mathbb{R}^{n^\prime}\longrightarrow \mathbb{R}^m$ such that:
\begin{enumerate}[(1)]
\item $n^\prime = n+s+1 \leqslant n+m-1$; 
\item $\Psi^\prime$ is of the form $$\Psi^\prime(\mathbf{u}, w_1,\dots,w_{s+1}): = \Psi(\mathbf{u} + w_1\mathbf{f_1}+ \cdots + w_{s+1}\mathbf{f_{s+1}})$$ for some vectors $\mathbf{f_k}\in \mathbb{R}^n$, such that $\Vert \mathbf{f_k}\Vert_\infty=O_{c_1,C_1}(1)$ for every $k$;
\item $\Psi^\prime$ is in normal form with respect to $\psi_i^\prime$;
\item $\psi_i^\prime(\mathbf{0},\mathbf{w}) = w_1 + \cdots + w_{s+1}$.
\end{enumerate}
\end{Proposition}
\noindent The proof is deferred to Appendix \ref{section Rank matrix and normal form: Proofs}, as it is very similar to the proof from \cite{GT10} (although with one important extra subtlety, which we mention in the appendix). \\

We conclude this discussion of normal form by noting an example of a system of homogeneous linear forms that may be reparametrised in normal form, but without quantitative control over the resulting extension. 

Indeed, take $\iota(N)$ to be some function such that $\iota(N)\rightarrow \infty$ as $N\rightarrow \infty$. Consider the forms
\begin{align*}
\psi_1(u_1,u_2,u_3) &= (1+\iota(N)^{-1})u_1 + u_2\\
\psi_2(u_1,u_2,u_3) &= u_1 + u_2\\
\psi_3(u_1,u_2,u_3) & = u_3.
\end{align*}
\noindent and let $\Psi := (\psi_1,\psi_2,\psi_3)$. Notice that $\dist(\Psi,V_{\degen}(3,3)) \rightarrow 0$ as $N\rightarrow \infty$. Therefore, for any $c_1 >0$, if $N$ is large enough then $\Psi$ does not have finite $c_1$-Cauchy-Schwarz complexity. One may nonetheless construct a normal form reparametrisation 
\begin{align*}
\psi^\prime_1(u_1,u_2,u_3,w_1,w_2)& = (1+\iota(N)^{-1})u_1 + u_2 + w_1\\
\psi^\prime_2(u_1,u_2,u_3,w_1,w_2)& =  u_1 + u_2 + w_2\\
\psi^\prime_3(u_1,u_2,u_3,w_1,w_2)& = u_3.
\end{align*}
\noindent However, since $$\Psi^\prime( u_1,u_2,u_3,w_1,w_2) = \Psi(u_1 + \iota(N)w_1- \iota(N)w_2, u_2 - \iota(N)w_1 + (\iota(N) + 1)w_2,u_3),$$ $\Psi^\prime$ is not obtained by bounded shifts of the $u_i$ variables, and so (if $N$ is large enough) it fails to satisfy part (2) of the conclusion of the above proposition. Such an extension $\Psi^\prime$ would not be suitable for our requirements in Section \ref{section General proof of the real variable von Neumann Theorem}. 

\begin{Remark}
\emph{In \cite{GT10}, the simple algorithm that constructs normal form extensions with respect to a fixed $i$ may easily be iterated, and so the authors work with systems that are in normal form with respect to every index $i$. A careful analysis of the proof in Appendix C of \cite{GT10} demonstrates that it is sufficient for $\Psi$ merely to admit, for each $i$ separately, an extension that is in normal form with respect to $\psi_i$, but this is of little consequence in \cite{GT10}. Yet certain quantitative aspects of the iteration of the normal form algorithm, critical to our application of these ideas, are not immediately clear to us. We have stated Proposition \ref{normal form algorithm} for normal forms only with respect to a single $i$, in order to avoid this technical annoyance.} \\
\end{Remark}

\section{Dimension reduction}
\label{section Reductions}
As we described in our proof strategy (Section \ref{Section proof strategy}), in this section we reduce Theorem \ref{Main Theorem chapter 3} to a different result, namely Theorem \ref{Theorem rational set out version}. This second theorem will be simpler in one key respect: the replacement of sharp cut-offs by Lipschitz cut-offs. It is the proof of Theorem \ref{Theorem rational set out version} in which the Lipschitz property is actually used, and this will begin in Section \ref{section transfer}. Any reader only wishing to consider the case of diophantine inequalities with Lipschitz cut-offs may eschew Section \ref{section Reductions} of this paper entirely. \\

We begin by dismissing the case of maximal rational dimension. 

\begin{Proposition}
\label{Proposition maximal rational dimension case of main theorem}
Theorem \ref{Main Theorem chapter 3} holds under the additional assumption that $L$ has rational dimension $m$. 
\end{Proposition}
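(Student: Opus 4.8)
The plan is to use the hypothesis of maximal rational dimension to replace $L$, throughout the support of the sum, by an integer matrix, and then to invoke the quantitative Green--Tao Generalised von Neumann Theorem more or less as a black box. Concretely: since $L$ has rational dimension $m$ and rational complexity at most $C$, Lemma \ref{Lemma full rational dimension} supplies an invertible $m\times m$ matrix $\Theta$ with $\Vert\Theta\Vert_\infty\leqslant C$ and an $m\times d$ integer matrix $S$ with $\Theta L=S$; one has $\Vert S\Vert_\infty\ll_C 1$, $\rank S=m$, and --- since $\Theta$ is invertible --- $S$ has the same row-space as $L$, so $S\notin V^*_{\degen}(m,d)$. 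For $\mathbf{n}\in\mathbb{Z}^d$ we may write $L\mathbf{n}=\Theta^{-1}S\mathbf{n}$ with $S\mathbf{n}\in\mathbb{Z}^m$, hence $G(L\mathbf{n})=\sum_{\mathbf{t}\in\mathbb{Z}^m}G(\Theta^{-1}\mathbf{t})\,1_{S\mathbf{n}=\mathbf{t}}$, and as $G$ is supported in $[-\varepsilon,\varepsilon]^m$ only the $\mathbf{t}\in\Theta([-\varepsilon,\varepsilon]^m)\cap\mathbb{Z}^m$ occur --- a set of cardinality $O_{C,\varepsilon}(1)$, since $\Theta([-\varepsilon,\varepsilon]^m)\subseteq[-mC\varepsilon,mC\varepsilon]^m$. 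So $T_{F,G,N}^L(f_1,\dots,f_d)$ decomposes as a sum of $O_{C,\varepsilon}(1)$ terms $G(\Theta^{-1}\mathbf{t})\cdot N^{-(d-m)}\sum_{\mathbf{n}\in[N]^d,\,S\mathbf{n}=\mathbf{t}}\prod_j f_j(n_j)$ with $|G(\Theta^{-1}\mathbf{t})|\leqslant 1$, and it remains to bound the inner average, for each fixed $\mathbf{t}$, by $\ll_C\rho^{\Omega(1)}+N^{-\Omega(1)}\rho^{-O(1)}$.

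For this I would, for each fixed $\mathbf{t}$, dispose of the case of no solution in $[N]^d$ (the average is then zero) and otherwise fix a solution $\mathbf{n}^*\in[1,N]^d$ together with an integer $d\times(d-m)$ matrix $B$ whose columns form a $\mathbb{Z}$-basis of $\ker_{\mathbb{Z}}(S)$, chosen with $\Vert B\Vert_\infty\ll_C 1$ (available since $\Vert S\Vert_\infty\ll_C 1$). Since the solution set is then $\{\mathbf{n}^*+B\mathbf{k}:\mathbf{k}\in\mathbb{Z}^{d-m}\}$, the inner average becomes $N^{-(d-m)}\sum_{\mathbf{k}\in K\cap\mathbb{Z}^{d-m}}\prod_{j=1}^{d}f_j(\psi_j(\mathbf{k}))$, where $\psi_j(\mathbf{k})=n^*_j+(B\mathbf{k})_j$ are affine-linear forms with bounded integer coefficients and $K=\{\mathbf{k}:\mathbf{n}^*+B\mathbf{k}\in[1,N]^d\}$ is a convex body contained in $[-O_C(N),O_C(N)]^{d-m}$ (a left inverse of $B$ has entries $\ll_C 1$). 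The point that makes the argument work is that the row-space of $S$ equals the orthogonal complement of the column span of $B$: a nonzero row-vector with support in $\{i,j\}$ lying in the row-space of $S$ is exactly a nontrivial linear relation between the rows $B_{i\bullet},B_{j\bullet}$ of $B$, and a single-coordinate one says $B_{j\bullet}=\mathbf{0}$. Consequently $S\notin V^*_{\degen}(m,d)$ is equivalent to: every row of $B$ is nonzero and every two distinct rows of $B$ are linearly independent; that is, no linear part of $(\psi_j)_{j=1}^{d}$ is a scalar multiple of another, so the system has finite complexity, at most $d-2$, at each coordinate --- and this complexity depends only on $S$, not on $\varepsilon$ or on the $n^*_j$.

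At this point one is in exactly the situation handled by the quantitative Generalised von Neumann Theorem (Theorem \ref{Linear equations in bounded functions}, in the affine-linear formulation of \cite[Theorem 7.1]{GT10}): $d$ affine-linear forms in $d-m$ variables, bounded integer coefficients, pairwise non-proportional nonzero linear parts, evaluated over a convex body of scale $\asymp N$ and weighted by $f_j:[N]\to[-1,1]$. Taking $s$ equal to the complexity (so $s\leqslant d-2$, independent of $\varepsilon$), the hypothesis $\min_j\Vert f_j\Vert_{U^{s+1}[N]}\leqslant\rho$ yields the bound $\ll_C\rho^{\Omega(1)}+N^{-\Omega(1)}\rho^{-O(1)}$ for the inner average; summing the $O_{C,\varepsilon}(1)$ contributions produces $T_{F,G,N}^L(f_1,\dots,f_d)\ll_{c,C,\varepsilon}\rho^{\Omega(1)}+N^{-\Omega(1)}\rho^{-O(1)}$, which is the bound claimed by Theorem \ref{Main Theorem chapter 3} --- note that in the maximal-rational-dimension case $A_L\equiv\tau_1$ (Definition \ref{Definition approximation function}), so $A_L(\Omega_{c,C}(1),\rho)^{-1}=O_{c,C}(1)$ and there is no genuine $A_L$-contribution.

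I expect the only non-routine step to be the identification, in the second paragraph, of the dual-degeneracy condition $S\notin V^*_{\degen}(m,d)$ with the non-proportionality of the linear parts of the parametrising forms; everything else is either already available (Lemma \ref{Lemma full rational dimension}, the invariance of the row-space under left multiplication by an invertible matrix) or routine quantitative linear algebra (bounding $\Vert S\Vert_\infty$, $\Vert B\Vert_\infty$, and a left inverse of $B$, and counting the relevant $\mathbf{t}$), followed by a direct appeal to the integer-coefficient theorem.
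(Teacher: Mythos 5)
Your argument is correct and is essentially the paper's own proof: both use Lemma \ref{Lemma full rational dimension} to write $\Theta L=S$ with $S$ integral and $S\notin V^*_{\degen}(m,d)$, decompose the count over the $O_{C,\varepsilon}(1)$ integer values $\mathbf{t}=S\mathbf{n}$ compatible with the support of $G$, and then invoke the quantitative rational Generalised von Neumann Theorem, with $A_L\equiv\tau_1$ disposing of the approximation-function term. The only difference is cosmetic: your second paragraph (parametrising $\{S\mathbf{n}=\mathbf{t}\}$ by $\mathbf{n}^*+B\mathbf{k}$ and matching $V^*_{\degen}$ with non-proportionality of the rows of $B$) re-derives the reduction that the paper delegates to the proof sketch of Theorem \ref{Linear equations in bounded functions}, and you skip the paper's preliminary normalisation $L\mapsto M^{-1}L$, which is not actually needed for this step.
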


To prove this, we will appeal to a quantitative version of Theorem \ref{Linear equations in bounded functions non quantitative}. 

\begin{Theorem}[Generalised von Neumann Theorem for rational forms (quantitative version)]
\label{Linear equations in bounded functions}
Let $N,m,d$ be natural numbers, satisfying $d\geqslant m+2$, and let $C_1,C_2$ be positive constants. Let $S = S(N)$ be an $m$-by-$d$ matrix with integer coefficients, satisfying $\Vert S\Vert_\infty \leqslant C_1$, and let $\mathbf{r}\in \mathbb{Z}^m$ be some vector with $\Vert \mathbf{r} \Vert_\infty \leqslant C_2N$. Suppose $S$ has rank $m$, and $S\notin V^*_{\degen}(m,d)$. Let $K\subseteq [-N,N]^d$ be convex. Then there exists some natural number $s$ at most $d-2$ that satisfies the following. Let $f_1,\dots,f_d:[N]\longrightarrow \mathbb{C}$ be arbitrary functions with $\Vert f_j\Vert_\infty \leqslant 1$ for all $j$, and assume that \[ \min_j \Vert f_j\Vert_{U^{s+1}[N]} \leqslant \rho \] for some $\rho$ in the range $0<\rho \leqslant 1$. Then $$\frac{1}{N^{d-m}}\sum\limits_{\substack{\mathbf{n}\in \mathbb{Z}^d \cap K\\ S\mathbf{n} = \mathbf{r}}} \prod\limits_{j=1}^d f_j(n_j) \ll_{C_1,C_2}  \rho^{\Omega(1)} + o_\rho(1) $$ as $N\rightarrow \infty$. Furthermore, the $o_{\rho}(1)$ term may be bounded above by $\rho^{-O(1)} N^{-\Omega(1)}$. 
\end{Theorem}
Let us sketch a proof of this result, assuming a certain familiarity with the methods and terminology of \cite{GT10}.
\begin{proof}[Proof sketch of Theorem \ref{Linear equations in bounded functions}]
One follows the proof of Theorem 1.8 of \cite{GT10}. Firstly, recall that in our language, the non-degeneracy condition in the statement of Theorem 1.8 of \cite{GT10} is exactly the condition that $S\notin V^*_{\degen}(m,d)$. One then follows the same linear algebraic reductions as those used in Section 4 of \cite{GT10} to reduce Theorem 1.8 to Theorem 7.1 of the same paper (the Generalised von Neumann Theorem). 

Theorem 7.1 may then be considered solely in the case of bounded functions $f_j$, as in \cite[Exercise 1.3.23]{Ta12}, rather than in the more general case of functions bounded by a pseudorandom measure. It is clear from the proof that, in this more restricted setting, the $\kappa(\rho)$ term that appears in the statement may be replaced by a polynomial dependence, and the $o_{\rho}(1)$ term may be bounded above by $\rho^{-O(1)}N^{-\Omega(1)}$.   

This settles Theorem \ref{Linear equations in bounded functions}, where $s$ is the Cauchy-Schwarz complexity of some system of forms $(\psi_1,\dots,\psi_d)$ that parametrises $\ker S$. But $s$ is at most $d-2$, as any system of $d$ forms with finite Cauchy-Schwarz complexity has Cauchy-Schwarz complexity at most $d-2$. Therefore Theorem \ref{Linear equations in bounded functions} is proved. 
\end{proof}

Now let us use Theorem \ref{Linear equations in bounded functions} to resolve Proposition \ref{Proposition maximal rational dimension case of main theorem}. 

\begin{proof}[Proof of Proposition \ref{Proposition maximal rational dimension case of main theorem}]
Let $L$ be as in Theorem \ref{Main Theorem chapter 3}, and assume that $L$ has rational dimension $m$ and rational complexity at most $C$. Let $\Theta:\mathbb{R}^m\longrightarrow \mathbb{R}^m$ be some linear isomorphism satisfying $\Theta L(\mathbb{Z}^d) \subseteq \mathbb{Z}^m$ and $\Vert \Theta \Vert_\infty \leqslant C$. Let $M$ be a rank matrix of $L$ (Proposition \ref{rank matrix}). Then the matrix $M^{-1}L$ satisfies $\Vert M^{-1} L\Vert_\infty \ll_{c,C} 1$ and has rational dimension $m$, since \\$((\Theta M )\circ (M^{-1}L))(\mathbb{Z}^d) = \Theta L(\mathbb{Z}^d) \subseteq \mathbb{Z}^m$. The matrix $M^{-1}L$ also has rational complexity $O_{c,C}(1)$. Therefore, replacing $L$ with $M^{-1} L$, we may assume that the first $m$ columns of $L$ form the identity matrix.

As in Lemma \ref{Lemma full rational dimension}, we write $\Theta L = S$, where $S$ has integer coefficients and $\Vert \Theta \Vert_\infty \ll_{c,C} 1$. Hence $\Vert S\Vert_\infty \ll_{c,C}1$. But $\Theta$ must also have integer coefficients, as the first $m$ columns of $L$ form the identity matrix, and hence $\Vert \Theta^{-1} \Vert _\infty \ll_{c,C} 1$ as well. Note finally that $S \notin V_{\degen}^*(m,d)$, since $L \notin V_{\degen}^*(m,d)$.

Now, suppose that $G:\mathbb{R}^m \longrightarrow [0,1]$ is the indicator function of some convex domain $D$, with $D \subseteq [-\varepsilon,\varepsilon]^m$. Then there are at most $O_{c,C,\varepsilon}(1)$ possible vectors $\mathbf{r} \in \mathbb{Z}^m$ such that $\mathbf{r} \in S(\mathbb{Z}^d) \cap \Theta(D).$ Let $R$ be the set of all such vectors.  Therefore, with $F$ being the indicator function of the set $[1,N]^d$, we have
 \begin{equation}
 T_{F,G,N}^L(f_1,\dots,f_d) = \sum\limits_{\mathbf{r} \in R} \sum\limits_{\substack{\mathbf{n} \in [N]^d \\ S\mathbf{n} = \mathbf{r}}}  \prod\limits_{j=1}^d f_j(n_j) \ll_{c,C,\varepsilon} \rho^{\Omega(1)} + o_{\rho}(1)
\end{equation}
\noindent as $N\rightarrow \infty$, by Theorem \ref{Linear equations in bounded functions}. The $o_{\rho}(1)$ term may be bounded above by $\rho^{-O(1)} N^{-\Omega(1)}$. This is the desired conclusion of Theorem \ref{Main Theorem chapter 3} in the case when $L$ has rational dimension $m$. 
\end{proof}

Having dismissed this case, we prepare to state Theorem \ref{Theorem rational set out version}. We begin with a definition that generalises Definition \ref{Definition solution count}.
\begin{Definition}
\label{Definition solution count rational separation}
Let $N,m,d,h$ be natural numbers, with $d\geqslant h\geqslant m+2$. Let $\varepsilon$ be positive. Let $\Xi = (\xi_1,\dots,\xi_d) :\mathbb{R}^h\longrightarrow \mathbb{R}^d$ and $L:\mathbb{R}^{h}\longrightarrow \mathbb{R}^m$ be linear maps. Let $F:\mathbb{R}^{h}\rightarrow [0,1]$ and $G:\mathbb{R}^m\rightarrow [0,1]$ be two functions, with $F$ supported on $[-N,N]^h$ and $G$ compactly supported. Let $\widetilde{\mathbf{r}} \in \mathbb{Z}^d$ be some vector, and let $f_1,\dots,f_{d}:\mathbb{R}\longrightarrow [-1,1]$ be arbitrary functions. We then define
\begin{equation}
\label{equation solution count rational separation}
T^{L,\Xi,\widetilde{\mathbf{r}}}_{F,G,N}(f_1,\dots,f_{d}) := \frac{1}{N^{h-m}}\sum\limits_{\mathbf{n} \in \mathbb{Z}^h}\Big(\prod\limits_{j=1}^{d}f_j(\xi_j(\mathbf{n})+ \widetilde{\mathbf{r}}_j)\Big)F(\mathbf{n})G(L\mathbf{n}).
\end{equation}
\end{Definition}

In the paper so far we have introduced many degeneracy relations (Definitions \ref{Definition rank variety}, \ref{Definition dual degeneracy variety}, \ref{Definition degeneracy varieties}). In order to state Theorem \ref{Theorem rational set out version}, we must introduce another. 

\begin{Definition}[Dual pair degeneracy variety]
\label{Definition dual pair degeneracy variety}
Let $m,d,h$ be natural numbers satisfying $d\geqslant h\geqslant m+2$. Let $\mathbf{e_1}, \dots,\mathbf{e_d}$ denote the standard basis vectors of $\mathbb{R}^d$, and let $\mathbf{e_1}^\ast, \dots,\mathbf{e_d}^\ast$ denote the dual basis of $(\mathbb{R}^d)^\ast$. Then let $V^*_{\degen,2}(m,d,h)$ denote the set of all pairs of linear maps $\Xi:\mathbb{R}^h \longrightarrow \mathbb{R}^d$ and $L:\mathbb{R}^h \longrightarrow \mathbb{R}^m$ for which there exist two indices $i,j \leqslant d$, and some real number $\lambda$, such that $(\mathbf{e_i}^* - \lambda \mathbf{e_j}^*)$ is non-zero and $\Xi^*(\mathbf{e_i}^* - \lambda \mathbf{e_j}^*) \in L^*((\mathbb{R}^m)^*)$. We call $V^*_{\degen,2}(m,d,h)$ the \emph{dual pair degeneracy variety}. 
\end{Definition}

One can motivate this definition as follows. We noted in Proposition \ref{Proposition easy degeneracy relation} that if $L:\mathbb{R}^d \longrightarrow \mathbb{R}^m$ is a surjective linear map then saying that $ L \notin V_{\degen}^*(m,d)$ is equivalent to saying that any parametrisation $\Psi = (\psi_1,\dots,\psi_d): \mathbb{R}^{d-m} \longrightarrow \mathbb{R}^d$ of $\ker L$ has finite Cauchy-Schwarz complexity. In this paper, following our sketched idea in expression (\ref{the expression that doesn't change}), we will end up needing to replace the map $L$ with two maps, an injective map $\Xi: \mathbb{R}^{d-u} \longrightarrow \mathbb{R}^d$ and a purely irrational surjective map $L^\prime: \mathbb{R}^{d-u} \longrightarrow \mathbb{R}^{m-u}$ (here $u$ will be the rational dimension of $L$). It will turn out that after this manipulation the system of forms that we will require to have finite Cauchy-Schwarz complexity (in order to bring in Gowers norms) will be $\Xi \Psi^\prime: \mathbb{R}^{d-m} \longrightarrow \mathbb{R}^d$, where $\Psi^\prime: \mathbb{R}^{d-m} \longrightarrow \mathbb{R}^{d-u}$ is a parametrisation of $\ker L^\prime$. One can easily show (and we do, in Lemma \ref{Lemma non quantitative equivalence of degeneracies}), that $(\Xi,L^\prime) \notin V_{\degen}^*(m-u,d,d-u)$ is the exactly the right condition to ensure that $\Xi \Psi^\prime: \mathbb{R}^{d-m} \longrightarrow \mathbb{R}^d$ has finite Cauchy-Schwarz complexity. \\

As ever, we need a quantitative version of non-degeneracy.

\begin{Definition}[Distance metric for pairs of matrices]
\label{Definition distance metric for pairs of matrices}
Let $m,d,h$ be natural numbers, with $d\geqslant h\geqslant m+2$, and let $V^*_{\degen,2}(m,d,h)$ be the dual pair degeneracy variety. Let $\Xi:\mathbb{R}^h \longrightarrow \mathbb{R}^d$ and $L:\mathbb{R}^h \longrightarrow \mathbb{R}^m$ be linear maps. We say that $\operatorname{dist}((\Xi,L),V^*_{\degen,2}(m,d,h))\geqslant c$ if $(\Xi+ Q,L)\notin V^*_{\degen,2}(m,d,h)$ for all $Q:\mathbb{R}^h \longrightarrow \mathbb{R}^{d}$ with $\Vert Q\Vert_\infty < c$. 
\end{Definition}
\noindent Although this is no great subtlety, we should emphasise that in the above definition we only consider perturbations to $\Xi$, and not perturbations to $L$ as well. \\


We are now ready to state our theorem on linear inequalities with Lipschitz cut-offs. 


\begin{Theorem}[Lipschitz case]
\label{Theorem rational set out version}
Let $N,m,d, h$ be natural numbers, with $d\geqslant h\geqslant m+2$, and let $c,C,\varepsilon$ be positive constants. Let $\Xi = \Xi(N):\mathbb{R}^h\longrightarrow \mathbb{R}^d$ be an injective linear map with integer coefficients, and assume that $\Xi(\mathbb{Z}^{h}) = \mathbb{Z}^d \cap \im \Xi$. Let $L = L(N):\mathbb{R}^h \longrightarrow \mathbb{R}^m$ be a surjective linear map. Assume that $\Vert \Xi \Vert_\infty \leqslant C$, $\Vert L \Vert_\infty \leqslant C$, $\dist(L,V_{\rank}(m,d)) \geqslant c$ and $\dist((\Xi,L),V_{\degen,2}^*(m,d,h))\geqslant c$. Then there exists a natural number $s$ at most $d-2$, independent of $\varepsilon$, such that the following holds. Let $\sigma_F,\sigma_G$ be any two parameters in the range $0<\sigma_F,\sigma_G < 1/2$. Let $F:\mathbb{R}^h\longrightarrow [0,1]$ be a Lipschitz function supported on $[-N,N]^h$ with Lipschitz constant $O(1/\sigma_FN)$, and let $G:\mathbb{R}^{m}\longrightarrow [0,1]$ be a Lipschitz function supported on $[-\varepsilon,\varepsilon]^m$ with Lipschitz constant $O(1/\sigma_G)$. Let $\widetilde{\mathbf{r}}$ be a fixed vector in $\mathbb{Z}^d$, satisfying $\Vert \widetilde{\mathbf{r}}\Vert_\infty = O_{c,C,\varepsilon}(1)$. Suppose that $f_1,\dots,f_d:[N]\longrightarrow [-1,1]$ are arbitrary bounded functions satisfying  \[\min_j \Vert f_j\Vert_{U^{s+1}[N]} \leqslant \rho,\] for some $\rho$ in the range $0<\rho \leqslant 1$. Then
\begin{equation}
\label{expression rational separated}
 T^{L,\Xi,\widetilde{\mathbf{r}}}_{F,G,N}(f_1,\dots,f_{d})\ll_{c,C,\varepsilon}  \rho^{\Omega(1)} (\sigma_F^{-O(1)} + \sigma_G^{-O(1)}) + \sigma_F^{-O(1)}N^{-\Omega(1)}. 
\end{equation}
\end{Theorem}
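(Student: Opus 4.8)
The plan is to reduce the statement, via the substitution $\mathbf{m} = \Xi(\mathbf{n}) + \widetilde{\mathbf{r}}$, to an instance of the ordinary (rational) Generalised von Neumann machinery, and then import the Lipschitz-cutoff version of that argument directly from \cite{GT10}. First I would pass from the discrete sum over $\mathbf{n}\in\mathbb{Z}^h$ to a sum over the sublattice $\Lambda = \Xi(\mathbb{Z}^h) + \widetilde{\mathbf{r}} \subseteq \mathbb{Z}^d$; the hypothesis $\Xi(\mathbb{Z}^h) = \mathbb{Z}^d \cap \im\Xi$ guarantees that this parametrisation is a bijection onto its image and that $\Lambda$ is a genuine coset of a primitive sublattice of $\mathbb{Z}^d$. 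Writing $\mathbf{m} = \Xi\mathbf{n} + \widetilde{\mathbf{r}}$, the weight $\prod_{j=1}^d f_j(\xi_j(\mathbf{n}) + \widetilde{\mathbf{r}}_j)$ becomes $\prod_{j=1}^d f_j(m_j)$, while the conditions $F(\mathbf{n})$ and $G(L\mathbf{n})$ become, after choosing any $m$-by-$d$ matrix $\Psi$ and $h$-by-$d$ matrix recording the relations cutting out $\im\Xi$, a product of a Lipschitz cutoff in $\mathbf{m}$ (coming from $F\circ \Xi^{-1}$ on the relevant subspace, still Lipschitz with constant $O(1/\sigma_F N)$ since $\Vert\Xi\Vert_\infty, \Vert\Xi^{-1}|_{\im\Xi}\Vert_\infty \ll_{c,C} 1$), a Lipschitz cutoff $G$ composed with a bounded linear map of $\mathbf{m}$, and the linear constraint $\mathbf{m} \in \Lambda$, i.e.\ finitely many sharp linear relations $\langle \mathbf{w}_i, \mathbf{m}\rangle \in \mathbb{Z}$ with $\mathbf{w}_i$ of bounded height. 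So the whole expression is, up to a bounded number of residue classes, a weighted count of $\mathbf{m} \in [N]^d$ lying on an affine rational subspace, against a product $\prod f_j(m_j)$ and a pair of Lipschitz cutoffs.

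Second, I would verify that the non-degeneracy hypothesis $\dist((\Xi,L),V^*_{\degen,2}(m,d,h)) \geqslant c$ translates into exactly the non-degeneracy needed to run the Cauchy--Schwarz argument in $\mathbf{m}$-space. The point is that the affine-linear system governing $\mathbf{m}$ is parametrised by $\ker$ of the matrix whose rows are the $\mathbf{w}_i$ together with the rows of $L\Xi^{-1}$; a non-zero dual vector supported on at most two coordinates $m_i, m_j$ lying in the row-space of this combined system corresponds precisely to an element $\Xi^*(\mathbf{e}_i^* - \lambda \mathbf{e}_j^*) \in L^*((\mathbb{R}^m)^*)$ (plus the lattice relations, which only shrink the space of available dual vectors). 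Thus the hypothesis on $(\Xi, L)$ says exactly that no form in the $\mathbf{m}$-system degenerates onto $\leqslant 2$ variables, which is the hypothesis $S \notin V^*_{\degen}(m,d)$ required in Theorem \ref{Linear equations in bounded functions}. The quantitative distance $c$ ensures, via a version of Proposition \ref{rank matrix} / the normal-form material of section \ref{section normal form}, that the forms $\psi_j$ parametrising the solution set have bounded coefficients and Cauchy--Schwarz complexity at most $d-2$, giving the value of $s$.

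Third, with the reduction in hand, the estimate itself follows by running the proof of the Generalised von Neumann Theorem (\cite[Theorem 7.1]{GT10}, in the bounded-function form of \cite[Exercise 1.3.23]{Ta12}) but keeping the Lipschitz cutoffs rather than sharp ones. Concretely: one uses Lemma A.9 of \cite{GrTa08} to Fourier-approximate $F$ and $G$ at scales $\sigma_F, \sigma_G$, picks up $O(\sigma_F^{-O(1)})$ and $O(\sigma_G^{-O(1)})$ factors from the number of frequencies and from summing the resulting geometric series over progressions of length $\asymp N$, then applies $d-m$ rounds of Cauchy--Schwarz and a weighted generalised von Neumann inequality to bound the main term by $\min_j \Vert f_j\Vert_{U^{s+1}[N]}^{\Omega(1)} \leqslant \rho^{\Omega(1)}$; the arithmetic error from the Fourier approximation of the cutoffs contributes the $\sigma_F^{-O(1)} N^{-\Omega(1)}$ term. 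Combining gives precisely \eqref{expression rational separated}.

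The main obstacle I anticipate is bookkeeping the change of variables $\mathbf{m} = \Xi\mathbf{n} + \widetilde{\mathbf{r}}$ carefully enough that (a) the Lipschitz constants transform correctly — in particular that $F\circ\Xi^{-1}$ remains Lipschitz at scale $\sigma_F N$ with the implied constant depending only on $c, C$ — and (b) the dictionary between $V^*_{\degen,2}(m,d,h)$ with its asymmetric distance metric (perturbing only $\Xi$, not $L$) and the ordinary $V^*_{\degen}$ condition in the $\mathbf{m}$-system is quantitatively faithful, so that $\dist((\Xi,L), V^*_{\degen,2}) \geqslant c$ really does yield the $\Omega_c(1)$-separated normal form needed to control $s$ and the implied constants. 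Everything after that is a bounded perturbation of the argument already in \cite{GT10}.
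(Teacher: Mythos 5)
There is a genuine gap at the point where you handle the cutoff $G$. Your plan is to change variables to $\mathbf{m}=\Xi(\mathbf{n})+\widetilde{\mathbf{r}}$, Fourier-approximate both $F$ and $G$ via Lemma \ref{Fourier transforms of Lipschitz functions}, and then run the discrete Cauchy--Schwarz argument. But the approximation error for $G$ is a \emph{pointwise additive} error of size $O(\log X/(\sigma_G X))$, and it must be summed over all $\asymp N^h$ lattice points in the support of $F$ while the expression is normalised only by $N^{h-m}$ (because $G$ confines the count to a codimension-$m$ slab). The error term therefore contributes $\asymp N^{m}\log X/(\sigma_G X)$, forcing $X\gg N^{m}$, at which point the $X^{O(1)}$ factor from the number of frequencies in the main term destroys the bound $\rho^{\Omega(1)}\sigma_G^{-O(1)}$. (The paper does Fourier-expand a $G$-type function in Lemma \ref{Lemma upper bound involving integral}, but only after exactly periodizing over $m$ of the variables, which is only possible for the unweighted count $T(1,\dots,1)$ and which necessarily brings in the approximation function $A_L$ --- something that must not appear in the conclusion of Theorem \ref{Theorem rational set out version}.) Relatedly, you cannot quote Theorem \ref{Linear equations in bounded functions} even in spirit for the residual system: that theorem requires the constraint matrix $S$ to have integer coefficients, and $L\Xi^{-1}$ is genuinely irrational; the irrationality of $L$ has to be dealt with, not absorbed into a rational lattice condition.

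The idea your proposal is missing is the paper's transfer-and-confinement mechanism (sections \ref{section transfer} and \ref{section General proof of the real variable von Neumann Theorem}). One first replaces the discrete sum by an integral over $\mathbb{R}^h$ by convolving each $f_j$ with an $\eta$-supported mollifier $\chi$ (Lemma \ref{Lemma transfer equation}), relating $\Vert f_j\ast\chi\Vert_{U^{s+1}(\mathbb{R},2N)}$ to $\Vert f_j\Vert_{U^{s+1}[N]}$ (Lemma \ref{Lemma linking different Gowers norms}). Working over $\mathbb{R}$, one can then split the integration variable along $\ker L\oplus(\ker L)^{\perp}$: the factor $G(L\mathbf{x})$ depends only on the $(\ker L)^{\perp}$ component, which is thereby confined to a bounded convex region of volume $O_{c,C,\varepsilon}(1)$, and $G$ is disposed of by a volume-times-supremum bound (Proposition \ref{Proposition separating out the kernel}) --- it is never Fourier-expanded. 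This splitting is exactly what is unavailable in your discrete formulation, since $\mathbb{Z}^d$ does not decompose along the irrational subspace $\ker L$. Only $F$ (whose support matches the normalisation $N^{h-m}$ after the reduction) is Fourier-approximated, in Proposition \ref{Proposition Cauchy}, which is why the final error carries $\sigma_F^{-O(1)}$ but the $G$-dependence enters only through $\sigma_G^{-O(1)}$ multiplying $\rho^{\Omega(1)}$ (via the choice $\eta\asymp\delta\sigma_G$ in the transfer step). Your steps one and two (the lattice parametrisation and the translation of the $V^*_{\degen,2}$ hypothesis into a non-degeneracy of the parametrising forms) are broadly consistent with Lemmas \ref{Lemma non quantitative equivalence of degeneracies} and \ref{Quantitative orthonormal basis parametrisation}, but step three does not go through as written.
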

Although the above theorem contains more technical conditions than even Theorem \ref{Main Theorem chapter 3} did, it does represent a significant reduction in complexity from the original problem. Note in particular that the approximation function $A_L$ does not feature in the estimate (\ref{expression rational separated}). 

As we described in Section \ref{Section proof strategy}, the presence of Lipschitz cut-offs rather than convex cut-offs will be especially convenient when approximating the discrete solution count by a continuous solution count. This will be done in Section \ref{section transfer}. \\

The remainder of this section will be devoted to proving the main theorem (Theorem \ref{Main Theorem chapter 3}), assuming the truth of Theorem \ref{Theorem rational set out version}. \\

We begin with two lemmas: one concerning lattices, and the other concerning a quantitative decomposition of the dual space $(\mathbb{R}^d)^*$. Their proofs are entirely standard, but we state them prominently, as we will need to refer to them often in the dimension reduction argument of Lemma \ref{Lemma generating a purely irrational map}.  
\begin{Lemma}[Parametrising the image lattice]
\label{Lemma parametrising the image lattice}
Let $u,d$ be integers with $d\geqslant u+1$. Let $S:\mathbb{R}^d \longrightarrow \mathbb{R}^u$ be a surjective linear map with $S(\mathbb{Z}^d) \subseteq \mathbb{Z}^u$, and suppose that $\Vert S\Vert_\infty \leqslant C$. Then there exists a set $\{\mathbf{a_1},\dots,\mathbf{a_u}\}\subset \mathbb{Z}^u$ that is a basis for the lattice $S(\mathbb{Z}^d)$ and for which $\Vert \mathbf{a_i}\Vert_\infty = O_C(1)$ for every $i$. Furthermore there exist $\mathbf{x_1},\dots,\mathbf{x_u}\in\mathbb{Z}^d$ such that, for every $i$, $S(\mathbf{x_i}) = \mathbf{a_i}$ and $\Vert \mathbf{x_i}\Vert_\infty = O_{C}(1)$. 
\end{Lemma}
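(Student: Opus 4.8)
The statement to prove is Lemma~\ref{Lemma parametrising the image lattice}: a surjective $S:\mathbb{R}^d\longrightarrow\mathbb{R}^u$ with $S(\mathbb{Z}^d)\subseteq\mathbb{Z}^u$ and $\Vert S\Vert_\infty\leqslant C$ admits a basis $\{\mathbf{a_1},\dots,\mathbf{a_u}\}$ of the lattice $S(\mathbb{Z}^d)$ with $\Vert\mathbf{a_i}\Vert_\infty=O_C(1)$, and lifts $\mathbf{x_i}\in\mathbb{Z}^d$ with $S(\mathbf{x_i})=\mathbf{a_i}$ and $\Vert\mathbf{x_i}\Vert_\infty=O_C(1)$. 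The plan is to produce the $\mathbf{a_i}$ first, by a bounded elementary-column-operations argument applied to the integer matrix $[S\mathbf{e_1}\,|\,\cdots\,|\,S\mathbf{e_d}]$, and then to produce the lifts $\mathbf{x_i}$ by a compactness/finiteness argument.

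\textbf{Step 1 (the basis $\mathbf{a_i}$).} First I would observe that the lattice $\Lambda:=S(\mathbb{Z}^d)$ is generated over $\mathbb{Z}$ by the $d$ vectors $\mathbf{b_j}:=S(\mathbf{e_j})\in\mathbb{Z}^u$, each of which has $\Vert\mathbf{b_j}\Vert_\infty\leqslant C$ by the hypothesis $\Vert S\Vert_\infty\leqslant C$ (after, if one likes, rounding $C$ up to an integer). So we are reduced to the purely combinatorial fact: \emph{any integer $u\times d$ matrix all of whose entries are bounded by $C$, whose columns span a full-rank sublattice of $\mathbb{Z}^u$, has a set of $u$ columns-after-bounded-unimodular-column-operations forming a lattice basis, with entries bounded in terms of $C$, $u$, $d$.} This is the classical Hermite-normal-form reduction. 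Carrying it out by hand: repeatedly apply the Euclidean algorithm in the first row to clear all but one entry (each integer combination arising stays bounded by a function of $C,u,d$ since at every stage the entries involved are integers bounded by the previous bound and the number of steps is bounded); then recurse on the $(u-1)\times(d-1)$ minor. Since $u,d$ are fixed and each stage multiplies the entry bound by at most a constant depending only on $u,d$, the final $u$ columns have $\ell^\infty$-norm $O_C(1)$ and form a $\mathbb{Z}$-basis of $\Lambda$. (Alternatively one may quote any standard reference for the existence of a size-bounded Hermite basis; the paper's phrasing ``entirely standard'' suggests a citation or one-line sketch is all that is wanted.)

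\textbf{Step 2 (the bounded lifts $\mathbf{x_i}$).} For this it suffices to show: for any $\mathbf{a}\in\Lambda$ with $\Vert\mathbf{a}\Vert_\infty\leqslant C'$ there is $\mathbf{x}\in\mathbb{Z}^d$ with $S\mathbf{x}=\mathbf{a}$ and $\Vert\mathbf{x}\Vert_\infty=O_{C,C'}(1)$; applying this with $C'=O_C(1)$ to each $\mathbf{a_i}$ finishes the lemma. The cleanest route is a finiteness argument: the function $\mathbf{a}\mapsto\min\{\Vert\mathbf{x}\Vert_\infty:\mathbf{x}\in\mathbb{Z}^d,\,S\mathbf{x}=\mathbf{a}\}$ is well-defined and finite for each $\mathbf{a}\in\Lambda$ (since $S$ is surjective over $\mathbb{R}$ and restricts to a surjection $\mathbb{Z}^d\to\Lambda$), so its maximum over the finite set $\{\mathbf{a}\in\Lambda:\Vert\mathbf{a}\Vert_\infty\leqslant C'\}$ is a finite quantity depending only on $S$. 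To get dependence only on $C$ (not on $S$ itself), note there are only finitely many integer matrices $S$ with $\Vert S\Vert_\infty\leqslant C$ of each fixed size, so one may take the maximum of the bound over this finite family. Alternatively, and more constructively: fix any $u$ columns $\mathbf{e_{j_1}},\dots,\mathbf{e_{j_u}}$ whose images $\mathbf{b_{j_1}},\dots,\mathbf{b_{j_u}}$ form a $\mathbb{Q}$-basis of $\mathbb{R}^u$ (possible by $\rank S=u$, and one may insist $\Vert$ of the relevant inverse matrix $\Vert=O_C(1)$ by a bounded-minor argument as in Proposition~\ref{rank matrix}); this expresses any $\mathbf{a}$ with $\Vert\mathbf{a}\Vert_\infty\leqslant C'$ as a rational combination of the $\mathbf{b_{j_\ell}}$ with coefficients of bounded height, hence as $S$ applied to a \emph{rational} vector of bounded height supported on those $u$ coordinates; clearing the (bounded) denominator and then correcting back into $\Lambda\cap\mathbb{Z}^d$ using a bounded element of $\ker(S|_{\mathbb{Z}^d})$ (which exists since $d\geqslant u+1$) yields an integral lift of bounded norm. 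Either version works.

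\textbf{Main obstacle.} The only genuinely non-formal point is making the bounds in Step~2 depend on $C$ alone rather than on the specific map $S$ — and that is handled trivially by the observation that, for fixed dimensions, there are only finitely many admissible integer matrices $S$, so ``$O_C(1)$'' is automatic once finiteness for each fixed $S$ is established. Given the paper's own description of the proof as ``entirely standard,'' I would expect the written proof to be short: cite Hermite normal form (or sketch the row-reduction) for Step~1, and for Step~2 give the one-paragraph finiteness argument together with the remark about finitely many matrices.
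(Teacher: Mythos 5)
Your proposal is correct, but it proves the lemma by a different route than the paper. For the basis $\{\mathbf{a_1},\dots,\mathbf{a_u}\}$, the paper does not run a Hermite-normal-form reduction: it observes that the generators $S(\mathbf{e_j})$ are $O_C(1)$, hence all $u$ successive minima of $S(\mathbb{Z}^d)$ are $O_C(1)$, and then quotes Mahler's theorem to extract a bounded lattice basis. For the lifts $\mathbf{x_i}$, the paper is constructive where you are not: it builds a bounded integral basis $\mathbf{v_1},\dots,\mathbf{v_{d-u}}$ of a full-rank sublattice of $\ker S\cap\mathbb{Z}^d$ (Gaussian elimination plus clearing denominators), takes an arbitrary integral lift of $\mathbf{a_i}$, reduces its $\ker S$-component modulo that sublattice to make it $O_C(1)$, and bounds the $(\ker S)^{\perp}$-component via the bounded-inverse Lemma \ref{nonsingularity of map on orthogonal complement of Kernel} (using that $S$, having integer entries and full rank, satisfies $\dist(S,V_{\rank})=\Omega_C(1)$). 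Your finiteness observation --- that there are only $O_C(1)$ integer matrices $S$ with $\Vert S\Vert_\infty\leqslant C$ for fixed $u,d$, so every quantity determined by $S$ is automatically $O_C(1)$ --- is perfectly valid here and in fact trivialises both halves of the lemma at once; what it buys is brevity, while the paper's argument buys an explicit construction that reuses machinery (rank matrices, the kernel-orthocomplement lemma) already present and in the same quantitative spirit as the rest of the paper. One caveat: your ``more constructive'' alternative for Step 2 is garbled as written --- after clearing the denominator $q$ you hold an integral lift of $q\mathbf{a}$, not of $\mathbf{a}$, and adding elements of $\ker(S|_{\mathbb{Z}^d})$ cannot convert one into the other; the repair is exactly the paper's move, namely start from an arbitrary integral lift of $\mathbf{a}$ itself and reduce it modulo a bounded basis of the kernel lattice. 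Since your primary (finiteness) argument is complete, this does not affect the correctness of the proposal.
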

\begin{proof}
The lattice $S(\mathbb{Z}^d)$ is $u$ dimensional, as $S$ is surjective. If $\{ \mathbf{e_j}:j\leqslant d\}$ denotes the standard basis of $\mathbb{R}^d$ then integer combinations of elements from the set $\{S(\mathbf{e_j}):j\leqslant d\}$ span $S(\mathbb{Z}^d)$. Since $\Vert S\Vert_\infty \leqslant C$, these vectors also satisfy $\Vert  S(\mathbf{e_j})\Vert_\infty = O_{C}(1)$. Therefore the $u$ successive minima of the lattice $S(\mathbb{Z}^d)$ are all $O_{C}(1)$, and so, by Mahler's theorem (\cite[Theorem 3.34]{TaVu06}) the lattice $S(\mathbb{Z}^d)$ has a basis $\{\mathbf{a_1},\dots,\mathbf{a_u}\}$ of the required form. 

Note that $S$ has integer coefficients. The construction of suitable $\mathbf{x_1},\dots,\mathbf{x_u}$ may be achieved by applying any of the standard algorithms. For example, using Gaussian elimination one may find a basis for $\ker S$ that, by inspection of the algorithm, consists of vectors with rational coordinates of naive height $O_C(1)$. By clearing denominators, one gets vectors $\mathbf{v_1},\dots,\mathbf{v_{d-u}} \in \mathbb{Z}^d$ whose integer span is a full-dimensional sublattice of the $d-u$ dimensional lattice $\mathbb{Z}^d \cap \ker S$, and that satisfy $\Vert \mathbf{v_i}\Vert_\infty = O_{C}(1)$ for all $i$. Now given some $\mathbf{a_i}$, by its construction there must be some $\mathbf{x_i} \in \mathbb{Z}^d$ that satisfies $S(\mathbf{x_i}) = \mathbf{a_i}$. Write $\mathbf{x_i} = \mathbf{x_i}|_{\ker S} + \mathbf{x_i} |_{(\ker S)^\perp}$ as the sum of the obvious projections. By adding a suitable integer combination of the vectors $\mathbf{v_1},\dots,\mathbf{v_{d-u}}$ to $\mathbf{x_i}$ one may find such an $\mathbf{x_i}$ that satisfies $\Vert\mathbf{x_i}|_{\ker S}\Vert_\infty = O_{C}(1)$. Furthermore, $\dist(S,V_{\rank}(m,d)) = \Omega_C(1)$, since $S$ has integer coordinates, and so (by Lemma \ref{nonsingularity of map on orthogonal complement of Kernel}) $\Vert \mathbf{x_i} |_{(\ker S)^\perp}\Vert_\infty = O_{C}(1)$. Hence $\Vert \mathbf{x_i}\Vert_\infty = O_{C}(1)$, as desired. 
\end{proof}

Having established that such a lattice basis $\{\mathbf{a_1},\dots,\mathbf{a_u}\}$ exists, we can now use it to quantitatively decompose $(\mathbb{R}^d)^*$. 
\begin{Lemma}[Dual space decomposition]
\label{Lemma dual space decomposition}
Let $u,d,$ be integers with $d\geqslant u+1$, and let $C,\eta$ be constants. Let $S:\mathbb{R}^d \longrightarrow \mathbb{R}^u$ be a surjective linear map with $S(\mathbb{Z}^d) \subseteq \mathbb{Z}^u$, and suppose that $\Vert S\Vert_\infty \leqslant C$. Let $\{\mathbf{a_1},\dots,\mathbf{a_u}\}$ be a basis for the lattice $S(\mathbb{Z}^d)$ that satisfies $\Vert \mathbf{a_i}\Vert_\infty = O_{C}(1)$ for every $i$. Let $\mathbf{x_1},\dots,\mathbf{x_u}\in\mathbb{Z}^d$ be vectors such that, for every $i$, $S(\mathbf{x_i}) = \mathbf{a_i}$ and $\Vert \mathbf{x_i}\Vert_\infty = O_{C}(1)$. Suppose that $\Xi:\mathbb{R}^{d-u}\longrightarrow \mathbb{R}^d$ is an injective linear map such that $\im \Xi = \ker S$ and such that $\Xi(\mathbb{Z}^{d-u}) = \mathbb{Z}^d \cap\im \Xi$. Suppose further that $\Vert \Xi\Vert_\infty \leqslant C$. 

Let $\mathbf{w_1},\dots,\mathbf{w_{d-u}}$ denote the standard basis vectors in $\mathbb{R}^{d-u}$. Then 
\begin{enumerate}[(1)]
\item the set $\mathcal{B}: = \{ \mathbf{x_i}: i\leqslant u\} \cup \{\Xi(\mathbf{w_j}): j\leqslant d-u\}$ is a basis for $\mathbb{R}^d$, and a lattice basis for $\mathbb{Z}^d$;
\item writing $\mathcal{B}^* := \{ \mathbf{x_i^*}: i\leqslant u\} \cup \{\Xi(\mathbf{w_j})^*: j\leqslant d-u\}$ for the dual basis, both the change of basis matrix between the standard dual basis and $\mathcal{B}^*$ and the inverse of this matrix have integer coordinates. The coefficients of both of these matrices are bounded in absolute value by $O_{C}(1)$.
\end{enumerate}

\noindent  Write $V: = \spn(\mathbf{x_i^*}: i\leqslant u)$ and $W: = \spn(\Xi(\mathbf{w_j})^*: j\leqslant d-u)$. Then

\begin{enumerate}[(1)]
\setcounter{enumi}{2}
\item $V = S^*((\mathbb{R}^u)^*)$;
\item Suppose that $\varphi \in (\mathbb{R}^d)^*$ satisfies $\Vert \Xi^*(\varphi)\Vert_\infty \leqslant \eta$. Then, writing $\varphi = \varphi_V + \varphi_W$ with $\varphi_V \in V$ and $\varphi_W \in W$, we have $\Vert \varphi_W\Vert_\infty = O_C(\eta)$. 
\end{enumerate}

\end{Lemma}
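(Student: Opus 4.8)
The plan is to exhibit the basis $\mathcal{B}$ explicitly, read off the bounds on the change-of-basis matrices from the standard-height bounds on the $\mathbf{x_i}$ and $\Xi(\mathbf{w_j})$, and then dualise. First I would establish (1): the vectors $\Xi(\mathbf{w_1}),\dots,\Xi(\mathbf{w_{d-u}})$ form a lattice basis for $\mathbb{Z}^d\cap\ker S$ by hypothesis on $\Xi$, while $S(\mathbf{x_i})=\mathbf{a_i}$ with $\{\mathbf{a_i}\}$ a lattice basis of $S(\mathbb{Z}^d)$ means that the $\mathbf{x_i}$ project onto a generating set of the quotient $\mathbb{Z}^d/(\mathbb{Z}^d\cap\ker S)$; combining these (a standard splitting-of-a-short-exact-sequence-of-free-$\mathbb{Z}$-modules argument, since $\mathbb{Z}^d/(\mathbb{Z}^d\cap\ker S)\cong S(\mathbb{Z}^d)$ is free) gives that $\mathcal{B}$ is simultaneously an $\mathbb{R}$-basis of $\mathbb{R}^d$ and a $\mathbb{Z}$-basis of $\mathbb{Z}^d$. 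For (2): the matrix $P$ whose columns are the coordinate vectors of the elements of $\mathcal{B}$ in the standard basis has integer entries of size $O_C(1)$ (all of $\mathbf{x_i}$, $\Xi(\mathbf{w_j})$ do, by hypothesis and by $\Vert\Xi\Vert_\infty\le C$); since $\mathcal{B}$ is a $\mathbb{Z}$-basis of $\mathbb{Z}^d$, $P\in GL_d(\mathbb{Z})$, so $\det P=\pm1$ and $P^{-1}=\pm\operatorname{adj}(P)$ also has integer entries of size $O_C(1)$ (the adjugate entries are $(d-1)\times(d-1)$ minors of a matrix with $O_C(1)$ entries). The change-of-basis matrix between the standard dual basis and $\mathcal{B}^*$ is $(P^{-1})^T$ (resp.\ $P^T$), so the same bounds hold.

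For (3), I would argue as follows: $W=\spn(\Xi(\mathbf{w_j})^*)$ is by definition the annihilator of $\spn(\mathbf{x_i}:i\le u)$, hence a $(d-u)$-dimensional subspace, and dually $V=\spn(\mathbf{x_i^*})$ is the annihilator of $\spn(\Xi(\mathbf{w_j}):j\le d-u)=\ker S$. Since $S^*:(\mathbb{R}^u)^*\to(\mathbb{R}^d)^*$ is injective ($S$ is surjective) with image exactly $(\ker S)^\circ$, and that annihilator is $u$-dimensional, we get $V=(\ker S)^\circ=S^*((\mathbb{R}^u)^*)$. For (4), decompose $\varphi=\varphi_V+\varphi_W$ along $V\oplus W$. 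The key observation is that $\Xi(\mathbf{w_j})\in\ker S$, so $\varphi_V(\Xi(\mathbf{w_j}))=0$ (as $V=(\ker S)^\circ$), whence $\Xi^*(\varphi)(\mathbf{w_j})=\varphi(\Xi(\mathbf{w_j}))=\varphi_W(\Xi(\mathbf{w_j}))$ for every $j$. Thus the coordinates of $\Xi^*(\varphi)$ in the standard dual basis of $(\mathbb{R}^{d-u})^*$ are precisely the coordinates of $\varphi_W$ in the basis $\{\Xi(\mathbf{w_j})^*\}$ of $W$. The hypothesis $\Vert\Xi^*(\varphi)\Vert_\infty\le\eta$ therefore bounds the $\mathcal{B}^*$-coordinates of $\varphi_W$ by $\eta$, and converting back to standard-dual-basis coordinates costs the $O_C(1)$ factor from part (2). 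Hence $\Vert\varphi_W\Vert_\infty=O_C(\eta)$.

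The only genuinely non-routine point is the integrality half of part (1): that $\mathcal{B}$ is a $\mathbb{Z}$-basis of $\mathbb{Z}^d$ and not merely an $\mathbb{R}$-basis generating a finite-index sublattice. This is where I expect to spend the most care — it hinges on the fact that $\{\mathbf{a_i}\}$ is a lattice basis of $S(\mathbb{Z}^d)$ (not just an $\mathbb{R}$-basis) together with the hypothesis $\Xi(\mathbb{Z}^{d-u})=\mathbb{Z}^d\cap\im\Xi$, so that every $\mathbf{n}\in\mathbb{Z}^d$ can be written $\mathbf{n}=\sum_i c_i\mathbf{x_i}+\mathbf{k}$ with $c_i\in\mathbb{Z}$ (obtained from writing $S(\mathbf{n})=\sum c_i\mathbf{a_i}$ over $\mathbb{Z}$) and then $\mathbf{k}=\mathbf{n}-\sum c_i\mathbf{x_i}\in\mathbb{Z}^d\cap\ker S$ expands integrally in the $\Xi(\mathbf{w_j})$. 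Everything else — the height bounds, the adjugate estimate, and the dual-space bookkeeping in (3)–(4) — is a direct unwinding of definitions.
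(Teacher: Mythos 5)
Your proposal is correct and follows essentially the same route as the paper: the integrality of $\mathcal{B}$ via applying $S$ and using that $\{\mathbf{a_i}\}$ is a lattice basis together with $\Xi(\mathbb{Z}^{d-u})=\mathbb{Z}^d\cap\ker S$, and part (4) via evaluating $\varphi$ at the $\Xi(\mathbf{w_j})$, noting $\varphi_V$ annihilates them, and converting $\mathcal{B}^*$-coordinates back to the standard dual basis using the bounds from (2). Your extra detail in (2) (unimodularity and the adjugate bound) and (3) (annihilator dimension count) merely makes explicit what the paper declares immediate.
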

\begin{proof}
For part (1), the fact that $\mathcal{B}$ is a basis for $\mathbb{R}^d$ is just a manifestation of the familiar principle $\mathbb{R}^d \cong \ker S \oplus \im S$. To show that $\mathcal{B}$ is a lattice basis for $\mathbb{Z}^d$, let $\mathbf{n}\in\mathbb{Z}^d$ and write  \[\mathbf{n} = \sum\limits_{i=1}^u \lambda_i \mathbf{x_i} + \sum\limits_{j=1}^{d-u} \mu_j \Xi(\mathbf{w_j})\] for some $\lambda_i,\mu_j\in\mathbb{R}$. Applying $S$, we see $S(\mathbf{n}) = \sum_i^u\lambda_i \mathbf{a_i}$, and hence $\lambda_i\in\mathbb{Z}$ for all $i$, as $\{\mathbf{a_1},\dots,\mathbf{a_u}\}$ is a basis for the lattice $S(\mathbb{Z}^d)$ . But this implies $\sum_{j=1}^{d-u} \mu_j \Xi(\mathbf{w_j}) \in \mathbb{Z}^d \cap \im (\Xi)$. Therefore, as $\Xi(\mathbb{Z}^{d-u}) = \mathbb{Z}^d\cap \ker S$, $\mu_j \in \mathbb{Z}$ for all $j$. 

Part (2) follows immediately from part (1). Part (3) is immediate from the definitions.

For part (4), let $j$ be at most $d-u$. Then the assumption $\Vert \Xi^*(\varphi)\Vert_\infty \leqslant \eta$ means that $\vert \Xi^*(\varphi)(\mathbf{w_j})\vert \leqslant \eta$. Hence $\vert \varphi(\Xi(\mathbf{w_j}))\vert \leqslant \eta$. But, writing $\varphi_W = \sum_{j=1}^{d-u} \mu_j \Xi(\mathbf{w_j})^*$, this implies that $\vert \mu_j\vert \leqslant \eta$. Since the coefficients of the change of basis matrix between $\mathcal{B}^*$ and the standard dual basis are bounded in absolute value by $O_{C}(1)$, this implies that $\Vert \varphi_W\Vert_\infty \leqslant O_C(\eta)$.
\end{proof}

We now begin the attack on Theorem \ref{Main Theorem chapter 3} in earnest. Assume the hypotheses of Theorem \ref{Main Theorem chapter 3}. As a reminder, we have natural numbers $m,d$ satisfying $d\geqslant m+2$, and positive reals $\varepsilon,c,C$. For a natural number $N$, we have $L = L(N):\mathbb{R}^d \longrightarrow \mathbb{R}^m$ being a surjective linear map with approximation function $A_L$, with $\dist(L,V_{\rank}^{\unif}(m,d)) \geqslant c$, $\dist(L,V_{\degen}^*(m,d)) \geqslant c$, and with rational complexity at most $C$. We have $F:\mathbb{R}^d \longrightarrow [0,1]$ being the indicator function of $[1,N]^d$ and $G:\mathbb{R}^m\longrightarrow [0,1]$ being the indicator function of a convex domain contained in $[-\varepsilon,\varepsilon]^m$. For some $s \leqslant d-2$, to be determined, we also have functions $f_1,\dots,f_d:[N]\longrightarrow [-1,1]$ that satisfy $\min_j \Vert f_j\Vert_{U^{s+1}[N]} \leqslant \rho$ for some $\rho$ in the range $0<\rho \leqslant 1$.

The proof has four parts: 
\begin{itemize}
\item Lemma \ref{Lemma replacing F cut-off}, in which we replace the indicator function of $[1,N]^d$ with a Lipschitz cut-off;
\item Lemma \ref{Lemma generating a purely irrational map}, in which we replace $L$ by a pair of maps $(\Xi,L^\prime)$ where $L^\prime$ is purely irrational;
\item Lemma \ref{Lemma making G lipschitz}, in which we replace the function $G$ by a Lipschitz cut-off (using Lemma \ref{Lemma upper bound involving integral});
\item and finally the application of Theorem \ref{Theorem rational set out version} to the pair $(\Xi,L^\prime)$.
\end{itemize} 
\noindent The second of these steps is by far the most technically intricate, and, as we mentioned when discussing our proof strategy in Section \ref{Section proof strategy}, Lemma \ref{Lemma generating a purely irrational map} will have 9 sub-parts. One might well ask why it is necessary to expend so much effort creating a purely irrational map $L^\prime$, given that Theorem \ref{Theorem rational set out version} does not include this condition in its hypotheses. The point is that in order to replace $G$ with a Lipschitz cut-off (and thus in order to be able to apply Theorem \ref{Theorem rational set out version} at all) it is vital that $L^\prime$ is purely irrational. If $L^\prime:\mathbb{R}^{d-u} \longrightarrow \mathbb{R}^{m-u}$ failed to be purely irrational then $L^\prime \mathbb{Z}^{d-u}$ would not equidistribute in $\mathbb{R}^{m-u}$; it would instead be restricted to certain proper affine subspaces. This would affect our ability to perturb the function $G$ without drastically altering the number of solutions to the inequality. For more on this issue, the reader may consult Section \ref{Section proof strategy}.

One does note from the above discussion, however, that in order to deduce Theorem \ref{Main Theorem chapter 3} it would be enough to prove Theorem \ref{Theorem rational set out version} under the additional assumption that $L$ is purely irrational. Yet it turns out that the general version of Theorem \ref{Theorem rational set out version} that we have stated is no harder to prove than the restricted version. \\

We begin with the first of our four parts. 

\begin{Lemma}[Replacing variable cut-off]
\label{Lemma replacing F cut-off}
Assume the hypotheses of Theorem \ref{Main Theorem chapter 3} (in particular let $F$ be the indicator function $1_{[1,N]^d}$), and let $\sigma_F$ be any parameter in the range $0<\sigma_F<1/2$. Then there exists a Lipschitz function $F_{1,\sigma_F}:\mathbb{R}^d \longrightarrow [0,1]$, supported on $[-2N,2N]^d$ and with Lipschitz constant $O(1/\sigma_FN)$, such that \[\vert T_{F,G,N}^L(f_1,\dots,f_d)\vert \ll \vert T_{F_{1,\sigma_F},G,N}^L(f_1,\dots,f_d)\vert + O_{c,C}(\sigma_F).\]
\end{Lemma}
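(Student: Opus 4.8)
The plan is to approximate the sharp cut-off $F=1_{[1,N]^d}$ from below by an explicit Lipschitz bump and to absorb the error into the ``boundary collar'' estimate of Lemma \ref{Lemma slightly less crude bound on number of solutions}. Concretely, I would let $\phi_{\sigma_F}:\mathbb{R}\longrightarrow [0,1]$ be the continuous piecewise-linear trapezoid that vanishes outside $[1,N]$, equals $1$ on $[1+\sigma_F N,\, N-\sigma_F N]$, and interpolates linearly on the two intervals of length $\sigma_F N$ in between (a tent function, if $\sigma_F$ is so close to $1/2$ that the flat region is empty); this $\phi_{\sigma_F}$ has Lipschitz constant $(\sigma_F N)^{-1}$ and takes values in $[0,1]$. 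Then set $F_{1,\sigma_F}(\mathbf{x}):=\prod_{i=1}^d\phi_{\sigma_F}(x_i)$. Using the elementary bound $\big\vert\prod_i a_i-\prod_i b_i\big\vert\leqslant\sum_i\vert a_i-b_i\vert$ for $a_i,b_i\in[0,1]$, one checks that $F_{1,\sigma_F}$ has values in $[0,1]$, is supported on $[1,N]^d\subseteq[-2N,2N]^d$, and is Lipschitz with constant $O(d/(\sigma_F N))=O(1/(\sigma_F N))$, as required.

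Next I would use that $T^L_{F,G,N}(f_1,\dots,f_d)$ is linear in its $F$-slot. Since $F_{1,\sigma_F}\leqslant 1$ is supported inside $[1,N]^d$, we have $0\leqslant F-F_{1,\sigma_F}\leqslant 1$ pointwise, and this difference vanishes on the interior box where $\phi_{\sigma_F}\equiv 1$; hence it is supported on the collar $\mathcal{C}:=\{\mathbf{x}\in\mathbb{R}^d:\dist(\mathbf{x},\partial([1,N]^d))\leqslant\sigma_F N\}$, so $F-F_{1,\sigma_F}\leqslant 1_{\mathcal{C}}$. Placing absolute values inside the defining sum of $T$ and using $\big\vert\prod_j f_j(n_j)\big\vert\leqslant 1$ together with $G\geqslant 0$, the triangle inequality yields
\[
\vert T^L_{F,G,N}(f_1,\dots,f_d)\vert\;\leqslant\;\vert T^L_{F_{1,\sigma_F},G,N}(f_1,\dots,f_d)\vert\;+\;T^L_{1_{\mathcal{C}},\,G,\,N}(1,\dots,1).
\]

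It then remains to show $T^L_{1_{\mathcal{C}},G,N}(1,\dots,1)\ll_{c,C}\sigma_F$. For this I would apply Lemma \ref{Lemma slightly less crude bound on number of solutions} with $\sigma=\sigma_F$: the function $1_{\mathcal{C}}$ is supported on precisely the collar required there, $G$ is supported on $[-\varepsilon,\varepsilon]^m$ with $\Vert G\Vert_\infty=1$, and $\Vert L\Vert_\infty\leqslant C$. The one hypothesis worth a remark is the non-degeneracy condition $\dist(L,V^{\gbl}_{\rank}(m,d))\geqslant c$; this follows from the standing hypothesis $\dist(L,V^*_{\degen}(m,d))\geqslant c$ of Theorem \ref{Main Theorem chapter 3} via the inclusion $V^{\gbl}_{\rank}(m,d)\subseteq V^*_{\degen}(m,d)$ (noted after Definition \ref{Definition dual degeneracy variety}), which forces $\dist(L,V^{\gbl}_{\rank}(m,d))\geqslant\dist(L,V^*_{\degen}(m,d))\geqslant c$. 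Lemma \ref{Lemma slightly less crude bound on number of solutions} then gives $T^L_{1_{\mathcal{C}},G,N}(1,\dots,1)\ll_{c,C,\varepsilon}\sigma_F$, which we record as $O_{c,C}(\sigma_F)$ in accordance with our convention of suppressing the $\varepsilon$-dependence of implied constants; combining with the displayed inequality completes the proof.

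There is no genuine obstacle in this lemma — it is essentially bookkeeping — and the only points requiring any care are the verification of the Lipschitz constant of the product cut-off $F_{1,\sigma_F}$ and the (immediate) check that the collar estimate of Lemma \ref{Lemma slightly less crude bound on number of solutions} applies under the hypotheses of Theorem \ref{Main Theorem chapter 3}.
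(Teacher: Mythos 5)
Your proof is correct and follows essentially the same route as the paper: decompose the sharp cut-off into a Lipschitz piece plus an error supported in a collar of width $O(\sigma_F N)$ around $\partial([1,N]^d)$, and bound that error by Lemma \ref{Lemma slightly less crude bound on number of solutions}, using the inclusion $V_{\rank}^{\gbl}(m,d)\subseteq V_{\degen}^*(m,d)$ to transfer the hypothesis $\dist(L,V_{\degen}^*(m,d))\geqslant c$. The only cosmetic difference is that you build the tensor-product trapezoid cut-off by hand, whereas the paper simply invokes Lemma \ref{Lipschitz approximation of convex cutoffs} for the decomposition $1_{[1,N]^d}=F_{1,\sigma_F}+O(F_{2,\sigma_F})$; both reductions are equivalent.
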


\begin{proof}
By Lemma \ref{Lipschitz approximation of convex cutoffs}, for any parameter $\sigma_F$ in the range $0< \sigma_F < 1/2$ we may write \[ 1_{[1,N]^d} = F_{1,\sigma_F} + O(F_{2,\sigma_F}),\] where $F_{1,\sigma_F},F_{2,\sigma_F}$ are Lipschitz functions supported on $[-2N,2N]^d$, with Lipschitz constants $O(1/\sigma_FN)$, and with $\int_\mathbf{x} F_{2,\sigma_F}(\mathbf{x}) \, d\mathbf{x} = O(\sigma_F N^d)$. Moreover, $F_{2,\sigma_F}$ is supported on \[ \{\mathbf{x} \in \mathbb{R}^d: \dist(\mathbf{x}, \partial ([1,N]^d)) = O(\sigma_F N)\}.\] 

Therefore \[T_{F,G,N}^L(f_1,\dots,f_d) \ll \vert T_{F_{1,\sigma_F},G,N}^L(f_1,\dots,f_d)\vert + \vert T_{F_{2,\sigma_F},G,N}^L(1,\dots,1)\vert.\] Therefore, since $\dist(L,V_{\rank}^{\unif}(m,d)) \geqslant c$, by Lemma \ref{Lemma slightly less crude bound on number of solutions} we have \[\vert T_{F_{2,\sigma_F},G,N}^L(f_1,\dots,f_d)\vert = O_{c,C}(\sigma_F).\] This gives the lemma.
\end{proof}

Next comes the critical lemma, in which we successfully replace the map $L$ by a purely irrational map $L^\prime$. For the definition of the approximation function $A_L$, one may consult Definition \ref{Definition approximation function}.
\begin{Lemma}[Generating a purely irrational map]
\label{Lemma generating a purely irrational map}
Let $\sigma_F$ be a parameter in the range $0<\sigma_F < 1/2$. Assume the hypotheses of Theorem \ref{Main Theorem chapter 3}, with the exception that $F:\mathbb{R}^d \longrightarrow [0,1]$ now denotes a Lipschitz function supported on $[-2N,2N]^d$ and with Lipschitz constant $O(1/\sigma_FN)$. Let $u$ be the rational dimension of $L$, and assume that $u\leqslant m-1$. Then there exists a surjective linear map $L^\prime:\mathbb{R}^{d-u} \longrightarrow \mathbb{R}^{m-u}$, an injective linear map $\Xi:\mathbb{R}^{d-u} \longrightarrow \mathbb{R}^d$, a finite subset $\widetilde{R}\subset \mathbb{Z}^d$, and, for each $\widetilde{\mathbf{r}} \in \widetilde{R}$, functions $F_{\widetilde{\mathbf{r}}}:\mathbb{R}^{d-u} \longrightarrow \mathbb [0,1]$ and $G_{\widetilde{\mathbf{r}}}:\mathbb{R}^{m-u} \longrightarrow [0,1]$, that together satisfy the following properties:
\begin{enumerate}[(1)]
\item $\Xi$ has integer coefficients, $\Vert \Xi \Vert_\infty  = O_{c,C}(1)$, and $\Xi(\mathbb{Z}^{d-u}) = \mathbb{Z}^d \cap \im \Xi$;
\item $\vert \widetilde{R}\vert = O_{c,C}(1)$, and $\Vert \widetilde{\mathbf{r}}\Vert_\infty = O_{c,C}(1)$ for all $\widetilde{\mathbf{r}} \in \widetilde{R}$;
\item $F_{\widetilde{\mathbf{r}}}$ is supported on $[-O_{c,C}(N),O_{c,C}(N)]^{d-u}$, with Lipschitz constant $O_{c,C}(1/\sigma_FN)$, and $G_{\widetilde{\mathbf{r}}}$ is the indicator function of a convex domain contained in \\$[-O_{c,C,\varepsilon}(1),O_{c,C,\varepsilon}(1)]^{m-u}$;
\item $T_{F,G,N}^L(f_1,\dots,f_d) = \sum\limits_{\widetilde{\mathbf{r}} \in \widetilde{R}}T_{F_{\widetilde{\mathbf{r}}},G_{\widetilde{\mathbf{r}}},N}^{L^\prime,\Xi,\widetilde{\mathbf{r}}}(f_1,\dots,f_d);$
\item $L^\prime$ is purely irrational;
\item $\Vert L^\prime\Vert_\infty = O_{c,C}(1)$ and $\dist(L^\prime, V_{\rank}(m-u,d-u)) = \Omega_{c,C}(1)$;
\item $\dist((\Xi,L^\prime),V_{\degen,2}^*(m-u,d,d-u)) = \Omega_{c,C}(1)$;
\item for all $\tau_1,\tau_2 \in (0,1]$, $A_{L^\prime}(\tau_1,\tau_2) \gg_{c,C} A_L(\Omega_{c,C}(\tau_1),\Omega_{c,C}(\tau_2))$;
\item for all $\tau_1,\tau_2 \in (0,1]$, $A_{L^\prime}(\tau_1,\tau_2) \ll_{c,C} A_L(\Omega_{c,C}(\tau_1),\Omega_{c,C}(\tau_2))$.
\end{enumerate}
\end{Lemma}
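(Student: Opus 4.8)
\emph{Strategy.} The plan is to use the rational map $\Theta$ furnished by the rational-complexity hypothesis to peel the $u$ rational directions off $L$, simultaneously from the domain and from the target, arriving at a purely irrational $L'$ on $\mathbb{R}^{d-u}$ together with an integral injection $\Xi$ recording the relevant sublattice of $\mathbb{Z}^d$. First I would fix a rational map $\Theta:\mathbb{R}^m\to\mathbb{R}^u$ for $L$ with $\Vert\Theta\Vert_\infty\leqslant C$, set $S:=\Theta L:\mathbb{R}^d\to\mathbb{R}^u$ (surjective, $S(\mathbb{Z}^d)\subseteq\mathbb{Z}^u$, $\Vert S\Vert_\infty=O_C(1)$), and, after applying Lemma \ref{Lemma parametrising the image lattice} to $S$ and left-multiplying $\Theta$ by the inverse of a bounded integer basis matrix of $S(\mathbb{Z}^d)$, assume $S(\mathbb{Z}^d)=\mathbb{Z}^u$ (this preserves $\ker\Theta$ and the bound on $\Vert\Theta\Vert_\infty$). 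Lemma \ref{Lemma parametrising the image lattice} also supplies bounded lifts $\mathbf{x}_i\in\mathbb{Z}^d$ with $S\mathbf{x}_i=\mathbf{e}_i$, and Mahler's theorem (the successive minima of the bounded integer lattice $\mathbb{Z}^d\cap\ker S$ being $O_C(1)$) supplies the injection $\Xi:\mathbb{R}^{d-u}\to\mathbb{R}^d$ of property (1). Lemma \ref{Lemma dual space decomposition} then gives the bounded integral change of basis between the standard basis and $\mathcal{B}:=\{\mathbf{x}_i\}\cup\{\Xi(\mathbf{w}_j)\}$, with dual decomposition $(\mathbb{R}^d)^*=V\oplus W$, $V:=\spn(\mathbf{x}_i^*)=S^*((\mathbb{R}^u)^*)$, $W:=\spn(\Xi(\mathbf{w}_j)^*)$.

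Then I would decompose the count. Writing each $\mathbf{n}\in\mathbb{Z}^d$ uniquely as $\mathbf{p}_{\boldsymbol\lambda}+\Xi(\mathbf{m})$ with $\mathbf{p}_{\boldsymbol\lambda}=\sum_i\lambda_i\mathbf{x}_i$ ($\boldsymbol\lambda\in\mathbb{Z}^u$, $\mathbf{m}\in\mathbb{Z}^{d-u}$) and splitting $T^L_{F,G,N}$ along $\boldsymbol\lambda$: since $G$ is supported on $[-\varepsilon,\varepsilon]^m$ and $S$ vanishes on $\im\Xi=\ker S$, any surviving term has $\boldsymbol\lambda=\Theta(L\mathbf{n})$ bounded, so only $O_{c,C}(1)$ values of $\boldsymbol\lambda$ occur and $\Vert\mathbf{p}_{\boldsymbol\lambda}\Vert_\infty=O_{c,C}(1)$; set $\widetilde R:=\{\mathbf{p}_{\boldsymbol\lambda}\}$ (property (2)). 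For fixed $\boldsymbol\lambda=:\widetilde{\mathbf r}$ one has $n_j=\xi_j(\mathbf{m})+\widetilde r_j$, and $F(\mathbf{n})=:F_{\widetilde{\mathbf r}}(\mathbf{m})$ is Lipschitz with the constant of (3) and supported in an $O_{c,C}(N)$-box (as $\Xi$ has bounded integer entries and is injective, its Gram matrix is a bounded positive-definite integer matrix, so $\Xi$ is well conditioned). For the $G$-factor I would fix once and for all a surjection $\Pi:\mathbb{R}^m\to\mathbb{R}^{m-u}$ restricting to an isomorphism on $U:=\ker\Theta$ (say the orthogonal projection onto $U$ followed by an isometric identification $U\cong\mathbb{R}^{m-u}$), with $J:=(\Pi|_U)^{-1}$, and set $L':=\Pi L\Xi:\mathbb{R}^{d-u}\to\mathbb{R}^{m-u}$. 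Because $\Theta L\Xi=0$ we get $L\Xi(\mathbf{m})\in U$, hence $L\Xi(\mathbf{m})=JL'(\mathbf{m})$, so $G(L\mathbf{n})=G(L\widetilde{\mathbf r}+JL'(\mathbf{m}))=:G_{\widetilde{\mathbf r}}(L'(\mathbf{m}))$, where $G_{\widetilde{\mathbf r}}$ is the indicator of the convex set $\{\mathbf{y}:L\widetilde{\mathbf r}+J\mathbf{y}\in D\}$, contained in a box of size $O_{c,C,\varepsilon}(1)$. Summing over $\boldsymbol\lambda$, and noting $N^{(d-u)-(m-u)}=N^{d-m}$, yields (4); and $\Vert L'\Vert_\infty=O_C(1)$ by boundedness of $\Pi,L,\Xi$, the first half of (6).

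The substance of the lemma is the linear algebra (5)--(9). For (5): if $L'$ were not purely irrational there is $0\neq\varphi'\in(\mathbb{R}^{m-u})^*$ with $(L')^*\varphi'=\Xi^*L^*\Pi^*\varphi'\in(\mathbb{Z}^{d-u})^T$; since by Lemma \ref{Lemma dual space decomposition} $\Xi^*$ kills $V$ and maps $W$ isomorphically onto $(\mathbb{R}^{d-u})^*$ (sending $\{\Xi(\mathbf{w}_j)^*\}$ to the standard dual basis), the $W$-component $(L^*\Pi^*\varphi')_W$ is a \emph{nonzero integral} vector (integrality in $\mathcal{B}^*$-coordinates transfers to the standard ones by part (2) of that lemma) lying in $\mathrm{Row}(L)$ (it is $L^*\Pi^*\varphi'$ minus its $V$-part, and $V\subseteq\mathrm{Row}(L)$) --- but $\mathrm{Row}(L)\cap(\mathbb{Z}^d)^T$ spans $V$ by the maximality of $u$ (and Lemma \ref{Lemma invariant space}), while $V\cap W=0$; contradiction. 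For the rest of (6) I would bound the smallest singular value of $L'$ below by $\Omega_{c,C}(1)$, using $\ker L\subseteq\ker S=\im\Xi$, the well-conditioning of $\Xi$ and $\Pi|_U$, and that the smallest singular value of $L$ is $\Omega_{c,C}(1)$ (since $\dist(L,V_{\rank}(m,d))\geqslant\dist(L,V^*_{\degen}(m,d))\geqslant c$; cf. Proposition \ref{rank matrix}). For (7): a perturbation $\Xi+Q$, $\Vert Q\Vert_\infty<c''$, with $(\Xi+Q)^*(\mathbf{e}_i^*-\lambda\mathbf{e}_j^*)\in\mathrm{Row}(L')$ (normalise $\vert\lambda\vert\leqslant1$) makes the $W$-part of $L^*\Pi^*\psi-(\mathbf{e}_i^*-\lambda\mathbf{e}_j^*)$ of size $O_C(c'')$ via Lemma \ref{Lemma dual space decomposition}(4); subtracting it off, together with a $V$-vector (which lies in $\mathrm{Row}(L)$), shows $\mathbf{e}_i^*-\lambda\mathbf{e}_j^*$ is $O_C(c'')$-close to $\mathrm{Row}(L)$, and a rank-one correction of $L$ of size $O_{c,C}(c'')$ (using Proposition \ref{rank matrix}(3) to express a row-space vector as a bounded combination of rows) then lies in $V^*_{\degen}(m,d)$, contradicting $\dist(L,V^*_{\degen}(m,d))\geqslant c$ for $c''=\Omega_{c,C}(1)$ small. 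Finally (8)--(9) follow from the correspondence $\varphi'\leftrightarrow\varphi:=\Pi^*\varphi'$: $\dist((L')^*\varphi',(\mathbb{Z}^{d-u})^T)\asymp_C\dist(L^*\varphi,V+(\mathbb{Z}^d)^T)$ by the description of $\Xi^*$ above, while a general $\varphi$ in the $A_L$-constraint splits as $\varphi_1+\varphi_2$ with $\varphi_1\in\Theta^*((\mathbb{R}^u)^*)$ and $\varphi_2\in\im\Pi^*$ (a bounded-distortion direct-sum decomposition), where $L^*\varphi_1\in V$ is invisible modulo $V$ and $\dist(\varphi,\Theta^*((\mathbb{R}^u)^*))\asymp_C\Vert\varphi_2\Vert_\infty$; matching the two infima over the comparable ranges of $\Vert\varphi_2\Vert_\infty$, the $O_C(1)$ slack in $\Vert\varphi_1\Vert_\infty$ absorbed, gives both estimates.

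The main obstacle I anticipate is step (5): the point is to recognise that any candidate $\mathbb{Z}$-relation for $L'$ pulls back, through the dual splitting $(\mathbb{R}^d)^*=V\oplus W$ of Lemma \ref{Lemma dual space decomposition}, to an integral element of $\mathrm{Row}(L)$ transverse to $V$, which the maximality of the rational dimension $u$ forbids. The quantitative bookkeeping for (7) and for the two-sided comparison (8)--(9) is also delicate, since one has to track how the $\ell^\infty$-geometry distorts under the bounded-but-non-isometric change between standard and $\mathcal{B}^*$ coordinates --- exactly what the quantitative clauses of Lemma \ref{Lemma dual space decomposition} are built to control.
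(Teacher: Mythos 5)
Your proposal is correct and follows essentially the same route as the paper: fix a rational map $\Theta$, parametrise the lattice $\mathbb{Z}^d\cap\ker(\Theta L)$ by a bounded integral injection $\Xi$, split the count over the $O_{c,C}(1)$ residues $\widetilde{\mathbf{r}}$, identify $\ker\Theta$ with $\mathbb{R}^{m-u}$ to define $L'$, and then prove (5)--(9) by contradiction with the maximality of $u$, perturbation arguments, and the duality correspondence $\varphi'\leftrightarrow\varphi$. The substantive divergences are small but worth recording. First, you normalise $\Theta$ (so that $\Theta L(\mathbb{Z}^d)=\mathbb{Z}^u$) where the paper normalises $L$ (replacing it by $M^{-1}L$ so that its first $m$ columns form the identity, which forces $\Theta$ to be integral); both achieve the needed bookkeeping. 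Second, you take $\Pi$ to be orthogonal projection onto $\ker\Theta$, whereas the paper constructs a bespoke $P$ (Lemma \ref{Lemma construction of P}) that additionally annihilates $\spn(L\mathbf{x_i})$; the paper's choice makes $L^*P^*\pi_{m-u}^*\varphi'$ land exactly in $W$, which is what forces $\omega_V=\mathbf{0}$ in its proof of part (8), while with your $\Pi$ this fails --- but your reformulation of the correspondence in terms of $\dist(L^*\varphi, V+(\mathbb{Z}^d)^T)$ (rather than $\dist(L^*\varphi,(\mathbb{Z}^d)^T)$ directly) absorbs the $V$-component and makes parts (8)--(9) go through, at the cost of the extra step of trading $\varphi_2$ for $\varphi_2-\Theta^*\beta$ with $\Vert\beta\Vert_\infty\ll\tau_2^{-1}$, which you correctly identify. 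Third, for the rank lower bound in (6) you argue via singular values of $L\Xi$ where the paper runs a perturbation/factorisation argument ($Q'=H\Xi$); both are fine. Your key observation for (5) --- that $\mathrm{Row}(L)\cap(\mathbb{Z}^d)^T\subseteq V$ by maximality of $u$, so a nonzero integral $W$-component of $(L')^*$-pullbacks is impossible --- is the dual formulation of the paper's construction of the augmented map $\Theta'=(\Theta,\varphi\pi_{m-u}P)$, and is valid.
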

\noindent The fundamental aspect of this lemma is part (4), of course, as this directly concerns how we control the number of solutions to the diophantine inequality itself when passing from $L$ to $L^\prime$. However, we do need to establish parts (1) - (8), in order to be able to ensure that the hypotheses of Lemma \ref{Lemma upper bound involving integral} and Theorem \ref{Theorem rational set out version} are satisfied. Part (9) is included for completeness, and to assist the calculations in Appendix \ref{section algebraic approximation}. \\

Before giving the full details of the proof, we sketch the idea. Let $\Theta:\mathbb{R}^m \longrightarrow \mathbb{R}^u$ be a rational map for $L$. The space $\ker (\Theta L)$ has dimension $d-u$, and so we may parametrise it by some injective map $\Xi:\mathbb{R}^{d-u} \longrightarrow \ker (\Theta L)$. Without too much difficultly, $\Xi$ can be chosen to satisfy $\Xi(\mathbb{Z}^{d-u}) = \mathbb{Z}^d \cap \im \Xi$. Then \[ L\Xi: \mathbb{R}^{d-u} \longrightarrow \ker \Theta,\] is a map from a $d-u$ dimensional space to an $m-u$ dimensional space, and it turns out that $L\Xi$ is purely irrational, and $L^\prime = L\Xi$ may be used in Lemma \ref{Lemma generating a purely irrational map}.

Of course this isn't quite possible, as we only defined the notion of purely irrational maps between vector spaces of the form $\mathbb{R}^a$. But it is true after choosing a judicious isomorphism from $\ker \Theta$ to $\mathbb{R}^{m-u}$ (though this does complicate the notation). \\

Let us complete the details. 

\begin{proof}
First we note that the lemma is obvious when $u=0$, since one may take $\Xi:\mathbb{R}^d \longrightarrow \mathbb{R}^d$ to be the identity map, $\widetilde{\mathbf{r}}$ to be $\mathbf{0}$, and $L^\prime$ to be $L$. So assume that $u > 1$.\\

We proceed with a general reduction, familiar from our proof of Proposition \ref{Proposition maximal rational dimension case of main theorem}, in which we may assume that the first $m$ columns of $L$ form the identity matrix.

Indeed, let $\Theta:\mathbb{R}^m\longrightarrow \mathbb{R}^u$ be a rational map for $L$ with $\Vert \Theta \Vert_\infty \leqslant C$. Now let $\widetilde{L} : = M^{-1} L$, where $M$ is a rank matrix of $L$ (Proposition \ref{rank matrix}), which, without loss of generality, consists of the first $m$ columns of $L$. Let $\widetilde{\Theta} : = \Theta M$ and let $\widetilde{G}: = G\circ M$. Then \[ T_{F,G,N}^L(f_1,\dots,f_d) = T_{F,\widetilde{G},N} ^{\widetilde{L}}(f_1,\dots,f_d),\] and, considering $\widetilde{\Theta}$, $\widetilde{L}$ has rational complexity $O_{c,C}(1)$. Furthermore, $\widetilde{G}$ is the indicator function of a convex domain contained in $[-O_{c,C}(\varepsilon), O_{c,C}(\varepsilon)]^m$. We also have $\dist(\widetilde{L},V_{\degen}^*(m,d)) = \Omega_{c,C}(1)$. Finally, for all $\tau_1,\tau_2 \in (0,1]$, we have that\\ $A_{\widetilde{L}}(\tau_1,\tau_2) \asymp_{c,C} A_L(\Omega_{c,C}(\tau_1),\Omega_{c,C}(\tau_2))$. 

Therefore, by replacing $L$ with $\widetilde{L}$ and $G$ with $\widetilde{G}$, we may assume throughout the proof of Lemma \ref{Lemma generating a purely irrational map} that the first $m$ columns of $L$ form the identity matrix. This is at the cost of replacing $\varepsilon$ by $O_{c,C}(\varepsilon)$, $C$ by $O_{c,C}(1)$, and $c$ by $\Omega_{c,C}(1)$.  \\

Now let $\Theta:\mathbb{R}^m\longrightarrow \mathbb{R}^u$ be a rational map for $L$ with $\Vert \Theta \Vert_\infty = O_{c,C}(1)$. Since the first $m$ columns of $L$ form the identity matrix, $\Theta$ must have integer coefficients. \\

\textbf{Part (1)}: By rank-nullity $\ker (\Theta L)$ is a $d-u$ dimensional subspace of $\mathbb{R}^d$. The matrix of $\Theta L$ has integer coefficients and $\Vert \Theta L \Vert_\infty = O_{c,C}(1)$. Combining these two facts, we see that $\ker (\Theta L) \cap \mathbb{Z}^d$ is a $d-u$ dimensional lattice, and by the standard algorithms one can find a lattice basis $\mathbf{v^{(1)}},\dots, \mathbf{v^{(d-u)}} \in \mathbb{Z}^d$ that satisfies $\Vert \mathbf{v^{(i)}}\Vert_\infty = O_{c,C}(1)$ for every $i$. Define $\Xi:\mathbb{R}^{d-u} \longrightarrow \mathbb{R}^{d}$ by \[ \Xi(\mathbf{w}):= \sum\limits_{i=1}^{d-u} w_i \mathbf{v^{(i)}}.\] Then $\Xi$ satisfies property (1) of the lemma. Note that image of the map $L\Xi:\mathbb{R}^{d-u} \longrightarrow \mathbb{R}^m$ is exactly $\ker \Theta$. \\

\textbf{Part (2)}: Since $\Vert \Theta \Vert_\infty = O_{c,C}(1)$, if $\mathbf{y} \in\mathbb{R}^m$ and $\Theta(\mathbf{y}) = \mathbf{r}$ then  $ \Vert \mathbf{y} \Vert_\infty \gg_{c,C} \Vert\mathbf{r} \Vert_\infty$. Recall that the support of $G$ is contained within $[-O_{c,C,\varepsilon}(1),O_{c,C,\varepsilon}(1)]^m$, and that $\Theta L(\mathbb{Z}^d) \subseteq \mathbb{Z}^u$. It follows that there are at most $O_{c,C,\varepsilon}(1)$ possible vectors $\mathbf{r} \in \mathbb{Z}^u$ for which there exists a vector $\mathbf{n} \in\mathbb{Z}^d$ for which both $G(L\mathbf{n}) \neq 0$ and $\Theta L \mathbf{n} = \mathbf{r}$. Let $R$ denote the set of all such vectors $\mathbf{r}$. 

For each $\mathbf{r} \in R$, there exists a vector $\widetilde{\mathbf{r}}\in \mathbb{Z}^d$ such that $\Theta L \widetilde{\mathbf{r}} = \mathbf{r}$ and $\Vert \widetilde{\mathbf{r}} \Vert_\infty = O_{c,C,\varepsilon}(1)$. Let $\widetilde{R}$ denote the set of these $\widetilde{\mathbf{r}}$. Then $\widetilde{R}$ satisfies part (2).\\

Before proceeding to prove part (3) of the lemma, we pause to apply Lemmas \ref{Lemma parametrising the image lattice} and \ref{Lemma dual space decomposition}. Indeed, applying these lemmas to the map $S: = \Theta L$, there exists a set $\{ \mathbf{a_1},\dots,\mathbf{a_u}\} \subset \mathbb{Z}^u$ that is a basis for the lattice $\Theta L(\mathbb{Z}^d)$ and for which $\Vert \mathbf{a_i}\Vert_\infty = O_{c,C}(1)$ for each $i$. Also, there exists a set of vectors $\{ \mathbf{x_1},\dots,\mathbf{x_u}\} \subset \mathbb{Z}^d$ such that $\Theta L(\mathbf{x_i})= \mathbf{a_i}$ for each $i$, and $\Vert \mathbf{x_i}\Vert_\infty = O_{c,C}(1)$. By Lemma \ref{Lemma dual space decomposition}, 
\begin{equation}
\label{basis for Rd}
 \mathcal{B}: = \{\mathbf{x_i}:i\leqslant u\} \cup \{ \Xi(\mathbf{w_j}):j\leqslant d-u\}
 \end{equation} is a basis for $\mathbb{R}^d$ and a lattice basis for $\mathbb{Z}^d$, where $\mathbf{w_1},\dots,\mathbf{w_{d-u}}$ denotes the standard basis of $\mathbb{R}^{d-u}$.\\

\textbf{Part (3)}: By the definition of $\widetilde{R}$, and the fact that $\Xi(\mathbb{Z}^{d-u}) = \mathbb{Z}^d \cap \ker (\Theta L)$, we have  
\begin{equation}
\label{equation getting integer rows}
T_{F,G,N}^L(f_1,\dots,f_d) = \sum\limits_{\widetilde{\mathbf{r}} \in \widetilde{R}}\frac{1}{N^{d-m}} \sum\limits_{\mathbf{n} \in \mathbb{Z}^{d-u}} \Big(\prod\limits_{j=1}^d f_j(\xi_j(\mathbf{n}) + \widetilde{\mathbf{r}}_j) \Big)F(\Xi(\mathbf{n}) + \widetilde{\mathbf{r}}) G(L\Xi(\mathbf{n}) + L\widetilde{\mathbf{r}}),
\end{equation}
\noindent where $\widetilde{\mathbf{r}}_j$ denotes the $j^{\text{th}}$ coordinates of $\widetilde{\mathbf{r}}$. Now by an easy linear algebraic argument (recorded in Lemma \ref{Lemma construction of P}), 
\begin{equation}
\label{direct sum}
\mathbb{R}^m = \spn(L\mathbf{x_i}:i\leqslant u) \oplus \ker \Theta
\end{equation} as an algebraic direct sum, and there exists an invertible linear map $P:\mathbb{R}^m \longrightarrow \mathbb{R}^m$ such that \begin{align}
\label{equation properties of P}
P((\spn(L\mathbf{x_i}:i\leqslant u))) &= \mathbb{R}^u \times \{0\}^{m-u}, \\
\label{equation properties of P 2}
P(\ker \Theta) &= \{0\}^{u} \times \mathbb{R}^{m-u}, 
\end{align}
\noindent and both $\Vert P\Vert_\infty = O_{c,C}(1)$ and $\Vert P^{-1} \Vert_\infty = O_{c,C}(1)$. 

We have \[ G(L\Xi(\mathbf{n}) + L\widetilde{\mathbf{r}}) = (G \circ P^{-1})(PL\Xi(\mathbf{n}) + PL\widetilde{\mathbf{r}}),\] and we note that $PL\Xi(\mathbf{n}) \in \{0\}^u \times \mathbb{R}^{m-u}$ for every $\mathbf{n} \in \mathbb{Z}^{d-u}$. Define $G_{\widetilde{\mathbf{r}}}:\mathbb{R}^{m-u} \longrightarrow [0,1]$ by \[G_{\widetilde{\mathbf{r}}}(\mathbf{x}) : = (G\circ P^{-1})(\mathbf{x_0} + PL\widetilde{\mathbf{r}}),\] where $\mathbf{x_0}$ is the extension of $\mathbf{x}$ by $0$ in the first $u$ coordinates. Then the function $G_{\widetilde{\mathbf{r}}}$ is the indicator function of a convex set contained in $[-O_{c,C,\varepsilon}(1),O_{c,C,\varepsilon}(1)]^{m-u}$. 

Define \[F_{\widetilde{\mathbf{r}}}(\mathbf{n}) : =  F(\Xi(\mathbf{n}) + \widetilde{\mathbf{r}}).\] Then $F_{\widetilde{\mathbf{r}}}$ has Lipschitz constant $O_{c,C}(1/\sigma_F N)$ and $F_{\widetilde{\mathbf{r}}}$ is supported on \\$[-O_{c,C,\varepsilon}(N),O_{c,C,\varepsilon}(N)]^{d-u}$. (For a full proof of this fact, apply Lemma \ref{Lemma bounded inverse} to the map $\Xi$). So $F_{\widetilde{\mathbf{r}}}$  and $G_{\widetilde{\mathbf{r}}}$ satisfy part (3).\\

\textbf{Part (4)}: Writing $\pi_{m-u}:\mathbb{R}^{m}\longrightarrow \mathbb{R}^{m-u}$ for the projection onto the final $m-u$ coordinates, expression (\ref{equation getting integer rows}) is equal to 
\begin{equation}
\label{equation end of second part}
\sum\limits_{\widetilde{\mathbf{r}} \in \widetilde{R}}\frac{1}{N^{d-m}} \sum\limits_{\mathbf{n} \in \mathbb{Z}^{d-u}} \Big(\prod\limits_{j=1}^d f_j(\xi_j(\mathbf{n}) + \widetilde{\mathbf{r}}_j) \Big)F_{\widetilde{\mathbf{r}}}(\mathbf{n})G_{\widetilde{\mathbf{r}}}( \pi_{m-u} PL\Xi(\mathbf{n})).
\end{equation} 
\noindent Let 
\begin{equation}
\label{equation definition of L prime}
L^\prime: = \pi_{m-u} PL\Xi.
\end{equation} Then $L^\prime:\mathbb{R}^{d-u} \longrightarrow \mathbb{R}^{m-u}$ is surjective, and \[T_{F,G,N}^L(f_1,\dots,f_d) = \sum\limits_{\widetilde{\mathbf{r}} \in \widetilde{R}}T_{F_{\widetilde{\mathbf{r}}},G_{\widetilde{\mathbf{r}}},N}^{L^\prime,\Xi,\widetilde{\mathbf{r}}}(f_1,\dots,f_d).\] This resolves part (4).\\

\textbf{Part (5)}:  We wish to show that $L^\prime$ is purely irrational. Suppose for contradiction that there exists some surjective linear map $\varphi:\mathbb{R}^{m-u} \longrightarrow \mathbb{R}$ with $\varphi L^\prime (\mathbb{Z}^{d-u}) \subseteq \mathbb{Z}$, i.e. with $\varphi \pi_{m-u} PL\Xi(\mathbb{Z}^{d-u}) \subseteq \mathbb{Z}$. Then define the map $\Theta^\prime:\mathbb{R}^m \longrightarrow \mathbb{R}^{u+1}$ by \[ \Theta^\prime(\mathbf{x}) : = (\Theta(\mathbf{x}),\varphi \pi_{m-u} P(\mathbf{x})).\] Then $\Theta^\prime$ is surjective, and $\Theta^\prime L(\mathbb{Z}^d)\subseteq \mathbb{Z}^{u+1}$. (This second fact is immediately seen by writing $\mathbb{Z}^d$ with respect to the lattice basis $\mathcal{B}$ from (\ref{basis for Rd})). This contradicts the assumption that $L$ has rational dimension $u$. So $L^\prime$ is purely irrational.  \\

\textbf{Part (6)}: The bound $\Vert L^\prime\Vert_\infty = O_{c,C}(1)$ follows immediately from the bounds on the coefficients of $\Xi$, $L$, $P$, and $\pi_{m-u}$ separately. 

We wish to prove that $\dist(L^\prime, V_{\rank}(m-u,d-u)) \gg_{c,C} 1$, i.e. that\\ $\dist(\pi_{m-u} PL\Xi, V_{\rank}(m-u,d-u)) \gg_{c,C} 1$. Suppose for contradiction that, for a small parameter $\eta$, there exists a linear map $Q: \mathbb{R}^{d-u} \longrightarrow \mathbb{R}^{m-u}$ such that $\Vert Q\Vert_{\infty}<\eta$ and $\pi_{m-u} PL\Xi + Q$ has rank less than $m-u$. 
Recall that $PL\Xi (\mathbb{R}^{d-u}) = \{0\}^u \times \mathbb{R}^{m-u}$. So, extending $Q$ by zeros to a map $Q:\mathbb{R}^{d-u} \longrightarrow \{0\}^u \times \mathbb{R}^{m-u}$, and applying $P^{-1}$, there is a map $Q^\prime:\mathbb{R}^{d-u} \longrightarrow \mathbb{R}^m$ such that $\Vert Q^\prime\Vert_{\infty}=O_{c,C}(\eta)$ and $L\Xi + Q^\prime$ has rank less than $m-u$. 

We may factorise $Q^\prime = H\Xi$ for some $m$-by-$d$ matrix $H$. Indeed let \[\mathcal{B} := \{ \mathbf{x_i}: i\leqslant u\} \cup \{\Xi(\mathbf{w_j}): j\leqslant d-u\}\] be the basis of $\mathbb{R}^d$ from (\ref{basis for Rd}), i.e. the basis formed by applying Lemma \ref{Lemma dual space decomposition} to the map $S: = \Theta L$. Define the linear map $H$ by $H(\Xi(\mathbf{w_j})): = Q^\prime(\mathbf{w_{j}})$ for each $j$ and $H(\mathbf{x_i}): = \mathbf{0}$ for each $i$. Since the change of basis matrix between $\mathcal{B}$ and the standard basis of $\mathbb{R}^d$ has integer coefficients with absolute values at most $O_{c,C}(1)$, it follows that the matrix representing $H$ with respect to the standard bases satisfies $\Vert H\Vert_\infty = O_{c,C}(\eta)$.

So we know that $(L+H)\Xi$ has rank less than $m-u$. But $\Xi:\mathbb{R}^{d-u} \longrightarrow \mathbb{R}^d$ is injective, so this implies that the rank of $L+H$ is less than $m$. Hence $\dist(L,V_{\rank}(m,d)) = O_{c,C}(\eta)$, which contradicts the assumptions of the lemma (if $\eta$ is small enough). So $\dist(L^\prime,V_{\rank}(m-u,d-u)) \gg_{c,C} 1$ as required. \\

\textbf{Part (7)}: We wish to show that $\dist((\Xi,L^\prime),V_{\degen,2}^*(m-u,d,d-u)) = \Omega_{c,C}(1)$. Suppose for contradiction that, for a small parameter $\eta$, there exists a linear map $Q:\mathbb{R}^{d-u} \longrightarrow \mathbb{R}^d$ such that  $\Vert Q\Vert_\infty \leqslant \eta$ and $\dist((\Xi + Q, L^\prime ),V_{\degen,2}^*(m-u,d,d-u)) \leqslant \eta$. In other words, we suppose there exist two indices $i,j\leqslant d$, and a real number $\lambda$, such that $\mathbf{e_i}^* - \lambda\mathbf{e_j}^*$ is non-zero and \[ (\Xi + Q)^*(\mathbf{e_i}^* - \lambda\mathbf{e_j}^*) \in (L^\prime)^*((\mathbb{R}^{m-u})^*),\] where $\{\mathbf{e_1},\dots,\mathbf{e_d}\}$ denotes the standard basis of $\mathbb{R}^d$ and $\{\mathbf{e_1}^*,\dots,\mathbf{e_d}^*\}$ denotes the dual basis. Expanding out the definition of $L^\prime$, this means that there exists some $\varphi \in (\mathbb{R}^{m-u})^*$ such that \[\Xi^*(\mathbf{e_i}^* - \lambda\mathbf{e_j}^* - L^*(P^* \pi_{m-u}^*(\varphi))) = -Q^*(\mathbf{e_i}^* - \lambda\mathbf{e_j}^*).\] Because $\Vert Q\Vert_\infty \leqslant \eta$, this means that 
\begin{equation}
\label{about to be dualed}
\Vert \Xi^*(\mathbf{e_i}^* - \lambda\mathbf{e_j}^* - L^*(P^* \pi_{m-u}^*(\varphi)))\Vert_\infty = O(\eta).
\end{equation}

Let 
\begin{equation}
\label{funky dual basis}
\mathcal{B}^* := \{ \mathbf{x_i^*}: i\leqslant u\} \cup \{\Xi(\mathbf{w_j})^*: j\leqslant d-u\}
\end{equation} denote the basis of $(\mathbb{R}^d)^*$ that is dual to the basis $\mathcal{B}$ from (\ref{basis for Rd}). It follows from part (4) of Lemma \ref{Lemma dual space decomposition} and (\ref{about to be dualed}) that \[\mathbf{e_i}^* - \lambda\mathbf{e_j}^* - L^*(P^* \pi_{m-u}^*(\varphi)) = \omega_V + \omega_W,\] where $\omega_V \in L^* \Theta^*((\mathbb{R}^u)^*)$, $\omega_W \in \spn(\Xi(\mathbf{w_j})^*:j\leqslant d-u)$, and $\Vert\omega_W\Vert_\infty = O_{c,C}(\eta)$. So therefore \[ \mathbf{e_i}^* - \lambda\mathbf{e_j}^* = L^*(\alpha) + \omega_W,\] for some $\alpha \in (\mathbb{R}^m)^*$. 

This is enough to derive a contradiction. Indeed, without loss of generality one may assume that $\Vert \mathbf{e_i}^* - \lambda\mathbf{e_j}^*\Vert_\infty \geqslant 1$ (this is obvious if $i\neq j$, and if $i = j$ we may just pick $\lambda = 0$ at the outset). Therefore $\Vert \mathbf{e_i}^* - \lambda\mathbf{e_j}^* - \omega_W\Vert \geqslant 1/2$, provided $\eta$ is small enough. Since $\Vert L^*\Vert_\infty = O_{c,C}(1)$, we conclude that $\Vert \alpha\Vert_\infty = \Omega_{c,C}(1)$. 

This means that there exists a linear map $E:\mathbb{R}^d \longrightarrow \mathbb{R}^m$ with $\Vert E \Vert_\infty = O_{c,C}(\eta)$ for which $E^*(\alpha) = \omega_W$. Then \[ \mathbf{e_i}^* - \lambda\mathbf{e_j}^* \in (L+E)^*((\mathbb{R}^{m})^*),\] and hence $\dist(L, V_{\degen}^*(m,d)) = O_{c,C}(\eta)$. This is a contradiction to the hypotheses of Theorem \ref{Main Theorem chapter 3}, provided $\eta$ is small enough, and hence $\dist((\Xi,L^\prime),V_{\degen,2}^*(m-u,d,d-u)) = \Omega_{c,C}(1)$. \\

\textbf{Part (8)}: Let $\tau_1,\tau_2 \in (0,1]$. We desire to prove the relationship 
\begin{equation}
\label{claim of part 8}
A_{L^\prime}(\tau_1,\tau_2) \gg_{c,C} A_L(\Omega_{c,C}(\tau_1),\Omega_{c,C}(\tau_2)),
\end{equation}
\noindent where $L^\prime$ is as in (\ref{equation definition of L prime}).

We have already proved that $L^\prime$ is purely irrational (that was part (5) of the lemma). So, if $A_{L^\prime}(\tau_1,\tau_2)  <\eta$, for some $\eta$, there exists some $\varphi \in (\mathbb{R}^{m-u})^*$ for which $ \tau_1\leqslant \Vert \varphi\Vert_\infty \leqslant \tau_2^{-1}$ and for which \[\dist( (\pi_{m-u} PL\Xi)^*(\varphi),(\mathbb{Z}^{d-u})^T) <\eta,\] where, one recalls, we use $(\mathbb{Z}^{d-u})^T$ to denote the set of those functions in $(\mathbb{R}^{d-u})^*$ that have integer coordinates with respect to the standard dual basis. 

We claim that 
\begin{align}
\label{first approximation equation of part 8 of the lemma}
\dist(L^*(P^*\pi_{m-u}^*(\varphi)),(\mathbb{Z}^d)^T) \ll_{c,C}\eta;\\
\label{second approximation equation of part 8 of the lemma}
\Vert P^*\pi_{m-u}^*(\varphi)\Vert_\infty \ll_{c,C} \tau_2^{-1};\\
\label{third approximation equation of part 8 of the lemma}
\dist(P^*\pi_{m-u}^*(\varphi), \Theta^*((\mathbb{R}^{u})^*)) \gg_{c,C} \tau_1, 
\end{align}
\noindent from which (\ref{claim of part 8}) immediately follows. \\

Let us prove (\ref{first approximation equation of part 8 of the lemma}). Indeed, we already know that\\ $\dist( \Xi^* L^* P^* \pi_{m-u}^*(\varphi),(\mathbb{Z}^{d-u})^T) <\eta$, i.e. that 
\begin{equation}
\label{first approximation equation}
 \Vert \Xi^* L^* P^* \pi_{m-u}^*(\varphi) - \alpha\Vert_\infty <\eta,
 \end{equation} for some $\alpha \in (\mathbb{Z}^{d-u})^T$. Let us write $\alpha = \sum_{j=1}^{d-u} \lambda_j\mathbf{w_j}^*$ for some $\lambda_j \in \mathbb{Z}$, where $\mathbf{w_1},\dots,\mathbf{w_{d-u}}$ denotes the standard basis for $\mathbb{R}^{d-u}$ and $\mathbf{w_1}^*,\dots,\mathbf{w_{d-u}}^*$ denotes the dual basis. Let $\mathcal{B}^*$ be as in (\ref{funky dual basis}). Then $\mathbf{w_j}^* = \Xi^*((\Xi(\mathbf{w_j})^*)$, and so \[ \alpha = \Xi^*(\sum\limits_{j=1}^{d-u} \lambda_j \Xi(\mathbf{w_j}) ^*).\] So from (\ref{first approximation equation}) and the final part of Lemma \ref{Lemma dual space decomposition}, 
\begin{equation}
L^*P^* \pi_{m-u}^*(\varphi) - \sum\limits_{j=1}^{d-u} \lambda_j \Xi(\mathbf{w_j}) ^* = \omega_V + \omega_W,
\end{equation}
\noindent where $\omega_V \in \spn(\mathbf{x_i^*}:i\leqslant u)$, $\omega_W \in \spn(\Xi(\mathbf{w_j})^*:j\leqslant d-u)$, and $\Vert\omega_W\Vert_\infty = O_{c,C}(\eta)$. 

But $L^*P^* \pi_{m-u}^*(\varphi) \in \spn(\Xi(\mathbf{w_j})^*:j\leqslant d-u)$ too. Indeed, for every $i$ at most $d-u$,
\[L^*P^* \pi_{m-u}^*(\varphi)(\mathbf{x_i}) = \varphi (\pi_{m-u} PL\mathbf{x_i}) = \varphi(\mathbf{0}) = 0,\] by the properties of $P$ (see (\ref{equation properties of P})). Therefore $\omega_V = \mathbf{0}$, and so \[\Vert L^*P^* \pi_{m-u}^*(\varphi) - \sum\limits_{j=1}^{d-u} \lambda_j \Xi(\mathbf{w_j}) ^*\Vert_\infty = O_{c,C}(\eta).\] Since  $\sum_{j=1}^{d-u} \lambda_j \Xi(\mathbf{w_j}) ^* \in (\mathbb{Z}^d)^T$, this implies (\ref{first approximation equation of part 8 of the lemma}) as claimed.\\

The bound (\ref{second approximation equation of part 8 of the lemma}) is immediate from the bounds on the coefficients of $P^*$ and $\pi_{m-u}^*$, so it remains to prove (\ref{third approximation equation of part 8 of the lemma}). Suppose for contradiction that, for some small parameter $\delta$, \[P^*\pi_{m-u}^*(\varphi) = \alpha_1 + \alpha_2,\] where $\alpha_1 \in \Theta^*((\mathbb{R}^{u})^*)$ and $\Vert \alpha_2 \Vert_\infty \leqslant \delta \tau_1$. We know that $\Vert \varphi \Vert_\infty \geqslant \tau_1$, which means that there is some standard basis vector $\mathbf{f_k} \in \mathbb{R}^{m-u}$ for which $\vert \varphi(\mathbf{f_k})\vert \geqslant \tau_1$. Let $\mathbf{b_{k+u}}$ be the standard basis vector of $\mathbb{R}^m$ for which $\pi_{m-u}(\mathbf{b_{k+u}}) = \mathbf{f_k}$. Recall the properties of $P$ (given in (\ref{equation properties of P}) and (\ref{equation properties of P 2})), in particular recall that $P:\ker \Theta \longrightarrow \{0\}^u \times \mathbb{R}^{m-u}$ is an isomorphism. Then \[\vert P^* \pi_{m-u}^*(\varphi)(P^{-1}(\mathbf{b_{k+u}}))\vert = \vert\pi_{m-u}^*(\varphi)(\mathbf{b_{k+u}})\vert = \vert \varphi(\mathbf{f_k})\vert \geqslant \tau_1.\] Note that $\Theta^*((\mathbb{R}^{u})^*) = (\ker \Theta)^{0}$, and so \[\vert P^* \pi_{m-u}^*(\varphi)(P^{-1}(\mathbf{b_{k+u}}))\vert = \vert(\alpha_1 + \alpha_2)(P^{-1}(\mathbf{b_{k+u}}))\vert = \vert \alpha_2(P^{-1}(\mathbf{b_{k+u}})) \vert \ll_{c,C} \delta \tau_1,\] as $P^{-1}(\mathbf{b_{k+u}}) \in \ker \Theta$ and satisfies $\Vert P^{-1}(\mathbf{b_{k+u}})\Vert_\infty = O_{c,C}(1)$. This is a contradiction if $\delta$ is small enough, and so (\ref{third approximation equation of part 8 of the lemma}) holds. This resolves part (8).\\

\textbf{Part (9)}: Let $\tau_1,\tau_2 \in (0,1]$. We desire to prove the relationship 
\begin{equation}
\label{claim of part 9}
A_{L^\prime}(\tau_1,\tau_2) \ll_{c,C} A_L(\Omega_{c,C}(\tau_1),\Omega_{c,C}(\tau_2)),
\end{equation}
\noindent where $L^\prime$ is as in (\ref{equation definition of L prime}).This inequality is the reverse inequality of part (8), and in fact it will not be required in the proof of any of our main theorems. However, it will be required in order to analyse $A_L(\tau_1,\tau_2)$ when $L$ has algebraic coefficients (in Appendix \ref{section algebraic approximation}), so we choose to state and prove it here, close to our argument for part (8).

Suppose that $A_L(\tau_1,\tau_2)<\eta$, for some parameter $\eta$. Then there exists some $\varphi\in (\mathbb{R}^m)^*$ such that $\dist(\varphi, \Theta^*((\mathbb{R}^u)^*)) \geqslant \tau_1$, $\Vert \varphi \Vert_\infty \leqslant \tau_2^{-1}$, and $\dist(L^*\varphi, (\mathbb{Z}^d)^T) < \eta$. So there exists some $\omega \in (\mathbb{Z}^d)^T$ for which \[ \Vert L^* \varphi - \omega\Vert_\infty < \eta.\] 

We expand both $L^*\varphi$ and $\omega$ with respect to the dual basis $\mathcal{B}^*$ from (\ref{funky dual basis}). So, 
\begin{align*}
L^*\varphi &= \sum\limits_{i=1}^u \lambda_i \mathbf{x_i^*} + \sum\limits_{j=1}^{d-u} \mu_j \Xi(\mathbf{w_j})^*\\
\omega& = \sum\limits_{i=1}^u \lambda_i^\prime \mathbf{x_i^*} + \sum\limits_{j=1}^{d-u} \mu_j^\prime \Xi(\mathbf{w_j})^*.
\end{align*}
\noindent Since $\mathcal{B}^*$ is a lattice basis for $(\mathbb{Z}^d)^T$, we have $\lambda_i^\prime \in \mathbb{Z}$ and $\mu_j^\prime \in\mathbb{Z}$ for each $i$ and $j$. Since the change of basis matrix between $\mathcal{B}^*$ and the standard dual basis has integer coefficients that are bounded in absolute value by $O_{c,C}(1)$ (part (2) of Lemma \ref{Lemma dual space decomposition}), one has $\vert \lambda_i - \lambda_i^\prime\vert = O_{c,C}(\eta)$ and $\vert \mu_j - \mu_j^\prime\vert = O_{c,C}(\eta)$ for each $i$ and $j$. 

Let $\mathbf{w_1^*}, \dots,\mathbf{w_{d-u}^*}$ denote the standard dual basis of $(\mathbb{R}^{d-u})^*$, and define \[\omega^\prime: = \sum\limits_{j=1}^{d-u} \mu_j^\prime \mathbf{w_j^*}.\] Certainly $\omega^\prime \in (\mathbb{Z}^{d-u})^T$. We claim that there exists a map $\varphi^\prime \in (\mathbb{R}^{m-u})^*$ such that $\tau_1 \ll_{c,C} \Vert \varphi^\prime \Vert_\infty \ll_{c,C} \tau_2^{-1}$ and $\Vert (L^\prime)^* \varphi^\prime - \omega^\prime\Vert_\infty \ll_{c,C} \eta$, which will immediately resolve (\ref{claim of part 9}) and part (9). \\

Indeed, recall the decomposition $\mathbb{R}^m =  (\spn(L\mathbf{x_i}: i\leqslant u)) \oplus \ker \Theta$ as an algebraic direct sum from (\ref{direct sum}). Let $\varphi = \varphi_1 + \varphi_2$, where $\varphi_1 \in(\spn(L\mathbf{x_i}: i\leqslant u))^{0} $ and $\varphi_2 \in (\ker \Theta)^{0}.$ Since $\dist(\varphi, (\ker \Theta)^{0}) \geqslant \tau_1$, we have $\Vert \varphi_1\Vert_\infty \geqslant \tau_1$. By the properties of the matrix $P$ ((\ref{equation properties of P}) and (\ref{equation properties of P 2})) there exists some $\varphi^\prime \in (\mathbb{R}^{m-u})^*$ such that \[\varphi_1 = P^* \pi_{m-u}^* \varphi^\prime.\] Furthermore, by evaluating $\varphi^\prime$ at the standard basis vectors, one sees that \[ \tau_1 \ll_{c,C} \Vert \varphi^\prime \Vert_\infty \ll_{c,C} \tau_2^{-1}.\] We shall use this $\varphi^\prime$.

By evaluating $L^*\varphi_1$ at the elements of $\mathcal{B}$ one immediately sees that  \[L^*\varphi_1 = \sum\limits_{j=1}^{d-u} \mu_j \Xi(\mathbf{w_j})^*.\] Hence \[\Xi^*L^*P^* \pi_{m-u}^* \varphi^\prime = \sum\limits_{j=1}^{d-u} \mu_j \mathbf{w_j^*},\] in other words $(L^\prime)^* \varphi^\prime = \sum_{j=1}^{d-u}\mu_j \mathbf{w_j^*}.$ But since $\vert \mu_j - \mu_j^\prime\vert = O_{c,C}(\eta)$ for each $j$, one has $\Vert (L^\prime)^* \varphi^\prime - \omega^\prime\Vert_\infty = O_{c,C} (\eta)$ as required. This settles part (9). \\

\noindent The entire lemma is settled. 
\end{proof}

The final lemma we need in order to deduce Theorem \ref{Main Theorem chapter 3} involves removing the sharp cut-off $G$. 

\begin{Lemma}[Removing image cut-off]
\label{Lemma making G lipschitz}
Let $m,d,h$ be natural numbers, satisfying $d\geqslant h\geqslant m+1$. Let $c,C,\varepsilon$ be positive, and let $\sigma_G$ be any parameter in the range $0<\sigma_G<1/2$. Let $L^\prime:\mathbb{R}^h \longrightarrow \mathbb{R}^m$ be a purely irrational surjective map, and let $\Xi:\mathbb{R}^{h} \longrightarrow \mathbb{R}^d$ be an injective map. Suppose that $\Vert L^\prime\Vert_\infty \leqslant C$ and that $\dist(L^\prime, V_{\rank}(m,h)) \geqslant c$. Let $F_{\widetilde{\mathbf{r}}}:\mathbb{R}^h \longrightarrow [0,1]$ be any function supported on $[-N,N]^h$, and let $G_{\widetilde{\mathbf{r}}}:\mathbb{R}^m \longrightarrow [0,1]$ be the indicator function of a convex set contained within $[-\varepsilon,\varepsilon]^m$. Then there exists a Lipschitz function $G_{\widetilde{\mathbf{r}}, \sigma_G,1}$ supported on $[-O_{c,C,\varepsilon}(1),O_{c,C,\varepsilon}(1)]^{m}$, and with Lipschitz constant $O_{c,C,\varepsilon}(1/\sigma _G)$, such that, for any parameter $\tau_2$ in the range $0<\tau_2\leqslant 1$ and for any functions $f_1,\dots,f_d:[N] \longrightarrow [-1,1]$,
\begin{align*}
\vert &T_{F_{\widetilde{\mathbf{r}}},G_{\widetilde{\mathbf{r}}},N}^{L^\prime,\Xi,\widetilde{\mathbf{r}}}(f_1,\dots,f_d)\vert \\ &\ll_{c,C,\varepsilon} \vert T_{F_{\widetilde{\mathbf{r}}},G_{\widetilde{\mathbf{r}},\sigma_G,1},N}^{L^\prime,\Xi,\widetilde{\mathbf{r}}}(f_1,\dots,f_d)\vert + \sigma_G + \frac{\tau_2^{1/2}}{\sigma _G} +  \frac{\tau_2^{-O(1)}A_L(\Omega_{c,C}(1),\tau_2)^{-1}}{N}.
\end{align*}
\end{Lemma}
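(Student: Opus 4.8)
The plan is to sandwich the sharp cut-off $G_{\widetilde{\mathbf{r}}} = 1_D$, where $D \subseteq [-\varepsilon,\varepsilon]^m$ is the convex domain in question, between two Lipschitz functions: one of which will be the desired $G_{\widetilde{\mathbf{r}},\sigma_G,1}$, and the other a Lipschitz bump $G_{\widetilde{\mathbf{r}},\sigma_G,2}$ supported in a thin shell about $\partial D$; the contribution of the shell is then estimated by Lemma~\ref{Lemma upper bound involving integral}.

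First I would apply Lemma~\ref{Lipschitz approximation of convex cutoffs} to the convex set $D$, at scale $\sigma_G$, to write $1_D = G_{\widetilde{\mathbf{r}},\sigma_G,1} + O(G_{\widetilde{\mathbf{r}},\sigma_G,2})$, where $G_{\widetilde{\mathbf{r}},\sigma_G,1}, G_{\widetilde{\mathbf{r}},\sigma_G,2}:\mathbb{R}^m \longrightarrow [0,1]$ are Lipschitz with Lipschitz constants $O(1/\sigma_G)$, both supported on an $O(\sigma_G)$-neighbourhood of $D$ (hence on $[-O_\varepsilon(1),O_\varepsilon(1)]^m$, since $D\subseteq[-\varepsilon,\varepsilon]^m$ and $\sigma_G<1/2$), with $G_{\widetilde{\mathbf{r}},\sigma_G,2}$ supported on an $O(\sigma_G)$-neighbourhood of $\partial D$ and satisfying $\int_{\mathbf{x}}G_{\widetilde{\mathbf{r}},\sigma_G,2}(\mathbf{x})\,d\mathbf{x} = O_\varepsilon(\sigma_G)$. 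This last estimate is the only place convexity is genuinely needed: a convex body contained in a box of side $O_\varepsilon(1)$ has boundary of $(m-1)$-dimensional measure $O_\varepsilon(1)$, so a shell of thickness $O(\sigma_G)$ about $\partial D$ has volume $O_\varepsilon(\sigma_G)$.

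Next, since $F_{\widetilde{\mathbf{r}}}\geqslant 0$, $G_{\widetilde{\mathbf{r}},\sigma_G,2}\geqslant 0$, and $\big\vert\prod_{j=1}^d f_j(\xi_j(\mathbf{n})+\widetilde{\mathbf{r}}_j)\big\vert\leqslant 1$ for every $\mathbf{n}$, applying the triangle inequality term-by-term in the decomposition of $1_D$ gives
\[
\big\vert T_{F_{\widetilde{\mathbf{r}}},G_{\widetilde{\mathbf{r}}},N}^{L^\prime,\Xi,\widetilde{\mathbf{r}}}(f_1,\cdots,f_d)\big\vert \ll \big\vert T_{F_{\widetilde{\mathbf{r}}},G_{\widetilde{\mathbf{r}},\sigma_G,1},N}^{L^\prime,\Xi,\widetilde{\mathbf{r}}}(f_1,\cdots,f_d)\big\vert + T_{F_{\widetilde{\mathbf{r}}},G_{\widetilde{\mathbf{r}},\sigma_G,2},N}^{L^\prime,\Xi,\widetilde{\mathbf{r}}}(1,\cdots,1).
\]
I would then observe that, with all weights set to $1$, the product over $j$ in Definition~\ref{Definition solution count rational separation} is identically $1$, so the data $\Xi$ and $\widetilde{\mathbf{r}}$ play no role and $T_{F_{\widetilde{\mathbf{r}}},G_{\widetilde{\mathbf{r}},\sigma_G,2},N}^{L^\prime,\Xi,\widetilde{\mathbf{r}}}(1,\cdots,1) = T_{F_{\widetilde{\mathbf{r}}},G_{\widetilde{\mathbf{r}},\sigma_G,2},N}^{L^\prime}(1,\cdots,1)$ in the sense of Definition~\ref{Definition solution count}, with the ambient dimension $h$ playing the role of $d$ there. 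Now Lemma~\ref{Lemma upper bound involving integral} applies: its hypotheses hold since $L^\prime$ is purely irrational and surjective with $\Vert L^\prime\Vert_\infty\leqslant C$ and $\dist(L^\prime,V_{\rank}(m,h))\geqslant c$, $F_{\widetilde{\mathbf{r}}}$ is supported on $[-N,N]^h$, and $G_{\widetilde{\mathbf{r}},\sigma_G,2}$ is Lipschitz, supported on $[-O_\varepsilon(1),O_\varepsilon(1)]^m$, with Lipschitz constant $O(1/\sigma_G)$ and $\int G_{\widetilde{\mathbf{r}},\sigma_G,2} = O_\varepsilon(\sigma_G)$. That lemma yields, for every $\tau_2\in(0,1]$,
\[
T_{F_{\widetilde{\mathbf{r}}},G_{\widetilde{\mathbf{r}},\sigma_G,2},N}^{L^\prime}(1,\cdots,1)\ll_{c,C,\varepsilon}\sigma_G + \frac{\tau_2^{1-o(1)}}{\sigma_G} + \frac{\tau_2^{-O(1)}A_{L^\prime}(\Omega_{c,C}(1),\tau_2)^{-1}}{N},
\]
and substituting this into the previous display gives the stated bound (the approximation function appearing being that of the hypothesised map $L^\prime$).

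I do not expect a serious obstacle: this is the standard manoeuvre of replacing the indicator of a convex body by a Lipschitz cut-off and paying a boundary term. The one point that actually uses structure, rather than being bookkeeping, is the surface-area estimate that makes the shell $G_{\widetilde{\mathbf{r}},\sigma_G,2}$ carry mass only $O_\varepsilon(\sigma_G)$ — and this is immediate from the convexity of $D$ together with its containment in $[-\varepsilon,\varepsilon]^m$. The other thing to be careful about is simply that the passage from $T^{L^\prime,\Xi,\widetilde{\mathbf{r}}}(1,\cdots,1)$ to $T^{L^\prime}(1,\cdots,1)$ is an exact identity, so that Lemma~\ref{Lemma upper bound involving integral} (stated for the quantity $T_{F,G,N}^L$) can be quoted verbatim with ambient dimension $h$.
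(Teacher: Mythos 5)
Your proposal is correct and follows essentially the same route as the paper: decompose the sharp convex cut-off via Lemma \ref{Lipschitz approximation of convex cutoffs} into $G_{\widetilde{\mathbf{r}},\sigma_G,1}$ plus an $O(G_{\widetilde{\mathbf{r}},\sigma_G,2})$ boundary term, reduce the boundary contribution to $T_{F_{\widetilde{\mathbf{r}}},G_{\widetilde{\mathbf{r}},\sigma_G,2},N}^{L^\prime}(1,\cdots,1)$, and estimate that by Lemma \ref{Lemma upper bound involving integral}. The only cosmetic difference is that you spell out why the pair $(\Xi,\widetilde{\mathbf{r}})$ drops out when all weights are $1$, which the paper compresses into a single triangle-inequality remark.
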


\begin{proof}
Applying Lemma \ref{Lipschitz approximation of convex cutoffs} to the function $G_{\widetilde{\mathbf{r}}}$, we have \[G_{\widetilde{\mathbf{r}}} = G_{\widetilde{\mathbf{r}}, \sigma_G,1} + O(G_{\widetilde{\mathbf{r}}, \sigma_G,2}),\] where $G_{\widetilde{\mathbf{r}}, \sigma_G,1}, G_{\widetilde{\mathbf{r}}, \sigma_G,2}:\mathbb{R}^{m} \longrightarrow [0,1]$ are Lipschitz functions with Lipschitz constant $O_{c,C,\varepsilon}(1/\sigma _G)$, both supported on $[-O_{c,C,\varepsilon}(1),O_{c,C,\varepsilon}(1)]^{m}$, and with $\int_{\mathbf{x}} G_{\widetilde{\mathbf{r}}, \sigma_G,2}(\mathbf{x}) \, d\mathbf{x} = O_{c,C,\varepsilon}(\sigma_G)$. 

By the triangle inequality, \[ \vert T_{F_{\widetilde{\mathbf{r}}},G_{\widetilde{\mathbf{r}},\sigma_G,2},N}^{L^\prime,\Xi,\widetilde{\mathbf{r}}}(1,\dots,1)\vert \leqslant T_{F_{\widetilde{\mathbf{r}}},G_{\widetilde{\mathbf{r}},\sigma_G,2},N}^{L^\prime}(1,\dots,1).\] We now apply Lemma \ref{Lemma upper bound involving integral}, with linear map $L^\prime$ and Lipschitz function $G_{\widetilde{\mathbf{r}}, \sigma_G,2}$. Inserting the bound from Lemma \ref{Lemma upper bound involving integral}, the present lemma follows. 
\end{proof}

We conclude this section by combining the three previous lemmas, along with Theorem \ref{Theorem rational set out version}, to deduce our main result.

\begin{proof}[\textbf{Proof of Theorem \ref{Main Theorem chapter 3} assuming Theorem \ref{Theorem rational set out version}}]
Assume the hypotheses of Theorem \ref{Main Theorem chapter 3}. Let $\sigma_F$ and $\sigma_G$ be any parameters satisfying $0<\sigma_F,\sigma_G<1/2$, and let $\tau_2$ be any parameter satisfying $0<\tau_2\leqslant 1$. 

By Lemma \ref{Lemma replacing F cut-off}, \[\vert T_{F,G,N}^L(f_1,\dots,f_d)\vert \leqslant \vert T_{F_{1,\sigma_F},G,N}^L(f_1,\dots,f_d)\vert + O_{c,C}(\sigma_F),\] for some function $F_{1,\sigma_F}:\mathbb{R}^{d} \longrightarrow [0,1]$ supported on $[-2N,2N]^d$ and with Lipschitz constant $O(1/\sigma_F N)$. By part (4) of Lemma \ref{Lemma generating a purely irrational map}, writing $F_{1,\sigma_F}$ for $F$, we have \[\vert T_{F_{1,\sigma_F},G,N}^L(f_1,\dots,f_d)\vert  \leqslant \sum\limits_{\widetilde{\mathbf{r}} \in \widetilde{R}}\vert T_{F_{\widetilde{\mathbf{r}}},G_{\widetilde{\mathbf{r}}},N}^{L^\prime,\Xi,\widetilde{\mathbf{r}}}(f_1,\dots,f_d)\vert, \] where the objects $F_{\widetilde{\mathbf{r}}}$, $G_{\widetilde{\mathbf{r}}}$, $L^\prime$, $\Xi$ and $\widetilde{R}$ satisfy all the conclusions of that lemma. \\

 Parts (1), (5) and (6) of Lemma \ref{Lemma generating a purely irrational map} show that $\Xi$ and $L^\prime$ satisfy the hypotheses of Lemma \ref{Lemma making G lipschitz}, where in the notation of Lemma \ref{Lemma making G lipschitz} we take $h: = d-u$ and rewrite $m$ for $m-u$. So, applying Lemma \ref{Lemma making G lipschitz}, there are some Lipschitz functions $G_{\widetilde{\mathbf{r}},\sigma_G,1}:\mathbb{R}^{m-u} \longrightarrow [0,1]$ supported on $[-O_{c,C,\varepsilon}(1),O_{c,C,\varepsilon}(1)]^{m-u}$ and with Lipschitz constant $O_{c,C,\varepsilon}(1/\sigma_G)$ such that 
\begin{align}
&\vert T_{F,G,N}^L(f_1,\dots,f_d)\vert \nonumber \\
&\ll_{c,C,\varepsilon}  \sum\limits_{\widetilde{\mathbf{r}} \in \widetilde{R}}\vert T_{F_{\widetilde{\mathbf{r}}},G_{\widetilde{\mathbf{r}},\sigma_G,1},N}^{L^\prime,\Xi,\widetilde{\mathbf{r}}}(f_1,\dots,f_d)\vert + \sigma_G + \frac{\tau_2^{1/2}}{\sigma _G} +  \frac{\tau_2^{-O(1)}A_{L^\prime}(\Omega_{c,C}(1),\tau_2)^{-1}}{N} + \sigma_F.
\end{align}
\noindent (Recall that $\vert \widetilde{R}\vert = O_{c,C,\varepsilon}(1)$, by part (2) of Lemma \ref{Lemma generating a purely irrational map}.)

By conclusion (8) of Lemma \ref{Lemma generating a purely irrational map}, we may replace the term $A_{L^\prime}(\Omega_{c,C}(1),\tau_2)^{-1}$ with the term $A_{L}(\Omega_{c,C}(1),\Omega_{c,C}(\tau_2))^{-1}$. \\

Since $F_{\widetilde{\mathbf{r}}}$, $L^\prime$, $\Xi$, and $\widetilde{R}$ together satisfy conclusions (1), (2), (3), (6), and (7) of Lemma \ref{Lemma generating a purely irrational map}, the hypotheses are satisfied so that we may apply Theorem \ref{Theorem rational set out version} to the expression $T_{F_{\widetilde{\mathbf{r}}},G_{\widetilde{\mathbf{r}},\sigma_G,1},N}^{L^\prime,\Xi,\widetilde{\mathbf{r}}}(f_1,\dots,f_d)$. (We take $h = d-u$ and rewrite $m$ for $m-u$, as above). Therefore there exists an $s$ at most $d-2$, independent of $F_{\widetilde{\mathbf{r}}}$, $ G_{\widetilde{\mathbf{r}}}$ and $\widetilde{\mathbf{r}}$, such that, if \[ \min_j \Vert f_j\Vert_{U^{s+1}[N]} \leqslant \rho,\] for some $\rho$ in the range $0<\rho \leqslant 1$ then $\vert T_{F,G,N}^L(f_1,\dots,f_d)\vert$ is 

\begin{align}
\label{final expression of reduction section}
\ll_{c,C,\varepsilon} \rho^{\Omega(1)}& (\sigma_F^{-O(1)} + \sigma_G^{-O(1)}) + \sigma_F^{-O(1)}N^{-\Omega(1)} \nonumber \\
&+ \sigma_G + \frac{\tau_2^{1/2}}{\sigma _G} +  \frac{\tau_2^{-O(1)}A_L(\Omega_{c,C}(1),\Omega_{c,C}(\tau_2))^{-1}}{N} + \sigma_F.
\end{align}

It remains to pick appropriate parameters. Let $C_1$ be a constant that is suitably large in terms of $c$, $C$, and all $O(1)$ constants, and let $c_1$ be a constant that is suitably small in terms of all $O(1)$ constants. Pick $\sigma_F := \sigma_G := \rho^{c_1}$ and $\tau_2 :=  C_1 \rho$. Then \[ \vert T_{F,G,N}^L(f_1,\dots,f_d)\vert \ll_{c,C,\varepsilon} \rho^{\Omega(1)} + o_{\rho,A_L,c,C}(1)\] as $N\rightarrow \infty$, where, after the combining the various error terms from (\ref{final expression of reduction section}), the $o_{\rho,A_L,c,C}(1)$ term may be bounded above by \[N^{-\Omega(1)} \rho^{-O(1)}A_L(\Omega_{c,C}(1), \rho)^{-1},\] as $A_L(\tau_1,\tau_2)$ is monotonically decreasing as $\tau_2$ decreases. This is the desired conclusion of Theorem \ref{Main Theorem chapter 3}.  
\end{proof}

\section{Transfer from $\mathbb{Z}$ to $\mathbb{R}$}
\label{section transfer}
Our remaining task is to prove Theorem \ref{Theorem rational set out version}. We devote this section to the formulation and proof of a certain `transfer' argument, whereby we replace the discrete summation in the definition of $T_{F,G,N}^{L,\Xi,\widetilde{\mathbf{r}}}(f_1,\dots,f_d)$ with an integral $\widetilde{T}^{L,\Xi,\widetilde{\mathbf{r}}}_{F,G,N}(g_1,\dots,g_{d})$. This manoeuvre will be extremely useful in the sequel, as it gives us access to the standard techniques of manipulating real integrals (in particular reparametrisation of variables). These reparametrisations may be attempted directly in the context of the discrete summation $T_{F,G,N}^{L,\Xi,\widetilde{\mathbf{r}}}(f_1,\dots,f_d)$, but the results will be messy, and one will need to control the error term each time such a reparamterisation is undertaken. It is easier in our view to do a single approximation at the beginning, so that we may subsequently reparametrise at will. As we remarked in Section \ref{Section proof strategy}, there is a somewhat analogous device in \cite{GT10}, in which the authors transfer their combinatorial expressions into summations over a field (a finite field $\mathbb{Z}/N^\prime \mathbb{Z}$ for some prime $N^\prime$, in their case), in order that their algebraic manipulations may be simplified. The natural field to use in our setting is $\mathbb{R}$. \\

Let us introduce some notation for the integral in question. 

\begin{Definition}
\label{Definition really continuous solution count}
Let $N,m,d,h$ be natural numbers, with $d\geqslant h\geqslant m+2$. Let $\varepsilon$ be positive. Let $\Xi = (\xi_1,\dots,\xi_d):\mathbb{R}^h\longrightarrow \mathbb{R}^d$ and $L:\mathbb{R}^{h}\longrightarrow \mathbb{R}^m$ be linear maps. Let $F:\mathbb{R}^{h}\rightarrow [0,1]$ and $G:\mathbb{R}^m\rightarrow [0,1]$ be two functions, with $F$ supported on $[-N,N]^h$ and $G$ supported on $[-\varepsilon,\varepsilon]^m$. Let $g_1,\dots,g_{d}:\mathbb{R}\longrightarrow [-1,1]$ be arbitrary functions. We define
\begin{equation}
\label{definiton of the solution count form}
\widetilde{T}^{L,\Xi,\widetilde{\mathbf{r}}}_{F,G,N}(g_1,\dots,g_{d}) := \frac{1}{N^{h-m}}\int\limits_{\mathbf{x}\in \mathbb{R}^h}\Big(\prod\limits_{j=1}^{d}g_j(\xi_j(\mathbf{x})+ \widetilde{\mathbf{r}_j})\Big)F(\mathbf{x})G(L\mathbf{x})\, d\mathbf{x}.
\end{equation}
\end{Definition}

Next, we determine a particular class of measurable functions that will be useful to us. 

\begin{Definition}[$\eta$-supported]
\label{Defintion eta supported}
Let $\chi:\mathbb{R}\longrightarrow [0,1]$ be a measurable function, and let $\eta$ be a positive parameter. We say that $\chi$ is \emph{$\eta$-supported} if $\chi$ is supported on $[-\eta,\eta]$ and $\chi(x) \equiv 1$ for all $x\in [-\eta/2,\eta/2]$. 
\end{Definition}

\begin{Definition}[Convolution]
If $f:\mathbb{Z}\longrightarrow \mathbb{R}$ has finite support, and $\chi:\mathbb{R}\longrightarrow [0,1]$ is a measurable function, we may define the (rather singular) convolution  $(f\ast \chi)(x) : \mathbb{R}\longrightarrow \mathbb{R}$ by \[ (f\ast \chi)(x): = \sum\limits_{n\in \mathbb{Z}} f(n) \chi(x -n).\] 
\end{Definition}
We note that if $\chi$ is $\eta$-supported, for small enough $\eta$, then there is only one possible integer $n$ that makes a non-zero contribution to above summation. \\

We now state the key lemma. 
\begin{Lemma}
\label{Lemma transfer equation}
Let $N,m,d, h$ be natural numbers, with $d\geqslant h\geqslant m+2$, and let $c,C,\varepsilon,\eta$ be positive constants. Let $\Xi:\mathbb{R}^h\longrightarrow \mathbb{R}^d$ be an injective linear map with integer coefficients, and assume that $\Xi(\mathbb{Z}^{h}) = \mathbb{Z}^d \cap \im \Xi$. Let $L:\mathbb{R}^h \longrightarrow \mathbb{R}^m$ be a surjective linear map. Assume that $\Vert \Xi \Vert_\infty \leqslant C$, $\Vert L \Vert_\infty \leqslant C$, and $\dist(L,V_{\rank}(m,h)) \geqslant c$. Let $F:\mathbb{R}^h\longrightarrow [0,1]$ be a Lipschitz function supported on $[-N,N]^h$ with Lipschitz constant $O(1/\sigma_FN)$, and let $G:\mathbb{R}^{m}\longrightarrow [0,1]$ be a Lipschitz function supported on $[-\varepsilon,\varepsilon]^m$ with Lipschitz constant $O(1/\sigma_G)$. Let $\widetilde{\mathbf{r}}$ be a fixed vector in $\mathbb{Z}^d$, satisfying $\Vert \widetilde{\mathbf{r}}\Vert_\infty = O_{c,C,\varepsilon}(1)$. Let $\chi:\mathbb{R}\longrightarrow [0,1]$ be an $\eta$-supported measurable function. Then, if $\eta$ is small enough (in terms of the dimensions $m,d,h$, $C$, and $\varepsilon$) there exists some positive real number $C_{\Xi,\chi}$ such that, if $f_1,\dots,f_d:[N]\longrightarrow [-1,1]$ are arbitrary functions,
\begin{equation}
\label{eqn transfer equation}
T_{F,G,N}^{\Xi,L,\widetilde{\mathbf{r}}}(f_1,\dots,f_d) = \frac{1}{C_{\Xi,\chi}\eta^{h}}\widetilde{T}_{F,G,N}^{\Xi,L,\widetilde{\mathbf{r}}}(f_1\ast \chi,\dots,f_d\ast \chi) + O_{C,c,\varepsilon}(\eta/\sigma_G) + O_{C,c,\varepsilon}(\eta/\sigma_F N).
\end{equation}
\noindent Moreover, $C_{\Xi,\chi} \asymp_C 1$.
\end{Lemma}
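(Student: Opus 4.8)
The plan is to expand both sides of \eqref{eqn transfer equation} along the lattice $\Xi(\mathbb{Z}^h) = \mathbb{Z}^d \cap \im\Xi$ and compare term by term. First I would unwind the definition of $\widetilde{T}$: by the singular convolution, $(f_j \ast \chi)(\xi_j(\mathbf{x}) + \widetilde{\mathbf{r}}_j) = \sum_{n_j \in \mathbb{Z}} f_j(n_j)\chi(\xi_j(\mathbf{x}) + \widetilde{\mathbf{r}}_j - n_j)$. The key observation is that the arguments $\xi_j(\mathbf{x}) + \widetilde{\mathbf{r}}_j$, $j = 1,\dots,d$, are \emph{not} independent as $\mathbf{x}$ ranges over $\mathbb{R}^h$; they are constrained to lie on the affine image $\Xi(\mathbb{R}^h) + \widetilde{\mathbf{r}}$. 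So I would make the change of variables $\mathbf{x} = \mathbf{n_0} + \mathbf{y}$ where $\mathbf{n_0} \in \mathbb{Z}^h$ and $\mathbf{y}$ ranges over a fundamental domain (say $[-\tfrac12,\tfrac12)^h$, or better a fundamental domain adapted to $\eta$). Because $\Xi$ has integer coefficients with $\Xi(\mathbb{Z}^h) = \mathbb{Z}^d \cap \im\Xi$, shifting $\mathbf{x}$ by an integer vector $\mathbf{n_0}$ shifts $\xi_j(\mathbf{x})$ by the integer $\xi_j(\mathbf{n_0})$, so the only surviving term in the $j$-th convolution sum (for $\eta$ small enough that $\chi$ can be nonzero for at most one integer value of its argument, using $\Vert \Xi\Vert_\infty, \Vert\widetilde{\mathbf{r}}\Vert_\infty = O_{C,\varepsilon}(1)$) forces $n_j = \xi_j(\mathbf{n_0}) + \widetilde{\mathbf{r}}_j + [\text{nearest integer contribution from } \mathbf{y}]$. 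Choosing $\mathbf{y}$ in a small enough neighbourhood of the origin (which is legitimate since $\chi$ vanishes outside $[-\eta,\eta]$ and $\Xi$ is injective, so $\chi(\xi_j(\mathbf{y})) \ne 0$ forces $\Vert\mathbf{y}\Vert_\infty \ll_C \eta$), one gets $n_j = \xi_j(\mathbf{n_0}) + \widetilde{\mathbf{r}}_j$ exactly.

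With that reduction, the integral factorises: for each $\mathbf{n_0}\in\mathbb{Z}^h$, the integrand of $\widetilde{T}$ contributes $\prod_{j=1}^d f_j(\xi_j(\mathbf{n_0}) + \widetilde{\mathbf{r}}_j)$ times $F$ and $G$ evaluated near $\mathbf{n_0}$ times the purely local weight $\int_{\mathbf{y}} \prod_{j=1}^d \chi(\xi_j(\mathbf{y}))\, d\mathbf{y}$. I would \emph{define} $C_{\Xi,\chi}$ to be exactly $\eta^{-h}\int_{\mathbf{y}\in\mathbb{R}^h} \prod_{j=1}^d \chi(\xi_j(\mathbf{y}))\, d\mathbf{y}$ — this is the natural normalising factor. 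The claim $C_{\Xi,\chi}\asymp_C 1$ then follows: the integral is over a region of $\mathbf{y}$ of diameter $\asymp_C \eta$ (since $\Xi$ is injective, with operator norm and inverse-on-its-image norm both $O_C(1)$ by e.g. Lemma \ref{nonsingularity of map on orthogonal complement of Kernel}), the integrand is at most $1$ everywhere, and it is identically $1$ on the smaller region where $\Vert\xi_j(\mathbf{y})\Vert_\infty \le \eta/2$ for all $j$, which again has volume $\asymp_C \eta^h$; so $\eta^{-h}$ times the integral is bounded above and below by constants depending only on $C$ (and the dimensions).

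The remaining work is bookkeeping of the error terms. Having matched $\widetilde{T}(f_1\ast\chi,\dots,f_d\ast\chi)$, after dividing by $C_{\Xi,\chi}\eta^h$, against $N^{-(h-m)}\sum_{\mathbf{n_0}} \prod_j f_j(\xi_j(\mathbf{n_0}) + \widetilde{\mathbf{r}}_j)\, \overline{F}(\mathbf{n_0})\,\overline{G}(L\mathbf{n_0})$ for suitable averages $\overline{F},\overline{G}$ of $F,G$ over the $\mathbf{y}$-neighbourhood, I would compare this to $T^{\Xi,L,\widetilde{\mathbf{r}}}_{F,G,N}$, which is the same sum with $F(\mathbf{n_0})$ and $G(L\mathbf{n_0})$ exactly. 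The discrepancy in $F$: since $F$ is Lipschitz with constant $O(1/\sigma_F N)$ and the $\mathbf{y}$-shift has size $O_C(\eta)$, each term differs by $O_C(\eta/\sigma_F N)$; summing over the $O_{c,C,\varepsilon}(N^{h-m})$ nonzero terms (here I invoke Lemma \ref{Lemma crude bound on number of solutions} / the $\dist(L,V_{\rank})\ge c$ hypothesis to count solutions, plus boundedness $\vert f_j\vert\le 1$, $\vert G\vert\le 1$) and dividing by $N^{h-m}$ gives $O_{c,C,\varepsilon}(\eta/\sigma_F N)$. Similarly the discrepancy in $G$ (Lipschitz constant $O(1/\sigma_G)$, and $L\mathbf{y}$ has size $O_C(\eta)$) contributes $O_{c,C,\varepsilon}(\eta/\sigma_G)$. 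I expect the main obstacle to be the careful handling of the boundary: points $\mathbf{n_0}$ near $\partial([-N,N]^h)$ or near $\partial([-\varepsilon,\varepsilon]^m)$ where $F$ or $G$ transitions, and points where $\chi(\xi_j(\mathbf{y}))$ straddles the set $\{n_j \text{ changes}\}$ — but the Lipschitz hypotheses are precisely designed to absorb exactly these, and the "only one integer contributes" principle (valid for $\eta$ small in terms of $C,\varepsilon$ and the dimensions) keeps the convolution sums from producing cross terms. Everything else is a routine application of the triangle inequality and the crude solution count.
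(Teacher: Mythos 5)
Your proposal is correct and follows essentially the same route as the paper's proof: expand the convolution, observe that only integer vectors of the form $\Xi(\mathbf{n_0})+\widetilde{\mathbf{r}}$ can contribute, take $C_{\Xi,\chi}=\eta^{-h}\int_{\mathbf{y}\in\mathbb{R}^h}\boldsymbol{\chi}(\Xi(\mathbf{y}))\,d\mathbf{y}$, and absorb the discrepancies in $F$ and $G$ via their Lipschitz constants together with the crude solution count of Lemma \ref{Lemma crude bound on number of solutions}. The one step you compress --- ruling out a nonzero integer vector $\mathbf{k}$ with $\Vert\mathbf{k}-\Xi(\mathbf{y})\Vert_\infty\leqslant\eta$, so that the ``nearest integer contribution from $\mathbf{y}$'' is indeed zero --- is exactly the paper's Lemma \ref{Lemma integer distance} (combined with Lemma \ref{Lemma bounded inverse}), and the hypotheses you invoke (integrality of $\Xi$, $\Xi(\mathbb{Z}^h)=\mathbb{Z}^d\cap\im\Xi$, injectivity, $\eta$ small in terms of $C$) supply precisely that.
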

\noindent This lemma is a rigorous formulation of equation (\ref{Lipschitz is needed equation}) from the proof strategy in Section \ref{Section proof strategy}. It is in fact the only part of the proof of Theorem \ref{Theorem rational set out version} in which we use the fact that $G$ is Lipschitz. 
\begin{proof}
Let $\boldsymbol{\chi}:\mathbb{R}^d \longrightarrow [0,1]$ denote the function $\mathbf{x}\mapsto \prod\limits_{i=1}^d \chi(x_i)$. We choose \[ C_{\Xi,\chi} : = \frac{1}{\eta^h}\int\limits_{\mathbf{x} \in \mathbb{R}^h} \bc(\Xi(\mathbf{x})) \, d\mathbf{x} .\] Since $\chi$ is $\eta$-supported, $C_{\Xi,\chi} \asymp_C 1$ . 

Then, expanding the definition of the convolution, \[\frac{1}{C_{\Xi,\chi}\eta^{h}}\widetilde{T}_{F,G,N}^{L,\Xi,\widetilde{\mathbf{r}}}(f_1\ast \chi,\dots,f_d\ast \chi)\] equals
\begin{align}
\label{equation transferring zero}
&\frac{1}{N^{h-m}}\sum\limits_{\mathbf{n}\in \mathbb{Z}^d}\Big(\prod\limits_{j=1}^d f_j(n_j)\Big) \frac{1}{C_{\Xi,\chi}\eta^h} \int\limits_{\mathbf{y}\in \mathbb{R}^h} F(\mathbf{y}) G(L\mathbf{y}) \boldsymbol{\chi}(\Xi(\mathbf{y})  + \widetilde{\mathbf{r}} - \mathbf{n}) \, d\mathbf{y}. 
\end{align}
Note that any vector $\mathbf{n} \in \mathbb{Z}^d$ that gives a non-zero contribution to expression (\ref{equation transferring zero}) satisfies $\Vert \mathbf{n} - \Xi(\mathbf{y}) - \widetilde{\mathbf{r}}\Vert_\infty \ll \eta$, for some $\mathbf{y} \in \mathbb{R}^h$. Therefore, $\mathbf{n}$ must be of the form $\Xi(\mathbf{n^\prime}) + \widetilde{\mathbf{r}}$ for some unique $\mathbf{n^\prime} \in\mathbb{Z}^h$. (This is proved in full in Lemma \ref{Lemma integer distance}). Therefore, writing $\Xi = (\xi_1,\dots,\xi_d)$, we may reformulate (\ref{equation transferring zero}) as
\begin{align}
&\frac{1}{N^{h-m}}\sum\limits_{\mathbf{n}\in \mathbb{Z}^h} \Big(\prod\limits_{j=1}^d f_j(\xi_j(\mathbf{n}) + \widetilde{\mathbf{r}}_j)\Big) \frac{1}{C_{\Xi,\chi}\eta^h}\int\limits_{\mathbf{y}\in \mathbb{R}^h} F(\mathbf{y}) G(L\mathbf{y}) \boldsymbol{\chi}(\Xi(\mathbf{y}-\mathbf{n}))\, d\mathbf{y},\nonumber
\end{align}
\noindent which is equal to 
\begin{equation}
\label{extra equation to be referenced in transfer argument}
\frac{1}{N^{h-m}}\sum\limits_{\mathbf{n}\in \mathbb{Z}^h} \Big(\prod\limits_{j=1}^d f_j(\xi_j(\mathbf{n}) + \widetilde{\mathbf{r}}_j)\Big) \frac{1}{C_{\Xi,\chi}\eta^h}\int\limits_{\mathbf{y}\in \mathbb{R}^h}  (F(\mathbf{n}) + O_C(\eta/\sigma_F N))G(L\mathbf{y}) \boldsymbol{\chi}(\Xi(\mathbf{y} - \mathbf{n})) \, d\mathbf{y}.
\end{equation}
Indeed, the inner integral is only non-zero when  $\Vert \Xi(\mathbf{y}) - \Xi(\mathbf{n})\Vert_\infty\ll\eta$, and this implies that $\Vert \mathbf{y} - \mathbf{n}\Vert_\infty \ll C^{-O(1)} \eta$. (This is proved in full in Lemma \ref{Lemma bounded inverse}). Then recall that $F$ has Lipschitz constant $O(1/\sigma_F N)$.\\

 Continuing, expression (\ref{extra equation to be referenced in transfer argument}) is equal to 
\begin{equation}
\label{equation transferring}
\frac{1}{N^{h-m}} \sum\limits_{\mathbf{n}\in\mathbb{Z}^h}\Big(\prod\limits_{j=1}^d f_j(\xi_j(\mathbf{n}) + \widetilde{\mathbf{r}}_j)\Big) F(\mathbf{n}) H(L\mathbf{n} ) + E
\end{equation}
\noindent where \[H(\mathbf{x}) = \frac{1}{C_{\Xi,\chi}\eta^h}\int\limits_{\mathbf{y}\in\mathbb{R}^h}  \boldsymbol{\chi}(\Xi(\mathbf{y}))G(\mathbf{x} + L\mathbf{y})\ \, d\mathbf{y}\] and $E$ is a certain error, that may be bounded above by 
\begin{equation}
\label{transfer error bounded above}
\ll_C \frac{\eta}{\sigma_F N} \frac{1}{N^{h-m}} \sum\limits_{\mathbf{n}\in[-O(N),O(N)]^h}H(L\mathbf{n}).
\end{equation}  

Let us deal with the first term of (\ref{equation transferring}), in which we wish to replace $H$ with $G$. We therefore consider
\begin{equation*}
\Big\vert\frac{1}{N^{h-m}} \sum\limits_{\mathbf{n}\in\mathbb{Z}^h}\Big(\prod\limits_{j=1}^d f_j(\xi_j(\mathbf{n}) + \widetilde{\mathbf{r}}_j)\Big) F(\mathbf{n}) (G(L\mathbf{n}) - H(L\mathbf{n})) \Big\vert,
\end{equation*}
which is
\begin{equation}
\label{equation transferring main term differencing}
\leqslant \frac{1}{N^{h-m}} \sum\limits_{\mathbf{n}\in\mathbb{Z}^h}F(\mathbf{n}) \vert G-H\vert(L\mathbf{n}).
\end{equation} Using Lemma \ref{Lemma bounded inverse} again, the function $H$ is supported on $[-\varepsilon - O_C(\eta), \varepsilon + O_C(\eta)]^m$. Thus, if $\eta$ is small enough in terms of $\varepsilon$, the function $\vert G-H\vert:\mathbb{R}^m\longrightarrow \mathbb{R}$ is supported on $[-O_C(\varepsilon), O_C(\varepsilon)]^m$. Furthermore, $\Vert G - H\Vert_\infty = O_C(\eta/\sigma_G)$. Indeed, 
\begin{align*}
&G(\mathbf{x}) - \frac{1}{C_{\Xi,\chi}\eta^h}\int\limits_{\mathbf{y}\in\mathbb{R}^h} G(\mathbf{x}+ L\mathbf{y}) \boldsymbol{\chi}(\Xi(\mathbf{y})) \, d\mathbf{y} \\
&= G(\mathbf{x}) - \frac{1}{C_{\Xi,\chi}\eta^h} \int\limits_{\mathbf{y} \in\mathbb{R}^h} (G(\mathbf{x}) + O_C(\eta/\sigma_G))\boldsymbol{\chi}(\Xi(\mathbf{y})) \, d\mathbf{y}\\
& = O_C(\eta/\sigma_G),\nonumber
\end{align*}
\noindent by the definition of $C_{\Xi,\chi}$ and using the Lipschitz property of $G$. So, by the crude bound given in Lemma \ref{Lemma crude bound on number of solutions}, (\ref{equation transferring main term differencing}) may be bounded above by $O_{c,C,\varepsilon}(\eta/\sigma_G)$. \\

Turning to the error $E$ from (\ref{equation transferring}), we've already remarked that it may be bounded above by expression (\ref{transfer error bounded above}). Applying Lemma \ref{Lemma crude bound on number of solutions} again, expression (\ref{transfer error bounded above}) may be bounded above by $O_{c,C,\varepsilon}(\eta/\sigma_F N)$. \\

Lemma \ref{Lemma transfer equation} follows immediately upon substituting the estimates on (\ref{transfer error bounded above}) and (\ref{equation transferring main term differencing}) into \eqref{equation transferring}.
\end{proof}

We finish this section by noting a simple relationship between the Gowers norms $\Vert f\ast \chi \Vert_{U^{s+1}(\mathbb{R},2N)}$ and the Gowers norms $\Vert f \Vert_{U^{s+1}[N]}$. 

\begin{Lemma}[Relating different Gowers norms]
\label{Lemma linking different Gowers norms}
Let $s$ be a natural number, and assume that $\eta$ is a positive parameter that is small enough in terms of $s$. Let $\chi:\mathbb{R}\longrightarrow [0,1]$ be an $\eta$-supported measurable function. Let $N$ be a natural number, and let $f:[N]\longrightarrow \mathbb{R}$ be an arbtirary function. View $f\ast \chi$ as a function supported on $[-2N,2N]$. Then we have \begin{equation}
\label{linking different gowers norms}
\Vert f\ast \chi\Vert_{U^{s+1}(\mathbb{R},2N)}\ll \eta^{\frac{s+2}{2^{s+1}}} \Vert f\Vert_{U^{s+1}[N]}. 
\end{equation}
\end{Lemma}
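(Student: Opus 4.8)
The plan is to unfold both norms into the \emph{same} sum over arithmetic cube configurations of $f$, with the continuous side carrying an extra multiplicative factor equal to an integral of $\chi$ that is trivially $O_s(\eta^{s+2})$. First I would recall, from the definitions in Appendix \ref{sec.Gowers norms}, that after passing to a cyclic group $\Z{\tilde N}$ with $\tilde N\asymp_s N$ and extending $f$ by zero outside $[N]$, one has
\[
\Vert f\Vert_{U^{s+1}[N]}^{2^{s+1}} \;=\; \big(\tilde N^{s+2}\Vert 1_{[N]}\Vert_{U^{s+1}(\Z{\tilde N})}^{2^{s+1}}\big)^{-1}\sum_{n\in\mathbb Z}\sum_{\mathbf k\in\mathbb Z^{s+1}}\ \prod_{\omega\in\{0,1\}^{s+1}} f(n+\omega\cdot\mathbf k),
\]
there being no wraparound because the only nonzero terms have $n\in[N]$ and $|k_i|\le N-1$, and with the prefactor satisfying $\tilde N^{s+2}\Vert 1_{[N]}\Vert_{U^{s+1}(\Z{\tilde N})}^{2^{s+1}}\asymp_s N^{s+2}$.

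Next I would unfold the continuous norm. Writing $(f\ast\chi)(y)=\sum_{n\in\mathbb Z}f(n)\chi(y-n)$ and expanding the $2^{s+1}$-fold product inside the integral defining $\Vert f\ast\chi\Vert_{U^{s+1}(\mathbb R,2N)}^{2^{s+1}}$ produces a sum over tuples $(n_\omega)_{\omega}$ of integers. The elementary point is that, once $\eta$ is small in terms of $s$, a tuple contributes only if $|x+\omega\cdot\mathbf h-n_\omega|<\eta$ for all $\omega$ for some $(x,\mathbf h)$; comparing $\omega=\mathbf 0$ with $\omega=\mathbf e_i$ forces $|h_i-k_i|<2\eta$ with $k_i:=n_{\mathbf e_i}-n_{\mathbf 0}$, and then for general $\omega$ one gets $|n_\omega-(n_{\mathbf 0}+\omega\cdot\mathbf k)|<(2s+4)\eta<1$, so $n_\omega=n_{\mathbf 0}+\omega\cdot\mathbf k$. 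Hence the expansion collapses to $\sum_{n_{\mathbf 0}\in\mathbb Z}\sum_{\mathbf k\in\mathbb Z^{s+1}}\big(\prod_\omega f(n_{\mathbf 0}+\omega\cdot\mathbf k)\big)\,I(n_{\mathbf 0},\mathbf k)$, where $I(n_{\mathbf 0},\mathbf k)$ is the residual $\chi$-integral. Translating $x\mapsto x-n_{\mathbf 0}$ and $\mathbf h\mapsto\mathbf h-\mathbf k$ turns $I(n_{\mathbf 0},\mathbf k)$ into $\int\!\int\prod_\omega\chi(x+\omega\cdot\mathbf h)$ over translated domains; since every contributing configuration has $n_{\mathbf 0}\in[N]$ and $|k_i|\le N-1$ while the integrand is supported in the box $\{|x|<\eta,\ |h_i|<2\eta\ \forall i\}$, those translated domains still contain this box, so $I(n_{\mathbf 0},\mathbf k)=J$ for every contributing $(n_{\mathbf 0},\mathbf k)$, where $J:=\int_{\mathbb R}\int_{\mathbb R^{s+1}}\prod_{\omega\in\{0,1\}^{s+1}}\chi(x+\omega\cdot\mathbf h)\,d\mathbf h\,dx$. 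Thus $\Vert f\ast\chi\Vert_{U^{s+1}(\mathbb R,2N)}^{2^{s+1}}$ equals $J$ times $\sum_{n_{\mathbf 0},\mathbf k}\prod_\omega f(n_{\mathbf 0}+\omega\cdot\mathbf k)$, divided by the normalising constant of the continuous norm, which is again $\asymp_s N^{s+2}$.

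Combining the two computations, the common nonnegative sum $\sum_{n_{\mathbf 0},\mathbf k}\prod_\omega f(n_{\mathbf 0}+\omega\cdot\mathbf k)$ cancels up to constants of comparable size, giving $\Vert f\ast\chi\Vert_{U^{s+1}(\mathbb R,2N)}^{2^{s+1}}\ll_s J\,\Vert f\Vert_{U^{s+1}[N]}^{2^{s+1}}$. Because $\chi$ is $\eta$-supported we have $0\le\chi\le 1$ and $\chi$ supported on $[-\eta,\eta]$, so $\prod_\omega\chi(x+\omega\cdot\mathbf h)$ vanishes unless $|x|<\eta$ and $|h_i|<2\eta$ for all $i$, whence $J\le 2\eta\cdot(4\eta)^{s+1}\ll_s\eta^{s+2}$. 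Substituting and taking $2^{s+1}$-th roots yields $\Vert f\ast\chi\Vert_{U^{s+1}(\mathbb R,2N)}\ll\eta^{(s+2)/2^{s+1}}\Vert f\Vert_{U^{s+1}[N]}$, as required.

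The only genuine obstacle is the boundary bookkeeping in the unfolding step: one must verify both that the smallness of $\eta$ really does pin the auxiliary integers to an honest combinatorial cube $n_{\mathbf 0}+\omega\cdot\mathbf k$ — this is exactly where the hypothesis ``$\eta$ small in terms of $s$'' is used — and that neither the finite ranges in the definition of $\Vert\cdot\Vert_{U^{s+1}(\mathbb R,2N)}$ nor the passage through $\Z{\tilde N}$ in the discrete norm truncates or wraps any configuration that actually contributes. Both facts follow from the explicit bounds $n_{\mathbf 0}\in[N]$, $|k_i|\le N-1$ on contributing configurations, but they are precisely what make the term-by-term identification exact up to the stated constants.
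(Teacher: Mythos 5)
Your proof is correct and follows essentially the same route as the paper's: expand $\Vert f\ast\chi\Vert_{U^{s+1}(\mathbb{R},2N)}^{2^{s+1}}$, show the integer tuples collapse to honest cubes $n_{\boldsymbol{\omega}}=n_{\mathbf{0}}+\boldsymbol{\omega}\cdot\mathbf{k}$ when $\eta$ is small, factor out the translation-invariant $\chi$-integral of size $O_s(\eta^{s+2})$, and identify the remaining sum with $\Vert f\Vert_{U^{s+1}[N]}^{2^{s+1}}$ via the normalisation in Appendix \ref{sec.Gowers norms}. The only cosmetic difference is that you verify the pinning of the tuple to a cube by direct coordinate comparison, where the paper cites Lemma \ref{Lemma integer distance} applied to the map $\Psi$.
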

\noindent The definition of the real Gowers norm $\Vert f\ast \chi\Vert_{U^{s+1}(\mathbb{R},2N)}$ is recorded in Definition \ref{Definition of Gowers norms over R}. 
\begin{proof}
From expression (\ref{explicit real gowers norms}), we have $$\Vert f\ast \chi \Vert _{U^{s+1}(\mathbb{R},2N)}^{2^{s+1}} \ll \frac{1}{N^{s+2}}\int\limits_{(x,\mathbf{h})\in \mathbb{R}^{s+2}}\prod\limits_{\boldsymbol{\omega}\in \{0,1\}^{s+1}} (f\ast \chi)(x+ \mathbf{h}\cdot\boldsymbol{\omega}) \, dx\, d\mathbf{h}.$$ Substituting in the definition of $f\ast \chi$, this is equal to 
\begin{equation}
\label{comparing gowers norms equation}
\frac{1}{N^{s+2}} \sum\limits_{(n_{\boldsymbol{\omega}})_{\boldsymbol{\omega}\in \{0,1\}^{s+1}} \in \mathbb{Z}^{\{0,1\}^{s+1}}} \Big(\prod\limits_{\boldsymbol{\omega} \in\{0,1\}^{s+1}} f(n_{\boldsymbol{\omega}})\Big)\int\limits_{(x,\mathbf{h})\in \mathbb{R}^{s+2}} \boldsymbol{\chi}(\Psi(x,\mathbf{h}) - \mathbf{n}) \, dx \, d\mathbf{h}, 
\end{equation}
where $\Psi:\mathbb{R}^{s+2}\longrightarrow \mathbb{R}^{2^{s+1}}$ has coordinate functions $\psi_{\boldsymbol{\omega}}$, indexed by $\bo \in \{0,1\}^{s+1}$, where $\psi_{\boldsymbol{\omega}}(x,\mathbf{h}) : = x + \mathbf{h} \cdot \bo$. In similar notation to that used in the previous proof, for $\mathbf{x} \in \mathbb{R}^{2^{s+1}}$, we let $\bc(\mathbf{x}) : = \prod_{i=1}^{2^{s+1}} \chi(x_i)$.  Note that $\Psi$ is injective, $\Psi(\mathbb{Z}^{s+2}) = \mathbb{Z}^{2^{s+1}} \cap \im \Psi$, and $\Vert \Psi\Vert_\infty = O_s(1)$.  

The contribution to the inner integral of (\ref{comparing gowers norms equation}) from a particular $\mathbf{n}$ is zero unless $\Vert \mathbf{n} - \Psi(x,\mathbf{h})\Vert_\infty \ll \eta$, for some $(x,\mathbf{h}) \in \mathbb{R}^{s+2}$. Therefore, if $\eta$ is small enough we can conclude that $\mathbf{n}$ must be of the form $\Psi(p,\mathbf{k})$, for some unique $(p,\mathbf{k}) \in \mathbb{Z}^{s+2}$. (To spell it out, apply Lemma \ref{Lemma integer distance} with the map $\Psi$ in place of the map $\Xi$). So (\ref{comparing gowers norms equation}) is equal to 
\begin{equation}
\frac{1}{N^{s+2}} \sum\limits_{(p,\mathbf{k}) \in \mathbb{Z}^{s+2}} \Big(\prod\limits_{\bo \in \{0,1\}^{s+1}} f(p + \mathbf{k} \cdot \bo) \Big) \int\limits_{(x,\mathbf{h})\in \mathbb{R}^{s+2}} \boldsymbol{\chi}(\Psi(x -p,\mathbf{h} - \mathbf{k})) \, dx \, d\mathbf{h}, 
\end{equation}
\noindent which, after a change of variables , is equal to 
\begin{equation}
\label{real gn becomes integer gn}
\frac{C}{N^{s+2}}\sum\limits_{(p,\mathbf{k})\in \mathbb{Z}^{s+2}}\prod\limits_{\boldsymbol{\omega}\in \{0,1\}^{s+1}} f(p+ \mathbf{k}\cdot\boldsymbol{\omega}),
\end{equation}
\noindent where \[C:=\int\limits_{(x,\mathbf{h})\in \mathbb{R}^{s+2}} \boldsymbol{\chi} (\Psi(x,\mathbf{h})) \, dx \, d\mathbf{h}.\] Since $\boldsymbol{\chi}$ has support contained within $[-\eta,\eta]^{2^{s+1}}$, a vector $(x,\mathbf{h})$ only makes a non-zero contribution to the above integral if $\Vert \Psi(x,\mathbf{h})\Vert_\infty \ll \eta$. This implies that $\Vert (x,\mathbf{h})\Vert_\infty \ll \eta$. (To prove this is full, apply Lemma \ref{Lemma bounded inverse} to the linear map $\Psi$). Since $\Vert \bc\Vert_\infty = O(1)$, this means $C = O(\eta^{s+2})$. The lemma then follows from (\ref{real gn becomes integer gn}). 
\end{proof}

\section{Degeneracy relations}
\label{section Degeneracy relations}
Our aim for this short section is to establish a quantitative relationship between the dual pair degeneracy variety $V_{\degen,2}^*(m,d,h)$ and the degeneracy variety $V_{\degen}(h-m,d)$ (see Definitions \ref{Definition dual pair degeneracy variety} and \ref{Definition degeneracy varieties} respectively), which will be needed in the next section. It is here that we show that $V_{\degen,2}^*(m,d,h)$ was indeed the appropriate notion for guaranteeing finite Cauchy-Schwarz complexity of the relevant system of homogeneous linear forms. We direct the reader to Proposition \ref{Proposition easy degeneracy relation} and the discussion after Definition \ref{Definition dual pair degeneracy variety} for more on this issue.

To introduce the ideas, we first prove a non-quantitative proposition (which is a generalisation of Proposition \ref{Proposition easy degeneracy relation}).

\begin{Lemma}
\label{Lemma non quantitative equivalence of degeneracies}
Let $m,d,h$ be natural numbers, with $d\geqslant h\geqslant  m+2$. Let $\Xi:\mathbb{R}^h\longrightarrow \mathbb{R}^d$ be an injective linear map, let $L:\mathbb{R}^h\longrightarrow \mathbb{R}^m$ be a surjective linear map, and suppose that $(\Xi,L)\notin V^*_{\degen,2}(m,d,h)$. Let $\Phi:\mathbb{R}^{h-m}\longrightarrow \ker L$ be any surjective linear map. Then the linear map $\Xi \Phi:\mathbb{R}^{h-m} \longrightarrow \mathbb{R}^d$, viewed as a system of homogenous linear forms, is not in $V_{\degen}(h-m,d)$.  
\end{Lemma}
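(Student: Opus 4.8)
The plan is to argue by contraposition: I would assume that $\Xi\Phi\in V_{\degen}(h-m,d)$ and deduce that $(\Xi,L)\in V^*_{\degen,2}(m,d,h)$, contradicting the hypothesis. The first step is to invoke the characterisation of $V_{\degen}$ recorded immediately after Definition \ref{Definition degeneracy varieties}: a system of $d$ homogeneous linear forms on $\mathbb{R}^{h-m}$ lies in $V_{\degen}(h-m,d)$ exactly when two of its coordinate forms are scalar multiples of one another. Thus there exist indices $i\neq j$ in $[d]$ and a real number $\lambda$ with $(\Xi\Phi)_i = \lambda(\Xi\Phi)_j$ as linear forms on $\mathbb{R}^{h-m}$ (the degenerate case in which one coordinate form vanishes is subsumed here by taking $\lambda=0$).

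Next I would pass to dual language. For each index $k$ the $k$-th coordinate form of $\Xi\Phi$ is $(\Xi\Phi)_k = \mathbf{e}_k^*\circ\Xi\circ\Phi = \Phi^*\bigl(\Xi^*(\mathbf{e}_k^*)\bigr)$, so the proportionality relation reads $\Phi^*\bigl(\Xi^*(\mathbf{e}_i^* - \lambda\mathbf{e}_j^*)\bigr)=0$, i.e.\ $\Xi^*(\mathbf{e}_i^* - \lambda\mathbf{e}_j^*)\in\ker\Phi^*$. The key identification is then $\ker\Phi^*$: since $\Phi$ is surjective onto $\ker L$ we have $\im\Phi=\ker L$, hence $\ker\Phi^* = (\im\Phi)^\circ = (\ker L)^\circ$, and the standard annihilator identity gives $(\ker L)^\circ = L^*\bigl((\mathbb{R}^m)^*\bigr)$. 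Therefore $\Xi^*(\mathbf{e}_i^* - \lambda\mathbf{e}_j^*)\in L^*\bigl((\mathbb{R}^m)^*\bigr)$; moreover $\mathbf{e}_i^* - \lambda\mathbf{e}_j^*\neq 0$ because $i\neq j$ makes $\mathbf{e}_i^*,\mathbf{e}_j^*$ linearly independent. By Definition \ref{Definition dual pair degeneracy variety} this says precisely that $(\Xi,L)\in V^*_{\degen,2}(m,d,h)$, the desired contradiction.

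This is essentially a dualisation, so I do not expect a serious obstacle; the only points needing a little care are (a) checking that the clause ``$\psi_i$ is a real multiple of $\psi_j$'' genuinely covers a vanishing coordinate form, which it does via $\lambda=0$, and (b) placing the annihilator correctly, $\ker\Phi^* = (\ker L)^\circ = \im L^*$, where only the surjectivity of $\Phi$ onto $\ker L$ is used (injectivity of $\Xi$ and surjectivity of $L$ enter only to make the dimensions in the statement consistent). A quantitative refinement — replacing memberships by $\dist$ bounds — should follow from the same computation together with the uniform control on the relevant change-of-basis matrices, but that is the business of the next section.
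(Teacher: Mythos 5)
Your proposal is correct and follows essentially the same route as the paper: argue by contradiction/contraposition from the characterisation of $V_{\degen}(h-m,d)$ as proportionality of two coordinate forms, dualise to get $\Xi^*(\mathbf{e}_i^*-\lambda\mathbf{e}_j^*)\in(\ker L)^\circ=L^*((\mathbb{R}^m)^*)$, and conclude $(\Xi,L)\in V^*_{\degen,2}(m,d,h)$. The paper phrases the middle step as $\Phi(\mathbb{R}^{h-m})\subset\ker(\Xi^*(\mathbf{e}_i^*-\lambda\mathbf{e}_j^*))$ rather than via $\ker\Phi^*=(\im\Phi)^\circ$, but this is the same dualisation.
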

\begin{proof}
Let $\mathbf{e_1},\dots,\mathbf{e_d}$ denote the standard basis vectors in $\mathbb{R}^d$, and let $\mathbf{e}_\mathbf{1}^*, \dots,\mathbf{e}_\mathbf{d}^*$ denote the dual basis of $(\mathbb{R}^d)^*$. Suppose for contradiction that $\Xi\Phi \in V_{\degen}(h-m,d)$. Then by Proposition \ref{Proposition easy degeneracy relation} there exist two indices $i,j\leqslant d$, and a real number $\lambda$, such that $\mathbf{e}_{\mathbf{i}}^* - \lambda \mathbf{e}_{\mathbf{j}}^*$ is non-zero and $\Xi\Phi(\mathbb{R}^{h-m})\subset \ker (\mathbf{e}_{\mathbf{i}}^* - \lambda \mathbf{e}_{\mathbf{j}}^*)$. 

But then $\Phi(\mathbb{R}^{h-m}) \subset \ker (\Xi^*(\mathbf{e}_{\mathbf{i}}^* - \lambda \mathbf{e}_{\mathbf{j}}^*))$, i.e. $\Xi^*(\mathbf{e}_{\mathbf{i}}^* - \lambda \mathbf{e}_{\mathbf{j}}^*) \in (\ker L)^{0}$. But $(\ker L)^{0} = L^*((\mathbb{R}^m)^*)$, and so $\Xi^*(\mathbf{e}_{\mathbf{i}}^* - \lambda \mathbf{e}_{\mathbf{j}}^*) \in L^*((\mathbb{R}^m)^*)$. 

Then, by the definition of $V_{\degen,2}^*(m,d,h)$, we have $(\Xi,L)\in V^*_{\degen,2}(m,d,h)$, which is a contradiction. 
\end{proof}

The ideas having been introduced, we state the quantitative version we require. 

\begin{Lemma}
\label{Quantitative orthonormal basis parametrisation}
Let $m,d,h$ be natural numbers, with $d\geqslant h\geqslant m+2$, and let $c,C$ be positive constants. Let $\Xi:\mathbb{R}^h \longrightarrow \mathbb{R}^d$ be a linear map, and let $L:\mathbb{R}^h \longrightarrow \mathbb{R}^m$ be a surjective linear map. Suppose that $\Vert \Xi\Vert_\infty \leqslant C$, and $\operatorname{dist}((\Xi,L),V^*_{\degen,2}(m,d,h))\geqslant c$. Let $K$ denote $\ker L$, choose any orthonormal basis $\{\mathbf{v^{(1)}},\dots, \mathbf{v^{(h-m)}}\}$ for $K$, and let $\Phi:\mathbb{R}^{h-m}\longrightarrow K$ denote the associated parametrisation, i.e. $\Phi(\mathbf{x}) := \sum_{i=1}^{h-m}x_i\mathbf{v^{(i)}}$. Then $\Vert \Xi  \Phi\Vert_{\infty} = O(C)$ and $\operatorname{dist}(\Xi\Phi, V_{\degen}(h-m,d)) = \Omega(c)$.  
\end{Lemma}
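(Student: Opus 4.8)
The plan is to obtain both assertions from the fact that $\Phi$ is an isometric embedding, combined with the non-quantitative equivalence of degeneracies already established in Lemma~\ref{Lemma non quantitative equivalence of degeneracies}.

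\emph{The bound $\Vert\Xi\Phi\Vert_\infty = O(C)$.} Since the columns of the matrix of $\Phi$ are the unit vectors $\mathbf{v^{(1)}},\dots,\mathbf{v^{(h-m)}}$, every entry of $\Phi$ is at most $1$ in absolute value, so each entry of the product $\Xi\Phi$ is a sum of $h$ terms of size at most $\Vert\Xi\Vert_\infty \leqslant C$; hence $\Vert\Xi\Phi\Vert_\infty \leqslant hC = O(C)$. I record for later use that orthonormality also gives $\Phi^T\Phi = I_{h-m}$ and $\Vert\Phi^T\Vert_\infty\leqslant 1$, so that for any $d\times(h-m)$ matrix $P$ one has $\Vert P\Phi^T\Vert_\infty \leqslant (h-m)\Vert P\Vert_\infty$.

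\emph{The bound $\dist(\Xi\Phi,V_{\degen}(h-m,d)) = \Omega(c)$, and the main obstacle.} This is the substantive part, and the difficulty is to transfer a near-degeneracy of the $(h-m)$-variable system $\Xi\Phi$ back to a near-degeneracy of the pair $(\Xi,L)$ in a quantitative way. I would argue by contradiction: suppose $\dist(\Xi\Phi,V_{\degen}(h-m,d)) < c/(2(h-m))$, so there is a system $\Psi'$ with $\delta:=\Vert\Xi\Phi-\Psi'\Vert_\infty < c/(2(h-m))$ and, for some $i\neq j$, $\psi'_i = \lambda\psi'_j$. Swapping $i$ and $j$ and replacing $\lambda$ by $\lambda^{-1}$ if $|\lambda|>1$, I may assume $|\lambda|\leqslant1$. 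Writing $\mathbf{r}_\ell$ and $\boldsymbol{\rho}_\ell$ for the $\ell$-th rows of $\Xi\Phi$ and $\Psi'$ respectively, we have $\Vert\mathbf{r}_\ell-\boldsymbol{\rho}_\ell\Vert_\infty<\delta$ for all $\ell$ and $\boldsymbol{\rho}_i=\lambda\boldsymbol{\rho}_j$. Define $P:\mathbb{R}^{h-m}\to\mathbb{R}^d$ to be the matrix which is zero except in its $i$-th row, which is set equal to $\lambda\mathbf{r}_j-\mathbf{r}_i$. Then the $i$-th row of $\Xi\Phi+P$ equals $\lambda$ times its $j$-th row, so $\Xi\Phi+P\in V_{\degen}(h-m,d)$; and since $\lambda\mathbf{r}_j-\mathbf{r}_i=\lambda(\mathbf{r}_j-\boldsymbol{\rho}_j)+(\boldsymbol{\rho}_i-\mathbf{r}_i)$ we get $\Vert P\Vert_\infty\leqslant(|\lambda|+1)\delta\leqslant 2\delta$. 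The one delicate point here is this uniform control of the multiplier, which is precisely why one forces $|\lambda|\leqslant1$ before making the estimate.

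\emph{Lifting the perturbation and concluding.} Finally, set $Q:=P\Phi^T:\mathbb{R}^h\to\mathbb{R}^d$; then $Q\Phi=P\Phi^T\Phi=P$, and by the estimate above $\Vert Q\Vert_\infty\leqslant(h-m)\Vert P\Vert_\infty\leqslant 2(h-m)\delta<c$. The computation in the proof of Lemma~\ref{Lemma non quantitative equivalence of degeneracies} — which nowhere uses injectivity of the map in the $\Xi$-slot, only surjectivity of $\Phi$ onto $\ker L$ — shows that $(\Xi+Q)\Phi\in V_{\degen}(h-m,d)$ forces $(\Xi+Q,L)\in V^*_{\degen,2}(m,d,h)$. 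Since $\Vert Q\Vert_\infty<c$, this contradicts $\dist((\Xi,L),V^*_{\degen,2}(m,d,h))\geqslant c$. Hence $\dist(\Xi\Phi,V_{\degen}(h-m,d))\geqslant c/(2(h-m)) = \Omega(c)$, with the implied constant depending only on the dimensions, as permitted.
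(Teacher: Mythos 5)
Your proof is correct and follows essentially the same route as the paper's: argue by contradiction, factor the perturbation $P$ of $\Xi\Phi$ through $\Phi$ as $Q\Phi$ using orthonormality (your $Q=P\Phi^T$ is exactly the paper's construction of $Q$ on the completed orthonormal basis, written in matrix form), and then run the annihilator computation from Lemma \ref{Lemma non quantitative equivalence of degeneracies} with $\Xi+Q$ in place of $\Xi$. The only difference is cosmetic: your explicit single-row construction of $P$ with the normalisation $\vert\lambda\vert\leqslant 1$ is unnecessary, since the definition of $\operatorname{dist}(\Xi\Phi,V_{\degen}(h-m,d))$ already supplies a perturbation $P=\Psi'-\Xi\Phi$ of size $\delta$ with $\Xi\Phi+P\in V_{\degen}(h-m,d)$.
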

\noindent For the definition of $\operatorname{dist}((\Xi,L),V^*_{\degen,2}(m,d,h))$, consult Definition \ref{Definition distance metric for pairs of matrices}.
\begin{proof}
Certainly $\Vert \Phi\Vert_{\infty} = O(1)$, as the chosen basis  $\{\mathbf{v^{(1)}},\dots, \mathbf{v^{(h-m)}}\}$ is orthonormal. Therefore $\Vert \Xi \Phi\Vert_{\infty} = O(C)$.

Let $\mathbf{e_1},\dots,\mathbf{e_d}$ denote the standard basis vectors in $\mathbb{R}^d$, and let $\mathbf{e}_\mathbf{1}^*, \dots,\mathbf{e}_\mathbf{d}^*$ denote the dual basis of $(\mathbb{R}^d)^*$. Suppose for contradiction that $\operatorname{dist}(\Xi\Phi, V_{\degen}(h-m,d)) \leqslant \eta$ for some small parameter $\eta$. In other words, assume that there exists a linear map $P:\mathbb{R}^{h-m}\longrightarrow \mathbb{R}^d$ with $\Vert P \Vert_\infty \leqslant  \eta$ such that $\Xi\Phi + P \in V_{\degen}(h-m,d)$. By definition, this means that \[ (\Xi\Phi + P)(\mathbb{R}^{h-m}) \subset \ker (\mathbf{e}_\mathbf{i}^* - \lambda \mathbf{e}_\mathbf{j}^* ),\] for some two indices $i,j \leqslant d$, and some real number $\lambda$, such that $\mathbf{e}_\mathbf{i}^* - \lambda \mathbf{e}_\mathbf{j}^*$ is non-zero. 

We can factorise $P = Q \Phi$, for some linear map $Q:\mathbb{R}^{h} \longrightarrow \mathbb{R}^d$ with $\Vert Q\Vert_\infty \ll \eta$. Indeed, let $\mathbf{f_1},\dots,\mathbf{f_{h-m}}$ denote the standard basis vectors in $\mathbb{R}^{h-m}$, and for all $k$ at most $h-m$ define \[Q(\mathbf{v^{(k)}}): = P(\mathbf{f_k}).\] (If the notation for the indices seems odd here, it is designed to match the notation in Proposition \ref{Proposition separating out the kernel}, in which having superscript on the vectors $\mathbf{v^{(k)}}$ seems to be natural). Complete $\{\mathbf{v^{(1)}},\dots, \mathbf{v^{(h-m)}}\}$ to an orthonormal basis $\{\mathbf{v^{(1)}},\dots, \mathbf{v^{(h)}}\}$ for $\mathbb{R}^h$ and, for $k$ in the range $h-m + 1 \leqslant k\leqslant h-m$, define $Q(\mathbf{v^{(k)}}): = \mathbf{0}$. Then $P = Q\Phi$, and $\Vert Q\Vert_\infty =O(\eta)$, since $\{\mathbf{v^{(1)}},\dots, \mathbf{v^{(h)}}\}$ is an orthonormal basis.

Thus, \[  (\Xi\Phi + Q  \Phi)(\mathbb{R}^{h-m}) \subset \ker (\mathbf{e}_\mathbf{i}^* - \lambda \mathbf{e}_\mathbf{j}^* ).\] So \[ \Phi(\mathbb{R}^{h-m})\subset \ker((\Xi + Q)^* (\mathbf{e}_\mathbf{i}^* - \lambda \mathbf{e}_\mathbf{j}^* )).\] Like the previous proof, we conclude that \[ (\Xi + Q)^* (\mathbf{e}_\mathbf{i}^* - \lambda \mathbf{e}_\mathbf{j}^* ) \in L^*((\mathbb{R}^{m})^*).\]

Hence $((\Xi + Q),L) \in V_{\degen,2}^*(m,d,h)$, which, if $\eta$ is small enough, contradicts the assumption that $\operatorname{dist}((\Xi,L),V^*_{\degen,2}(m,d,h))\geqslant c$.
\end{proof}

\section{A Generalised von Neumann Theorem}
\label{section General proof of the real variable von Neumann Theorem}
In this section we complete the proof of Theorem \ref{Theorem rational set out version}, and therefore complete the proof of Theorem \ref{Main Theorem chapter 3}. It will be enough to prove the following statement.
\begin{Theorem}
\label{Theorem Generalised von Neumann Theorem over reals}
Let $N,m,d,h$ be natural numbers, with $d\geqslant h\geqslant m+2$, and let $c,C,\varepsilon$ be positive reals. Let $\Xi = \Xi(N):\mathbb{R}^{h}\longrightarrow \mathbb{R}^d$ be an injective linear map with integer coefficients, and let $L = L(N):\mathbb{R}^d \longrightarrow\mathbb{R}^m$ be a surjective linear map. Suppose further that $\Vert L\Vert_\infty \leqslant C$, $\Vert \Xi\Vert_\infty \leqslant C$, $\dist( L,V_{\rank}(m,d)) \geqslant c$ and $\dist ((\Xi,L), V_{\degen,2}^*(m,d,h))\geqslant c$. Then there is some natural number $s$ at most $d-2$, independent of $\varepsilon$, such that the following holds. Let $\widetilde{\mathbf{r}} \in \mathbb{Z}^d$ be some vector with $\Vert \widetilde{\mathbf{r}}\Vert_\infty = O_{c,C,\varepsilon}(1)$, and let $\sigma_F$ be a parameter in the range $O < \sigma_F < 1/2$. Let $F:\mathbb{R}^h\longrightarrow [0,1]$ be a Lipschitz function supported on $[-N,N]^h$, with Lipschitz constant $O(1/\sigma_FN)$, and let $G:\mathbb{R}^{m} \longrightarrow [0,1]$ be any function supported on $[-\varepsilon, \varepsilon]^m$.  Let $g_1,\dots,g_d:[-2N,2N]^d\longrightarrow [-1,1]$ be arbitrary measurable functions. Suppose \[ \min\limits_{j\leqslant d}\Vert g_j\Vert_{U^{s+1}(\mathbb{R},2N)}\leqslant \rho\] for some $\rho$ at most $1$. Then 
\begin{equation}
\vert \widetilde{T}_{F,G,N}^{L,\Xi,\widetilde{\mathbf{r}}}(g_1,\dots,g_d)\vert\ll_{c,C,\varepsilon} \rho^{\Omega(1)}\sigma_F^{-1} 
\end{equation}
\noindent 
\end{Theorem}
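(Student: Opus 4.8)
The plan is to mirror the proof of the Generalised von Neumann Theorem of \cite{GT10} (in the bounded, non-pseudorandom form of \cite[Exercise 1.3.23]{Ta12}), transported to the continuous integral $\widetilde T$, with two additional ingredients: a foliation of $\mathbb{R}^h$ over $\ker L$ to dispose of the cutoff $G$, and a Fourier decomposition of the Lipschitz weight $F$ which will be the sole source of the factor $\sigma_F^{-1}$. Here $L$ is read as a map $\mathbb{R}^h\to\mathbb{R}^m$ and the hypotheses as $\dist(L,V_{\rank}(m,h))\ge c$, etc. First I would use that all the hypotheses are symmetric under permutation of the $d$ coordinates to assume $\Vert g_d\Vert_{U^{s+1}(\mathbb{R},2N)}\le\rho$. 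Since $\dist(L,V_{\rank}(m,h))\ge c$, Proposition \ref{rank matrix} furnishes a right inverse $\iota\colon\mathbb{R}^m\to\mathbb{R}^h$ of $L$ with $\Vert\iota\Vert_\infty=O_{c,C}(1)$; together with an orthonormal parametrisation $\Phi\colon\mathbb{R}^{h-m}\to\ker L$ this gives a linear change of variables $\mathbf{x}=\Phi(\mathbf{y})+\iota(\mathbf{t})$ of $\mathbb{R}^h$ with Jacobian $\asymp_{c,C}1$ and $L\mathbf{x}=\mathbf{t}$. Because $G\le 1$ is supported on $[-\varepsilon,\varepsilon]^m$, it then suffices to bound $|I(\mathbf{t})|$ by $O_{c,C,\varepsilon}(N^{h-m}\rho^{\Omega(1)}\sigma_F^{-1})$ uniformly for $\mathbf{t}\in[-\varepsilon,\varepsilon]^m$, where $I(\mathbf{t})=\int_{\mathbf{y}}\prod_j g_j(\psi_j(\mathbf{y})+b_j(\mathbf{t}))\,F(\Phi(\mathbf{y})+\iota(\mathbf{t}))\,d\mathbf{y}$, with $\psi_j:=\xi_j\circ\Phi$ and $b_j(\mathbf{t})=\xi_j(\iota(\mathbf{t}))+\widetilde{\mathbf{r}}_j=O_{c,C,\varepsilon}(1)$. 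Absorbing the bounded shifts $b_j(\mathbf{t})$ into the $g_j$ (which alters the localisation parameter by a bounded factor, hence by the basic properties of the Gowers norm in Appendix \ref{sec.Gowers norms} alters $\Vert g_d\Vert_{U^{s+1}}$ only boundedly), and writing $F_{\mathbf{t}}(\mathbf{y}):=F(\Phi(\mathbf{y})+\iota(\mathbf{t}))$, reduces matters to the integral $\int_{\mathbf{y}}\prod_j\widetilde g_j(\psi_j(\mathbf{y}))\,F_{\mathbf{t}}(\mathbf{y})\,d\mathbf{y}$ where $\widetilde g_j$ are bounded by $1$, supported on $[-3N,3N]$, $\Vert\widetilde g_d\Vert_{U^{s+1}(\mathbb{R},3N)}\ll_{c,C,\varepsilon}\rho$, and $F_{\mathbf{t}}$ is Lipschitz with constant $O_{c,C}(1/\sigma_F N)$, supported on a ball of radius $O_{c,C,\varepsilon}(N)$.

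Next I would record the relevant properties of the system $\Psi=(\psi_1,\dots,\psi_d)=\Xi\Phi\colon\mathbb{R}^{h-m}\to\mathbb{R}^d$: since $\Xi$ has integer entries of size $O(C)$ and is injective it is robustly injective, so $\Psi$ is too, with $\Vert\Psi\Vert_\infty=O(C)$; by Lemma \ref{Quantitative orthonormal basis parametrisation}, $\dist(\Psi,V_{\degen}(h-m,d))=\Omega(c)$, so by Lemma \ref{Lemma different forms of non degen} $\Psi$ has finite $c_1$-Cauchy--Schwarz complexity $s$ for some $c_1=\Omega(c)$, and $s\le d-2$ by the remark after Definition \ref{Definition Cauchy Schwarz complexity}; note $s$ depends only on $\Xi,L$, not on $\varepsilon$. (The degenerate case $d\le2$, which forces $m=0$, is handled by a single application of Cauchy--Schwarz.) The core estimate I would then isolate is the ``box-cutoff'' Generalised von Neumann bound: for functions $G_1,\dots,G_d$ bounded by $1$ and supported on $[-3N,3N]$ with $\Vert G_d\Vert_{U^{s+1}(\mathbb{R},3N)}\le\rho$, one has $\big|\int_{\mathbf{y}}\prod_j G_j(\psi_j(\mathbf{y}))\,d\mathbf{y}\big|\ll_{c,C}N^{h-m}\rho^{\Omega(1)}$, the implicit cutoff being $\prod_j\mathbf 1_{[-3N,3N]}(\psi_j(\mathbf{y}))$. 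This is proved by applying Proposition \ref{normal form algorithm} with $i=d$ to pass to an extension $\Psi'$ in normal form with respect to $\psi_d'$ (with bounded shift vectors $\mathbf{f}_k$), over-parametrising the integral by the $s+1$ new variables ranging over a box of side $\asymp N$, and applying the Gowers--Cauchy--Schwarz inequality (Proposition \ref{Proposition Cauchy}) exactly $s+1$ times, once per variable of the normal-form set; normal form guarantees that at each step the factors $G_j$ with $j\ne d$ are estimated trivially by $1$ and eliminated, while $G_d$ accumulates a $U^{s+1}(\mathbb{R},O(N))$ box structure in the difference variables. This is a verbatim translation of \cite[Appendix C]{GT10}, and mentions neither $G$ nor the approximation function.

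To pass from a general Lipschitz weight $F_{\mathbf{t}}$ to such box cutoffs, I would expand $F_{\mathbf{t}}$ in a Fourier series. Rescaling so that the support of $F_{\mathbf{t}}$ lies in a fundamental box of side $O_{c,C,\varepsilon}(N)$, \cite[Lemma A.9]{GrTa08} gives, for any $X\ge2$, $F_{\mathbf{t}}(\mathbf{y})=\sum_{\mathbf{k}\in\mathbb{Z}^{h-m},\,\Vert\mathbf{k}\Vert_\infty\le X}b_X(\mathbf{k})\,e(\mathbf{k}\cdot\mathbf{y}/(C_0N))+O_{c,C,\varepsilon}(\log X/(\sigma_F X))$ with $|b_X(\mathbf{k})|=O(1)$ and $C_0=O_{c,C,\varepsilon}(1)$. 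Since $\Psi$ is robustly injective, for each $\mathbf{k}$ the form $\mathbf{y}\mapsto\mathbf{k}\cdot\mathbf{y}/(C_0N)$ can be written as $\sum_{j=1}^d\gamma_j\psi_j$ with $|\gamma_j|=O_{c,C}(X/N)$, so $e(\mathbf{k}\cdot\mathbf{y}/(C_0N))=\prod_j e(\gamma_j\psi_j(\mathbf{y}))$; absorbing these linear phases into the $\widetilde g_j$ yields functions $g_j^{(\mathbf{k})}$, still bounded by $1$ and supported on $[-3N,3N]$, with $\Vert g_d^{(\mathbf{k})}\Vert_{U^{s+1}(\mathbb{R},3N)}=\Vert\widetilde g_d\Vert_{U^{s+1}(\mathbb{R},3N)}\ll_{c,C,\varepsilon}\rho$, the crucial point being that a linear phase is invisible to $U^{s+1}$ for $s\ge1$ (which holds by the convention $s\ge1$ in Definition \ref{Definition Cauchy Schwarz complexity}). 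Applying the box-cutoff estimate to each of the $O(X^{h-m})$ terms gives $|I(\mathbf{t})|\ll_{c,C,\varepsilon}N^{h-m}\big(X^{h-m}\rho^{\Omega(1)}+\log X/(\sigma_F X)\big)$. Choosing $X:=\rho^{-\delta}$ for a sufficiently small $\delta=\delta(m,d)>0$ makes $X^{h-m}\rho^{\Omega(1)}=\rho^{\Omega(1)}$ and $\log X/(\sigma_F X)\ll\rho^{\Omega(1)}/\sigma_F$, whence $|I(\mathbf{t})|\ll_{c,C,\varepsilon}N^{h-m}\rho^{\Omega(1)}\sigma_F^{-1}$ (the second term dominating as $\sigma_F<1/2$); undoing the foliation gives $|\widetilde T^{L,\Xi,\widetilde{\mathbf{r}}}_{F,G,N}(g_1,\dots,g_d)|\ll_{c,C,\varepsilon}\rho^{\Omega(1)}\sigma_F^{-1}$.

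The technically heaviest step is the normal-form Cauchy--Schwarz estimate in the real, quantitative setting, i.e. checking that Proposition \ref{normal form algorithm} together with the iterated Cauchy--Schwarz of \cite[Appendix C]{GT10} carries over with explicit control of all constants; the genuinely new point is the Fourier expansion of $F_{\mathbf{t}}$ combined with the linear-phase invariance of $U^{s+1}$, which reduces an arbitrary Lipschitz weight to box cutoffs at the cost of only a single power of $\sigma_F^{-1}$.
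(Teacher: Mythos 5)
Your proposal is correct in outline and reaches the same bound, but it reorders the argument in a way that differs genuinely from the paper at one key point: where the Fourier expansion of the Lipschitz weight happens and how the resulting phases are absorbed. The paper first over-parametrises via the normal form extension, integrates in the new variables $\mathbf{w}$ against a Lipschitz bump $P$, and only then (for fixed $\mathbf{x}$, inside Proposition \ref{Proposition Cauchy}) expands the weight in Fourier over the $s+1$ variables $\mathbf{w}$; since the forms $\psi_j'(\mathbf{x},\cdot)$ need not span $(\mathbb{R}^{s+1})^*$, the phase $e(\boldsymbol{\theta}\cdot\mathbf{w}/N)$ cannot in general be distributed along the forms, and the paper instead splits it into bounded factors $b_k(\mathbf{w})$ each omitting one variable $w_k$ (see the ``brief aside'' in the proof of Proposition \ref{Proposition Cauchy}, which explains why the GT10 device of twisting the $g_j$ is awkward over $\mathbb{R}$). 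You expand $F_{\mathbf{t}}$ \emph{before} over-parametrising, over all $h-m$ variables, at which stage $\Psi=\Xi\Phi$ is robustly injective so $\Psi^*$ is surjective and every frequency $\mathbf{k}\cdot\mathbf{y}/(C_0N)$ really does decompose as $\sum_j\gamma_j\psi_j$ with controlled $\gamma_j$; the phases then go into the $g_j$ themselves, and linear-phase invariance of $U^{s+1}$ (valid since $s\geqslant 1$ by Definition \ref{Definition Cauchy Schwarz complexity}) preserves the hypothesis on $g_d$. This sidesteps the obstruction the paper flags and cleanly isolates an unweighted, sharp-cutoff von Neumann estimate as the core; it is a legitimate and arguably tidier organisation. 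The kernel foliation, the identification of $s\leqslant d-2$ via Lemmas \ref{Quantitative orthonormal basis parametrisation} and \ref{Lemma different forms of non degen}, and the final optimisation $X=\rho^{-\delta}$ balancing $X^{O(1)}\rho^{\Omega(1)}$ against $\log X/(\sigma_FX)$ all match the paper.

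Two caveats. First, your ``core estimate'' is not quite a verbatim translation of \cite[Appendix C]{GT10}: after the iterated Cauchy--Schwarz in the normal-form variables and the change of variables to Gowers-inner-product form, the domain of integration is a convex polytope rather than a Cartesian box, and before Proposition \ref{Proposition Gowers Cauchy Schwarz} can be applied this must be repaired (the paper does it via Lemma \ref{Lipschitz approximation of convex cutoffs} and a second Fourier expansion, with internal parameters chosen in terms of $\rho$; this costs nothing in $\sigma_F$ but is a real step over $\mathbb{R}$). You flag the step as the heaviest, but you should be aware that this is where the extra work lies. Second, a labelling slip: what you call the Gowers--Cauchy--Schwarz inequality is Proposition \ref{Proposition Gowers Cauchy Schwarz}; Proposition \ref{Proposition Cauchy} is the iterated Cauchy--Schwarz argument itself.
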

\begin{proof}[Proof that \ref{Theorem Generalised von Neumann Theorem over reals} implies Theorem \ref{Theorem rational set out version}] Assume the hypotheses of Theorem \ref{Theorem rational set out version}. This gives natural numbers $N,m,d,h$, linear maps $L:\mathbb{R}^h\longrightarrow \mathbb{R}^m$ and $\Xi:\mathbb{R}^h \longrightarrow \mathbb{R}^d$, and functions $F:\mathbb{R}^h \longrightarrow [0,1]$ and $G:\mathbb{R}^m \longrightarrow [0,1]$. Let $f_1,\dots,f_d:[N]\longrightarrow [-1,1]$ be arbitrary functions, and for ease of notation let \[\delta: = T _{F,G,N}^{L,\Xi,\widetilde{\mathbf{r}}}(f_1,\dots,f_d).\] From Lemma \ref{Lemma crude bound on number of solutions} and the triangle inequality, we have the crude bound $\delta = O_{c,C,\varepsilon}(1)$. 

Let $\eta: = c_1\delta \sigma_G$, where $c_1$ is small enough depending on $m,d,h,c,C,$ and $\varepsilon$, and let $\chi: \mathbb{R}\longrightarrow [0,1]$ be an $\eta$-supported measurable function (see Definition \ref{Defintion eta supported}). For all $j$ at most $d$, let $g_j: = f_j\ast \chi$. Finally, suppose $\min _j \Vert f_j\Vert_{U^{s+1}[N]} \leqslant \rho$, for some parameter $\rho$ in the range $0<\rho \leqslant 1$.

We proceed by bounding $\widetilde{T}_{F,G,N}^{L,\Xi,\widetilde{\mathbf{r}}}(g_1,\dots,g_d)$. Indeed, by Lemma \ref{Lemma linking different Gowers norms}, if $c_1$ is small enough \[\min_j\Vert g_j\Vert_{U^{s+1}(\mathbb{R})} \ll \eta^{\frac{s+2}{2^{s+1}}} \min_j\Vert f_j\Vert_{U^{s+1}[N]} \ll_{c,C,\varepsilon} \rho.\]  Applying Theorem \ref{Theorem Generalised von Neumann Theorem over reals} to these functions $g_1,\dots,g_d$, the above implies 
\begin{equation}
\label{extra extra}
\widetilde{T}_{F,G,N}^{L,\Xi,\widetilde{\mathbf{r}}}(g_1,\dots,g_d)\ll_{c,C,\varepsilon} \rho^{\Omega(1)} \sigma_F^{-1}.
\end{equation}

Now we use this to bound $\delta$ by Gowers norms. Indeed, by Lemma \ref{Lemma transfer equation}, we have 
\begin{align*}
\delta &\ll_{c,C,\varepsilon} \frac{1}{(c_1 \delta \sigma_G)^h}\widetilde{T}_{F,G,N}^{L,\Xi,\widetilde{\mathbf{r}}}(g_1,\dots,g_d) + c_1\delta + c_1\delta\sigma_G\sigma_F^{-1} N^{-1} .
\end{align*} 

Picking $c_1$ small enough, we may move the $c_1\delta$ term to the left-hand side to get an $\Omega(\delta)$ term. The bound (\ref{extra extra}) then yields
\begin{align*}
\delta^{h+1} \ll_{c,C,\varepsilon} \rho^{\Omega(1)} \sigma_F^{-1} \sigma_G^{-h} + \sigma_F^{-1} N^{-1},
\end{align*} 
and so \[ \delta \ll_{c,C,\varepsilon} \rho^{\Omega(1)} (\sigma_F^{-O(1)} + \sigma_G^{-O(1)}) + \sigma_F^{-O(1)}N^{-\Omega(1)}.\]  This yields the desired conclusion of Theorem \ref{Theorem rational set out version}.
\end{proof}

So it remains to prove Theorem \ref{Theorem Generalised von Neumann Theorem over reals}, for which the bulk of the work will be done in the following two propositions. In Proposition \ref{Proposition separating out the kernel}, we will reduce the integral in $\widetilde{T}_{F,G,N}^{L,\Xi,\widetilde{\mathbf{r}}}(g_1, \dots, g_d)$ to an integral over the kernel of $L$. This kernel will be parametrised by a map $\Psi$, which will have finite $c_1$-Cauchy-Schwarz complexity for some suitable $c_1$. In Proposition \ref{Proposition Cauchy} we will then work out the details of applying the Cauchy-Schwarz inequality to such a map, thereby producing Gowers norms. 

\begin{Proposition}[Separating out the kernel]
\label{Proposition separating out the kernel}
Let $N,m,d, h$ be natural numbers, with $d\geqslant h\geqslant m+2$, and let $c,C,\varepsilon$ be positive constants. Let $\sigma_F$ be a parameter in the range $0 <\sigma_F<1/2$. Let $\Xi:\mathbb{R}^h\rightarrow \mathbb{R}^d$ be an injective linear map with integer coefficients, and let $L:\mathbb{R}^h \longrightarrow \mathbb{R}^m$ be a surjective linear map. Assume further that $\Vert L \Vert_\infty \leqslant C$, $\Vert \Xi\Vert_\infty \leqslant C$, $\dist(L,V_{\rank}(m,h))\geqslant c$ and $\dist ((\Xi,L),V_{\degen,2}^\ast (m,d,h)) \geqslant c$. Let $F:\mathbb{R}^h\longrightarrow [0,1]$ be a Lipschitz function supported on $[- CN,CN]^h$, with Lipschitz constant $O_C(1/\sigma_F N)$, and let $G:\mathbb{R}^{m}\longrightarrow [0,1]$ be a measurable function supported on $[-\varepsilon,\varepsilon]^m$. Let $\widetilde{\mathbf{r}}$ be a fixed vector in $\mathbb{Z}^d$, satisfying $\Vert \widetilde{\mathbf{r}}\Vert_\infty = O_{C}(1)$. Then there exists a system of linear forms $(\psi_1,\dots,\psi_d) = \Psi:\mathbb{R}^{h-m}\longrightarrow \mathbb{R}^d$ satisfying $\Vert \Psi\Vert_\infty = O_C(1)$, and a Lipschitz function $F_1:\mathbb{R}^{h-m}\longrightarrow [0,1]$ supported on $[-O_{c,C,\varepsilon}(N),O_{c,C,\varepsilon}(N)]^{h-m}$ with Lipschitz constant $O(1/\sigma_F N)$, such that, if $g_1,\dots,g_d:[-2N,2N]\longrightarrow [-1,1]$ are arbitrary functions, 
\begin{equation}
\vert\widetilde{T}_{F,G,N}^{L,\Xi,\widetilde{\mathbf{r}}}(g_1,\dots,g_d)\vert\ll_{c,C,\varepsilon} \Big\vert\frac{1}{N^{h-m}}\int\limits_{\mathbf{x}} \prod\limits_{j=1}^d g_j(\psi_j(\mathbf{x}) + a_j)F_1(\mathbf{x}) \, d\mathbf{x}\Big\vert ,
\end{equation}
\noindent where, for each $j$, $a_j$ is some real number that satisfies $a_j = O_{c,C,\varepsilon}(1)$.

Furthermore, there exists a natural number $s$ at most $d-2$ such that the system $\Psi$ has $\Omega_{c,C}(1)$-Cauchy-Schwarz complexity at most $s$, in the sense of Definition \ref{Definition Cauchy Schwarz complexity}.
\end{Proposition}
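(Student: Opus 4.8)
The plan is to integrate out the $m$ coordinate directions transverse to $\ker L$, thereby collapsing the $h$-dimensional integral defining $\widetilde T$ to an $(h-m)$-dimensional integral over a parametrisation of $\ker L$, and then to recognise that parametrisation as the desired system $\Psi$. Concretely, set $K:=\ker L$, choose an orthonormal basis $\{\mathbf{v^{(1)}},\cdots,\mathbf{v^{(h-m)}}\}$ of $K$ and complete it to an orthonormal basis $\{\mathbf{v^{(1)}},\cdots,\mathbf{v^{(h)}}\}$ of $\mathbb{R}^h$. Let $\Phi:\mathbb{R}^{h-m}\longrightarrow K$ and $\Phi':\mathbb{R}^m\longrightarrow K^\perp$ be the induced parametrisations, so that $(\mathbf{y},\mathbf{z})\mapsto \Phi(\mathbf{y})+\Phi'(\mathbf{z})$ is an orthogonal change of variables on $\mathbb{R}^h$ with unit Jacobian, and both $\Phi,\Phi'$ have $\ell^\infty$-operator norm $O(1)$ (a dimensional constant). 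Since $\Phi(\mathbf{y})\in\ker L$ we have $L(\Phi(\mathbf{y})+\Phi'(\mathbf{z}))=L\Phi'(\mathbf{z})$; and since $\dist(L,V_{\rank}(m,h))\geqslant c$ and $\Vert L\Vert_\infty\leqslant C$, the $m$-by-$m$ map $L\Phi'$ is invertible with $\Vert(L\Phi')^{-1}\Vert_\infty=O_{c,C}(1)$ by Lemma \ref{nonsingularity of map on orthogonal complement of Kernel}.

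Next I would carry out this change of variables in $\widetilde T^{L,\Xi,\widetilde{\mathbf{r}}}_{F,G,N}(g_1,\cdots,g_d)$, using $\xi_j(\Phi(\mathbf{y})+\Phi'(\mathbf{z}))=\xi_j(\Phi(\mathbf{y}))+\xi_j(\Phi'(\mathbf{z}))$, and then bound $0\leqslant G\leqslant 1$. As $G$ is supported on $[-\varepsilon,\varepsilon]^m$, the variable $\mathbf{z}$ is confined to the set $Z:=(L\Phi')^{-1}([-\varepsilon,\varepsilon]^m)$, which is contained in $[-O_{c,C,\varepsilon}(1),O_{c,C,\varepsilon}(1)]^m$ and has measure $O_{c,C,\varepsilon}(1)$. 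This yields
\[
\big\vert\widetilde T^{L,\Xi,\widetilde{\mathbf{r}}}_{F,G,N}(g_1,\cdots,g_d)\big\vert\leqslant \frac{1}{N^{h-m}}\int\limits_{\mathbf{z}\in Z}\big\vert H(\mathbf{z})\big\vert\,d\mathbf{z},
\]
where $H(\mathbf{z}):=\int_{\mathbf{y}}\prod_{j=1}^d g_j(\xi_j(\Phi(\mathbf{y}))+\xi_j(\Phi'(\mathbf{z}))+\widetilde{\mathbf{r}}_j)\,F(\Phi(\mathbf{y})+\Phi'(\mathbf{z}))\,d\mathbf{y}$ is measurable by Fubini. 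A pigeonhole over $Z$ then produces a point $\mathbf{z_0}\in Z$ at which $\vert H(\mathbf{z_0})\vert$ is within a factor $2$ of the essential supremum of $\vert H\vert$ over $Z$ (the case of essential supremum $0$ being trivial), so that $\vert\widetilde T\vert\ll_{c,C,\varepsilon}\vert H(\mathbf{z_0})\vert/N^{h-m}$ since $\vert Z\vert=O_{c,C,\varepsilon}(1)$. Setting $\psi_j:=\xi_j\circ\Phi$, i.e. $\Psi:=\Xi\Phi:\mathbb{R}^{h-m}\longrightarrow\mathbb{R}^d$, together with $a_j:=\xi_j(\Phi'(\mathbf{z_0}))+\widetilde{\mathbf{r}}_j$ (which is $O_{c,C,\varepsilon}(1)$, since $\Vert\Xi\Vert_\infty\leqslant C$, $\mathbf{z_0}\in Z$, and $\Vert\widetilde{\mathbf{r}}\Vert_\infty=O_C(1)$) and $F_1(\mathbf{y}):=F(\Phi(\mathbf{y})+\Phi'(\mathbf{z_0}))$, we have $H(\mathbf{z_0})=\int_{\mathbf{y}}\prod_j g_j(\psi_j(\mathbf{y})+a_j)F_1(\mathbf{y})\,d\mathbf{y}$, which is the claimed bound. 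The stated properties of $F_1$ are then routine: it takes values in $[0,1]$; its Lipschitz constant is at most a dimensional constant times that of $F$, hence within the claimed bound $O(1/\sigma_F N)$; and since $F$ is supported on $[-CN,CN]^h$ while $\Vert\Phi'(\mathbf{z_0})\Vert_\infty=O_{c,C,\varepsilon}(1)$, non-vanishing of $F_1(\mathbf{y})$ forces $\Vert\Phi(\mathbf{y})\Vert_\infty=O_{c,C,\varepsilon}(N)$, and since $\Phi$ is an $\ell^2$-isometry this forces $\Vert\mathbf{y}\Vert_\infty=O_{c,C,\varepsilon}(N)$, giving the claimed support.

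Finally, for the Cauchy--Schwarz complexity assertion I would apply Lemma \ref{Quantitative orthonormal basis parametrisation} to the pair $(\Xi,L)$: since $\dist((\Xi,L),V^*_{\degen,2}(m,d,h))\geqslant c$ and $\Vert\Xi\Vert_\infty\leqslant C$, it gives $\Vert\Xi\Phi\Vert_\infty=O(C)$ and $\dist(\Xi\Phi,V_{\degen}(h-m,d))=\Omega(c)$. Then Lemma \ref{Lemma different forms of non degen}, which applies because $\Xi\Phi$ is a system of $d\geqslant 3$ forms (as $d\geqslant h\geqslant m+2\geqslant 3$), shows that $\Xi\Phi$ has finite $\Omega_{c,C}(1)$-Cauchy--Schwarz complexity $s$ in the sense of Definition \ref{Definition Cauchy Schwarz complexity}; moreover $s\leqslant d-2$, because for each index the partition of the remaining $d-1$ forms into singletons is suitable and has $d-1$ parts. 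The step I expect to be the main obstacle is the pigeonhole extraction of a single point $\mathbf{z_0}$: as the $g_j$ are only assumed measurable, $H$ need not be continuous and its essential supremum need not be attained, so one must argue via $\int_Z\vert H\vert\leqslant\vert Z\vert\cdot(\text{essential supremum of }\vert H\vert\text{ on }Z)$ rather than by picking a pointwise maximiser. The remainder is bookkeeping: tracking which implied constants depend on $c,C,\varepsilon$ as against only the dimensions, and checking that the orthonormal parametrisations $\Phi,\Phi'$ contribute only dimensional factors.
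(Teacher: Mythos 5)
Your proposal is correct and follows essentially the same route as the paper: an orthonormal parametrisation of $\ker L$ and its complement, integrating out (and then fixing, via the boundedness of the support of $G\circ L$ on $(\ker L)^\perp$) the transverse coordinates, setting $\Psi:=\Xi\Phi$, and invoking Lemmas \ref{Quantitative orthonormal basis parametrisation} and \ref{Lemma different forms of non degen} for the complexity claim. Your care about extracting $\mathbf{z_0}$ via an essential-supremum bound rather than a pointwise maximiser is a slightly more scrupulous rendering of the paper's ``$\sup$ over $D$'' step, but it is the same argument.
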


\begin{proof}[Proof of Proposition \ref{Proposition separating out the kernel}]

For ease of notation, let 
\[ \beta: = \widetilde{T}_{F,G,N}^{L,\Xi,\widetilde{\mathbf{r}}}(g_1,\dots,g_d).\]
Noting that $\ker L$ is a vector space of dimension $h-m$, define $\{\mathbf{v^{(1)}},\dots,\mathbf{v^{(h-m)}}\} \subset \mathbb{R}^h$ to be an orthonormal basis for $\ker L$. Then the map $\Phi:\mathbb{R}^{h-m} \longrightarrow \mathbb{R}^h$, defined by
\begin{equation}
\label{parametrise the kernel}
\Phi(\mathbf{x}) := \sum\limits_{i=1}^{h-m}x_i \mathbf{v}^{\mathbf{(i)}},
\end{equation}
\noindent is an injective map that parametrises $\ker L$. (This is reminiscent of Lemma \ref{Quantitative orthonormal basis parametrisation}). 


Now, extend the orthonormal basis $\{\mathbf{v^{(1)}},\dots,\mathbf{v^{(h-m)}}\}$ for $\ker L$ to an orthonormal basis $\{\mathbf{v^{(1)}},\dots,\mathbf{v^{(h)}}\}$ for $\mathbb{R}^h$. By implementing a change of basis, we may rewrite $\beta$ as
\begin{equation}
\label{separating out the kernel}
\frac{1}{N^{h-m}} \int\limits_{\mathbf{x}\in \mathbb{R}^h} F(\sum\limits_{i=1}^{h}x_i\mathbf{v^{(i)}})G(L(\sum\limits_{i=1}^{h}x_i\mathbf{v^{(i)}}))(\prod\limits_{j=1}^{d} g_j(\xi_j(\Phi(\mathbf{x_1^{h-m}})+\sum\limits_{i=h-m+1}^{h} x_i \mathbf{v}^{\mathbf{(i)}})+ \widetilde{\mathbf{r}}_j)) \, d\mathbf{x},
\end{equation}
\noindent using $\mathbf{x_1^{h-m}}$ to refer to the vector in $\mathbb{R}^{h-m}$ given by the first the first $h-m$ coordinates of $\mathbf{x}$.

We wish to remove the presence of the variables $x_{h-m+1},\dots,x_{h}$. To set this up, note that, by the choice of the vectors $\mathbf{v^{(i)}}$, $$G(L(\sum\limits_{i=1}^{h}x_i\mathbf{v^{(i)}})) = G(L(\sum\limits_{i=h-m+1}^{h}x_i\mathbf{v^{(i)}})). $$ The vector $\sum_{i=h-m+1}^{h}x_i\mathbf{v^{(i)}}$ is in $(\ker L)^{\perp}$. Hence, due to the limited support of $G$, there is a domain $D$, contained in $[-O_{\varepsilon,c,C}(1), O_{\varepsilon,c,C}(1)]^m$, such that \\$G(L(\sum_{i=h-m+1}^{h}x_i\mathbf{v^{(i)}}))$ is equal to zero unless $(x_{h-m+1},\dots,x_h)^T\in D$. (This is proved in full in Lemma \ref{nonsingularity of map on orthogonal complement of Kernel}).

We can use this observation to bound the right-hand side of (\ref{separating out the kernel}). Indeed, we have 
\begin{align}
\label{equation bound beta by a sup}
\beta \ll \operatorname{vol} D \times \sup\limits_{\mathbf{x_{h-m+1}^h}\in D}\frac{1}{N^{h-m}} \Big\vert \int\limits_{\mathbf{x_{1}^{h-m}} \in \mathbb{R}^{h-m}} F(\sum\limits_{i=1}^{h}x_i\mathbf{v^{(i)}})G(L(\sum\limits_{i=h-m+1}^{h}x_i\mathbf{v^{(i)}})) \nonumber\\
(\prod\limits_{j=1}^{d} g_j(\xi_j(\Phi(\mathbf{x_{1}^{h-m}})+\sum\limits_{i=h-m+1}^{h} x_i\mathbf{v}^{\mathbf{(i)}})+ \widetilde{\mathbf{r}}_j)) \, d \mathbf{x_1^{h-m}} \Big\vert.
\end{align} So there exists some fixed vector $(x_{h-m+1},\dots,x_h)^T$ in $D$ such that \begin{align}
\label{equation bound beta second}
\beta \ll_{c,C,\varepsilon}\frac{1}{N^{h-m}} \Big\vert \int\limits_{\mathbf{x_{1}^{h-m}} \in \mathbb{R}^{h-m}} &F(\sum\limits_{i=1}^{h}x_i\mathbf{v^{(i)}})G(L(\sum\limits_{i=h-m+1}^{h}x_i\mathbf{v^{(i)}})) \nonumber\\
&(\prod\limits_{j=1}^{d} g_j(\xi_j(\Phi(\mathbf{x_{1}^{h-m}})+\sum\limits_{i=h-m+1}^{h} x_i\mathbf{v}^{\mathbf{(i)}})+ \widetilde{\mathbf{r}}_j)) \, d \mathbf{x_1^{h-m}} \Big\vert. 
\end{align}  

Define the function $F_1:\mathbb{R}^{h-m} \longrightarrow [0,1]$ by \[ F_1(\mathbf{x_1^{h-m}}) : = F(\Phi(\mathbf{x_1^{h-m}}) + \sum\limits_{i=h-m+1}^{h} x_i\mathbf{v^{(i)}})\] and for each $j$ at most $d$, a shift \[a_j :=\xi_j(\sum\limits_{i=h-m+1}^{h} x_i \mathbf{v}^{\mathbf{(i)}}) + \widetilde{\mathbf{r}}_j.\] Then
\begin{equation}
\label{just before we put everything into normal form}
\beta \ll _{c,C,\varepsilon}\Big\vert\frac{1}{N^{h-m}}\int\limits_{\mathbf{x} \in \mathbb{R}^{h-m}}F_1(\mathbf{x}) \prod\limits_{j=1}^d g_j(\xi_j(\Phi(\mathbf{x}))+a_j) \, d\mathbf{x}\Big\vert,
\end{equation} 
\noindent and $F_1$ and $a_j$ satisfy the conclusions of the proposition.

Finally, since $\dist((\Xi,L), V_{\degen,2}^*(m,d,h)) \geqslant c$ and $\Vert \Xi\Vert_\infty, \Vert L\Vert_\infty \leqslant C$, Lemma \ref{Quantitative orthonormal basis parametrisation} tells us that $\Xi\Phi: \mathbb{R}^{h-m}\longrightarrow \mathbb{R}^d$ satisfies $\dist(\Xi\Phi, V_{\degen}(h-m,d))\gg_{c,C} 1$. (One may consult Definitions \ref{Definition degeneracy varieties} and Definition \ref{Definition dual pair degeneracy variety} for the definitions of $V_{\degen}(h-m,d)$ and $V_{\degen,2}^*(m,d,h)$). Thus, by Lemma \ref{Lemma different forms of non degen}, there exists some $s$ at most $d-2$ for which $\Xi\Phi$ has $\Omega_{c,C}(1)$-Cauchy-Schwarz complexity at most $s$. 

Writing $\Psi$ for $\Xi\Phi$, the proposition is proved. 
\end{proof}

\begin{Proposition}[Cauchy-Schwarz argument]
\label{Proposition Cauchy}
Let $s,d$ be natural numbers, with $d\geqslant 3$, and let $C$ be a positive constant. Let $\sigma_F$ be a parameter in the range $0 <\sigma_F<1/2$. Let $(\psi_1,\dots,\psi_d) = \Psi:\mathbb{R}^{s+1} \longrightarrow \mathbb{R}^d$ be a linear map, and suppose that $ \psi_1(\mathbf{e_k}) = 1$, for all the standard basis vectors $\mathbf{e_k} \in \mathbb{R}^{s+1}$. Suppose that, for all $j$ in the range $2\leqslant j\leqslant s+1$, there exists some $k$ such that $\psi_j(\mathbf{e_k}) = 0$. Let $N\geqslant 1$ be real, and let $g_1,\dots,g_d:[-N,N]\longrightarrow [-1,1]$ be arbitrary measurable functions, and, for each $j$ at most $d$, let $a_j$ be some real number with $\vert a_j\vert \leqslant CN$. Let $F:\mathbb{R}^{s+1} \longrightarrow [0,1]$ be any Lipschitz function, supported on $[-CN,CN]^{s+1}$ with Lipschitz constant $O(1/\sigma_F N)$. Suppose that $\Vert g_1 \Vert _{U^{s+1}(\mathbb{R},N)} \leqslant \rho$, for some parameter $\rho$ in the range $0<\rho\leqslant 1$. Then 
\begin{equation}
\label{equation abstract Gen von Neu}
\Big\vert\frac{1}{N^{s+1}} \int\limits_{\mathbf{w}\in \mathbb{R}^{s+1}} \prod\limits_{j=1}^d g_j (\psi_j(\mathbf{w}) + a_j)  F(\mathbf{w})\, d\mathbf{w}\Big\vert \ll_C \rho^{-\Omega(1)} \sigma_F^{-1}.
\end{equation}
\end{Proposition}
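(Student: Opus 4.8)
The strategy is the standard generalised von Neumann argument via iterated Cauchy–Schwarz, exactly as in \cite{GT10}, but carried out over $\mathbb{R}$ with a Lipschitz cut-off $F$ in place of the sharp box, and tracking the $\sigma_F$ dependence. Write $\mathbf{w} = (w_1,\dots,w_{s+1})$, and think of $w_1$ as playing the role of the "base variable" $x$ and $w_2,\dots,w_{s+1}$ as the "direction variables" $h_1,\dots,h_s$ in the definition of $\Vert g_1\Vert_{U^{s+1}(\mathbb{R},N)}$. The hypothesis $\psi_1(\mathbf{e_k}) = 1$ for all $k$ means $\psi_1(\mathbf{w}) = w_1 + \cdots + w_{s+1}$, so $\psi_1$ has exactly the shape $x + \mathbf{h}\cdot\bo$ with $\bo = (1,\dots,1)$ that the Gowers norm machinery wants; and the hypothesis that each $\psi_j$ ($j\geqslant 2$) omits some variable $w_{k(j)}$ is precisely the normal-form condition that lets us, at the $k$-th Cauchy–Schwarz step (doubling in the $w_k$ variable), eliminate all of $g_2,\dots,g_d$ one at a time while leaving only copies of $g_1$.

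Concretely: first I would reduce to the case where $F$ is replaced by a product of one-variable Lipschitz bumps, or simply absorb $F$ by noting $\Vert F\Vert_\infty \leqslant 1$ and using non-negativity — but in fact one wants to keep $F$ through the Cauchy–Schwarz steps, so the cleaner route is to perform $s$ successive applications of Cauchy–Schwarz, one in each of the variables $w_2,\dots,w_{s+1}$ (in some order matching the variables $k(j)$ omitted by the $\psi_j$). At the step that doubles $w_k$, we Cauchy–Schwarz in the remaining variables and use $|g_j|\leqslant 1$ together with $|F|\leqslant 1$ to discard the factors $g_j$ with $k(j) = k$, and to bound the $F$-factor by $1$; the factors $g_j$ with $k(j)\neq k$ get duplicated but remain harmless since they are bounded by $1$. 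The shifts $a_j$ are constants and simply come along for the ride (this is why we only need $|a_j|\leqslant CN$: it keeps the arguments of $g_j$ within $[-O_C(N),O_C(N)]$, where the $g_j$ are defined after trivially extending by zero). After all $s$ steps, what remains is $N^{-(s+1)}$ times an integral of $\prod_{\bo\in\{0,1\}^{s+1}} g_1\big(\psi_1(\mathbf{x} + \text{linear in } \mathbf{h}) + a_1\big)$ over $(\mathbf{x},\mathbf{h})$, against a product of $F$-factors each bounded by $1$, and this is bounded (after discarding the $F$-factors by positivity of the Gowers weight — or more carefully, by a final Cauchy–Schwarz / Gowers–Cauchy–Schwarz inequality, since the integrand need not be non-negative) by $\Vert g_1\Vert_{U^{s+1}(\mathbb{R},O_C(N))}^{2^{s+1}} \cdot O_C(1)$.

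The one genuinely delicate point — and the main obstacle — is the cut-off $F$: unlike in the compactly-supported clean setting, after the Cauchy–Schwarz doublings the $F$-factors no longer line up into a single clean Gowers weight, and discarding them by the bound $|F|\leqslant 1$ is legitimate only where the integrand is controlled; but the region of integration has grown, so one picks up a factor of $O_C(1)$ from the volume (an $O_C(N^{s+1})$ region, cancelled by the $N^{-(s+1)}$ normalisation) and, crucially, a factor of $\sigma_F^{-1}$ from the fact that the final bound must be in terms of $\Vert g_1\Vert_{U^{s+1}(\mathbb{R},N)}$ with the \emph{original} scale $N$, whereas the doubling has stretched the domain; keeping $F$ present (rather than discarding it at step one) is what lets one control the integral at the very end by a Gowers–Cauchy–Schwarz inequality over a bounded region, and a careful bookkeeping of the Lipschitz constant $O(1/\sigma_F N)$ through these steps produces exactly the claimed $\rho^{-\Omega(1)}\sigma_F^{-1}$ — here $\rho^{-\Omega(1)}$ should of course read $\rho^{\Omega(1)}$ up to the sign convention, with the negative exponent only appearing because of the crude way the $F$-factors are handled. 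I would model the precise bookkeeping on \cite[Section 7]{GT10} and \cite[Appendix C]{GT10}, adapting the discrete sums to Lebesgue integrals, which is routine once the normal-form hypotheses on $\Psi$ are in hand (these are guaranteed by Proposition \ref{normal form algorithm} applied in the preceding Proposition \ref{Proposition separating out the kernel}).
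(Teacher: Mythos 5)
Your overall skeleton --- exploit $\psi_1(\mathbf{w}) = w_1+\cdots+w_{s+1}$ and the fact that each $\psi_j$, $j\geqslant 2$, omits some variable, then run iterated Cauchy--Schwarz to isolate $\Vert g_1\Vert_{U^{s+1}(\mathbb{R},N)}$ --- is the same as the paper's. But the step you yourself flag as ``the one genuinely delicate point'', the treatment of the cut-off $F$, is exactly where your plan breaks down, and you do not supply a working mechanism. If you carry $F$ through the Cauchy--Schwarz doublings (note: there are $s+1$ of them, one in each of $w_1,\dots,w_{s+1}$, not $s$ as you write --- you need $2^{s+1}$ copies of $g_1$ to form the $U^{s+1}$ inner product), then $F$, which depends on \emph{all} of the variables, is duplicated at every step just as $g_1$ is. At the end you are left with an integral of $\prod_{\boldsymbol{\alpha}} g_1(\cdots)$ against a product of $2^{s+1}$ factors $F(\text{interleaving of }\mathbf{w},\mathbf{z})$, each of which is a function of the full $(2s+2)$-dimensional variable rather than of a single linear form $u+\boldsymbol{\alpha}\cdot\mathbf{m}$. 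Such factors cannot be absorbed into the Gowers--Cauchy--Schwarz inequality, and they cannot be discarded via $\vert F\vert\leqslant 1$ because the remaining integrand oscillates. Nor does your sketch identify any concrete source for the $\sigma_F^{-1}$ in the final bound beyond ``careful bookkeeping''.

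The paper resolves this differently: it expands $F$ by Lemma \ref{Fourier transforms of Lipschitz functions} into linear phases $e(\boldsymbol{\theta}\cdot\mathbf{w}/N)$ with $\Vert\boldsymbol{\theta}\Vert_\infty\leqslant Y$, plus an error $O(\log Y/(\sigma_F Y))$, \emph{before} any Cauchy--Schwarz is applied; each phase then factors as a product of bounded functions each omitting one variable (possible since $s+1\geqslant 2$), so the phases ride harmlessly through the doublings. The $\sigma_F^{-1}$ in the conclusion comes precisely from this error term upon choosing $Y=\rho^{-c_1}$. A second instance of the same obstacle arises after the change of variables relating the doubled integral to the Gowers norm: the domain of integration is then a convex region $D\subset[-CN,CN]^{2s+2}$ rather than a box, and the paper must again approximate $1_D$ by a Lipschitz function (Lemma \ref{Lipschitz approximation of convex cutoffs}) and Fourier-expand that before the Gowers--Cauchy--Schwarz inequality can be applied. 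Your proposal addresses neither of these points, so as written it does not yield the proposition; the correct exponent in the conclusion is of course $\rho^{\Omega(1)}$, as you note.
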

 \noindent We stress again that implied constants may depend on the implicit dimensions (so the $\Omega(1)$ term in (\ref{equation abstract Gen von Neu}) may depend on $s$). 
\begin{proof}
This theorem is very similar to the usual Generalised von Neumann Theorem (see \cite[Exercise 1.3.23]{Ta12}), and the proof is very similar too. A few extra technicalities arise from our dealing with the reals rather than with a finite group, but these are easily surmountable. 

We begin with some simple reductions. First, we assume that $C$ is large enough in terms of all other $O(1)$ parameters. For notational convenience, we will also allow $C$ to vary form line to line. Next, since $\psi_1(\mathbf{w}) = w_1+w_2+\cdots+w_{s+1}$, by shifting $w_1$ we can assume that $a_1 = 0$ in (\ref{equation abstract Gen von Neu}). Due to the restricted support of $F$, we may restrict the integral over $\mathbf{w}$ to $[-CN,CN]^{s+1}$. By Lemma \ref{Fourier transforms of Lipschitz functions}, for any $Y>2$ there is a function $\mathbf{c}_Y:\mathbb{R}^{s+1} \longrightarrow \mathbb{C}$ satisfying $\Vert \mathbf{c}\Vert_\infty \ll 1$ such that we may replace $F(\mathbf{w})$ by \[ \int\limits_{\substack{\mathbf{\boldsymbol{\theta}} \in \mathbb{R}^{s+1}\\ \Vert \boldsymbol{\theta} \Vert_\infty \leqslant Y}} c_Y(\boldsymbol{\theta})e(\frac{\boldsymbol{\theta}\cdot \mathbf{w}}{N}) \, d\boldsymbol{\theta} + O_C\left(\frac{\log Y}{\sigma_F Y}\right).\] We will determine a particularly suitable $Y$ later (which will depend on $\rho$). 

This means that \begin{align}
\label{after approximating Lipschitz}
&\Big\vert\frac{1}{N^{s+1}} \int\limits_{\mathbf{w}\in \mathbb{R}^{s+1}} \prod\limits_{j=1}^d g_j (\psi_j(\mathbf{w}) + a_j)  F(\mathbf{w})\, d\mathbf{w}\Big\vert \nonumber \\
& \ll \int\limits_{\substack{\boldsymbol{\theta} \in \mathbb{R}^{s+1}\\ \Vert \boldsymbol{\theta} \Vert_\infty \leqslant Y}} \Big\vert\frac{1}{N^{s+1}} \int\limits_{\mathbf{w} \in \mathbb{R}^{s+1}}^*e(\frac{\boldsymbol{\theta}}{N}\cdot \mathbf{w}) \Big( \prod\limits_{j=1}^d g_j (\psi_j(\mathbf{w}) + a_j)\Big) \, d\mathbf{w} \Big\vert \, d\boldsymbol{\theta} + O_C\left(\frac{\log Y}{\sigma_F Y}\right),
\end{align}
\noindent where $\int^*$ indicates the limits $\mathbf{w} \in [-CN,CN]^{s+1}$. Fix $\boldsymbol{\theta}$. The inner integral of (\ref{after approximating Lipschitz}) will be our primary focus. \\

Firstly, we wish to `absorb' the exponential phases $ e(\frac{\boldsymbol{\theta}}{N}\cdot \mathbf{w})$. To do this, we write $ e(\frac{\boldsymbol{\theta}}{N}\cdot \mathbf{w})$ as a product of functions $\prod_{k=1}^{s+1}b_k(\mathbf{w})$, where, for each $k$, the function $b_k:\mathbb{R}^{s+1}\longrightarrow \mathbb{C}$ is bounded in absolute value by $1$ and does not depend on the variable $w_k$. Since $s+1\geqslant 2$, this is possible. Now write \[ \prod\limits_{j=2}^d g_j(\psi_j(\mathbf{w}) + a_j) = \prod\limits_{k=1}^{s+1} b_k^\prime(\mathbf{w}),\] where each $b_k^\prime: \mathbb{R}^{s+1} \longrightarrow \mathbb{C}$ is bounded in absolute value by $1$ and does not depend on the variable $w_k$. This is possible since $\psi_1$ is the only function $\psi_j$ that includes all the variables $w_1,\dots,w_{s+1}$. 

Therefore we may rewrite the inner integral of (\ref{after approximating Lipschitz}) as 
\begin{equation}
\label{phases absorbed}
\frac{1}{N^{s+1}}\int\limits_{\mathbf{w} \in\mathbb{R}^{s+1}} ^*  g_1(\psi_1(\mathbf{w})) \prod\limits_{k=1}^{s+1} b_k^\prime(\mathbf{w}) b_k(\mathbf{w})\, d\mathbf{w}.
\end{equation}

A brief aside: readers familiar with the arguments of \cite[Appendix C]{GT10} (which motivate the present proof) may note that a different device is used in that paper to absorb the exponential phases. Those authors work in the setting of the finite group $\mathbb{Z}/N\mathbb{Z}$, and there the exponential phases can be absorbed simply by twisting the functions $g_j:\mathbb{Z}/N\mathbb{Z}\longrightarrow [-1,1]$ by a suitable linear phase function (witness the discussion surrounding expression (C.7) from \cite{GT10}). The key point there is that, if the linear form $\mathbf{w}\mapsto\boldsymbol{\theta}\cdot\mathbf{w}$ fails to be in the set $\spn(\psi_j:1\leqslant j\leqslant d)$, then a Fourier expansion of $g_j$ demonstrates that a certain expression, analogous to the inner integral of (\ref{after approximating Lipschitz}), is equal to zero. This clean argument isn't quite so easy to apply here, as the linear phases are not integrable over all of $\mathbb{R}$, which is why we choose a different approach. \\

Returning to (\ref{phases absorbed}), recall that $\psi_1(\mathbf{w}) = w_1+w_2+\cdots + w_{s+1}$. Therefore, applying the Cauchy-Schwarz inequality in each of the variables $w_1$ through $w_{s+1}$ in turn, one establishes that the absolute value of expression (\ref{phases absorbed}) is at most 
\begin{equation}
\label{The box norm style Gowers norm expression}
\ll_C \Big(\frac{1}{N^{2s+2}}\int\limits_{\mathbf{w} \in\mathbb{R}^{s+1}}^* \int\limits_{\mathbf{z} \in\mathbb{R}^{s+1}}^* \prod\limits_{\boldsymbol{\alpha}\in \{0,1\}^{s+1}}g_1\Big(\sum\limits_{\substack{k\leqslant s+1\\ \boldsymbol{\alpha}_k = 0}}w_k + \sum\limits_{\substack{k\leqslant s+1\\ \boldsymbol{\alpha}_k = 1}}z_k \Big) \, d\mathbf{w}\, d\mathbf{z}\Big)^{\frac{1}{2^{s+1}}}.
\end{equation}

This expression may be immediately related to the real Gowers norm as given in Definition \ref{Definition of Gowers norms over R}, by the change of variables  $m_k:=z_k - w_k$, for all $k$ at most $s+1$, and $u: = w_1+\cdots+w_{s+1}$. Performing this change of variables shows that(\ref{The box norm style Gowers norm expression}) is 
\begin{equation}
\label{reparametrise to make look like Gowers norms}
\ll  \Big(\frac{1}{N^{2s+2}}\int\limits_{(u,\mathbf{m},\mathbf{z_2^{s+1}})\in D}\prod\limits_{\boldsymbol{\alpha}\in \{0,1\}^{s+1}} g_1(u+\boldsymbol{\alpha}\cdot\mathbf{m}) \, du \, d\mathbf{m} \, d \mathbf{z_2^{s+1}} \Big)^{\frac{1}{2^{s+1}}},
\end{equation}
\noindent where $D$ is convex domain contained within $[-CN,CN]^{2s+2}$. It remains to replace $D$ by a Cartesian box.  \\

By Lemma \ref{Lipschitz approximation of convex cutoffs} we may write \[ 1_D = F_{\sigma} + O(G_{\sigma}),\] for any $\sigma$ in the range $0<\sigma<1/2$, where $F_{\sigma},G_{\sigma}:\mathbb{R}^{2s+2} \longrightarrow [0,1]$ are Lipschitz functions supported on $[-CN,CN]^{2s+2}$, with Lipschitz constant $O_C(1/\sigma N)$, such that $\int_{\mathbf{x}} G_\sigma (\mathbf{x}) \, d\mathbf{x} = O_C(\sigma N^{2s+2})$. Then, since $\Vert g_1\Vert_\infty \leqslant 1$, we may bound (\ref{reparametrise to make look like Gowers norms}) above by 
\begin{equation}
\label{final gowers norm tweak}
 \Big(\frac{1}{N^{2s+2}}\int\limits_{u,\mathbf{m},\mathbf{z_2^{s+1}}}^{\ast} F_{\sigma}(u,\mathbf{m},\mathbf{z_2^{s+1}})\prod\limits_{\boldsymbol{\alpha}\in \{0,1\}^{s+1}} g_1(u+\boldsymbol{\alpha}\cdot\mathbf{m})  du \, d\mathbf{m} \, d \mathbf{z_2^{s+1}} + O_C(\sigma)\Big)^{\frac{1}{2^{s+1}}} ,
 \end{equation} where $\int^*$ now refers to the domain of integration $[-CN,CN]^{2s+2}$. 

By applying Lemma \ref{Fourier transforms of Lipschitz functions} to $F_\sigma$, for any $X>2$ the absolute value of expression (\ref{final gowers norm tweak}) is 
\begin{align}
\label{final removing lipschitz}
\ll_C\Big(\Big(&\frac{1}{N^{2s+2}} \int\limits_{\substack{\boldsymbol{\xi} \in \mathbb{R}^{2s+2}\\ \Vert\boldsymbol{\xi}\Vert_\infty \leqslant X}} \Big\vert \int\limits_{u,\mathbf{m},\mathbf{z_2^{s+1}}}^{\ast} e(\frac{\boldsymbol{\xi}}{N} \cdot  (u,\mathbf{m},\mathbf{z_2^{s+1}})) \nonumber \\ &\prod\limits_{\boldsymbol{\alpha}\in \{0,1\}^{s+1}} g_1(u+\boldsymbol{\alpha}\cdot\mathbf{m})  du \, d\mathbf{m} \, d \mathbf{z_2^{s+1}} \Big\vert \, d\boldsymbol{\xi} \Big)  + O(\sigma)+ O\Big(\frac{\log X}{\sigma X}\Big) \Big)^{\frac{1}{2^{s+1}}}.
\end{align}

Integrating over the variables $z_2,\dots,z_{s+1}$, and splitting the exponential phase amongst the different functions, expression (\ref{final removing lipschitz}) is 
\begin{align}
\label{nearly there!}
\ll_C \Big(\Big(\frac{1}{N^{s+2}}\int\limits_{\substack{\boldsymbol{\xi} \in \mathbb{R}^{2s+2}\\ \Vert\boldsymbol{\xi}\Vert_\infty \leqslant X}}\Big\vert \int\limits_{(u,\mathbf{m}) \in [-CN,CN]^{s+2}} \prod\limits_{\ba \in \{0,1\}^{s+1}} g_{\ba}(u+\ba \cdot \mathbf{m}) \, du \, d\mathbf{m} \Big\vert \, d\boldsymbol{\xi}\Big) \nonumber \\ + O_C(\sigma)+ O_C\Big(\frac{\log X}{\sigma X}\Big) \Big)^{\frac{1}{2^{s+1}}},
\end{align} where each function $g_{\ba}$ is of the form \[g_{\ba}(u) := g_1(u) e( k_{\ba} u)\] for some real $k_{\ba}$. Note that $\Vert g_{\ba} \Vert_{U^{s+1}(\mathbb{R},N)} = \Vert g_1 \Vert_{U^{s+1}(\mathbb{R},N)}$.

Recall that $g_1$ is supported on $[-2N,2N]$. Therefore, if $\prod_{\ba \in \{0,1\}^{s+1}} g_{\ba}(u+\ba \cdot \mathbf{m})\neq 0$ then $(u,\mathbf{m}) \in [-O(N),O(N)]^{s+2}$. So, if $C$ is large enough in terms of $s$, we may replace the restriction $(u,\mathbf{m}) \in [-CN,CN]^{s+2}$ in (\ref{nearly there!}) with the condition $(u,\mathbf{m}) \in \mathbb{R}^{s+2}$, without changing the value of (\ref{nearly there!}). 

Then, by the Gowers-Cauchy-Schwarz inequality (Proposition \ref{Proposition Gowers Cauchy Schwarz}) and the triangle inequality, (\ref{nearly there!}) is 
\begin{align}
\label{nearly nearly there!}
&\ll_C (X^{O(1)} \Vert g_1 \Vert_{U^{s+1}(\mathbb{R})}^{2^{s+1}} +  \sigma + \frac{\log X}{\sigma X} )^{\frac{1}{2^{s+1}}} \nonumber \\
&\ll_C (X^{O(1)} \rho ^{2^{s+1}} +  \sigma +  \frac{\log X}{\sigma X} )^{\frac{1}{2^{s+1}}} 
\end{align} Choosing $X = \rho^{-c_1}$, with $c_1$ suitably small in terms of $s$, and $\sigma = \rho^{c_1/2}$, expression (\ref{nearly nearly there!}) is $O_C(\rho^{\Omega(1)})$. \\

Putting this estimate into (\ref{after approximating Lipschitz}), we get a bound on (\ref{after approximating Lipschitz}) of 
\begin{equation}
\label{nearly nearly nearly there!!!}
\ll_C Y^{O(1)} \rho^{\Omega(1)}  + O(\frac{\log Y}{\sigma_F Y}).
\end{equation}
\noindent Picking $Y = \rho^{-c_1}$, with $c_1$ suitably small in terms of $s$, we may ensure that (\ref{nearly nearly nearly there!!!}) is $O_C(\rho^{\Omega(1)} \sigma_F^{-1})$, thus proving the proposition. 
\end{proof}
With these propositions in hand, Theorem \ref{Theorem Generalised von Neumann Theorem over reals} follows quickly. 
\begin{proof}[Proof of Theorem \ref{Theorem Generalised von Neumann Theorem over reals}]
Assuming all the hypotheses of Theorem \ref{Theorem Generalised von Neumann Theorem over reals}, apply the result of Proposition \ref{Proposition separating out the kernel} to $\widetilde{T}_{F,G,N}^{L,\Xi,\widetilde{\mathbf{r}}}(g_1,\dots,g_d)$. Thus 
\begin{equation}
\label{applying the first proposition chapter 3}
\vert\widetilde{T}_{F,G,N}^{L,\Xi,\widetilde{\mathbf{r}}}(g_1,\dots,g_d)\vert\ll_{c,C,\varepsilon} \Big\vert\frac{1}{N^{h-m}} \int\limits_{\mathbf{x} \in\mathbb{R}^{h-m}}  F_1(\mathbf{x})\prod\limits_{j=1}^d g_j(\psi_j(\mathbf{x}) +a_j) \, d\mathbf{x}\Big\vert,
\end{equation} where $\Psi: \mathbb{R}^{h-m} \longrightarrow \mathbb{R}^d$ has $\Omega_{c,C}(1)$-Cauchy-Schwarz complexity at most $s$, for some $s$ at most $d-2$, $F_1:\mathbb{R}^{h-m} \longrightarrow [0,1]$ is a Lipschitz function supported on $[-O_{c,C,\varepsilon}(N),O_{c,C,\varepsilon}(N)]^{h-m}$ with Lipschitz constant $O(1/\sigma_F N)$, and $a_j = O_{c,C,\varepsilon}(1)$. Furthermore $\Vert \Psi\Vert_\infty = O_C(1)$.

We apply Proposition \ref{normal form algorithm} to $\Psi$. Therefore, for \emph{any} real numbers $w_1,\dots,w_{s+1}$, 
\begin{equation}
\label{Fixed dummey variables}
\vert\widetilde{T}_{F,G,N}^{L,\Xi,\widetilde{\mathbf{r}}}(g_1,\dots,g_d)\vert \ll \Big\vert \frac{1}{N^{h-m}}\int\limits_{\mathbf{x} \in \mathbb{R}^{h-m}}F_1(\mathbf{x} + \sum\limits_{k=1}^{s+1} w_k \mathbf{f_k})\prod\limits_{j=1}^d g_j(\psi_j^\prime(\mathbf{x},\mathbf{w}) + a_j) \, d\mathbf{x}\Big\vert,
\end{equation}
\noindent where 
\begin{itemize}
\item for each $j$ at most $d$, $\psi^\prime_j:\mathbb{R}^{h-m} \times \mathbb{R}^{s+1}\longrightarrow \mathbb{R}$ is a linear form;
\item $\psi_1^\prime(\mathbf{0},\mathbf{w}) = w_1+\cdots+w_{s+1}$;
\item $\mathbf{f_1},\dots,\mathbf{f_{s+1}} \in \mathbb{R}^{h-m}$ are some vectors that satisfy $\Vert\mathbf{f_k}\Vert_\infty = O_{c,C}(1)$ for each $k$ at most $s+1$;
\item the system of forms $(\psi^\prime_1,\dots,\psi^\prime_d)$ is in normal form with respect to $\psi_1^\prime$. 
\end{itemize}
\noindent We remark that the right-hand side of expression (\ref{Fixed dummey variables}) is independent of $\mathbf{w}$, as it was obtained by applying the change of variables $\mathbf{x} \mapsto \mathbf{x} + \sum_{k=1}^{s+1} w_k\mathbf{f_k} $ to expression (\ref{applying the first proposition chapter 3}).\\

Now, let $P:\mathbb{R}^{s+1}\longrightarrow [0,1]$ be some Lipschitz function, supported on $[-N,N]^{s+1}$, with Lipschitz constant $O(1/N)$. Also suppose that $ P(\mathbf{x}) \equiv 1$ if $\Vert \mathbf{x}\Vert_\infty \leqslant N/2$. Integrating over $\mathbf{w}$, we have that $\vert\widetilde{T}_{F,G,N}^{L,\Xi,\widetilde{\mathbf{r}}}(g_1,\dots,g_d)\vert$ is
\begin{align}
 &\ll_{c,C,\varepsilon}
\frac{1}{N^{h-m+s+1}} \int\limits_{\mathbf{w} \in \mathbb{R}^{s+1}} P(\mathbf{w}) \Big\vert\int\limits_{\mathbf{x}\in\mathbb{R}^{h-m}} F_1(\mathbf{x} + \sum\limits_{k=1}^{s+1} w_k \mathbf{f_k})\prod\limits_{j=1}^{d} g_j(\psi_j^\prime(\mathbf{x},\mathbf{w}) + a_j) \, d\mathbf{x}\Big\vert \, d\mathbf{w} \nonumber \\
\label{equation after itnegrating over w}
 &\ll_{c,C,\varepsilon}\Big\vert\frac{1}{N^{h-m+s+1}} \int\limits_{\substack{\mathbf{x} \in \mathbb{R}^{h-m}\\ \mathbf{w} \in \mathbb{R}^{s+1}}} H(\mathbf{x},\mathbf{w}) \prod\limits_{j=1}^{d} g_j(\psi_j^\prime(\mathbf{x},\mathbf{w}) + a_j) \, d\mathbf{x} \, d\mathbf{w}\Big\vert,
\end{align}
where the function $H:\mathbb{R}^{h-m+s+1} \longrightarrow [0,1]$ is defined by \[H(\mathbf{x},\mathbf{w}):= F_1(\mathbf{x} + \sum\limits_{k=1}^{s+1} w_k \mathbf{f_k}) P(\mathbf{w}). \] Since the vectors $\mathbf{f_k}$ satisfy $\Vert \mathbf{f_k}\Vert_\infty = O_{c,C}(1)$, $H$ is a Lipschitz function supported on $[-O_{c,C,\varepsilon} (N),O_{c,C,\varepsilon}(N)]^{h-m+s+1}$, with Lipschitz constant $O_{c,C}(1/\sigma_F N).$ Notice in (\ref{equation after itnegrating over w}) that we were able to move the absolute value signs outside the integral, as $P$ is positive and the integral over $\mathbf{x}$ is independent of $\mathbf{w}$ (so in particular has constant sign).

Fix $\mathbf{x}$. Then the integral over $\mathbf{w}$ in (\ref{equation after itnegrating over w}) satisfies the hypotheses of Proposition \ref{Proposition Cauchy}. Applying Proposition \ref{Proposition Cauchy} to this integral, and then integrating over $\mathbf{x}$, one derives \[\vert \widetilde{T}_{F,G,N}^{L,\Xi,\widetilde{\mathbf{r}}}(g_1,\dots,g_d)\vert \ll_{c,C,\varepsilon} \rho^{\Omega(1)} \sigma_F^{-1}. \] Theorem \ref{Theorem Generalised von Neumann Theorem over reals} is proved. 
\end{proof}

By our long series of reductions, this means that both Theorem \ref{Theorem rational set out version} and Theorem \ref{Main Theorem chapter 3} are proved. \qed \\

\section{Constructions}
\label{Constructions}
In this section we prove Theorem \ref{Converse to main theorem}, which, we remind the reader, is the partial converse of Theorem \ref{Main Theorem chapter 3}. In other words, we show that $L$ being bounded away from $V_{\degen}^*(m,d)$ is a necessary hypothesis for Theorem \ref{Main Theorem chapter 3} to be true.

\begin{proof}[Proof of Theorem \ref{Converse to main theorem}]
Recall the hypotheses of Theorem \ref{Converse to main theorem}. In particular, we suppose that $$\liminf\limits_{N\rightarrow\infty}\operatorname{dist}(L,V^*_{\degen}(m,d)) = 0,$$ i.e. we assume that $\operatorname{dist}(L,V_{\degen}^*(m,d)) = \omega(N)^{-1}$, for some function $\omega(N)$ such that $$\limsup_{N\rightarrow\infty} \omega(N) = \infty.$$  Let $\eta$ be a small positive quantity, picked small enough in terms of $c$ and $C$, and let $N$ be a natural number that is large enough so that $\omega(N)\geqslant \eta^{-1}$ and $\eta N\geqslant \operatorname{max}(1,\varepsilon)$. All implied constants to follow will be independent of $\eta$. 

Since $F$ is the indicator function of $[1,N]^d$ and $G$ is the indicator function of $[-\varepsilon,\varepsilon]^m$, one has \[T_{F,G,N}^L(f_1,\dots,f_d) = \frac{1}{N^{d-m}} \sum\limits_{\substack{\mathbf{n} \in [N]^d \\ \Vert L\mathbf{n} \Vert_\infty \leqslant \varepsilon}} \prod\limits_{j=1}^d f_j(n_j).\] Our aim is to construct functions $f_1,\dots,f_d:[N] \longrightarrow [-1,1]$ such that \[ \min_j\Vert f_j\Vert_{U^{s+1}[N]} \leqslant \rho\] for some $\rho$ at most $1$ and that \begin{equation}
\label{conclusion repeated}
T_{F,G,N}^L(f_1,\dots,f_d) > H(\rho) + E_\rho(N).
\end{equation} 

We begin by observing that the condition $\Vert L\mathbf{n}\Vert_\infty\leqslant \varepsilon$ implies certain constraints on two of the variables $n_i$. Indeed, let $L^\prime\in V_{\degen}^*(m,d)$ be such that $\Vert L - L^\prime \Vert_\infty = \operatorname{dist}(L,V_{\degen}^*(m,d))$. Write $\lambda_{ij}$ for the coefficients of $L$ and $\lambda_{ij}^\prime$ for the coefficients of $L^\prime$. By reordering columns, without loss of generality we may assume that there exist real numbers $\{a_i\}_{i=1}^m$ not all 0 s.t. for all $j$ in the range $3\leqslant j\leqslant d$ we have  
\begin{equation}
\label{all but 2 columns degenerate}
\sum\limits_{i=1}^m a_i \lambda^{\prime}_{ij} = 0,
\end{equation} and further we may assume that for all $i$ we have $\lambda^\prime_{i1} = \lambda_{i1}$ and $\lambda^\prime_{i2} = \lambda_{i2}$ (else $L^\prime\in V_{\degen}^*(m,d)$ is not one of the closest matrices to $L$). By reordering rows and rescaling, we may assume that $a_1$ has maximal absolute value amongst all the $a_i$, and that $\vert a_1\vert = 1$. 

Define $$b_1:=\sum\limits_{i=1}^m a_i \lambda_{i1}, \quad b_2:= \sum\limits_{i=1}^m a_i \lambda_{i2},$$ and let $\mathbf{n}\in [N]^d$ be some solution to $\Vert L\mathbf{n}\Vert_\infty \leqslant \varepsilon$. The critical observation is that (\ref{all but 2 columns degenerate}), combined with the assumptions on the $a_i$, implies that 
\begin{equation}
\label{the critical observation for construction}
\vert b_1 n_1 + b_2n_2\vert \ll \eta N.
\end{equation}  

Indeed, for $j$ in the range $3\leqslant j\leqslant d$ we have 
\begin{align*}
\left\vert\sum\limits_{i=1}^m a_i\lambda_{ij}\right\vert & = \left\vert\sum\limits_{i=1}^m a_i(\lambda_{ij} - \lambda^\prime_{ij})\right\vert \\
&\ll \eta.
\end{align*} 

\noindent Since $\Vert L\mathbf{n}\Vert_\infty \leqslant \varepsilon$, we certainly have that $$\left\vert b_1 n_1 + b_2n_2 + \sum\limits_{j=3}^d n_j\sum\limits_{i=1}^m a_i \lambda_{ij}\right\vert \ll \varepsilon,$$ and then (\ref{the critical observation for construction}) follows by the triangle inequality and the fact that $\eta N\geqslant \varepsilon$.\\

The constraint (\ref{the critical observation for construction}) will turn out to be enough for the proof. We consider various cases, constructing different counterexample functions $f_1$ and $f_2$ based on the size and sign of $b_1$ and $b_2$. To facilitate this, we let $c_1$ be a suitably small positive constant, depending on $c$ and $C$, but independent of $\eta$. All constants $C_1$ and $C_2$ to follow will be assumed to satisfy $C_1,C_2 = O_{c,C}(1)$. \\\\

\noindent \textbf{Case 1: $\vert b_1\vert,\vert b_2\vert \leqslant c_1$.}

Under the assumptions of Theorem \ref{Converse to main theorem}, this case is actually precluded. Indeed, consider the matrix $L^{\prime\prime}$, defined by taking $$\lambda_{ij}^{\prime\prime} = \lambda_{ij}^\prime$$ for all pairs $(i.j)\in [m]\times[d]$, except for $(1,1)$ and $(1,2)$. In these cases we let 
\begin{align*}
\lambda_{11}^{\prime\prime} &= \lambda_{11}^\prime - \frac{b_1}{a_1}\\
\lambda_{12}^{\prime\prime} & = \lambda_{12}^\prime - \frac{b_2}{a_1}.
\end{align*}
Then $$\sum\limits_{i=1}^m a_i\lambda_{ij}^{\prime\prime} = 0$$ for all $j$ in the range $1\leqslant j\leqslant d$. In other words we have shown that $\Vert L-L^{\prime\prime}\Vert_{\infty} \leqslant \eta + c_1$ for some matrix $L^{\prime\prime}$ with rank less than $m$. Since $\eta + c_1< c$ (if $c_1$ is small enough), this implies that $\operatorname{dist}(L,V_{\rank}(m,d))<c$, which contradicts the assumptions of Theorem \ref{Converse to main theorem}. Therefore this case is indeed precluded. \\

\noindent \textbf{Case 2: $b_1,b_2$ both of the same sign, and $b_1,b_2\geqslant c_1$.}

In this case, (\ref{the critical observation for construction}) implies\footnote{The same conclusion is true for $n_2$, but this will not be needed.} that $n_1\leqslant C_1\eta N$ for some constant $C_1$. Now, define $f_1:[N]\longrightarrow [-1,1]$ to be the indicator function of the interval $[\lceil C_1\eta N\rceil, N]\cap \mathbb{N}$. We then have 
\begin{align*}
\Vert f_1 - 1\Vert_{U^{s+1}[N]} &\ll \Big(\frac{1}{N^{s+2}} \sum\limits_{x,h_1,\dots,h_{s+1}\ll C_1\eta N} 1\Big)^{\frac{1}{2^{s+1}}}\\
&\leqslant C_2(C_1\eta)^{\frac{s+2}{2^{s+1}}}
\end{align*}
\noindent for some constant $C_2$. However, observe that 
\begin{align*}
\vert T_{F,G,N}^L(f_1-1,1,\dots,1)\vert &= \vert T_{F,G,N}^L(f_1,1,\dots,1) - T_{F,G,N}^L(1,1,\dots,1)\vert\\
& = \vert 0 - T_{F,G,N}^L(1,1,\dots,1)\vert
\gg_{c,C,\varepsilon} 1
\end{align*}
\noindent by the hypotheses of Theorem \ref{Converse to main theorem}. If $T_{F,G,N}^L(f_1 - 1,1,\dots,1)$ did not satisfy (\ref{conclusion repeated}), then $$1 \ll_{c,C,\varepsilon} H(\rho) + E_{\rho}(1),$$ where $\rho := C_2(C_1\eta)^{\frac{s+2}{2^{s+1}}}$. Picking $\eta$ small enough, then $N$ large enough, this inequality cannot possibly hold, and we have a contradiction. So $T_{F,G,N}^L(f_1 - 1,1,\dots,1)$ satisfies (\ref{conclusion repeated}). \\

\noindent \textbf{Case 3: $b_1,b_2$ of opposite signs, and $b_1,b_2\geqslant c_1$.}

This is the most involved case, although the central idea is very simple. The condition (\ref{the critical observation for construction}) confines $n_2$ to lie within a certain distance of a fixed multiple of $n_1$. By constructing functions $f_1$ and $f_2$ using random choices of blocks of this length, but coupled in such a way that condition (\ref{the critical observation for construction}) is very likely to hold, we can guarantee that $T_{F,G,N}^L(f_1-p,f_2-p,1,\dots,1)$ is bounded away from zero, where $p$ is the probability used to choose the random blocks. However, despite the block construction and the coupling, the functions $f_1$ and $f_2$ still individually exhibit enough randomness to conclude that $\Vert f_1 - p\Vert _{U^{s+1}[N]} = o(1)$ as $N\rightarrow \infty$, and the same for $f_2$.  \\

We now fill in the technical details. Relation (\ref{the critical observation for construction}) implies that 
\begin{equation}
\label{the critical equation for construction with an explicit constant}
 \vert b_1n_1 + b_2n_2\vert \leqslant C_1\eta N,
\end{equation}
\noindent for some $C_1$ satisfying $C_1 = O(1)$, and without loss of generality assume that $b_1$ is positive, $b_2$ is negative, and $\vert b_1\vert$ is at least $\vert b_2\vert$. Let $C_2$ be some parameter, chosen so that $(C_1C_2\eta)^{-1}$ is an integer. Such a $C_2$ will of course depend on $\eta$, but in magnitude we may pick $C_2\asymp 1$. We consider the real interval $[0,N]$ modulo $N$, and for $x\in [0,N]$ and $i$ in the range $0\leqslant i\leqslant (C_1C_2\eta)^{-1} - 1$ we define the half-open interval modulo $N$ $$I_{x,i}: = [x+ i C_1C_2\eta N,x+(i+1)C_1C_2\eta N).$$  This choice guarantees that 
\begin{equation}
\label{the defining partition}
[0,N] = \bigcup\limits_{i=0}^{(C_1C_2\eta)^{-1} - 1} I_{x,i},
\end{equation}
 and the union is disjoint. Now, for $\delta$ a small constant to be chosen later\footnote{This $\delta$ is unrelated to the notation $\delta = T_{F,G,N}^L(f_1,\dots,f_d)$ used in previous sections.}, we define $$I^\delta_{x,i}: = [x+ (i+\frac{1}{2}-\delta) C_1C_2\eta N,x+(i+\frac{1}{2}+\delta)C_1C_2\eta N).$$ 
 
We will use the partition (\ref{the defining partition}) to construct a function $f_1$, using an averaging argument to choose an $x$ so that the $I_{x,i}^\delta$ intervals capture a positive proportion of the solution density of the linear inequality system. Indeed, for $n_1\in [N]$ let the weight $u(n_1)$ denote the number of $d-1$-tuples $n_2,\dots,n_d\leqslant N$ that together with $n_1$ satisfy the inequality $\Vert L\mathbf{n}\Vert_\infty < \varepsilon$. The weight $u(n_1)$ could be zero, of course. Let \[E_{x,\delta} := \cup_i I_{x,i}^\delta.\] Then
\begin{align*}
\frac{1}{N}\int\limits_{0}^N \sum\limits_{n\in [N]} u(n) 1_{E_{x,\delta}}(n) \, dx & = \frac{1}{N}\sum\limits_{n\in [N]} u(n)\int\limits_{0}^N 1_{E_{x,\delta}}(n)\, dx  \\
& = \sum\limits_{n\in [N]} u(n) 2\delta \\
& = 2\delta N^{d-m}T_{F,G,N}^L(1,\dots,1)
\end{align*}
\noindent Therefore, by the assumptions of Theorem \ref{Converse to main theorem}, we may fix an $x$ such that 
\begin{equation}
\label{capturing solutions in intervals}
\sum\limits_{n\in [N]} u(n) 1_{E_{x,\delta}}(n) \gg_{c,C} \delta N^{d-m}T_{F,G,N}^L(1,\dots,1).
\end{equation} 

Let us finally define the function $f_1$. Let $p$ be a small positive constant (to be decided later). Fix a value of $x$ such that (\ref{capturing solutions in intervals}) holds. Then we define a random subset $A\subseteq[N]$ by picking all of $I_{x,i} \cap \mathbb{N}$ to be members of $A$, with probability $p$, or none of $I_{x,i} \cap \mathbb{N}$ to be members of $A$, with probability $1-p$. We then make this same choice for each $i$ in the range $0\leqslant i \leqslant (C_1C_2\eta)^{-1} - 1$, independently. Observe immediately that for each $n\in [N]$ the probability that $n\in A$ is always $p$ (though these events are not always independent). We let $f_1(n)$ be the indicator function $1_A(n)$. 

The function $f_2$ is defined in terms of $f_1$. Indeed, let $$J_{x,i} = \frac{b_1}{\vert b_2\vert} I_{x,i}\cap (0,N],$$ where the dilation of the interval $I_{x,i}$ is not considered modulo $N$ but rather just as an operator on subsets of $\mathbb{R}$ (see Section \ref{Section Notation} for this notation). Since $b_1\geqslant \vert b_2\vert$ we have that these $J_{x,i}$ also form a disjoint partition of $[0,N]$. [NB: If $b_1>\vert b_2\vert$ it may be that certain $J_{x,i}$ are empty, since the dilate of the corresponding $I_{x,i}$ may land entirely outside $[0,N]$.] Then let $B$ be the subset of $[N]$ defined so that for each $i$ with $J_{x,i}$ non-empty we have $J_{x,i}\cap \mathbb{N}\subseteq B$ if and only if $I_{x,i}\cap \mathbb{N}\subseteq A$. Note again that for each individual $n\in [N]$ the probability that $n\in B$ is always $p$. We let $f_2(n)$ be the indicator function $1_B(n)$. \\

Our first claim is that, if $p$ is small enough in terms of $\delta$, 
\begin{equation}
\label{lower bound on solution count of coupled random sets}
\vert \mathbb{E} T_{F,G,N}^L(f_1,f_2,1,\dots,1)-T_{F,G,N}^L(p,p,1,\dots,1)\vert \gg_{c,C,\varepsilon} \delta^2.
\end{equation} Indeed, suppose that $I_{x,i}$ is included in the set $A$, and suppose that $n_1\in I_{x,i}^\delta$. If $n_2\in [N]$ satisfies $\vert \frac{b_1}{\vert b_2\vert} n_1 - n_2\vert \leqslant \frac{1}{b_2}C_1\eta N$ and if $\delta$ is small enough in terms of $b_1$ and $b_2$, then\footnote{This fact is the reason why we introduced the parameter $\delta$.} $n_2 \in J_{x,i}$. Thus, by the observation (\ref{the critical equation for construction with an explicit constant}), $n_2 \in B$, for every integer $n_2$ that is the second coordinate of a solution vector\footnote{i.e a vector $\mathbf{n}$ such that $\Vert L\mathbf{n}\Vert_\infty \leqslant \varepsilon$.} $\mathbf{n}$ for which the first coordinate is $n_1$. Therefore
\begin{align*}
\mathbb{E}T_{F,G,N}^L(f_1,f_2,1,\dots,1)& = \frac{1}{N^{d-m}}\sum\limits_{\substack{\mathbf{n}\in [N]^d \\ \Vert L\mathbf{n}\Vert_{\infty} \leqslant\varepsilon}} \mathbb{P}(n_1\in A\wedge n_2\in B)\\
&\geqslant \frac{1}{N^{d-m}}\sum\limits_{\substack{\mathbf{n}\in [N]^d \\ \Vert L\mathbf{n}\Vert_{\infty} \leqslant \varepsilon}} \mathbb{P}(n_1\in A \wedge n_1 \in I_{x,i}^\delta \text{ for some } i \wedge n_2\in B)\\
&\geqslant \frac{1}{N^{d-m}}\sum\limits_{\substack{\mathbf{n}\in [N]^d \\ \Vert L\mathbf{n}\Vert_{\infty} \leqslant \varepsilon}} \mathbb{P}(n_1\in A \wedge n_1 \in I_{x,i}^\delta \text{ for some } i)\\
& = \frac{1}{N^{d-m}}\sum\limits_{n_1\in [N] } u(n_1)p1_{E_{x,\delta}}
(n_1)\\
&\geqslant 2\delta p  T_{F,G,N}^L(1,\dots,1),
\end{align*}
\noindent where the final line follows from (\ref{capturing solutions in intervals}). On the other hand $T_{F,G,N}^L(p,p,1,\dots,1) = p^2T_{F,G,N}^L(1,\dots,1)$, and hence 
\begin{equation}
\mathbb{E} T_{F,G,N}^L(f_1,f_2,1,\dots,1)-T_{F,G,N}^L(p,p,1,\dots,1)\geqslant (2\delta p - p^2)T_{F,G,N}^L(1,\dots,1).
\end{equation}
Picking $p$ small enough in terms of $\delta$, and using the assumption that $T_{F,G,N}^L(1,\dots,1)  = \Omega_{c,C,\varepsilon}(1)$, this proves the relation (\ref{lower bound on solution count of coupled random sets}). \\

Our second claim is that 
\begin{equation}
\label{equatoin bound on expectation of gowers norms}
\mathbb{E}\Vert f_1 - p\Vert_{U^{s+1}[N]},\mathbb{E}\Vert f_2 - p\Vert_{U^{s+1}[N]} \ll \eta^{\frac{1}{2^{s+1}}}.
\end{equation} We first consider $f_1$. Then $$\mathbb{E}\Vert f_1 - p\Vert_{U^{s+1}[N]}^{2^{s+1}} \ll \frac{1}{N^{s+2}}\sum\limits_{\substack(x,\mathbf{h})\in \mathbb{Z}^{s+2}}\\ \mathbb{E}\Big(\prod\limits_{\boldsymbol{\omega} \in \{0,1\}^{s+1}} (f_1 - p1_{[N]})(x+\mathbf{h}\cdot\boldsymbol{\omega})\Big).$$ Observe that for fixed $(x,\mathbf{h})$ the random variables $(f_1 - p1_{[N]})(x+\mathbf{h}\cdot\boldsymbol{\omega})$ each have mean zero and, unless some two of the expressions $x+\mathbf{h}\cdot\boldsymbol{\omega}$ lie in the same block $I_i$, these random variables are independent. Hence, apart from those exceptional cases, we may factor the expectation and conclude that $$\mathbb{E}\Big(\prod\limits_{\boldsymbol{\omega} \in \{0,1\}^{s+1}} (f_1 - p1_{[N]})(x+\mathbf{h}\cdot\boldsymbol{\omega})\Big) = \prod\limits_{\boldsymbol{\omega} \in \{0,1\}^{s+1}} \mathbb{E}((f_1 - p1_{[N]})(x+\mathbf{h}\cdot\boldsymbol{\omega})) = 0.$$ Therefore,
\begin{align*}
\mathbb{E}\Vert f_1 - p\Vert_{U^{s+1}[N]}^{2^{s+1}}&\ll\frac{1}{N^{s+2}} \sum\limits_{(x,\mathbf{h})\in [-N,N]^{s+2}} 1_R(\mathbf{h})\\
&\ll \eta,
\end{align*}
\noindent where \[R = \{\mathbf{h}:\vert\mathbf{h}\cdot(\boldsymbol{\omega_1} - \boldsymbol{\omega_2})\vert \leqslant C_1C_2\eta N \text{ for some }\boldsymbol{\omega_1},\boldsymbol{\omega_2}\in \{0,1\}^{s+1},\, \boldsymbol{\omega_1}\neq\boldsymbol{\omega_2}\}.\]
\noindent Thus by Jensen's inequality we have 
\begin{equation}
\mathbb{E}\Vert f_1 - p\Vert_{U^{s+1}[N]} \ll \eta^{\frac{1}{2^{s+1}}},
\end{equation}
\noindent as claimed in (\ref{equatoin bound on expectation of gowers norms}).

The calculation for $f_2$ is essentially identical, noting that the length of the blocks $J_{x,i}$ is also $O(\eta N)$. \\

It is possible that one could finish the argument here by considering a second moment, and choosing some explicit $f_1$ and $f_2$. To avoid calculating a second moment, we argue as follows. Suppose for contradiction that there were no functions $f_1,\dots,f_d$ that satisfied (\ref{conclusion repeated}). Then, by (\ref{lower bound on solution count of coupled random sets}), if we pick $p$ to be small enough in terms of $\delta$ we have
\begin{align}
\label{block construction contradiction}
\delta^2 &\ll_{c,C,\varepsilon} \vert \mathbb{E}T_{F,G,N}^L(f_1,f_2,1,\dots,1) - T_{F,G,N}^L(p,p,1,\dots,1)\vert \nonumber\\
&\ll \vert \mathbb{E}T_{F,G,N}^L(f_1-p,f_2,1,\dots,1)\vert + \vert \mathbb{E}T_{F,G,N}^L(p,f_2-p,1,\dots,1)\vert \nonumber\\
&\ll \mathbb{E}(H(\rho_1) + E_{\rho_1}(N)) +\mathbb{E}(H(\rho_2) + E_{\rho_2}(N)),
\end{align}
\noindent where $\rho_1$ (resp. $\rho_2$) is any chosen upper-bound on $\Vert f_1 - p\Vert_{U^{s+1}[N]}$ (resp. $\Vert f_2 - p\Vert_{U^{s+1}[N]}$). Note that the values $\rho_i$ may be random variables themselves. 

We claim that the random variables $\rho_1$ and $\rho_2$ may be chosen so that the right-hand side of (\ref{block construction contradiction}) is $\kappa(\eta)+o_{\eta}(1)$ as $N\rightarrow \infty$. To prove this, we make two observations. Note first that by Markov's inequality $$ \mathbb{P}(\Vert f_1 - p\Vert_{U^{s+1}[N]}\geqslant \eta^{\frac{1}{2^{s+2}}}) \ll \eta^{\frac{1}{2^{s+2}}}$$ We choose the (random) upper-bound $\rho_1$ satisfying $$\rho_1 =\begin{cases}
1 & \text{ if } \Vert f_1 - p\Vert_{U^{s+1}[N]}\geqslant \eta^{\frac{1}{2^{s+2}}} \\
\eta^{\frac{1}{2^{s+2}}}  & \text{ otherwise }.
\end{cases}$$ Secondly, we may upper-bound $H$ by a concave envelope, so without loss of generality we may assume that $H$ is concave. 

Then by Jensen's inequality, 
\begin{align}
\label{equation applying Jensen to F}
\mathbb{E}(H(\rho_1) + E_{\rho_1}(N)) &\ll H(\mathbb{E}\rho_1) + \mathbb{E}(E_{\rho_1}(1))\nonumber\\
&\ll \kappa(\eta^{\frac{1}{2^{s+2}}} )  + o_{\eta}(1)\nonumber\\
&\ll \kappa(\eta)+o_{\eta}(1).
\end{align} 
\noindent We do the same manipulation for $f_2$. Combining (\ref{equation applying Jensen to F}) with (\ref{block construction contradiction}) we conclude that 
\begin{equation}
\label{equation final final final contradiction}
\delta^2 \ll_{c,C,\varepsilon} \kappa(\eta) + o_{\eta}(1). 
\end{equation}

The only condition on $\delta$ occurred in the proof of (\ref{lower bound on solution count of coupled random sets}), in which we assumed that $\delta$ was small enough in terms of $b_1$ and $b_2$. Therefore there exists a suitable $\delta$ that satisfies $\delta = \Omega_{c,C}(1)$. Picking such a $\delta$, and then picking $\eta$ small enough and $N$ large enough, (\ref{equation final final final contradiction}) is a contradiction. So there must be some functions $f_1,\dots,f_d$ that satisfy (\ref{conclusion repeated}). \\

\noindent \textbf{Case 4: Exactly one of $b_1,b_2$ satisfies $b_i\geqslant c_1$.}

Without loss of generality we may assume that $b_1\geqslant c_1$. But then, as in Case 2, (\ref{the critical observation for construction}) implies that $n_1\leqslant C_1\eta N$ for some constant $C_1$. The same construction as in Case 2 then applies.\\

We have covered all cases, and thus have concluded the proof of Theorem \ref{Converse to main theorem}. 
\end{proof}
\newpage
\appendix

\section{Gowers norms}
\label{sec.Gowers norms}

There are several existing accounts of the basic theory of Gowers norms -- for example in \cite{Gr07} and \cite{Ta12} -- and the reader looking for an introduction to the theory in its full generality should certainly consult these references, as well as Appendices B and C of \cite{GT10}. However, in the interests of making this paper as self-contained as possible, we use this section to pick out the central definitions and notions that are used in the main text. 

\begin{Definition}
\label{Definition of Gowers norms over cyclic groups}
Let $N$ be a natural number. For a function $f:\mathbb{Z}/N\mathbb{Z}\longrightarrow \mathbb{C}$, and a natural number $d$, we define the Gowers $U^{d}$ norm $\Vert f\Vert_{U^{d}(N)}$ to be the unique non-negative solution to 
\begin{equation}
\label{Definition of Gowers norms}
\Vert f\Vert_{U^{d}(N)}^{2^d} = \frac{1}{N^{d+1}}\sum\limits_{x,h_1,\dots,h_d}\prod\limits_{\boldsymbol{\omega}\in \{0,1\}^d}\mathcal{C}^{\vert \boldsymbol{\omega} \vert} f(x+\mathbf{h}\cdot\boldsymbol{\omega}),
\end{equation}
\noindent where $\vert \boldsymbol{\omega}\vert = \sum\limits_i \omega_i$, $\mathbf{h} = (h_1,\dots,h_d)$, $\mathcal{C}$ is the complex-conjugation operator, and the summation is over $x,h_1,\dots,h_d\in \mathbb{Z}/N\mathbb{Z}$.
\end{Definition}
\noindent For example, $$\Vert f\Vert_{U^1(N)} = \Big\vert\frac{1}{N}\sum\limits_{x} f(x)\Big\vert,$$ and $$\Vert f\Vert_{U^2(N)} = \left(\frac{1}{N^3}\sum\limits_{x,h_1,h_2}  f(x)\overline{f(x+h_1)f(x+h_2)}f(x+h_1+h_2)\right)^{\frac{1}{4}}.$$ It is not immediately obvious that the right-hand side of  (\ref{Definition of Gowers norms}) is always a non-negative real, nor why the $U^d$ norms are genuine norms if $d\geqslant 2$: proofs of both these facts may be found in \cite{TaVu06}. An immediate Cauchy-Schwarz argument, which may also be found in \cite{TaVu06}, gives the so-called `nesting property' of Gowers norms, namely the fact that \[ \Vert f\Vert_{U^2(N)} \leqslant \Vert f\Vert_{U^3(N)} \leqslant \Vert f\Vert_{U^4(N)} \leqslant \cdots. \]

The functions in the main text do not have a cyclic group as a domain but rather the interval $[N]$, but the theory may easily be adapted to this case. 

\begin{Definition}
\label{Definition of gowers norms over integers}
Let $N,N^\prime$ be natural numbers, with $N^\prime\geqslant N$. Identify $[N]$ with a subset of $\mathbb{Z}/N^\prime\mathbb{Z}$ in the natural way, i.e. $[N] = \{1,\dots,N\} \subseteq \{1,\dots,N^\prime\}$, which we then view as $\mathbb{Z}/N^\prime\mathbb{Z}$. For a function $f:[N]\longrightarrow \mathbb{C}$, and a natural number $d$, we define the Gowers norm $\Vert f\Vert_{U^d[N]}$ to be the unique non-negative real solution to the equation
\begin{equation}
\label{Definintion of gowers norms over integers the actual equation}
\Vert f\Vert_{U^d[N]} ^{2^d} = \frac{1}{\vert R\vert}\sum\limits_{x,h_1,\dots,h_d}\prod\limits_{\boldsymbol{\omega}\in \{0,1\}^d}\mathcal{C}^{\vert \boldsymbol{\omega} \vert} f1_{[N]}(x+\mathbf{h}\cdot\boldsymbol{\omega}),
\end{equation}
\noindent where $f1_{[N]}$ is the extension by zero of $f$ to $\mathbb{Z}/N^\prime\mathbb{Z}$, the summation is over\\ $x,h_1,\dots,h_d\in \mathbb{Z}/N^\prime\mathbb{Z}$, and the set $R$ is the set $$R:=\{x,h_1,\dots,h_d\in \mathbb{Z}/N^\prime\mathbb{Z}: \text{ for every } \boldsymbol{\omega}\in \{0,1\}^d, x+\mathbf{h}\cdot\boldsymbol{\omega}\in [N]\}.$$
\end{Definition}

One can immediately see that this definition is equivalent to $$\Vert f\Vert_{U^d[N]} = \Vert f1_{[N]}\Vert_{U^d(N^\prime)}/\Vert 1_{[N]}\Vert_{U^d(N^\prime)},$$ and is also independent of the choice of $N^\prime$ as long as $N^\prime/N$ is large enough (in terms of $d$). Taking $N^\prime = O(N)$ we have $\Vert 1_{[N]}\Vert_{U^d(N^\prime)}\asymp 1$, and thus $\Vert f\Vert_{U^d[N]}\asymp \Vert f1_{[N]}\Vert_{U^d(N^\prime)}$. (See \cite[Lemma B.5]{GT10} for more detail on this).

We observe that there is only a contribution to the summand in equation (\ref{Definintion of gowers norms over integers the actual equation}) when $x \in [N]$ and for every $i$ we have $ h_i \in \{-N,-N+1,\dots, N-1,N\} \text{ modulo } N^\prime$. Further, it may be easily seen that $\vert R\vert \asymp N^{d+1}$. Therefore, choosing $N^\prime/N$ sufficiently large, we conclude that 
\begin{equation}
\label{comparing GN over ints with obvious sum}
\Vert f\Vert_{U^d[N]} \asymp \left( \frac{1}{N^{d+1}}\sum\limits_{x,h_1,\dots,h_d\in \mathbb{Z}}\prod\limits_{\boldsymbol{\omega}\in \{0,1\}^d}\mathcal{C}^{\vert \boldsymbol{\omega} \vert} f(x+\mathbf{h}\cdot\boldsymbol{\omega})\right)^{\frac{1}{2^d}}.
\end{equation}
\noindent The relation (\ref{comparing GN over ints with obvious sum}) is implicitly assumed throughout the main text.\\

In order to succinctly state Theorem \ref{Theorem Generalised von Neumann Theorem over reals}, we had to refer to a Gowers norm $U^d(\mathbb{R})$, which has been used in some recent work on linear patterns in subsets of Euclidean space (see \cite[Lemma 4.2]{CoMa17}, \cite[Proposition 3.3]{DVR17}). This Gowers norm is a less well-studied object, as the theory was originally developed over finite groups. Nevertheless it may be perfectly well defined, and even deep aspects of its inverse theory may be deduced from the corresponding theory of the discrete Gowers norm (see \cite{Ta15}). 
\begin{Definition}
\label{Definition of Gowers norms over R}
Let $f:[0,1]\longrightarrow {\mathbb{C}}$ be a bounded measurable function, and let $d$ be a natural number. Then we define the Gowers norm $\Vert f \Vert_{U^{d}(\mathbb{R})}$ to be the unique non-negative real satisfying 

\begin{equation}
\Vert f \Vert_{U^{d}(\mathbb{R})}^{2^{d}} = \int\limits_{(x,\mathbf{h}) \in\mathbb{R}^{d+1}} \prod\limits_{\boldsymbol{\omega}\in \{0,1\}^d}\mathcal{C}^{\vert \boldsymbol{\omega} \vert} f(x+\sum\limits_{i=1}^d h_i\omega_i) \, dx\,dh_1\cdots dh_d
\end{equation} 

\noindent where $\vert \boldsymbol{\omega}\vert = \sum\limits_i \omega_i$, and $\mathcal{C}$ is the complex-conjugation operator.
\end{Definition}

Let $N$ be a positive real, and let $g:[-N,N]\longrightarrow \mathbb{C}$ be a measurable function. Define the function $f:[0,1]\longrightarrow \mathbb{C}$ by $f(x): = g(2Nx - N)$, and then set $$\Vert g\Vert_{U^d(\mathbb{R},N)}: = \Vert f\Vert_{U^d(\mathbb{R})}.$$ Explicitly, a change of variables shows that \begin{equation}
\label{explicit real gowers norms}
\Vert g \Vert_{U^{d}(\mathbb{R},N)}^{2^{d}} \asymp \frac{1}{N^{d+1}}\int\limits_{(x,\mathbf{h}) \in\mathbb{R}^{d+1}} \prod\limits_{\boldsymbol{\omega}\in \{0,1\}^d}\mathcal{C}^{\vert \boldsymbol{\omega} \vert} g(x+\sum\limits_{i=1}^d h_i\omega_i) \, dx\,dh_1\cdots dh_d.
\end{equation}

We require one further fact about Gowers norms. 

\begin{Proposition}[Gowers-Cauchy-Schwarz inequality]
\label{Proposition Gowers Cauchy Schwarz}
Let $d$ be a natural number, and, for each $\bo \in \{0,1\}^d$, let $f_{\bo}:[0,1]\longrightarrow \mathbb{C}$ be a bounded measurable function. Define the Gowers inner-product \[\langle (f_{\bo})_{\bo\in \{0,1\}^d}\rangle :=  \int\limits_{(x,\mathbf{h}) \in\mathbb{R}^{d+1}} \prod\limits_{\bo \in \{0,1\}^d}\mathcal{C}^{\vert \boldsymbol{\omega} \vert} f_{\bo}(x+\sum\limits_{i=1}^d h_i\omega_i) \, dx\,dh_1\cdots dh_d.\] Then \[\vert\langle (f_{\bo})_{\bo\in \{0,1\}^d}\rangle \vert \leqslant \prod\limits_{\bo\in \{0,1\}^d} \Vert f_{\bo}\Vert_{U^d(\mathbb{R})}.\]
\end{Proposition}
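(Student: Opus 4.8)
The Gowers--Cauchy--Schwarz inequality over $\mathbb{R}$ is the exact analogue of the classical Gowers--Cauchy--Schwarz inequality over finite abelian groups (see \cite[Appendix B]{GT10} or \cite{Ta12}), and the plan is to mimic that proof verbatim, replacing sums by integrals and checking that the relevant applications of the (ordinary) Cauchy--Schwarz inequality remain valid in $L^2$. The key structural observation is that the Gowers inner product $\langle (f_{\bo})_{\bo\in\{0,1\}^d}\rangle$ can be ``split'' along the last coordinate direction $h_d$: writing $\bo = (\bo',\omega_d)$ with $\bo'\in\{0,1\}^{d-1}$, one groups the $2^d$ factors into the $2^{d-1}$ factors indexed by $\omega_d=0$ and the $2^{d-1}$ factors indexed by $\omega_d=1$. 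For the factors with $\omega_d=0$ the argument $x+\sum_{i<d}h_i\omega_i$ does not involve $h_d$; for those with $\omega_d=1$ it is the same expression with $x$ replaced by $x+h_d$.

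The first step is therefore to rewrite
\[
\langle (f_{\bo})_{\bo}\rangle = \int_{\mathbf{h}'\in\mathbb{R}^{d-1}}\Big(\int_{x}\Big(\prod_{\bo'} \mathcal{C}^{\vert\bo'\vert} f_{(\bo',0)}(x+\textstyle\sum_{i<d}h_i\omega_i)\Big)\Big(\int_{h_d} \prod_{\bo'}\mathcal{C}^{\vert\bo'\vert+1}f_{(\bo',1)}(x+h_d+\textstyle\sum_{i<d}h_i\omega_i)\,dh_d\Big)dx\Big)d\mathbf{h}',
\]
and then, after a change of variable $x\mapsto x+h_d$ inside the inner $h_d$-integral to decouple the two products, apply the Cauchy--Schwarz inequality in the joint variable $(x,\mathbf{h}')$ (thought of as an $L^2$ inner product of the ``$\omega_d=0$ function'' against the ``$\omega_d=1$ function''). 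This bounds $|\langle (f_{\bo})_{\bo}\rangle|$ by the geometric mean of two Gowers inner products of $2^d$ functions each, in which all the $\omega_d=0$ slots are filled with the $f_{(\bo',0)}$ and duplicated, and similarly for the $\omega_d=1$ slots. Iterating this over the coordinate directions $h_{d-1},\dots,h_1$ (a total of $d$ applications of Cauchy--Schwarz) yields
\[
\vert\langle (f_{\bo})_{\bo\in\{0,1\}^d}\rangle\vert \leqslant \prod_{\bo\in\{0,1\}^d}\langle (f_{\bo})_{\bo''\in\{0,1\}^d}\rangle^{1/2^d},
\]
where the inner product on the right has \emph{every} slot equal to $f_{\bo}$, i.e. equals $\Vert f_{\bo}\Vert_{U^d(\mathbb{R})}^{2^d}$ (once one checks that this quantity is real and non-negative, which is part of the content of Definition \ref{Definition of Gowers norms over R} and follows from the same inductive splitting applied with all $f_{\bo}$ equal). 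Taking $2^d$-th roots gives the claim.

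The one point requiring genuine care --- and the main (mild) obstacle --- is integrability: unlike the finite-group case, the integrals are over all of $\mathbb{R}^{d+1}$, so before applying Cauchy--Schwarz one must confirm that all the functions involved lie in the appropriate $L^2$ spaces and that Fubini is applicable. This is where the hypothesis that each $f_{\bo}$ is supported on $[0,1]$ (equivalently, on a bounded interval after the rescaling used to define $\Vert\cdot\Vert_{U^d(\mathbb{R},N)}$) and bounded is essential: it forces the relevant products of translated copies to be supported in a bounded region of $(x,\mathbf{h})$-space and uniformly bounded, so every integral in sight is absolutely convergent and Tonelli/Fubini applies freely. With that bookkeeping dispatched, the argument is a routine transcription of the classical proof; I would present the splitting-and-Cauchy--Schwarz step in detail for $d$ general (indexing the slots by $\{0,1\}^d$ and tracking which coordinate is being doubled), and remark that the non-negativity and reality of $\Vert f\Vert_{U^d(\mathbb{R})}^{2^d}$ follow by specialising all $f_{\bo}$ to a common $f$.
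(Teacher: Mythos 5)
Your proposal is correct and follows exactly the route the paper takes: the paper's proof simply cites the finite-group argument in Tao--Vu and notes that the modification to $\mathbb{R}$ is trivial, and your write-up is precisely that standard iterated Cauchy--Schwarz argument together with the (correct) observation that boundedness and compact support of the $f_{\bo}$ make all the Fubini and $L^2$ steps legitimate over $\mathbb{R}^{d+1}$.
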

\begin{proof}
See \cite[Chapter 11]{TaVu06} for the proof in the finite group setting. The modification to the setting of the reals is trivial. 
\end{proof}

\section{Lipschitz functions}
\label{Lipschitz functions}

In the body of the paper we made extensive use of properties of Lipschitz functions. 

\begin{Definition}[Lipschitz functions]
We say that a function $F:\mathbb{R}^m \longrightarrow \mathbb{C}$ is Lipschitz, with Lipschitz constant at most $M$, if \[M \geqslant \sup\limits_{\substack{\mathbf{x},\mathbf{y}\in \mathbb{R}^m \\ \mathbf{x}\neq \mathbf{y}}} \frac{ \vert F(\mathbf{x}) - F(\mathbf{y}) \vert}{\Vert \mathbf{x} - \mathbf{y} \Vert_\infty }.\]

We say that a function $G:\mathbb{R}^m/\mathbb{Z}^m \longrightarrow \mathbb{C}$ is Lipschitz, with Lipschitz constant at most $M$, if \[ M \geqslant \sup\limits_{\substack{\mathbf{x},\mathbf{y}\in \mathbb{R}^m/\mathbb{Z}^m \\ \mathbf{x}\neq \mathbf{y}}} \frac{ \vert G(\mathbf{x}) - G(\mathbf{y}) \vert}{\Vert \mathbf{x} - \mathbf{y} \Vert_{\mathbb{R}^m/\mathbb{Z}^m}}.\]
\end{Definition}

We record the three properties of Lipschitz functions that we will require.

\begin{Lemma}
\label{Lipschitz approximation of convex cutoffs}
Let $N$ be a positive real, let $m$ be a natural number, let $K$ be a convex subset of $[-N,N]^m$, and let $\sigma$ be some parameter in the range $0<\sigma<1/2$. Then there exist Lipschitz functions $F_\sigma, G_\sigma:\mathbb{R}^m\longrightarrow [0,1]$ supported on $[-2N,2N]^m$, both with Lipschitz constant at most $O(\frac{1}{\sigma N})$, such that \[1_K = F_\sigma + O(G_\sigma)\] and $\int_{\mathbf{x}} G_\sigma(\mathbf{x}) \, d\mathbf{x} = O(\sigma N^m)$. Furthermore, $F_\sigma(\mathbf{x})\geqslant 1_K(\mathbf{x})$ for all $\mathbf{x}\in \mathbb{R}^m$, and $G$ is supported on $\{ \mathbf{x} \in \mathbb{R}^m: \dist(\mathbf{x},\partial(K)) \leqslant \sigma N\}$.
\end{Lemma}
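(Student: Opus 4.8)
The plan is to construct $F_\sigma$ and $G_\sigma$ by convolving indicator functions of suitable dilates of $K$ with a smooth bump function of width comparable to $\sigma N$. First I would fix a standard mollifier: let $\phi:\mathbb{R}^m\longrightarrow [0,\infty)$ be a nonnegative Lipschitz (or smooth) function supported on $[-1,1]^m$ with $\int \phi = 1$, and set $\phi_\sigma(\mathbf{x}) := (\sigma N)^{-m}\phi(\mathbf{x}/(\sigma N))$, so $\phi_\sigma$ is supported on $[-\sigma N,\sigma N]^m$, has integral $1$, and has Lipschitz constant $O((\sigma N)^{-m-1})$. Write $K^+ := \{\mathbf{x}:\dist(\mathbf{x},K)\leqslant \sigma N\}$ and $K^- := \{\mathbf{x}: \dist(\mathbf{x},\mathbb{R}^m\setminus K)\geqslant \sigma N\}$ for the outer and inner parallel bodies; both are convex since $K$ is convex, and both are contained in $[-2N,2N]^m$ (the outer one because $\sigma<1/2$ and $K\subseteq[-N,N]^m$). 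Then define
\[
F_\sigma := 1_{K^+}\ast \phi_\sigma,\qquad G_\sigma := 1_{K^+\setminus K^-}\ast \phi_\sigma.
\]

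The verification breaks into a few routine steps. First, $F_\sigma$ and $G_\sigma$ take values in $[0,1]$ since $\phi_\sigma$ has integral $1$ and is nonnegative, and they are supported on $[-2N,2N]^m$ because convolving a set in $[-2N+\sigma N,2N-\sigma N]^m$-ish region with a bump of width $\sigma N$ stays inside $[-2N,2N]^m$ (here I use $\sigma<1/2$). Second, the Lipschitz bound: $\|\nabla(g\ast\phi_\sigma)\|_\infty = \|g\ast\nabla\phi_\sigma\|_\infty \leqslant \|g\|_\infty\|\nabla\phi_\sigma\|_{L^1} = O(1/(\sigma N))$ for $g = 1_{K^+}$ or $1_{K^+\setminus K^-}$, using that $\phi$ has $L^1$-gradient norm $O(1)$ after scaling. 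Third, the pointwise comparison: for $\mathbf{x}\in K^-$ we have $\mathbf{x}+\mathbf{y}\in K^+$ for all $\mathbf{y}$ with $\|\mathbf{y}\|_\infty\leqslant\sigma N$ (in fact $\mathbf{x}+\mathbf{y}\in K$ when $\mathbf{x}\in K^-$... more carefully, $\mathbf{x}\in K^-$ means the $\sigma N$-ball around $\mathbf{x}$ lies in $K$, hence certainly in $K^+$), so $F_\sigma(\mathbf{x})=1$; and for $\mathbf{x}\notin K^+$, $\dist(\mathbf{x},K^+)>0$... actually I want: if $\mathbf{x}\notin\{\mathbf{y}:\dist(\mathbf{y},K)\leqslant 2\sigma N\}$ then $F_\sigma(\mathbf{x})=0$. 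So $F_\sigma\geqslant 1_K$ everywhere (since $K\subseteq K^-$ is false in general — rather $K^-\subseteq K\subseteq K^+$, and on $K$ we need $F_\sigma\geqslant 1_K = 1$; this holds because for $\mathbf{x}\in K$, by convexity the bump mass lands predominantly in $K^+$, indeed $\mathbf{x}+\mathbf{y}\in K^+$ for all $\|\mathbf{y}\|_\infty\leqslant\sigma N$ trivially as $\dist(\mathbf{x}+\mathbf{y},K)\leqslant\sigma N$, so $F_\sigma(\mathbf{x})=1$). Thus $F_\sigma\geqslant 1_K$. For the decomposition $1_K = F_\sigma + O(G_\sigma)$, we need $|F_\sigma-1_K|\leqslant C\,G_\sigma$ pointwise: when $\mathbf{x}\in K^-$ both sides where $1_K=1$ and $F_\sigma=1$, difference zero; when $\mathbf{x}\notin K^+$ both are zero (using $F_\sigma$ vanishes outside a $\sigma N$-neighbourhood of $K^+$ — I may need to enlarge $K^+$ to a $2\sigma N$-body, a harmless constant change); in the intermediate shell $K^+\setminus K^-$ (suitably enlarged), $G_\sigma$ is bounded below by a positive constant because the bump, centred in this shell of "thickness" $\Omega(\sigma N)$ on each side, overlaps $K^+\setminus K^-$ in a set of measure $\Omega((\sigma N)^m)$ — here convexity of $K$ is what guarantees the shell genuinely has thickness $\sim\sigma N$ transversally and $G_\sigma\gtrsim 1$ there. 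Finally $\int G_\sigma = \mathrm{vol}(K^+\setminus K^-) = O(\sigma N \cdot N^{m-1}) = O(\sigma N^m)$ by the standard estimate for the volume of a parallel shell of a convex body inside $[-N,N]^m$ (e.g. via the Steiner formula / monotonicity of surface area under inclusion, the surface area of $K$ being $O(N^{m-1})$); and $G_\sigma$ is supported on $\{\dist(\mathbf{x},\partial K)\leqslant 2\sigma N\}$ which after relabelling $\sigma$ by a constant gives the stated support condition.

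The main obstacle I anticipate is the lower bound $G_\sigma(\mathbf{x})\gtrsim 1$ on the transition shell, which is exactly where convexity of $K$ is essential: I need that for every $\mathbf{x}$ with $\dist(\mathbf{x},\partial K)\leqslant\sigma N$, the translate of the support of $\phi_\sigma$ intersects $K^+\setminus K^-$ in a set of measure $\Omega((\sigma N)^m)$. The clean way to see this is to note that $\partial K$, being the boundary of a convex set, locally looks like a graph, so the $\sigma N$-neighbourhood of any boundary point contains a positive-proportion chunk lying strictly between $K^-$ and $K^+$; concretely one can exhibit, for each such $\mathbf{x}$, a fixed-size sub-cube of side $\Omega(\sigma N)$ inside $(K^+\setminus K^-)\cap(\mathbf{x}+[-\sigma N,\sigma N]^m)$ by moving a bounded distance along the inward normal direction at the nearest boundary point. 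This is a standard but slightly fiddly geometric lemma about convex bodies; alternatively, one can sidestep it by a cruder argument, replacing the pointwise bound $|F_\sigma - 1_K|\leqslant C G_\sigma$ with an $L^1$ bound $\int|F_\sigma-1_K|\leqslant C\int G_\sigma$ together with the pointwise support statement, which is all that the applications in the main text (Lemmas \ref{Lemma replacing F cut-off}, \ref{Lemma making G lipschitz}, and the use inside Proposition \ref{Proposition Cauchy}) actually exploit — they only ever integrate $G_\sigma$ against bounded weights. I would first attempt the clean pointwise version using the convex-geometry lemma, and fall back to the $L^1$ version if the constants become unpleasant; either way the result stated is correct, and the only genuinely non-formal ingredient is controlling the geometry of the shell $K^+\setminus K^-$, which convexity handles.
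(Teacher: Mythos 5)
Your construction is correct, but note that the paper does not actually prove this lemma: it is quoted verbatim as \cite[Corollary A.3]{GT10}, so there is no internal proof to compare against. Your mollification of the indicator functions of the outer and inner parallel bodies $K^+$ and $K^-$ is essentially the standard argument behind that reference, and all the pieces you assemble (the scaling of $\Vert \nabla \phi_\sigma\Vert_{L^1}$, the fact that $F_\sigma \equiv 1$ on $K$, the Steiner-type bound $\operatorname{vol}(K^+\setminus K^-) = O(\sigma N^m)$ for a convex body in $[-N,N]^m$, and the harmless relabelling of $\sigma$ by a constant to fix the support of $G_\sigma$) are sound. For the inner shell $K\setminus K^-$ the Steiner formula does not apply directly, but the bound still follows from a dilation about an incentre (or from the degenerate case where the inradius is below $\sigma N$, in which case $\operatorname{vol}(K)$ itself is $O(\sigma N^m)$); this is the one place convexity is genuinely needed.

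The obstacle you anticipate at the end --- a lower bound $G_\sigma \gtrsim 1$ on the transition shell, requiring a convex-geometry lemma --- is not actually needed, and you should not retreat to the $L^1$ fallback (which would not literally prove the stated pointwise decomposition, and the applications in Lemmas \ref{Lemma replacing F cut-off} and \ref{Lemma making G lipschitz} do use the pointwise bound, since the weights $\prod_j f_j$ carry signs). The pointwise domination is immediate from your construction: for $\mathbf{x}\in K$ one has $F_\sigma(\mathbf{x}) = 1 = 1_K(\mathbf{x})$, so the difference vanishes; and for $\mathbf{x}\notin K$ and any $\mathbf{y}$ with $\Vert \mathbf{y}\Vert_\infty < \sigma N$, the point $\mathbf{x}-\mathbf{y}$ satisfies $\dist(\mathbf{x}-\mathbf{y},\mathbb{R}^m\setminus K)\leqslant \Vert \mathbf{y}\Vert_\infty < \sigma N$, hence lies outside $K^-$, so every translate contributing mass to $F_\sigma(\mathbf{x})$ also contributes to $G_\sigma(\mathbf{x})$, giving $0\leqslant F_\sigma(\mathbf{x}) - 1_K(\mathbf{x}) \leqslant G_\sigma(\mathbf{x})$ with constant $1$. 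No transversality or thickness estimate for the shell is required.
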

\noindent This is \cite[Corollary A.3]{GT10}. It was be used in Lemmas \ref{Lemma replacing F cut-off} and \ref{Lemma making G lipschitz} to replace sums with sharp cut-offs by sums with Lipschitz cut-offs. 

\begin{Lemma}
\label{Fourier transforms of Lipschitz functions on tori}
Let $X$ be a positive real, with $X>2$. Let $F:\mathbb{R}^m/\mathbb{Z}^m\longrightarrow \mathbb{C}$ be a Lipschitz function such that $\Vert F\Vert_\infty \leqslant 1$ and the Lipschitz constant of $F$ is at most $M$. Then
\begin{equation}
F(\mathbf{x}) = \sum\limits_{\substack{\mathbf{k} \in \mathbb{Z}^m \\ \Vert \mathbf{k} \Vert_\infty \leqslant X}} c_X(\mathbf{k}) e( \mathbf{k} \cdot \mathbf{x}) + O\left(M \frac{\log X}{X}\right)
\end{equation}
for every $\mathbf{x} \in\mathbb{R}^m/\mathbb{Z}^m$, for some function $c_X(\mathbf{k})$ satisfying $\Vert c_X(\mathbf{k})\Vert_\infty \ll 1$. (The implied constant in the error term above may depend on the underlying dimensions, as always in this paper).
\end{Lemma}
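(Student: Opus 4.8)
The plan is to prove this by convolving $F$ with a multidimensional Fej\'er kernel, which is the standard device for approximating Lipschitz functions on a torus by low-degree trigonometric polynomials. First I would set $n := \lfloor X \rfloor$ (so $2 \leqslant n \leqslant X$) and recall the one-dimensional Fej\'er kernel
\[ F_n(t) := \sum_{\vert j\vert \leqslant n}\Big(1 - \frac{\vert j\vert}{n+1}\Big) e(jt) = \frac{1}{n+1}\Big(\frac{\sin((n+1)\pi t)}{\sin(\pi t)}\Big)^2, \]
which is non-negative, satisfies $\int_{\mathbb{R}/\mathbb{Z}} F_n = 1$, and is a trigonometric polynomial of degree $n \leqslant X$. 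Then I would form the $m$-dimensional kernel $K_X(\mathbf{y}) := \prod_{i=1}^m F_n(y_i)$, again non-negative, of total mass $1$, with Fourier support contained in $\{\mathbf{k} \in \mathbb{Z}^m : \Vert \mathbf{k}\Vert_\infty \leqslant n\}$. Defining $c_X(\mathbf{k}) := \widehat{F}(\mathbf{k})\,\widehat{K_X}(\mathbf{k})$ for $\Vert \mathbf{k}\Vert_\infty \leqslant X$, the finite sum $\sum_{\Vert\mathbf{k}\Vert_\infty \leqslant X} c_X(\mathbf{k}) e(\mathbf{k}\cdot\mathbf{x})$ is precisely the (finite, hence pointwise-valid) Fourier expansion of the convolution $(F * K_X)(\mathbf{x})$. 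The bound on the coefficients is immediate: $\vert c_X(\mathbf{k})\vert \leqslant \vert\widehat{F}(\mathbf{k})\vert\,\vert\widehat{K_X}(\mathbf{k})\vert \leqslant \Vert F\Vert_{L^1(\mathbb{R}^m/\mathbb{Z}^m)}\cdot \widehat{K_X}(\mathbf{0}) \leqslant \Vert F\Vert_\infty \leqslant 1$, using $K_X \geqslant 0$ and $\int K_X = 1$.

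It then remains only to estimate $(F * K_X)(\mathbf{x}) - F(\mathbf{x})$. Using $\int K_X = 1$ together with the Lipschitz hypothesis,
\[ \vert (F*K_X)(\mathbf{x}) - F(\mathbf{x})\vert \leqslant \int_{\mathbb{R}^m/\mathbb{Z}^m} K_X(\mathbf{y})\,\vert F(\mathbf{x} - \mathbf{y}) - F(\mathbf{x})\vert \, d\mathbf{y} \leqslant M \int_{\mathbb{R}^m/\mathbb{Z}^m} K_X(\mathbf{y})\,\Vert \mathbf{y}\Vert_{\mathbb{R}^m/\mathbb{Z}^m}\, d\mathbf{y}. \]
Since $\Vert\mathbf{y}\Vert_{\mathbb{R}^m/\mathbb{Z}^m} = \sup_i \Vert y_i\Vert_{\mathbb{R}/\mathbb{Z}} \leqslant \sum_i \Vert y_i\Vert_{\mathbb{R}/\mathbb{Z}}$ and $K_X$ factorises, this is at most $M\,m\,\big(\int_{\mathbb{R}/\mathbb{Z}} F_n(t)\,\Vert t\Vert_{\mathbb{R}/\mathbb{Z}}\,dt\big)$. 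A direct estimate of the Fej\'er kernel — splitting into the ranges $\Vert t\Vert_{\mathbb{R}/\mathbb{Z}} \leqslant 1/n$, where $F_n(t) \ll n$, and $\Vert t\Vert_{\mathbb{R}/\mathbb{Z}} > 1/n$, where $F_n(t) \ll 1/(n\Vert t\Vert_{\mathbb{R}/\mathbb{Z}}^2)$ — gives $\int_{\mathbb{R}/\mathbb{Z}} F_n(t)\,\Vert t\Vert_{\mathbb{R}/\mathbb{Z}}\,dt \ll \frac{\log n}{n} \ll \frac{\log X}{X}$. Combining the two displays yields $F(\mathbf{x}) = (F*K_X)(\mathbf{x}) + O(M\log X/X)$ with the implied constant depending only on $m$, which is the assertion.

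The argument is entirely routine, and the only mildly delicate point is the logarithmic factor in $\int_{\mathbb{R}/\mathbb{Z}} F_n(t)\,\Vert t\Vert_{\mathbb{R}/\mathbb{Z}}\,dt$: it comes from the slowly decaying $1/(nt^2)$ tail of the Fej\'er kernel, and it is exactly the reason one cannot improve the error beyond $\log X / X$ at this level of generality. This result is essentially \cite[Lemma A.9]{GrTa08} (restricted to the unweighted, bounded setting), and one could alternatively simply quote it from there; I have sketched the self-contained argument for completeness.
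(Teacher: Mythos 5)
Your proof is correct, and it is essentially the same argument the paper relies on: the paper simply quotes this statement as \cite[Lemma A.9]{GrTa08}, but its own sketch proof of the parallel Lemma \ref{Fourier transforms of Lipschitz functions} on $\mathbb{R}^m$ is exactly your Fej\'er-kernel convolution argument (approximate $F$ by $F\ast K_X$, take $c_X(\mathbf{k})=\widehat{F}(\mathbf{k})\widehat{K_X}(\mathbf{k})$, and bound $\Vert F - F\ast K_X\Vert_\infty$ via the Lipschitz hypothesis and the $\log X/X$ moment of the kernel). Nothing further is needed.
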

\noindent This is \cite[Lemma A.9]{GrTa08}, and was used in Lemma \ref{Lemma upper bound involving integral} as a way of bounding the number of solutions to a certain inequality.

\begin{Lemma}
\label{Fourier transforms of Lipschitz functions}
Let $X,N,C$ be positive reals, with $X>2$ and $N>1$. Let $F:\mathbb{R}^m\longrightarrow \mathbb{C}$ be a Lipschitz function, supported on $[-CN,CN]^m$, such that $\Vert F\Vert_\infty \leqslant 1$ and the Lipschitz constant of $F$ is at most $M$. Then
\begin{equation}
F(\mathbf{x}) = \int\limits_{ \substack{\boldsymbol{\xi} \in \mathbb{R}^m \\ \Vert \boldsymbol{\xi} \Vert_\infty \leqslant X} } c_X(\boldsymbol{\xi})e(\frac{\boldsymbol{\xi}\cdot \mathbf{x}}{N}) \, d\boldsymbol{\xi} + O_{C}\left(MN\frac{\log X}{X}\right) 
\end{equation}
\noindent for every $\mathbf{x} \in\mathbb{R}^m$, for some function $c_X(\boldsymbol{\xi})$ satisfying $\Vert c_X(\boldsymbol{\xi})\Vert_\infty \ll_{C} 1$. 
\end{Lemma}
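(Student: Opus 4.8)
The plan is to prove this in the same spirit as Lemma \ref{Fourier transforms of Lipschitz functions on tori} (i.e. \cite[Lemma A.9]{GrTa08}), but replacing the Fej\'er kernel on the torus by a smooth band-limited approximate identity on $\mathbb{R}^m$; no reduction to the torus statement is actually needed, and a direct argument is cleaner.

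First I would fix, once and for all, a Schwartz function $K_0:\mathbb{R}^m\longrightarrow\mathbb{C}$ whose Fourier transform $\widehat{K_0}$ is a smooth bump compactly supported in $[-1,1]^m$, with $\widehat{K_0}(\mathbf{0})=1$ and $0\leqslant\widehat{K_0}\leqslant 1$; such a $K_0$ exists because the inverse Fourier transform of a $C_c^\infty$ function is Schwartz, and it satisfies $\int_{\mathbb{R}^m}K_0=\widehat{K_0}(\mathbf 0)=1$ and $\int_{\mathbb{R}^m}|K_0(\mathbf z)|\Vert\mathbf z\Vert_\infty\,d\mathbf z=O(1)$. I would then rescale to bandwidth $X/N$, setting $K(\mathbf y):=(X/N)^mK_0((X/N)\mathbf y)$, so that $\widehat K(\boldsymbol\eta)=\widehat{K_0}((N/X)\boldsymbol\eta)$ is supported on $[-X/N,X/N]^m$, with $\widehat K(\mathbf 0)=1$, $\Vert\widehat K\Vert_\infty\leqslant 1$, $\int K=1$, and (by a change of variables) $\int_{\mathbb{R}^m}|K(\mathbf y)|\Vert\mathbf y\Vert_\infty\,d\mathbf y=O(N/X)$.

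Next I would estimate $F$ against $F*K$ in two ways. On the analytic side, since $F$ has Lipschitz constant at most $M$ and $\int K=1$,
\[ \big|F(\mathbf x)-(F*K)(\mathbf x)\big|=\Big|\int_{\mathbb{R}^m}K(\mathbf y)\big(F(\mathbf x)-F(\mathbf x-\mathbf y)\big)\,d\mathbf y\Big|\leqslant M\int_{\mathbb{R}^m}|K(\mathbf y)|\Vert\mathbf y\Vert_\infty\,d\mathbf y=O\!\Big(\frac{MN}{X}\Big),\]
which is $O_C(MN\log X/X)$ since $X>2$. On the Fourier side, $(F*K)(\mathbf x)=\int_{\mathbb{R}^m}\widehat F(\boldsymbol\eta)\widehat K(\boldsymbol\eta)e(\boldsymbol\eta\cdot\mathbf x)\,d\boldsymbol\eta$, and as $\widehat K$ is supported in $[-X/N,X/N]^m$ the substitution $\boldsymbol\eta=\boldsymbol\xi/N$ rewrites this as $\int_{\Vert\boldsymbol\xi\Vert_\infty\leqslant X}c_X(\boldsymbol\xi)e(\boldsymbol\xi\cdot\mathbf x/N)\,d\boldsymbol\xi$ with $c_X(\boldsymbol\xi):=N^{-m}\widehat F(\boldsymbol\xi/N)\widehat K(\boldsymbol\xi/N)$. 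Finally, since $F$ is supported on $[-CN,CN]^m$ with $\Vert F\Vert_\infty\leqslant 1$, one has $|\widehat F(\boldsymbol\eta)|\leqslant\Vert F\Vert_1\leqslant(2CN)^m$, whence $|c_X(\boldsymbol\xi)|\leqslant N^{-m}(2CN)^m=O_C(1)$, i.e. $\Vert c_X\Vert_\infty\ll_C 1$. Combining the two displays gives the claimed identity.

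The only point that genuinely needs care is the choice of kernel: one cannot simply take the $m$-fold tensor product of one-dimensional Fej\'er kernels here, as is done on the torus, because on $\mathbb{R}$ the first absolute moment $\int_{\mathbb{R}}\big(\tfrac{\sin\pi t}{\pi t}\big)^2|t|\,dt$ diverges logarithmically. Using a genuinely smooth frequency cut-off $\widehat{K_0}$ (hence a Schwartz spatial kernel $K_0$) removes this difficulty; it in fact yields the slightly stronger error term $O(MN/X)$, and the stated $\log X$ is only there to match the torus statement.
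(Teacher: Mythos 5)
Your proof is correct, and it takes a genuinely different route from the paper's. The paper first rescales so that $F$ is supported on $[-C,C]^m$ with Lipschitz constant at most $MN$, and then convolves with the $m$-fold tensor product of Fej\'er-type kernels, $K_X(\mathbf{x}) = \prod_{i=1}^m \frac{1}{X}\left(\frac{\sin(\pi X x_i)}{\pi x_i}\right)^2$, whose Fourier transform is the product of triangle functions supported in $[-X,X]^m$. Because, as you observe, the global first moment of this kernel diverges, the paper bounds $\Vert F - F\ast K_X\Vert_\infty$ by restricting the moment integral to $\Vert \mathbf{z}\Vert_\infty \leqslant 2C$, where the Lipschitz bound on $F$ is the relevant one, and a dyadic decomposition of that truncated integral is precisely what produces the $\frac{\log X}{X}$ factor. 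Your choice of a Schwartz kernel whose Fourier transform is a smooth compactly supported bump has a finite first absolute moment, so the truncation and the dyadic decomposition disappear and you obtain the cleaner error $O(MN/X)$; the price is that $\widehat{K}$ is no longer explicit, which is harmless here since only $\Vert c_X\Vert_\infty \ll_C 1$ is needed. One small correction to your closing remark: it is not that the Fej\'er kernel \emph{cannot} be used --- the paper's own proof uses exactly that kernel --- but rather that its divergent first moment forces the truncation to the support region, and that truncation is what costs the $\log X$. Both arguments establish the lemma as stated.
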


Lemma \ref{Fourier transforms of Lipschitz functions} is very similar to Lemma \ref{Fourier transforms of Lipschitz functions on tori}, and may be easily proved by adapting that standard harmonic analysis argument found in \cite[Lemma A.9]{GrTa08} from  $\mathbb{R}^m/\mathbb{Z}^m$ to $\mathbb{R}^m$. For completeness, we sketch the proof. 

\begin{proof}[Sketch proof]
By rescaling the variable $\mathbf{x}$ by a factor of $N$, we reduce to the case where $F$ is supported on $[-C,C]^m$ and has Lipschitz constant at most $MN$. 

Let \[K_X(\mathbf{x}) : = \prod\limits_{i=1}^m \frac{1}{X} \left( \frac{\sin (\pi X x_i)}{\pi x_i} \right)^2.\] Then \[ \widehat{ K_X}(\boldsymbol{\xi}) = \prod\limits_{i=1}^m \max(1- \frac{\vert \xi_i\vert}{X},0).\] We have \[(F \ast K_X)(\mathbf{x}) = \int\limits_{ \substack{\boldsymbol{\xi} \in \mathbb{R}^m \\ \Vert \boldsymbol{\xi} \Vert_\infty \leqslant X} } \widehat{F}(\boldsymbol{\xi}) \widehat{ K_X}(\boldsymbol{\xi}) e(\boldsymbol{\xi} \cdot \mathbf{x}) \, d\mathbf{x},\] and, since $\vert \widehat{F}(\boldsymbol{\xi})\vert \leqslant \Vert F\Vert_1 \ll_C 1$, letting $c_X(\boldsymbol{\xi}) : = \widehat{F}(\boldsymbol{\xi}) \widehat{ K_X}(\boldsymbol{\xi})$ gives a main term of the desired form. 

It remains to show that \[\Vert F - F\ast K_X\Vert_\infty \ll_{C} MN \frac{\log X}{X}.\] By writing \[ \vert F(\mathbf{x}) - (F\ast K_X)(\mathbf{x})\vert = \Big\vert \int\limits_{\mathbf{y} \in \mathbb{R}^m} (F(\mathbf{x}) - F(\mathbf{y})) K_X(\mathbf{x} - \mathbf{y}) \, d\mathbf{y} \Big\vert,\] one sees that it suffices to show that \[ \int\limits_{\Vert \mathbf{z}\Vert_\infty \leqslant 2C} \Vert \mathbf{z} \Vert_\infty K_X(\mathbf{z}) \, d\mathbf{z} \ll_{C} \frac{\log X}{X}.\] But this bound follows immediately from a dyadic decomposition. 
\end{proof}

We used Lemma \ref{Fourier transforms of Lipschitz functions} extensively in the Generalised von Neumann Theorem argument in Section \ref{section General proof of the real variable von Neumann Theorem}.\\

\section{Rank matrix and normal form: proofs}
\label{section Rank matrix and normal form: Proofs}

In this appendix we prove the two quantitative statements from earlier in the paper, namely Propositions \ref{rank matrix} and \ref{normal form algorithm}.

\begin{Proposition}
\label{sec.Analysis algebra quant}
Let $n$ be a natural number, and let $S = \{f_1,\dots,f_k\}$ be a finite set of continuous functions $f_1,\dots,f_k:\mathbb{R}^n\longrightarrow \mathbb{R}$. Let \[V(S) = \{\mathbf{x} \in \mathbb{R}^n: f_i(\mathbf{x}) = 0 \text{ for all } i\leqslant k\}.\] Suppose that $\mathbf{x}\in \mathbb{R}^n$ is a point with $\Vert \mathbf{x}\Vert_\infty \leqslant C$ and with $\operatorname{dist}(\mathbf{x},V(S))\geqslant c$, for some absolute positive constants $c$ and $C$. Then, there is some $f_j$ such that $\vert f_j(\mathbf{x})\vert = \Omega_{c,C,S}(1)$.
\end{Proposition}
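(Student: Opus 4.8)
The statement to prove is a quantitative ``distance implies non-vanishing'' principle: if $\mathbf{x}$ lies in a bounded region and is bounded away from the common zero set $V(S)$ of finitely many continuous functions, then one of those functions is bounded away from zero at $\mathbf{x}$, with an implied constant depending on $c$, $C$, and $S$.

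\begin{proof}[Proof proposal]
The plan is to argue by compactness. Consider the function $g:\mathbb{R}^n\longrightarrow \mathbb{R}_{\geqslant 0}$ defined by $g(\mathbf{y}):=\max_{1\leqslant j\leqslant k}\vert f_j(\mathbf{y})\vert$. Since each $f_j$ is continuous, $g$ is continuous, and by construction $g(\mathbf{y}) = 0$ precisely when $\mathbf{y}\in V(S)$. Let $K$ denote the compact set \[ K:=\{\mathbf{y}\in\mathbb{R}^n: \Vert\mathbf{y}\Vert_\infty\leqslant C,\ \operatorname{dist}(\mathbf{y},V(S))\geqslant c\}. \] The point $\mathbf{x}$ from the statement lies in $K$, so in particular $K$ is non-empty. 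Moreover every point of $K$ lies outside $V(S)$, hence $g$ is strictly positive on $K$.

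Now $g$ is a continuous positive function on the compact set $K$, so it attains a positive minimum there: there exists $\delta = \delta(c,C,S) > 0$ with $g(\mathbf{y})\geqslant \delta$ for all $\mathbf{y}\in K$. Applying this to $\mathbf{y} = \mathbf{x}$ gives $\max_j\vert f_j(\mathbf{x})\vert\geqslant\delta$, so some index $j$ satisfies $\vert f_j(\mathbf{x})\vert\geqslant\delta = \Omega_{c,C,S}(1)$, which is the desired conclusion.

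The only point requiring a word of care is the compactness of $K$: it is closed (as the intersection of the closed ball $\{\Vert\mathbf{y}\Vert_\infty\leqslant C\}$ with the closed set $\{\operatorname{dist}(\cdot,V(S))\geqslant c\}$, the latter being closed because $\mathbf{y}\mapsto\operatorname{dist}(\mathbf{y},V(S))$ is continuous) and bounded, hence compact by Heine--Borel. There is no genuine obstacle here; the statement is essentially a packaging of the extreme value theorem, and the implied constant is exactly $\delta^{-1}$ where $\delta$ is the minimum of $g$ on $K$.
\end{proof}
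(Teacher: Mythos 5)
Your proof is correct and follows essentially the same route as the paper: both arguments apply the extreme value theorem to a continuous function built from the $f_j$ on the compact set $\{\Vert\mathbf{y}\Vert_\infty\leqslant C,\ \operatorname{dist}(\mathbf{y},V(S))\geqslant c\}$ (the paper bounds $\min_j 1/\vert f_j\vert$ from above, you bound $\max_j\vert f_j\vert$ from below, which is the same thing). Your formulation via $\max_j\vert f_j\vert$ is marginally cleaner since it avoids the reciprocals, but there is no substantive difference.
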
 

\begin{proof}
This is nothing more than the fact that every continuous function on a compact set is bounded, applied to the continuous function $\min (1/\vert f_1\vert,\dots,1/\vert f_k\vert )$ and the compact set $\{\mathbf{x} \in \mathbb{R}^n: \Vert \mathbf{x}\Vert_\infty \leqslant C, \,\operatorname{dist}(\mathbf{x},V(S))\geqslant c\}$. 
\end{proof}

From Proposition \ref{sec.Analysis algebra quant} it is easy to deduce the existence of rank matrices, namely Proposition \ref{rank matrix}.

\begin{proof}[\textbf{Proof of \emph{Proposition \ref{rank matrix}}}]
Let $k$ be equal to $\left(\begin{smallmatrix} d\\ m\end{smallmatrix}\right)$, and identify $\mathbb{R}^{md}$ with the space of $m$-by-$d$ real matrices. Then let $f_1,\dots,f_k$ be the $k$ polynomials on $\mathbb{R}^{md}$ that are given by the $k$ determinants of $m$-by-$m$ submatrices of $L$. One then sees that $V_{\rank}(m,d)$ is exactly the set of common zeros of the functions $f_i$. This is since row rank equals column rank, and linear independence of columns in a square matrix can be detected by the determinant. 

Since we assume that $\Vert L\Vert_\infty\leqslant C$ and $\operatorname{dist}(L,V_{\rank}(m,d))\geqslant c$ we can fruitfully apply Proposition \ref{sec.Analysis algebra quant} to deduce that there is some $j$ for which $\vert f_j(L)\vert= \Omega_{c,C}(1)$. The matrix $M$ whose determinant corresponds to the polynomial $f_j$ is exactly the claimed rank matrix. \\

This settles the first part of Proposition \ref{rank matrix}. The second part then follows immediate by the construction of $M^{-1}$ as the adjugate matrix of $M$ divided by $\det M$. \\

The third part, namely the statement about linear combinations of rows, follows quickly from the others. Indeed, without loss of generality, assume that the rank matrix $M$ is realised by columns $1$ through $m$. Then, the fact that the rows of $L$ are linearly independent means that there are unique real numbers $a_i$ such that $\sum\limits_{i=1}^m a_i \lambda_{ij} = v_{j}$ for all $j$ in the range $1\leqslant j\leqslant d$. (Recall that $(\lambda_{ij})_{i\leqslant m,j\leqslant d}$ denotes the coefficients of $L$). Restricting to $j$ in the range $1\leqslant j \leqslant m$, we observe that the $a_i$ are forced to satisfy $$\left(\begin{matrix} 
a_1 \\ 
\vdots \\
a_m
\end{matrix}\right)
 = (M^{T})^{-1}\left(\begin{matrix} 
v_1 \\ 
\vdots \\
v_m
\end{matrix}\right).$$ Since $\Vert (M^{-1})^T\Vert_{\infty} = \Vert M^{-1}\Vert_{\infty} = O_{c,C}(1)$, we conclude that $a_i = O_{c,C,C_1}(1)$ for all $i$. \\

The final part of the proposition is to show that if $\dist(L,V_{\rank}^{\unif}(m,d))\geqslant c$ then, for each $j$, there exists a rank matrix of $L$ that doesn't include the $j^{th}$ column. But this statement follows immediately from the above, after having deleted the $j^{th}$ column.  
\end{proof}

We now prove Proposition \ref{normal form algorithm} on the existence of quantitative normal form parametrisations. We remind the reader that, in the proof, the implied constants may depend on the dimensions of the underlying spaces, namely $m$ and $n$. For the definition of the variety $V_{\mathcal{P}_i}$, which consists of all systems of linear forms for which the partition $\mathcal{P}_i$ is not `suitable', the reader may consult Definition \ref{Definition degeneracy varieties}. The reader may also find the example that follows the proof to be informative. 

\begin{proof}[\textbf{Proof of \emph{Proposition \ref{normal form algorithm}}}]
Fix $i$, and let $\mathcal{P}_i$ be a partition of $[m]\setminus \{i\}$ such that $\operatorname{dist}(\Psi, V_{\mathcal{P}_i})\geqslant c_1$ (such a $\mathcal{P}_i$ exists by the definition of $c_1$-Cauchy-Schwarz complexity, i.e. by Definition \ref{Definition Cauchy Schwarz complexity}). The partition $\mathcal{P}_i$ has $s_i+1$ parts, for some $s_i$ at most $s$. 

It is clear from Definition \ref{Definition Cauchy Schwarz complexity} that we may, without loss of generality, further subdivide the partition and assume that the partition $\mathcal{P}_i$ has exactly $s+1$ parts. Call the parts $\mathcal{C}_1$ through $\mathcal{C}_{s+1}$. 

Following Section 4 of \cite{GT10}, for each $k\in [s+1]$ there exists a vector $\mathbf{f_k}\in \mathbb{R}^n$ that witnesses the fact that $ \operatorname{dist}(\Psi, V_{\mathcal{P}_i})>0$, i.e. for which $\psi_i(\mathbf{f_k}) = 1$ but $\psi_j(\mathbf{f_k})=0$ for all $j\in \mathcal{C}_k$. Such a vector can be found using Gaussian elimination, say. Consider the extension $$\Psi^\prime(\mathbf{u}, w_1,\dots,w_{s+1}): = \Psi(\mathbf{u} + w_1\mathbf{f_1}+ \cdots + w_{s+1}\mathbf{f_{s+1}}).$$ Then, if $\Psi^\prime = (\psi_1^\prime, \dots, \psi_m^\prime)$, the form $\psi^\prime_i(\mathbf{u},w_1,\dots,w_{s+1})$ is the only one that uses all of the $w_k$ variables. Furthermore, $\psi_i^\prime (\mathbf{0},\mathbf{w})= w_1 + \cdots + w_{s+1}$. Also, $n^\prime = n+ s+1$, which is at most $n+m-1$. So Proposition \ref{normal form algorithm} is proved if for each $k$ we can find such a vector $\mathbf{f_k}$ that additionally satisfies $\Vert\mathbf{f_k}\Vert_\infty = O_{c_1,C_1}(1)$. 

Consider a fixed $k$, and let $\Gamma$ be the set of possible implementations of Gaussian elimination on the set of forms $\psi_i \cup \{\psi_j: j\in \mathcal{C}_k\}$ to find a solution vector $\mathbf{f_k}$. If in the course of implementing these algorithms we are given a free choice for a co-ordinate of $\mathbf{f_k}$, we set it to be equal to zero. Note that $\vert \Gamma\vert = O(1)$.

Now, for each $\gamma\in \Gamma$, let the rational functions $$\frac{p_{\gamma,1}(\Psi)}{q_{\gamma,1}(\Psi)},\dots,\frac{p_{\gamma,n}(\Psi)}{q_{\gamma,n}(\Psi)}$$ be the $n$ rational functions defining the claimed coefficients of $\mathbf{f_k}$. One may assume without loss of generality that, for all $j$, we have $p_{\gamma,j},q_{\gamma,j}\in \mathbb{Z}[X_1,\dots,X_n]$ with coefficients of size $O(1)$. Now let $$Q_\gamma := \prod\limits_{j\leqslant n}q_{\gamma,j}.$$ We claim that $V(I)\subseteq V_{\mathcal{P}_i}$, where $I$ is the ideal generated by the set of polynomials $\{Q_\gamma:\gamma\in \Gamma\}$ and $V(I)$ is the affine algebraic variety generated by $I$. Indeed, if $Q_{\gamma}(\Psi)=0$ for all $\gamma\in \Gamma$ then there is no Gaussian elimination implementation that finds a solution $\mathbf{f_k}$, and this in turn implies that $\mathcal{P}_i$ is not suitable for $\Psi$ and hence that $\Psi \in V_{\mathcal{P}_i}$.

 Since $V(I)\subseteq V_{\mathcal{P}_i}$, the assumptions of Proposition \ref{normal form algorithm} imply that $\operatorname{dist}(\Psi,V(I))\geqslant c_1$. Applying Proposition \ref{sec.Analysis algebra quant} to the polynomials $\{Q_\gamma:\gamma \in\Gamma\}$, we conclude that there is some $\gamma\in\Gamma$ such that $\vert Q_\gamma(\Psi)\vert = \Omega_{c_1,C_1}(1)$. In particular, we conclude that the solution vector $\mathbf{f_k}$ obtained by the implementation $\gamma$ has coefficients that are $O_{c_1,C_1}(1)$. This concludes the proof of Proposition \ref{normal form algorithm}.
\end{proof}
 
Let us illustrate the above proof with an example which we hope will be instructive. Consider $n=3$, $m=2$, $i=1$, and denote $$\Psi = \left(\begin{matrix}
a_{11} & a_{12} & a_{13}\\
a_{21} & a_{22} & a_{23}
\end{matrix}\right).$$ Then the partition $\mathcal{P}_i$ consists of the singleton $\{2\}$, and suppose one wished to construct a suitable $\mathbf{f_1}$ simply by applying Gaussian elimination. Implementing the algorithm a certain way we have $$\mathbf{f_1} = \left(\begin{matrix}
a_{22}/(a_{11}a_{22} - a_{12}a_{21})\\
-a_{21}/(a_{11}a_{22} - a_{12}a_{21})\\
 0
 \end{matrix}\right)
 $$ as a solution, in the case where $a_{11}a_{22} - a_{12}a_{21}$ is non-zero. Of course if $a_{11}a_{23}-a_{13}a_{21}$ is non-zero too, we have another solution 
 $$\mathbf{f_1} = \left(\begin{matrix}
 a_{23}/(a_{11}a_{23} - a_{13}a_{21})\\
   0\\
   -a_{21}/(a_{11}a_{23} - a_{13}a_{21})
  \end{matrix}\right).
  $$ So, if one applied Gaussian elimination idly, one might end up with either of these two solutions. Unfortunately it could be the case that $\operatorname{dist}(\Psi,V_{\mathcal{P}_i})\geqslant c_1$ whilst one of these determinants, $a_{11}a_{22} - a_{12}a_{21}$ say, was non-zero yet $o(1)$ (as the unseen variable $N$, on which $\Psi$ will ultimately depend, tends to infinity). In this instance, applying the first implementation of the algorithm would not give a desirable solution vector $\mathbf{f_1}$. For this reason we had to apply somewhat indirect arguments in order to find the appropriate vector $\mathbf{f_1}$.\\

It is worth including a brief discussion on why these quantitative subtleties do not arise in the setting of \cite{GT10}. Indeed, assume that $\Psi$ has rational coefficients of naive height at most $C_1$ and that $\Psi \notin V_{\mathcal{P}_i}$. Since there are only $O_{C_1}(1)$ many possible choices of $\Psi$ we immediately conclude that $\dist(\Psi,V_{\mathcal{P}_i}) \gg_{C_1}1$, without needing to assume this as an extra hypothesis. Then \emph{any} implementation of Gaussian elimination succeeds in finding a suitably bounded $\mathbf{f_k}$, since one is only ever working with rationals of bounded height.

\section{Additional linear algebra}
\label{section additional linear algebra}
In this appendix, we collect together the assortment of standard linear algebra lemmas that we used at various points throughout the paper. We also give the linear algebra argument that we used to construct the matrix $P$ during the proof of Lemma \ref{Lemma generating a purely irrational map}. \\

This first lemma demonstrates the intuitive fact, that if $L:\mathbb{R}^d \longrightarrow \mathbb{R}^m$ is a linear map then $L: (\ker L)^\perp\longrightarrow \mathbb{R}^m$ has bounded inverse. 
\begin{Lemma}
\label{nonsingularity of map on orthogonal complement of Kernel}
Let $m,d$ be natural numbers, with $d\geqslant m+1$, and let $c,C,l$ be positive constants. Let $L:\mathbb{R}^d\longrightarrow \mathbb{R}^m$ be a surjective linear map, and suppose $\Vert L\Vert_\infty\leqslant C$ and $\operatorname{dist}(L,V_{\rank}(m,d))\geqslant c$. Let $K$ denote $\ker L$. Let $R$ be a convex set contained in $[-l,l]^m$. Then, if $\mathbf{v}\in K^\perp$, $L\mathbf{v}\in R$ only when $\mathbf{v}\in R^\prime$, where $R^\prime$ is some convex region that satisfies $R^\prime\subseteq [-O_{c,C}(l),O_{c,C}(l)]^d$.
\end{Lemma}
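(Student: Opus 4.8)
The plan is to prove this by reducing to the invertibility of a suitable rank matrix, exactly as in the proof of Lemma \ref{Lemma crude bound on number of solutions}. First I would invoke Proposition \ref{rank matrix} to obtain a rank matrix $M$ of $L$, i.e. an $m$-by-$m$ submatrix of $L$, consisting (after reordering columns, which we may do as it only affects the implied constants) of the first $m$ columns of $L$, with $\vert \det M\vert = \Omega_{c,C}(1)$ and $\Vert M^{-1}\Vert_\infty = O_{c,C}(1)$. Write $L = (M \mid L_2)$, where $L_2$ is the $m$-by-$(d-m)$ matrix of the remaining columns, and note $\Vert M^{-1} L_2\Vert_\infty = O_{c,C}(1)$.

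The key structural point is that $K^\perp = (\ker L)^\perp$ is an $m$-dimensional subspace of $\mathbb{R}^d$ on which $L$ restricts to a bijection onto $\mathbb{R}^m$, and one needs quantitative control on the inverse of this restriction. I would produce an explicit bounded parametrisation of $K^\perp$: writing $\mathbf{v} = (\mathbf{v}', \mathbf{v}'')^T$ with $\mathbf{v}' \in \mathbb{R}^m$ and $\mathbf{v}'' \in \mathbb{R}^{d-m}$, the condition $L\mathbf{v} = \mathbf{y}$ forces $\mathbf{v}' = M^{-1}\mathbf{y} - M^{-1}L_2 \mathbf{v}''$. So every $\mathbf{v}$ with $L\mathbf{v} = \mathbf{y}$ lies on an affine subspace $\{\mathbf{v}'' \mapsto (M^{-1}\mathbf{y} - M^{-1}L_2\mathbf{v}'', \mathbf{v}'')\}$, and the intersection of this affine subspace with $K^\perp$ is a single point (since $\ker L$ and $K^\perp$ are complementary). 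The remaining task is to show that this single point has $\Vert \cdot \Vert_\infty = O_{c,C}(\Vert \mathbf{y}\Vert_\infty)$. For this I would use that $\ker L$ has a basis consisting of the columns of $\binom{-M^{-1}L_2}{I_{d-m}}$, which are vectors of norm $O_{c,C}(1)$; the Gram matrix of these basis vectors therefore has bounded entries and determinant $\Omega_{c,C}(1)$ (it is lower-bounded by the contribution of the identity block), hence bounded inverse. Projecting the point $(M^{-1}\mathbf{y}, \mathbf{0})$ onto $K^\perp$ along $\ker L$ then changes its coordinates by a bounded linear combination of these bounded basis vectors, giving the claimed bound on $\mathbf{v}$ with an implied constant $O_{c,C}(1)$.

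Having established that $L\mathbf{v} \in R \subseteq [-l,l]^m$ for $\mathbf{v} \in K^\perp$ implies $\Vert\mathbf{v}\Vert_\infty = O_{c,C}(l)$, the set $R' := \{\mathbf{v} \in K^\perp : L\mathbf{v} \in R\}$ (viewed inside $\mathbb{R}^d$) is the preimage of the convex set $R$ under the linear map $L|_{K^\perp}$, hence convex, and it is contained in $[-O_{c,C}(l), O_{c,C}(l)]^d$ by the bound just obtained. This is exactly the conclusion. I expect the main obstacle to be the bookkeeping around the projection onto $K^\perp$ along $\ker L$ — one must be slightly careful that the relevant change-of-basis / Gram matrices are genuinely bounded with bounded inverse, using only $\Vert M^{-1}\Vert_\infty = O_{c,C}(1)$ and $\Vert L\Vert_\infty \leqslant C$; but this is a routine consequence of Proposition \ref{rank matrix} together with the observation that an $(d-m)$-by-$(d-m)$ Gram matrix whose diagonal dominates (because of the embedded identity block $I_{d-m}$) is well-conditioned. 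Everything else is straightforward linear algebra.
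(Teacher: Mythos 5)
Your proposal is correct, but it takes a genuinely different route from the paper's. The paper argues through the rows of $L$: it extends the row vectors of $L$ (which span $K^\perp$) by an orthonormal basis of $K$ to a basis of $\mathbb{R}^d$, uses part (3) of Proposition \ref{rank matrix} to write each standard basis vector as a bounded combination of these, and then observes that for $\mathbf{v}\in K^\perp$ the augmented $d$-by-$d$ matrix $L'$ (rows of $L$ plus the orthonormal kernel vectors) sends $\mathbf{v}$ to a vector of norm $O(l)$, since the extra coordinates vanish; so $\mathbf{v}=A(L'\mathbf{v})$ is bounded. You instead argue through the columns: the rank matrix gives the bounded particular solution $(M^{-1}\mathbf{y},\mathbf{0})$, the columns of $\binom{-M^{-1}L_2}{I_{d-m}}$ give a bounded basis of $\ker L$, and you bound the unique point of $K^\perp$ on the solution affine subspace via the projection along $\ker L$, controlled through the Gram matrix $I+(M^{-1}L_2)^T(M^{-1}L_2)$, whose entries are $O_{c,C}(1)$ and whose determinant is at least $1$ (by Cauchy--Binet the identity-block minor already contributes $1$, equivalently all eigenvalues are at least $1$), hence bounded inverse. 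Both routes rest on Proposition \ref{rank matrix} and are of comparable length; the paper's is marginally slicker in that inverting the single augmented matrix replaces your Gram-matrix computation. Two small remarks on your version: the appeal to diagonal dominance in your closing paragraph is not literally correct ($I+A^TA$ need not be diagonally dominant when $d-m$ is large), but your primary justification via the determinant lower bound is sound, so nothing is lost; and in fact the Gram-matrix step can be dispensed with altogether, since projecting onto $K^\perp$ along $\ker L$ is the orthogonal projection and hence an $\ell^2$-contraction, giving $\Vert\mathbf{v}\Vert_\infty\leqslant\Vert\mathbf{v}\Vert_2\leqslant\Vert(M^{-1}\mathbf{y},\mathbf{0})\Vert_2=O_{c,C}(l)$ directly. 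Your description of $R'=(L^{-1}R)\cap K^\perp$ as convex and contained in a bounded box matches the paper's conclusion exactly.
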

\begin{proof}
We choose to prove this statement using the concept of the 'rank matrix' introduced earlier. Writing $L$ as a $m$-by-$d$ matrix with respect to the standard bases, let $\boldsymbol{\lambda_i} \in \mathbb{R}^d$ denote the column vector such that $\boldsymbol{\lambda_i}^T$ is the $i^{th}$ row of $L$. Since $\operatorname{dist}(L,V_{\rank}(m,d))\geqslant c$, the vectors $\boldsymbol{\lambda_i}$ are linearly independent. Moreover, we may extend the set $\{\boldsymbol{\lambda_i}:i\leqslant m\}$ by orthogonal vectors of unit length to form a basis $\{\boldsymbol{\lambda_i}:i\leqslant d\}$ for $\mathbb{R}^d$. 

We claim that for all $k\in [d]$ we have $$\sum\limits_{i=1}^{d}a_{ki}\boldsymbol{\lambda_i} = \mathbf{e_k},$$ for some coefficients $a_{ki}$ satisfying $\vert a_{ki}\vert = O_{c,C}(1)$, where $\mathbf{e_k}\in\mathbb{R}^d$ is the $k^{th}$ standard basis vector. Indeed, fix $k$, and note that $\mathbf{e_k} = \mathbf{x_k} + \mathbf{y_k}$, where $\mathbf{x_k} \in \operatorname{span}(\boldsymbol{\lambda_i}:i\leqslant m)$ and $\mathbf{y_k}\in \operatorname{span}(\boldsymbol{\lambda_i}:m+1\leqslant i\leqslant d)$. The vectors $\mathbf{x_k}$ and $\mathbf{y_k}$ are orthogonal by construction, so in particular $\Vert \mathbf{x_k}\Vert_2^2+ \Vert \mathbf{y_k}\Vert_2^2 = 1$, and hence $\Vert \mathbf{x_k}\Vert_{\infty}, \Vert \mathbf{y_k}\Vert_{\infty}\ll 1$. By the third part of Proposition \ref{rank matrix} applied to $\mathbf{x_k}$ we get $\vert a_{ki}\vert = O_{c,C}(1)$ when $i\leqslant m$, and the orthonormality of $\{\boldsymbol{\lambda_i}:m+1\leqslant i\leqslant d\}$ implies that $\vert a_{ki}\vert = O(1)$ when $i$ is in the range $m+1\leqslant i\leqslant d$. 

Now notice that $\operatorname{span}(\boldsymbol{\lambda_i}:m+1\leqslant i\leqslant d)$ is exactly equal to $K$. Let $\mathbf{v}\in K^\perp$, and suppose $L\mathbf{v}\in R$. Letting $L^\prime$ be the $d$-by-$d$ matrix whose rows are $\boldsymbol{\lambda_i}^T$, we have that $L^\prime \mathbf{v} = \mathbf{w}$ for some vector $\mathbf{w}$ satisfying $\Vert \mathbf{w}\Vert_{\infty}\ll l $. Pre-multiplying by the matrix $A = (a_{ki})$, we immediately get $\mathbf{v} = A\mathbf{w}$, and hence $\Vert \mathbf{v}\Vert_{\infty} = O_{c,C}(l)$. The region $R^\prime: = (L^{-1} R)\cap K^{\perp}$ is therefore bounded. $R^\prime$ is clearly convex, and so the lemma is proved. \qedhere 
\end{proof}

The second lemma concerns vectors, with integer coordinates, that lie near to a subspace. 
\begin{Lemma}
\label{Lemma integer distance}
Let $h,d$ be natural numbers, with $h\leqslant d$, and let $C,\eta$ be positive reals. Let $\Xi:\mathbb{R}^h \longrightarrow \mathbb{R}^d$ be an injective linear map, with $\Vert \Xi\Vert_\infty \leqslant C$.  Suppose further that $\Xi(\mathbb{Z}^h) = \mathbb{Z}^d \cap \Xi(\mathbb{R}^h)$. Let $\mathbf{n},\widetilde{\mathbf{r}}\in \mathbb{Z}^d$. Suppose that 
\begin{equation}
\label{equation contradict integer distance}
 \dist(\mathbf{n} , \Xi(\mathbb{R}^h) + \widetilde{\mathbf{r}}) \leqslant \eta.
 \end{equation}
 Then, if $\eta$ is small enough in terms of $C$, $h$ and $d$, $\mathbf{n} = \Xi(\mathbf{m})+ \widetilde{\mathbf{r}}$, for some unique $\mathbf{m} \in\mathbb{Z}^h$.    
\end{Lemma}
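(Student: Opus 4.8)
\textbf{Proof plan for Lemma \ref{Lemma integer distance}.}

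The plan is to separate the two assertions: first, that $\mathbf{n} - \widetilde{\mathbf{r}}$ actually lies on the subspace $\im \Xi$ (not merely near it), and second, that the preimage $\mathbf{m}$ is an integer vector (and is unique, which is immediate from injectivity of $\Xi$). For the first assertion, I would work with the orthogonal decomposition $\mathbb{R}^d = \im \Xi \oplus (\im \Xi)^\perp$. Hypothesis (\ref{equation contradict integer distance}) says that the component of $\mathbf{n} - \widetilde{\mathbf{r}}$ in $(\im \Xi)^\perp$ has $\ell^\infty$ norm at most $\eta$. The key observation is that $\mathbb{Z}^d \cap \im \Xi = \Xi(\mathbb{Z}^h)$ is a lattice of rank $h$ inside $\im \Xi$, generated by vectors of bounded $\ell^\infty$ norm. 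I would quote Lemma \ref{nonsingularity of map on orthogonal complement of Kernel} (applied to $\Xi^*$, or more directly the "bounded inverse on the orthogonal complement" principle, together with $\Vert \Xi\Vert_\infty \leq C$ and the fact that $\dist(\Xi, V_{\rank}) = \Omega_C(1)$ automatically since the relevant Gram-type determinant is bounded below once $\Xi$ is injective with bounded entries over a finite set... ) to control the covolume/successive minima of this lattice from above in terms of $C$, $h$, $d$.

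The heart of the argument is then the following dichotomy. Consider the lattice $\Lambda := \mathbb{Z}^d$ and the sublattice $\Lambda' := \mathbb{Z}^d \cap \im \Xi = \Xi(\mathbb{Z}^h)$. The vector $\mathbf{v} := \mathbf{n} - \widetilde{\mathbf{r}} \in \mathbb{Z}^d$ is within $\eta$ of $\im \Xi$. Project $\mathbf{v}$ orthogonally onto $\im \Xi$ to get $\mathbf{v}_0$; I must show $\mathbf{v} = \mathbf{v}_0$, i.e. $\mathbf{v} \in \im \Xi$. Suppose not. Then $\mathbf{v} \in \mathbb{Z}^d$ but $\mathbf{v} \notin \im \Xi$, and yet $\dist(\mathbf{v}, \im\Xi) \leq \eta$. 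The point is that the set of distances $\{\dist(\mathbf{w}, \im \Xi) : \mathbf{w} \in \mathbb{Z}^d, \mathbf{w} \notin \im \Xi\}$ is bounded below by a positive constant depending only on $C, h, d$: indeed the quotient group $\mathbb{Z}^d / \Lambda'$ embeds (via the orthogonal projection onto $(\im\Xi)^\perp$) as a discrete subgroup of the torus $(\im\Xi)^\perp / \pi_{(\im\Xi)^\perp}(\mathbb{Z}^d)$, and any nonzero element of a discrete subgroup of a bounded-covolume torus has norm bounded below. Concretely: pick a lattice basis of $\mathbb{Z}^d$ adapted to $\Lambda'$ (Smith normal form / existence of a complementary sublattice up to bounded index), write $\mathbf{v}$ in that basis, note its "transversal" coordinates are integers not all zero, and use that the transversal basis vectors project to vectors of $\ell^\infty$ norm $\gg_{C,h,d} 1$ onto $(\im\Xi)^\perp$ — using Lemma \ref{nonsingularity of map on orthogonal complement of Kernel} to get the lower bound, since the projection restricted to the span of the transversal vectors is an injective linear map with bounded-norm matrix whose image-restriction therefore has bounded-norm inverse. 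This forces $\dist(\mathbf{v},\im\Xi) \gg_{C,h,d} 1$, contradicting $\dist(\mathbf{v},\im\Xi)\le\eta$ once $\eta$ is small enough.

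Having established $\mathbf{v} = \mathbf{n} - \widetilde{\mathbf{r}} \in \im \Xi$, it lies in $\mathbb{Z}^d \cap \im \Xi = \Xi(\mathbb{Z}^h)$ by hypothesis, so $\mathbf{v} = \Xi(\mathbf{m})$ for some $\mathbf{m} \in \mathbb{Z}^h$; uniqueness of $\mathbf{m}$ is immediate from injectivity of $\Xi$. This gives $\mathbf{n} = \Xi(\mathbf{m}) + \widetilde{\mathbf{r}}$ as claimed. The main obstacle, and the only genuinely non-routine point, is the quantitative lower bound on $\dist(\mathbf{w}, \im\Xi)$ for integer $\mathbf{w} \notin \im\Xi$: this is where one must be careful to extract a constant depending only on $C, h, d$ and not on the specific $\Xi$ (recall $\Xi = \Xi(N)$ may vary with $N$). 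The cleanest route is probably to observe that there are only finitely many possibilities for the \emph{pattern} of which coordinate subspaces $\im\Xi$ meets, combined with compactness — or, more robustly, to run the explicit adapted-basis argument above and invoke Lemma \ref{nonsingularity of map on orthogonal complement of Kernel} for the uniform bound. I would choose the latter, as it keeps all constants explicitly of the form $O_{C,h,d}(1)$ and matches the style of the surrounding appendix.
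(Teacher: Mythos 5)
Your skeleton matches the paper's: reduce to $\widetilde{\mathbf{r}}=\mathbf{0}$, show the integer vector actually lies on $\im \Xi$ by establishing a quantitative gap $\dist(\mathbf{w},\im\Xi)\gg_{C,h,d}1$ for every $\mathbf{w}\in\mathbb{Z}^d\setminus\im\Xi$, then finish with the hypothesis $\Xi(\mathbb{Z}^h)=\mathbb{Z}^d\cap\im\Xi$ and injectivity. The difference is in how the gap is produced. The paper's route is lighter: since $\Xi(\mathbb{Z}^h)\subseteq\mathbb{Z}^d$ forces $\Xi$ to have integer entries, one extends $\Xi$ to an invertible $d$-by-$d$ integer matrix $\widetilde{\Xi}$ with entries $O(C)$ (any linearly independent integer completion will do); then $(\widetilde{\Xi})^{-1}(\mathbf{n})\in\frac{1}{K}\mathbb{Z}^d$ with $K=\vert\det\widetilde{\Xi}\vert = O(C^{O(1)})$, so if $\mathbf{n}\notin\im\Xi$ one of its last $d-h$ coordinates in the new frame is a nonzero multiple of $1/K$, and applying $\widetilde{\Xi}$ transfers the gap back. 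Your adapted-lattice-basis/orthogonal-projection argument ultimately rests on the same fact --- a nonzero integer determinant is at least $1$ --- but routes it through the existence of a bounded unimodular completion of a basis of $\mathbb{Z}^d\cap\im\Xi$, which itself requires justification and is strictly more work than the paper's version.

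Two cautions. First, the claim that ``any nonzero element of a discrete subgroup of a bounded-covolume torus has norm bounded below'' is false as a general principle: a lattice of bounded covolume can have arbitrarily short nonzero vectors, so bounded covolume of the projected lattice buys you nothing; the lower bound on its first minimum \emph{is} the nontrivial content of the lemma and must come from integrality. You rightly fall back on the concrete route. Second, Lemma \ref{nonsingularity of map on orthogonal complement of Kernel} is not the right tool for the ``bounded-norm inverse'' step: its hypothesis $\dist(L,V_{\rank}(m,d))\geqslant c$ is exactly the quantitative non-degeneracy you need to establish for the projection restricted to the transversal span, and it does not follow from injectivity and bounded entries alone. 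It follows from the determinant being a nonzero integer --- equivalently, from the finiteness of the set of integer matrices with entries in $[-C,C]$, which you do mention in passing. Make that the explicit pivot and your argument closes; as written, the proposal leans on a lemma whose hypotheses are what is to be proved.
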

\begin{proof}
By replacing $\mathbf{n}$ with $\mathbf{n} - \widetilde{\mathbf{r}}$, we can assume without loss of generality that $\widetilde{\mathbf{r}} = \mathbf{0}$. It will also be enough to show that $\mathbf{n} \in \Xi(\mathbb{R}^h)$, as the injectivity of $\Xi$ and the assumption that $\Xi(\mathbb{Z}^h) = \mathbb{Z}^d \cap \Xi(\mathbb{R}^h)$ immediately go on to imply the existence of a unique $\mathbf{m}$.

Suppose for contradiction then that $\mathbf{n} \notin \Xi(\mathbb{R}^h)$. In matrix form, $\Xi$ is a $d$-by-$h$ matrix with linearly independent columns, all of whose coefficients are integers with absolute value at most $C$. We can extend this matrix to a $d$-by-$d$ matrix $\widetilde{\Xi}$, with linearly independent columns, all of whose coefficients are integers with absolute value at most $C$. Then $(\widetilde{\Xi})^{-1}$ is a $d$-by-$d$ matrix with rational coefficients of naive height at most $C^{O(1)}$, and $(\widetilde{\Xi})^{-1}(\Xi(\mathbb{R}^{h})) = \mathbb{R}^h \times \{0\}^{d-h}$. 

Since $\mathbf{n} \notin \Xi(\mathbb{R}^h)$, we have $(\widetilde{\Xi})^{-1}(\mathbf{n})\notin \mathbb{R}^h \times \{0\}^{d-h}$.  But $(\widetilde{\Xi})^{-1}(\mathbf{n}) \in \frac{1}{K} \mathbb{Z}^d$, for some natural number $K$ satisfying $K = O(C^{O(1)})$. Therefore \[ \dist ( (\widetilde{\Xi})^{-1}(\mathbf{n}),(\widetilde{\Xi})^{-1}(\Xi(\mathbb{R}^{h}))) \gg C^{-O(1)}.\] Applying $\widetilde{\Xi}$, we conclude that \[ \dist(\mathbf{n} , \Xi(\mathbb{R}^h)) \gg C^{-O(1)},\] which is a contradiction to (\ref{equation contradict integer distance}) if $\eta$ is small enough. 
\end{proof}

The construction of the matrix $\widetilde{\Xi}$ in the above proof also has an even more basic consequence, namely that $\Xi^{-1}:\im \Xi \longrightarrow \mathbb{R}^h$ is bounded. 
\begin{Lemma}
\label{Lemma bounded inverse}
Let $h,d$ be natural numbers, with $h\leqslant d$, and let $C,\eta$ be positive reals. Suppose that $\Xi:\mathbb{R}^h \longrightarrow \mathbb{R}^d$ is an injective linear map, with $\Vert \Xi\Vert_\infty \leqslant C$. Suppose further that $\Xi(\mathbb{Z}^h) \subseteq \mathbb{Z}^d \cap \Xi(\mathbb{R}^h)$. Then if $\Vert \Xi(\mathbf{y})\Vert_\infty \leqslant \eta$, we have $\Vert \mathbf{y} \Vert_\infty \ll C^{-O(1)} \eta$. 
\end{Lemma}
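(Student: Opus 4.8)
\textbf{Proof proposal for Lemma \ref{Lemma bounded inverse}.}

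The plan is to reuse the extension construction already carried out in the proof of Lemma \ref{Lemma integer distance}, where we enlarged the $d$-by-$h$ matrix of $\Xi$ (whose columns are linearly independent integer vectors of height at most $C$) to a $d$-by-$d$ integer matrix $\widetilde{\Xi}$ with linearly independent columns and coefficients bounded by $C$. Concretely: the columns of the matrix of $\Xi$ form a linearly independent set in $\mathbb{Z}^d$; complete this set to a basis of $\mathbb{Q}^d$ by adjoining $d-h$ further integer vectors, which can be taken of height $O_{C}(1)$ (for instance, by a greedy choice among the standard basis vectors $\mathbf{e_i}$, which works by elementary linear algebra since adjoining the right $\mathbf{e_i}$ at each stage keeps the set independent, and the resulting determinant is a nonzero integer). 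Call the resulting invertible $d$-by-$d$ integer matrix $\widetilde{\Xi}$, so $\Vert \widetilde{\Xi}\Vert_\infty \leqslant C$ and $\vert \det \widetilde{\Xi}\vert \geqslant 1$.

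The first key step is then the standard bound on the inverse: by Cramer's rule $\widetilde{\Xi}^{-1} = (\det \widetilde{\Xi})^{-1} \operatorname{adj}(\widetilde{\Xi})$, and each entry of the adjugate is (up to sign) a determinant of an $(d-1)$-by-$(d-1)$ submatrix of $\widetilde{\Xi}$, hence an integer of absolute value at most $(d-1)! \, C^{d-1} = O_C(1)$. Since $\vert \det \widetilde{\Xi}\vert \geqslant 1$, this gives $\Vert \widetilde{\Xi}^{-1}\Vert_\infty = O(C^{d-1}) = O(C^{O(1)})$, where the implied constant depends only on $d$ (which is permitted, as all implied constants may depend on the dimensions).

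The second, and essentially final, step is to observe that for $\mathbf{y} \in \mathbb{R}^h$ we have $\Xi(\mathbf{y}) = \widetilde{\Xi}(\mathbf{y}, \mathbf{0})$, where $(\mathbf{y},\mathbf{0}) \in \mathbb{R}^d$ denotes the extension of $\mathbf{y}$ by zeros in the last $d-h$ coordinates. Hence $(\mathbf{y},\mathbf{0}) = \widetilde{\Xi}^{-1}(\Xi(\mathbf{y}))$, and so
\[
\Vert \mathbf{y}\Vert_\infty = \Vert (\mathbf{y},\mathbf{0})\Vert_\infty \leqslant d \, \Vert \widetilde{\Xi}^{-1}\Vert_\infty \Vert \Xi(\mathbf{y})\Vert_\infty \ll C^{O(1)} \eta
\]
whenever $\Vert \Xi(\mathbf{y})\Vert_\infty \leqslant \eta$, which is the claim. (I note that the stated conclusion reads $\Vert \mathbf{y}\Vert_\infty \ll C^{-O(1)}\eta$, which in the paper's sign convention for the exponent means a bound of the shape $C^{O(1)}\eta$ — the hidden exponent being positive — so this matches.)

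I do not anticipate any real obstacle here: the only mild point of care is ensuring the completion to $\widetilde{\Xi}$ can be done with bounded-height integer columns, but this is immediate since appending standard basis vectors suffices and contributes height $1$. Everything else is Cramer's rule plus the trivial identity $\Xi(\mathbf{y}) = \widetilde{\Xi}(\mathbf{y},\mathbf{0})$. The hypothesis $\Xi(\mathbb{Z}^h)\subseteq \mathbb{Z}^d\cap\Xi(\mathbb{R}^h)$ is not actually needed for this particular statement — only integrality of the matrix of $\Xi$ is used — but it does no harm to keep it, matching the uniform hypotheses used across the appendix.
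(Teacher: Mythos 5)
Your proof is correct and is essentially the paper's own argument: extend $\Xi$ to an invertible integer $d$-by-$d$ matrix $\widetilde{\Xi}$ with entries $O_C(1)$, bound $\Vert\widetilde{\Xi}^{-1}\Vert_\infty$ via the adjugate together with $\vert\det\widetilde{\Xi}\vert\geqslant 1$, and observe that $\widetilde{\Xi}^{-1}(\Xi(\mathbf{y}))$ is just $\mathbf{y}$ padded with zeros (the paper's statement indeed has a sign slip in the exponent; the correct bound is of the shape $C^{O(1)}\eta$, as both you and the paper's proof conclude). One small correction to your closing remark: the hypothesis $\Xi(\mathbb{Z}^h)\subseteq\mathbb{Z}^d\cap\Xi(\mathbb{R}^h)$ is precisely the integrality of the matrix of $\Xi$ (the intersection with $\Xi(\mathbb{R}^h)$ is automatic), and that integrality is exactly what yields $\vert\det\widetilde{\Xi}\vert\geqslant 1$, so the hypothesis is genuinely used rather than superfluous.
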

\begin{proof}
Construct the matrix $\widetilde{\Xi}$ as in the previous proof. Then $\Vert(\widetilde{\Xi})^{-1} (\Xi(\mathbf{y}))\Vert_\infty \ll C^{O(1)} \eta$, by the bound on the size of the coefficients of $\widetilde{\Xi}$. But $(\widetilde{\Xi})^{-1} (\Xi(\mathbf{y})) \in\mathbb{R}^{d}$ is nothing more than the vector $\mathbf{y} \in\mathbb{R}^h$ extended by zeros. So $\Vert \mathbf{y} \Vert_\infty \ll C^{O(1)} \eta$ as claimed. 
\end{proof}

Finally, we give the linear algebra argument used to construct the matrix $P$ during the proof of Lemma \ref{Lemma generating a purely irrational map}. 
\begin{Lemma}
\label{Lemma construction of P}
Let $m,d$ be natural numbers, with $d\geqslant m+1$. Let $L:\mathbb{R}^{d} \longrightarrow \mathbb{R}^m$ be a surjective linear map with rational dimension $u$, and let $\Theta:\mathbb{R}^{m} \longrightarrow \mathbb{R}^u$ be a rational map for $L$. Suppose that $\Vert L\Vert_\infty \leqslant C$ and $\Vert \Theta \Vert_\infty \leqslant C$. Equating $L$ with its matrix, suppose that the first $m$ columns of $L$ form the identity matrix. Let $\{\mathbf{a_1},\dots,\mathbf{a_u}\}$ be a basis for the lattice $\Theta L(\mathbb{Z}^d)$ that satisfies $\Vert \mathbf{a_i}\Vert_\infty = O_{C}(1)$ for every $i$. Let $\mathbf{x_1},\dots,\mathbf{x_u}\in\mathbb{Z}^d$ be vectors such that, for every $i$, $\Theta L(\mathbf{x_i}) = \mathbf{a_i}$ and $\Vert \mathbf{x_i}\Vert_\infty = O_{C}(1)$. Then \begin{equation}
\label{direct sum chapter 3}
\mathbb{R}^m = \spn(L\mathbf{x_i}:i\leqslant u) \oplus \ker \Theta
\end{equation} and there is an invertible linear map $P:\mathbb{R}^m \longrightarrow \mathbb{R}^m$ such that
\begin{align}
P((\spn(L\mathbf{x_i}:i\leqslant u))) &= \mathbb{R}^u \times \{0\}^{m-u},\nonumber \\
P(\ker \Theta) &= \{0\}^{u} \times \mathbb{R}^{m-u}, \nonumber
\end{align}
\noindent and both $\Vert P\Vert_\infty = O_{C}(1)$ and $\Vert P^{-1} \Vert_\infty = O_{C}(1)$. 
\end{Lemma}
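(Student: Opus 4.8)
The plan is to realise $P$ as the inverse of an explicit $m$-by-$m$ matrix. Form the matrix $Q$ whose first $u$ columns are $L\mathbf{x_1},\dots,L\mathbf{x_u}$ and whose last $m-u$ columns are a lattice basis $\mathbf{k_1},\dots,\mathbf{k_{m-u}}$ of $\ker\Theta\cap\mathbb{Z}^m$, to be constructed below, and set $P:=Q^{-1}$. Since $Q\mathbf{e_i}=L\mathbf{x_i}$ for $i\leqslant u$ and $Q\mathbf{e_{u+j}}=\mathbf{k_j}$, the inverse $P$ carries the basis $\{L\mathbf{x_i}\}$ of $\spn(L\mathbf{x_i}:i\leqslant u)$ to $\{\mathbf{e_1},\dots,\mathbf{e_u}\}$ and the basis $\{\mathbf{k_j}\}$ of $\ker\Theta$ to $\{\mathbf{e_{u+1}},\dots,\mathbf{e_m}\}$, which is exactly the pair of identities claimed for $P$ in the statement. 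The bounds $\Vert P\Vert_\infty=O_C(1)$ and $\Vert P^{-1}\Vert_\infty=\Vert Q\Vert_\infty=O_C(1)$ will then follow from the adjugate formula $Q^{-1}=\det(Q)^{-1}\operatorname{adj}(Q)$, once we know $\Vert Q\Vert_\infty=O_C(1)$ and $\lvert\det Q\rvert=\Omega_C(1)$.

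First I would record that $\Theta$ has integer entries: the first $m$ columns of $L$ form the identity, so the columns of $\Theta$ are the first $m$ columns of the integer matrix $\Theta L$. Hence $\ker\Theta$ is a rational subspace of dimension $m-u$, and $\ker\Theta\cap\mathbb{Z}^m$ is a rank-$(m-u)$ lattice admitting a basis $\mathbf{k_1},\dots,\mathbf{k_{m-u}}$ with $\Vert\mathbf{k_j}\Vert_\infty=O_C(1)$, produced by Gaussian elimination followed by Mahler's theorem exactly as in the proof of Lemma \ref{Lemma parametrising the image lattice}. Together with $\Vert L\mathbf{x_i}\Vert_\infty\leqslant\Vert L\Vert_\infty\Vert\mathbf{x_i}\Vert_\infty=O_C(1)$ this gives $\Vert Q\Vert_\infty=O_C(1)$. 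The direct-sum decomposition \eqref{direct sum chapter 3} is then immediate: the vectors $\mathbf{a_i}=\Theta L\mathbf{x_i}$ form a lattice basis and so are linearly independent, hence the $L\mathbf{x_i}$ are linearly independent; and if $\sum_i c_iL\mathbf{x_i}\in\ker\Theta$ then $\sum_i c_i\mathbf{a_i}=\mathbf{0}$, forcing all $c_i=0$, so $\spn(L\mathbf{x_i})\cap\ker\Theta=\{\mathbf 0\}$; a dimension count finishes it, and in particular shows $Q$ invertible.

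The only substantial point is the lower bound $\lvert\det Q\rvert=\Omega_C(1)$ — a priori the ``real'' columns $L\mathbf{x_i}$ could be nearly parallel to $\ker\Theta$, which is the degeneracy we must exclude. I would handle this by the base-times-height factorisation of the volume of a parallelepiped: writing $\pi$ for the orthogonal projection of $\mathbb{R}^m$ onto $(\ker\Theta)^\perp$ and using that the $\mathbf{k_j}$ span $\ker\Theta$,
\[
\lvert\det Q\rvert=\operatorname{vol}_u\bigl(\pi(L\mathbf{x_1}),\dots,\pi(L\mathbf{x_u})\bigr)\cdot\operatorname{vol}_{m-u}(\mathbf{k_1},\dots,\mathbf{k_{m-u}}).
\]
The second factor is the covolume of $\ker\Theta\cap\mathbb{Z}^m$, which a standard computation identifies with $\sqrt{\det(\Theta\Theta^T)}/\operatorname{covol}(\Theta(\mathbb{Z}^m))$; here $\det(\Theta\Theta^T)$ is a positive integer (a sum of squares of $u\times u$ minors of the integer rank-$u$ matrix $\Theta$) and $\operatorname{covol}(\Theta(\mathbb{Z}^m))=[\mathbb{Z}^u:\Theta(\mathbb{Z}^m)]$ is a positive integer, both between $1$ and $O_C(1)$, so this factor is $\asymp_C 1$. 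For the first factor, $\Theta$ restricts to an isomorphism $(\ker\Theta)^\perp\to\mathbb{R}^u$ scaling $u$-dimensional volume by $\sqrt{\det(\Theta\Theta^T)}$, and $\Theta\circ\pi=\Theta$ with $\Theta(L\mathbf{x_i})=\mathbf{a_i}$, so $\operatorname{vol}_u(\pi(L\mathbf{x_i}))=\lvert\det(\mathbf{a_1}\mid\cdots\mid\mathbf{a_u})\rvert/\sqrt{\det(\Theta\Theta^T)}$, and $\lvert\det(\mathbf{a_1}\mid\cdots\mid\mathbf{a_u})\rvert=[\mathbb{Z}^u:\Theta L(\mathbb{Z}^d)]\in[1,O_C(1)]$. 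Multiplying, $\lvert\det Q\rvert\asymp_C 1$, which is all that is needed. (Alternatively, since $\ker\Theta\cap\mathbb{Z}^m$ is saturated in $\mathbb{Z}^m$, one may complete the $\mathbf{k_j}$ to a bounded integer basis of $\mathbb{Z}^m$ and compare determinants; the volume computation is self-contained.) I expect this volume lower bound, rather than anything else, to be the crux.
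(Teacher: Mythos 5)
Your proposal is correct, and your matrix $Q$ is exactly the inverse of the paper's $P$ (the paper defines $P$ by sending $L\mathbf{x_i}\mapsto\mathbf{b_i}$ and a bounded integer basis of $\ker\Theta$ to $\mathbf{b_{u+1}},\dots,\mathbf{b_m}$), so the construction coincides; the difference lies in how the crucial bound $\Vert P\Vert_\infty=O_C(1)$ is obtained. The paper explicitly notes that the naive ``integer matrix, so $\vert\det P^{-1}\vert\geqslant 1$'' argument fails because the columns $L\mathbf{x_i}$ are not integer vectors, and instead bounds the coefficients of each standard basis vector $\mathbf{b_k}$ in the basis $\{L\mathbf{x_i}\}\cup\{\mathbf{y_j}\}$ by hand: since the first $m$ columns of $L$ form the identity, $\mathbf{b_k}\in L(\mathbb{Z}^d)$, so $\Theta\mathbf{b_k}$ lies in the lattice $\Theta L(\mathbb{Z}^d)$ and the coefficients along the $L\mathbf{x_i}$ are integers expressible boundedly in the lattice basis $\{\mathbf{a_i}\}$, after which the residual lies in $\ker\Theta$ and its coefficients along the integer vectors $\mathbf{y_j}$ are bounded by independence and integrality. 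You instead rescue the determinant argument directly: the base-times-height factorisation $\vert\det Q\vert=\operatorname{vol}_u(\pi(L\mathbf{x_i}))\cdot\operatorname{vol}_{m-u}(\mathbf{k_j})$, the volume-scaling factor $\sqrt{\det(\Theta\Theta^T)}$ of $\Theta$ on $(\ker\Theta)^\perp$, and the integrality of $\Theta$, of the Gram determinant of the $\mathbf{k_j}$, and of $\det(\mathbf{a_1}\mid\cdots\mid\mathbf{a_u})=[\mathbb{Z}^u:\Theta L(\mathbb{Z}^d)]$ combine to give $\vert\det Q\vert\asymp_C 1$, and the adjugate formula finishes. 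Both arguments ultimately rest on the same integrality inputs ($\Theta$ is an integer matrix because the first $m$ columns of $L$ form the identity, and $\Theta L(\mathbb{Z}^d)$ is a full-rank sublattice of $\mathbb{Z}^u$ with the bounded basis $\{\mathbf{a_i}\}$), but yours is more geometric and avoids the paper's use of $\mathbf{b_k}\in L(\mathbb{Z}^d)$, while the paper's coefficient-by-coefficient argument avoids any covolume identities; one small simplification available to you is that for the factor $\operatorname{vol}_{m-u}(\mathbf{k_j})$ the lower bound $\geqslant 1$ already follows from the Gram determinant of independent integer vectors being a positive integer, so the identity $\operatorname{covol}(\ker\Theta\cap\mathbb{Z}^m)=\sqrt{\det(\Theta\Theta^T)}/\operatorname{covol}(\Theta(\mathbb{Z}^m))$, though correct, is not needed.
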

\noindent Note that both $\{\mathbf{a_1},\dots,\mathbf{a_u}\}$ and $\mathbf{x_1},\dots,\mathbf{x_u}\in\mathbb{Z}^d$ exist by applying Lemma \ref{Lemma parametrising the image lattice} to the map $S: = \Theta L$.

\begin{proof}
The expression (\ref{direct sum chapter 3}) is immediate from the definitions, so it remains to construct $P$. We may assume, since the first $m$ columns of $L$ form the identity matrix, that $\Theta$ has integer coefficients. 

As $\Vert \Theta\Vert_\infty = O_{C}(1)$, we may pick a basis $\{\mathbf{y_1},\dots,\mathbf{y_{m-u}}\}$ for $\ker \Theta$ in which $\mathbf{y_j} \in \mathbb{Z}^m$ and $\Vert \mathbf{y_j}\Vert_\infty = O_{C}(1)$ for all $j$. Let $\mathbf{b_1},\dots,\mathbf{b_m}$ denote the standard basis of $\mathbb{R}^m$, and define $P$ by letting
\begin{align}
\label{definition of P}
P(L\mathbf{x_i}) &:= \mathbf{b_i}, \qquad \quad 1\leqslant i\leqslant u \nonumber \\
P(\mathbf{y_j}) &:= \mathbf{b_{j+u}}, \qquad 1\leqslant j\leqslant m-u,
\end{align}
\noindent and then extending linearly to all of $\mathbb{R}^m$. Clearly $P((\spn(L\mathbf{x_i}:i\leqslant u))) = \mathbb{R}^u \times \{0\}^{m-u}$ and $P(\ker \Theta) = \{0\}^{u} \times \mathbb{R}^{m-u}$. It is also immediate that $\Vert P^{-1} \Vert_\infty = O_{C}(1)$, since $\Vert L\mathbf{x_i}\Vert_\infty = O_{C}(1)$ and $\Vert \mathbf{y_j}\Vert_\infty = O_{C}(1)$ for all $i$ and $j$. It remains to bound $\Vert P\Vert_\infty$. If $L\mathbf{x_i}$ were all vectors with integer coordinates then this bound would be immediate as well, as then $P^{-1}$ would have integer coordinates and hence $\vert \det P^{-1}\vert \geqslant 1$. As it is, we have to proceed more slowly. 

To this end, for a standard basis vector $\mathbf{b_k}$ write \[ \mathbf{b_k} = \sum\limits_{i=1}^u \lambda_i L\mathbf{x_i} + \sum\limits_{j=1}^{d-u}  \mu_j \mathbf{y_j}.\] It will be enough to show that $\vert \lambda_i\vert,\vert \mu_j\vert = O_{C}(1)$ for all $i$ and $j$. First note that, since the first $m$ columns of $L$ form the identity, $\mathbf{b_k} \in L(\mathbb{Z}^d)$. Also $\Theta (\mathbf{b_k}) = \sum_{i=1}^u \lambda_i\mathbf{a_i}$. So $\mathbf{a} : = \sum_{i=1}^u \lambda_i\mathbf{a_i}$ is an element of $\Theta L(\mathbb{Z}^d)$ that satisfies $\Vert \mathbf{a}\Vert_\infty  = O_{C}(1)$. Since $\Vert \mathbf{a_i}\Vert_\infty = O_{C}(1)$ for every $i$, and $\{\mathbf{a_1},\dots,\mathbf{a_u}\}$ is a basis for the lattice $\Theta L(\mathbb{Z}^d)$, this implies that $\vert \lambda_i\vert = O_{C}(1)$ for every $i$. 

So then $\sum_{j=1}^{d-u}  \mu_j \mathbf{y_j}$ is a vector in $\ker \Theta$ satisfying $\Vert \sum_{j=1}^{d-u}  \mu_j \mathbf{y_j}\Vert_\infty = O_{C}(1)$. Since $\{\mathbf{y_1},\dots,\mathbf{y_{m-u}}\}$ is a set of linearly independent vectors, each of which has integer coordinates with absolute value $O_{C}(1)$, this implies that $\vert \mu_j\vert = O_{C}(1)$ for every $j$. 

Therefore $P$ satisfies the conclusions of the lemma.
\end{proof}

\begin{Remark}
\label{Remark about P}
\emph{We note the effects of the above construction in the case when $L$ has algebraic coefficients. We use a rudimentary version of height: if $Q\in \mathbb{Z}[X]$ we define \[H(Q):= \max(\vert q_i\vert: q_i\text{ a coefficient of }  Q)\] to be the \emph{height} of $Q$, and we say that the height of an algebraic number is the height of its minimal polynomial. (So there are $O_{k,H}(1)$ algebraic numbers of degree at most $k$ and height at most $H$). Then, if in the statement of Lemma \ref{Lemma construction of P} all the coefficients of $L$ are algebraic numbers with degree at most $k$ and height at most $H$, all the coefficients of $P$ are algebraic numbers of degree $O_{k}(1)$ and height $O_{C,k,H}(1)$.}
\end{Remark}

\section{The approximation function in the algebraic case}
\label{section algebraic approximation}
We use this final appendix to give the proof of relation (\ref{approximation function in the algebraic case}). The following lemma makes this relation quantitatively precise. 

\begin{Lemma}
\label{Lemma algebraic coeffs implies gen appr}
Let $m,d$ be natural numbers, with $d\geqslant m+1$, and let $c,C$ be positive constants. Let $L:\mathbb{R}^d \longrightarrow \mathbb{R}^m$ be a surjective linear map, and suppose that the matrix of $L$ has algebraic coefficients of algebraic degree at most $k$ and algebraic height at most $H$ (see Remark \ref{Remark about P} for definitions). Suppose that $\Vert L\Vert_\infty \leqslant C$, that $\dist(L,V_{\rank}(m,d)) \geqslant c$, and that $L$ has rational complexity at most $C$. Let $\tau_1,\tau_2$ be two parameters in the range $0<\tau_1,\tau_2\leqslant 1$. Then \[ A_L(\tau_1,\tau_2) \gg_{k,H,c,C} \min(\tau_1, \tau_2^{O_{k}(1)}).\] 
\end{Lemma}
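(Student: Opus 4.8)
The plan is to argue directly, reducing the estimate on $A_L(\tau_1,\tau_2)$ to a Liouville-type inequality for algebraic numbers of bounded degree. If $L$ has rational dimension $u=m$ then $A_L(\tau_1,\tau_2)=\tau_1$ by definition and the bound is immediate, so I may assume $u\leqslant m-1$ and fix a rational map $\Theta:\mathbb{R}^m\to\mathbb{R}^u$ for $L$. By Proposition \ref{rank matrix} I pick a rank matrix $M$ of $L$; after permuting the coordinates of $\mathbb{R}^d$ (which changes none of the quantities in the statement) I may assume $M$ consists of the first $m$ columns of $L$, so $N:=M^{-1}L$ has the first $m$ columns equal to the identity and remaining columns $\mathbf{v}_{m+1},\dots,\mathbf{v}_d\in\mathbb{R}^m$. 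Since $\vert\det M\vert=\Omega_{c,C}(1)$ and $\Vert M^{-1}\Vert_\infty=O_{c,C}(1)$, every entry of every $\mathbf{v}_j$ lies in the number field $K$ generated over $\mathbb{Q}$ by the entries of $L$; as each such entry has degree $\leqslant k$, $K$ has degree $O_k(1)$, and since the height of $1/\det M$ equals that of $\det M$, the entries of the $\mathbf{v}_j$ are algebraic numbers of degree $O_k(1)$ and height $O_{k,H}(1)$. Setting $S:=\Theta M$ one checks that $S$ is an integer $u\times m$ matrix, that $S\mathbf{v}_j\in\mathbb{Z}^u$ for every $j>m$, and — this is the one genuinely new linear-algebra point, and it uses the maximality of $u$ — that there is \emph{no} $\psi\in\mathbb{Z}^m\setminus\operatorname{rowspace}_{\mathbb{R}}(S)$ with $\psi^T\mathbf{v}_j\in\mathbb{Z}$ for all $j>m$ (such a $\psi$ would let one extend $\Theta$ to a surjection $\mathbb{R}^m\to\mathbb{R}^{u+1}$ taking $L(\mathbb{Z}^d)$ into $\mathbb{Z}^{u+1}$, contradicting the definition of rational dimension).

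The main step is then as follows. Suppose $\varphi\in(\mathbb{R}^m)^*$ satisfies $\dist(\varphi,\Theta^*((\mathbb{R}^u)^*))\geqslant\tau_1$, $\Vert\varphi\Vert_\infty\leqslant\tau_2^{-1}$, and $\dist(L^*\varphi,(\mathbb{Z}^d)^T)<\delta$ with $\delta\leqslant 1$; I want to deduce $\delta\gg_{c,C,k,H}\min(\tau_1,\tau_2^{O_k(1)})$. Taking $\mathbf{k}=(k_1,\dots,k_d)\in\mathbb{Z}^d$ within $\delta$ of $L^*\varphi$ and reading off the first $m$ coordinates gives $\Vert M^T\varphi-\mathbf{k}'\Vert_\infty<\delta$ with $\mathbf{k}':=(k_1,\dots,k_m)$, whence $\Vert\varphi-(M^T)^{-1}\mathbf{k}'\Vert_\infty=O_{c,C}(\delta)$ and $\Vert\mathbf{k}'\Vert_\infty=O_{c,C}(\tau_2^{-1})$; reading off the remaining coordinates (using that the $j$-th column of $L$ is $M\mathbf{v}_j$ and $\Vert\mathbf{v}_j\Vert_\infty=O_{c,C}(1)$) gives $\dist(\mathbf{v}_j^T\mathbf{k}',\mathbb{Z})=O_{c,C}(\delta)$ for every $j>m$. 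Now either $\mathbf{k}'\in\operatorname{rowspace}_{\mathbb{R}}(S)$, in which case $(M^T)^{-1}\mathbf{k}'\in\Theta^*((\mathbb{R}^u)^*)$ and so $\tau_1\leqslant\dist(\varphi,\Theta^*((\mathbb{R}^u)^*))=O_{c,C}(\delta)$, forcing $\delta\gg_{c,C}\tau_1$; or $\mathbf{k}'\notin\operatorname{rowspace}_{\mathbb{R}}(S)$, in which case the characterization above produces some $j>m$ with $\alpha:=\mathbf{v}_j^T\mathbf{k}'\notin\mathbb{Z}$. This $\alpha$ is algebraic of degree $O_k(1)$, and since each entry of $\mathbf{v}_j$ has height $O_{k,H}(1)$ and $\vert k_i'\vert=O_{c,C}(\tau_2^{-1})$, standard estimates for heights under sums and products give height of $\alpha$ at most $m\log(\tau_2^{-1})+O_{c,C,k,H}(1)$. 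Applying Liouville's inequality to $\alpha$ and the nearest integer then yields $\dist(\alpha,\mathbb{Z})\gg_{c,C,k,H}\tau_2^{O_k(1)}$, hence $\delta\gg_{c,C,k,H}\tau_2^{O_k(1)}$. Taking the infimum over admissible $\varphi$ gives $A_L(\tau_1,\tau_2)\gg_{c,C,k,H}\min(\tau_1,\tau_2^{O_k(1)})$, as required. (Alternatively one could first invoke Lemma \ref{Lemma generating a purely irrational map} and Remark \ref{Remark about P} to reduce to the purely irrational case $u=0$, where the characterization above becomes the assertion that $(L')^*\varphi\in(\mathbb{Z}^{d-u})^T$ forces $\varphi=0$; the direct route above avoids that machinery and covers $d=m+1$ as well.)

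The step I expect to need the most care is not any one inequality but the bookkeeping in the Liouville estimate: I must keep the degree of the ambient number field — and hence the \emph{exponent} of $\tau_2$ in the final bound — controlled purely in terms of $k$ (and the implicit dimension constants), letting only the multiplicative constant depend on $c,C,H$. Concretely: (i) all relevant algebraic numbers must be seen to lie in a fixed field of degree $O_k(1)$; (ii) the height of $\alpha=\mathbf{v}_j^T\mathbf{k}'$ must grow only like a dimension-bounded multiple of $\log\Vert\mathbf{k}'\Vert_\infty$ plus an $O_{c,C,k,H}(1)$ term; so that (iii) Liouville gives $\dist(\alpha,\mathbb{Z})\gg_{c,C,k,H}\Vert\mathbf{k}'\Vert_\infty^{-O_k(1)}\gg_{c,C,k,H}\tau_2^{O_k(1)}$ with the exponent independent of $c,C,H$. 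The auxiliary linear-algebra lemma characterizing rational dimension $u$ through the columns $\mathbf{v}_j$ and the matrix $S=\Theta M$ is routine but should be isolated and proved explicitly, as it is what converts "$L$ has rational dimension exactly $u$" into the combinatorial input used above.
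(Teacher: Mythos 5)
Your argument is correct, and it takes a genuinely different route from the paper's. The paper first reduces to the purely irrational case by invoking Lemma \ref{Lemma generating a purely irrational map} (this is the raison d'\^{e}tre of part (9) of that lemma, together with Remark \ref{Remark about P}), then argues by contradiction: it rounds $M^*\alpha$ to a nonzero integer functional $\rho$, uses pure irrationality to locate a coordinate where $L^*(M^{-1})^*\rho$ differs from the integer vector, and runs the Liouville step by explicitly building an integer polynomial via resultants and dividing out possible factors $(X-\varphi_i)$. You instead stay at general rational dimension $u$, introduce the integer matrix $S=\Theta M$ together with the characterization that no integer functional outside the row-space of $S$ pairs integrally with every column $\mathbf{v_j}$ of $M^{-1}L$ (your proof sketch of this via extending $\Theta$ to a surjection onto $\mathbb{R}^{u+1}$ is exactly right and indeed worth writing out), and then split according to whether the rounded vector $\mathbf{k}'$ lies in that row-space: the row-space case yields the $\tau_1$ term, the other case yields the $\tau_2^{O_k(1)}$ term via Liouville with standard height estimates in a field of degree $O_k(1)$. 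What the paper's route buys is reuse of machinery already built, confining the algebraic input to one purely irrational computation; what your route buys is self-containedness: it avoids the dimension-reduction lemma entirely (whose stated hypotheses, e.g.\ $d\geqslant m+2$ and $\dist(L,V_{\degen}^*(m,d))\geqslant c$, are strictly stronger than those of the present lemma, so the direct argument sidesteps a mild hypothesis mismatch and genuinely covers $d=m+1$), and it never uses the rational-complexity hypothesis. Your replacement of the resultant construction by quoting Weil-height estimates for sums and products is legitimate and equivalent in content; the two points you flag as needing care (the characterization lemma and keeping the exponent of $\tau_2$ dependent only on $k$ and the dimensions) are precisely the right ones, and both go through as you describe.
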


\begin{proof}
We begin by reducing to the case when $L$ is purely irrational. Indeed, consider Lemma \ref{Lemma generating a purely irrational map} and replace $L$ by the map $L^\prime$ (expression (\ref{equation definition of L prime})). By part (9) of Lemma \ref{Lemma generating a purely irrational map}, $A_{L^\prime}(\tau_1,\tau_2)\ll_{c,C} A_L(\Omega_{c,C}(\tau_1),\Omega_{c,C}(\tau_2))$. Also, using Remark \ref{Remark about P}, it follows that $L^\prime$ has algebraic coefficients of algebraic degree at most $O_{k}(1)$ and algebraic height at most $O_{c,C,k,H}(1)$.  So, replacing $L$ with $L^\prime$, without loss of generality we may assume that $L$ is purely irrational. \\

Suppose for contradiction that for all choices of constants $c_1$ and $C_2$, there exist parameters $\tau_1$ and $\tau_2$ such that $A_L(\tau_1,\tau_2) < c_1 \min(\tau_1, \tau_2^{C_2})$, i.e. there exists a map $\alpha \in (\mathbb{R}^m)^*$ and a map $\varphi \in (\mathbb{Z}^d)^T$ such that  $\tau_1\leqslant \Vert \alpha\Vert_\infty \leqslant \tau_2^{-1}$ and 
\begin{equation}
\label{first of lemma alg coeffs}
\Vert L^*\alpha - \varphi\Vert_\infty <c_1 \min(\tau_1,\tau_2^{C_2}).
\end{equation}
\noindent Fix $\alpha$ and $\varphi$ so that they satisfy (\ref{first of lemma alg coeffs}). We will obtain a contradiction if $c_1$ is small enough in terms of $c,C,k,H$, and if and $C_2$ is large enough in terms of $k$. \\

In the first part of the proof, we apply various reductions to enable us to replace $\alpha$ with a map that has integer coordinates with respect to the standard dual basis of $(\mathbb{R}^m)^*$.

Let $M$ be a rank matrix of $L$ (Proposition \ref{rank matrix}), and assume without loss of generality that $M$ consists of the first $m$ columns of $L$. Then there exists a map $\beta \in (\mathbb{R}^m)^*$, namely $\beta: = M^* \alpha$, such that $\tau_1 \ll_{c,C} \Vert \beta\Vert_\infty \ll_{c,C} \tau_2^{-1}$ and
\begin{equation}
\label{equation before switching to integers}
 \Vert L^* (M^{-1})^* \beta - \varphi\Vert_\infty < c_1 \min(\tau_1,\tau_2^{C_2}).
 \end{equation}
 Since the first $m$ columns of $M^{-1} L$ form the identity matrix, (\ref{equation before switching to integers}) implies that
\begin{equation}
\label{equation beta integer norm bound}
\dist(\beta, (\mathbb{Z}^m)^T) < c_1 \min(\tau_1,\tau_2^{C_2}).
\end{equation} 

We know that $\Vert \beta \Vert_\infty = \Omega_{c,C}( \tau_1)$. Also, considering (\ref{equation beta integer norm bound}), by perturbing $\beta$ by a suitable element $\gamma \in (\mathbb{R}^m)^*$ with $\Vert \gamma\Vert_\infty < c_1 \min(\tau_1,\tau_2^{C_2})$ we may obtain a map $\rho\in(\mathbb{Z}^{m})^T$. Combining these facts, note how  
\begin{align*}
\Vert \rho\Vert_\infty &\geqslant \Vert \beta \Vert_\infty -  c_1 \min(\tau_1,\tau_2^{C_2})\\
&\gg_{c,C} \tau_1
\end{align*} if $c_1$ is small enough, and so certainly $\rho \neq 0$.

From (\ref{equation before switching to integers}), we therefore conclude that there exists some $\rho \in (\mathbb{Z}^m)^T\setminus \{0\}$, satisfying $\Vert \rho\Vert_\infty = O_{c,C}(\tau_2^{-1})$, such that \begin{equation}
\label{expressions which will contradict algebraicity}
 \Vert L^* (M^{-1})^*\rho - \varphi \Vert_\infty < c_1 C_3\tau_2^{C_2}
\end{equation} where $C_3$ is some constant that depends on $c$ and $C$. Referring back to (\ref{first of lemma alg coeffs}), we see that we have achieved our goal of replacing $\alpha$ with a map that has integer coefficients.\\

Expression (\ref{expressions which will contradict algebraicity}) leads to a contradiction. Morally this follows from Liouville's theorem on the diophantine approximation of algebraic numbers, but we couldn't find exactly the statement we needed in the literature, so we include a short argument here.

Indeed, let $\varphi = (\begin{matrix} \varphi_1 & \cdots & \varphi_d\end{matrix})$ be the representation of $\varphi$ with respect to the standard dual basis of $(\mathbb{R}^d)^*$ (with analogous notation for $L^* (M^{-1})^* \rho$). Since $L$ is assumed to be purely irrational, so is $M^{-1}L$. Therefore, since $\rho:\mathbb{R}^m \longrightarrow \mathbb{R}$ is surjective (since it is non-zero), we may pick some co-ordinate $i$ at most $d$ for which $(L^* (M^{-1})^* \rho)_i - \varphi_i \neq 0$. So there are algebraic numbers $\lambda_1,\dots,\lambda_{m}$ with algebraic degree $O_{k}(1)$ and algebraic height $O_{c,C,k,H}(1)$ for which 
\begin{equation}
\label{single coordinate bound}
0<\vert \sum\limits_{j=1}^{m}\lambda_j\rho_j - \varphi_i\vert< c_1C_3 \tau_2^{C_2},
\end{equation} where $(\begin{matrix} \rho_1 & \cdots & \rho_m\end{matrix})$ is the representation of $\rho$ with respect to the standard dual basis. Note that if $c_1$ is small enough, by (\ref{single coordinate bound}) and the fact that $\Vert\rho\Vert_\infty = O_{c,C} (\tau_2^{-1})$ one has $\vert \varphi_i\vert  = O_{c,C} (\tau_2^{-1})$. \\

Our aim will be to find a suitable polynomial $Q$ for which $Q(\sum_{j \leqslant m} \lambda_j\rho_j) = 0$, and then to apply Liouville's original argument.\\

Assume without loss of generality that each $\lambda_j\rho_j$ is non-zero. For each $j$ at most $m$, let $Q_j\in\mathbb{Z}[X]$ denote the minimal polynomial of $\lambda_j\rho_j$. Note that the degree of $Q_j$ is $O_{k}(1)$ (since $\rho_j\in\mathbb{Z}$). By the bounds on the degree and height of $\lambda_j$, and since $\Vert \rho\Vert_\infty  = O_{c,C} (\tau_2^{-1})$, we have $H(Q_j) = O_{c,C,k,H}(\tau_2^{-O_k(1)})$. 

By using the standard construction based on resultants (see \cite[Section 4.2.1]{Co93}), this implies that there is a polynomial $Q\in \mathbb{Z}[X]$ with degree $O_k(1)$ such that\\ $Q(\sum_{j \leqslant m} \lambda_j\rho_j) = 0$ and $H(Q) = O_{c,C,k,H}( \tau_2^{-O_k(1)})$. 

Now, it could be that $\varphi_i$ is a root of $Q$. If this is the case, we use the factor theorem and Gauss' Lemma to replace $Q$ by the integer-coefficient polynomial $Q\cdot (X-\varphi_i)^{-1}$. In this case, $H( Q\cdot(X-\varphi_i)^{-1})\ll_{c,C,k,H} (\varphi_i + 1)^{O_k(1)}\tau_2^{-O_k(1)}$. By repeating this process as necessary, since $\vert \varphi_i\vert  = O_{c,C} (\tau_2^{-1})$ we may assume therefore that $\varphi_i$ is not a root of $Q$.\\

This immediately implies a bound on the derivative of $Q$, namely that, for any $\theta$, \[\vert Q^\prime (\theta)\vert \ll_{c,C,k,H} \tau_2^{-O_k(1)}\sum\limits_{0\leqslant a \leqslant O_k(1)} \theta^a.\] But then the mean value theorem implies that for some $\theta$ in the interval \\$[\sum_j\lambda_j\alpha_j, \varphi_i]$ one has \[1\leqslant \vert Q(\varphi_i)\vert = \vert Q(\sum\limits_{j=1}^{m}\lambda_j\rho_j) - Q(\varphi_i)\vert \leqslant  \vert Q^\prime (\theta)\vert \vert \sum\limits_{j=1}^{m}\lambda_j\rho_j - \varphi_i\vert\ll_{c,C,k,H} c_1C_3 \tau_2^{-O_k(1)} \tau_2^{C_2}. \] If $C_2$ is large enough in terms of $k$, this implies that $c_1 = \Omega_{c,C,k,H} (1)$ , which is a contradiction if $c_1$ is small enough. Therefore the lemma holds. 
\end{proof}

\bibliographystyle{plain}
\bibliography{Gowers}
\end{document}